\theoremstyle{plain}
\newtheorem{thm}{Theorem}[section]
\newtheorem{prop}[thm]{Proposition}
\newtheorem{ex}[thm]{Example}
\newtheorem{lemma}[thm]{Lemma}
\newtheorem{rmk}[thm]{Remark}
\newtheorem{df}[thm]{Definition}
\newtheorem{cor}[thm]{Corollary}
\newtheorem{nt}[thm]{Notation}
\newtheorem{axiom}{A}
\newcommand{\aref}[1]{\hyperref[#1]{A~\ref{#1}}}
\newcommand{\Sh}{\operatorname{Sh}}
\newcommand{\Ext}{\operatorname{Ext}}
\newcommand{\length}{\operatorname{length}}
\newcommand{\Hom}{\operatorname{Hom}}
\newcommand{\RHom}{\operatorname{RHom}}
\newcommand{\Cl}{\operatorname{Cl}}
\newcommand{\chr}{\operatorname{char}}
\newcommand{\Image}{\operatorname{Im}}
\newcommand{\Path}{\operatorname{Path}}
\newcommand{\link}{\operatorname{link}}
\newcommand{\Int}{\operatorname{Int}}
\newcommand{\intt}{\operatorname{int}}
\newcommand{\Cell}{\operatorname{\textbf{Cell}}}
\newcommand{\Mor}{\operatorname{Mor}}
\newcommand{\Ob}{\operatorname{Ob}}
\newcommand{\Iso}{\operatorname{Iso}}
\newcommand{\Ent}{\operatorname{Ent}}
\newcommand{\Aut}{\operatorname{Aut}}
\newcommand{\Sets}{\operatorname{Sets}}
\newcommand{\red}{\operatorname{red}}
\newcommand{\sk}{\operatorname{sk}}
\newcommand{\End}{\operatorname{End}}
\newcommand{\pouya}[1]{{\color{red} \sf
$\clubsuit\clubsuit\clubsuit$  [#1]}}
\newcommand{\david}[1]{{\color{blue} \sf
  [#1]}}
\newcommand{\newterm}{\textsf}
\begin{document}
\title{Topological Koszulity for Category Algebras} 
\author{David Favero}
\author{Pouya Layeghi}
\date{\today}
\newcommand{\Addresses}{{
  \bigskip
  \footnotesize

  \noindent \textsc{David Favero and  Pouya Layeghi}
  \\
  \textsc{School of Mathematics, University of Minnesota } \\
   \textsc{ Vincent Hall, 206 Church Street SE
Minneapolis, MN 55455}
\par\nopagebreak
  \noindent \textit{E-mail address}: \texttt{favero@umn.edu, layeg001@umn.edu}
}}

\begin{abstract}
We give a topological description of Ext groups between simple representations of categories via a nerve type construction. We use it to show that the Koszulity of indiscretely based category algebras is equivalent to the locally bouquet property of this nerve. We also provide a class of functors which preserve the Koszulity of category algebras called almost discrete fibrations.  Specializing from categories to posets, we show that the equivalence relations of V. Reiner and D. Stamate in \cite{rs} are exactly almost discrete fibrations and recover their results.  As an application, we classify when a shifted dual collection to a full strong exceptional collection of line bundles on a toric variety is strong.
\end{abstract}

\maketitle

\tableofcontents
\section{Introduction}
In \cite{r}, Reisner combinatiorally classified when the quotient of a polynomial ring by a squarefree monomial ideal is Cohen-Macaulay in terms of the input poset of monomial generators.  It was later shown that the Koszulity of the incidence algebra of the poset is equivalent to a similar condition called the locally Cohen-Macaulay property \cite{iz, pp, ap, rs}. 

Incidence algebras are special cases of category algebras which are more flexible and appear naturally in many contexts.  This begs the question, can one similarly classify the Koszulity of category algebras?  We answer this question affirmatively for what we call indiscretely based categories:\ graded categories whose degree zero connected components are indiscrete categories.  That is, we define weakening of the Cohen-Macaulay condition for categories (see Definition~\ref{locally bouquet}) which generalizes that of posets to obtain the following:
\begin{thm}
  Let $\mathcal{C}$ be an indiscretely based category.  Then, $k\mathcal{C}$ is Koszul if and only if $\mathcal{C}$ is locally bouquet (cohomologically). 
\end{thm}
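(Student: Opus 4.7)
The plan is to combine two ingredients announced in the abstract: the standard characterization of Koszulity in terms of Ext-vanishing between simples, and the nerve-theoretic computation of $\Ext^*_{k\mathcal{C}}(S, S')$. The indiscretely based hypothesis should enter in identifying the simple modules cleanly with the connected components of $\mathcal{C}_0$, since an indiscrete category has (up to isomorphism) a single simple representation.

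First I would reduce the problem. For a non-negatively graded algebra $A$ whose degree-zero piece is semisimple, Koszulity is equivalent to the bigraded vanishing $\Ext^i_A(S,S')_j = 0$ for all simples $S, S'$ and all $i \ne j$. Applying this to $A = k\mathcal{C}$, and using that indiscretely based categories have simples indexed by $\pi_0(\mathcal{C}_0)$ with very simple structure (each component's contribution to $A_0$ is Morita equivalent to $k$), I get the purely bigraded diagonal-concentration problem.

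Next I would invoke the topological description of Ext the paper promises, writing $\Ext^i_{k\mathcal{C}}(S_x,S_y)_j$ as the $(i-?)$-th (reduced) cohomology of a nerve-like simplicial complex $N_j(x,y)$ attached to the pair $(x,y)$ of components and the morphism degree $j$. The cohomological grading should track chain length in the nerve and the algebraic grading should track degrees of composable morphisms in $\mathcal{C}$, so that the Koszul diagonal condition $i = j$ becomes the condition that each $N_j(x,y)$ has reduced cohomology concentrated in one specific degree (the one corresponding to the top-dimensional cells). This is precisely the cohomological locally bouquet property from Definition~\ref{locally bouquet}, a wedge-of-spheres condition at the level of cohomology. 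Both directions of the equivalence then follow simultaneously: Koszulity $\iff$ Ext concentrated on the diagonal $\iff$ each nerve is cohomologically a bouquet of spheres.

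The main obstacle I expect is the bookkeeping that matches the bigrading of $\Ext^*_{k\mathcal{C}}(S_x,S_y)$ to the correct dimension and filtration piece of the nerve. For posets this is classical (order complexes of open intervals, with Cohen--Macaulay giving cohomology in the top degree $\ell(x,y) - 2$), but for a general indiscretely based graded $\mathcal{C}$ one must verify that the nerve construction produces exactly the dimensional shift so that the Koszul condition $\Ext^i_j = 0$ for $i \ne j$ matches cohomology-in-a-single-degree of the nerve, not a degree off. Once this matching is checked, the hard work is already contained in the topological $\Ext$-description; the rest of the argument is formal.
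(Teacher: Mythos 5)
Your proposal is correct and follows essentially the same route as the paper: decompose $(k\mathcal{C})_0$ into the simples $S_v$, apply the topological computation $\Ext^i(S_w,S_v)_{-n}\cong\bigoplus_{p}\widetilde H^{i-2}(B\mathcal{C}(p))$ from Proposition~\ref{Ext groups}, and match the BGS diagonal-vanishing criterion of Proposition~\ref{Koszul} against the bouquet condition. The dimension bookkeeping you flag as the main obstacle is exactly the point the paper addresses via $\dim B\mathcal{C}(p)=l(p)-2$ (which relies on the conventions around $\widetilde H^{-1}(\emptyset)$ and Proposition~\ref{prop: ext1 vanishes}), so nothing further is needed beyond what you outline.
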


 The categorical viewpoint has another advantage - one can study Koszulity in terms of functors. We define a class of such functors which preserve Koszulity called almost discrete fibrations (which specialize to discrete fibrations, see Corollary \ref{cor: almost discrete fibration}). Restricting back to posets, we obtain a categorical description of the equivalence relations axiomatized in \cite{rs}:
 \begin{thm} \label{thm: intro rs}
    Let $P$ be a graded poset regarded as a category. There is a one-to-one correspondence between the equivalence relations on $P$ axiomatized in \cite{rs}  and almost discrete fibrations $F:P\to \mathcal{D}$ where $\mathcal{D}$ is an indiscretely based category which satisfies the property that any morphism in $\mathcal{D}$ lifts to $P$. 
\end{thm}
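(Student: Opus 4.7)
The plan is to describe the correspondence explicitly in both directions. Given an equivalence relation $\sim$ on $P$ satisfying the Reiner--Stamate axioms, I would form the quotient category $\mathcal{D} := P/\sim$ whose objects are the $\sim$-classes and whose morphisms descend from the order relation on $P$. The projection $F : P \to P/\sim$ is then the candidate almost discrete fibration. Conversely, given an almost discrete fibration $F : P \to \mathcal{D}$ with $\mathcal{D}$ indiscretely based and every morphism liftable, I would define $x \sim_F y$ iff $F(x) = F(y)$ and show that $\sim_F$ satisfies the Reiner--Stamate axioms.

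The first step is to unwind the indiscretely based condition on $\mathcal{D}$. The degree zero layer of $\mathcal{D}$ is a disjoint union of indiscrete (codiscrete) categories, so, under a surjective-on-objects functor from the discrete degree-zero layer of $P$, specifying $\mathcal{D}$ in degree zero amounts to specifying a rank-preserving equivalence relation on the vertices of $P$ --- this matches the rank axiom in \cite{rs}. The second step is to show that the remaining Reiner--Stamate axioms correspond respectively to: (i) well-definedness of the covering relation on the quotient (so morphisms in $\mathcal{D}$ are well-defined), (ii) coherence of composition (so that two paths in $P$ descending to the same morphism in $\mathcal{D}$ give the same composite), and (iii) the path-lifting property characterizing an almost discrete fibration in the poset setting via Corollary~\ref{cor: almost discrete fibration}.

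Checking that the two constructions are mutually inverse is then immediate. Starting from $\sim$, the fibers of $F : P \to P/\sim$ recover $\sim$ by construction. Starting from $F : P \to \mathcal{D}$, the induced functor $\bar F : P/\sim_F \to \mathcal{D}$ is bijective on objects (the morphism-lifting hypothesis applied to identities forces surjectivity) and full by the lifting assumption, while faithfulness follows from the indiscrete structure on each degree zero connected component of $\mathcal{D}$, which forces any two parallel morphisms between fixed source and target classes to agree with their image under $F$.

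The main obstacle I expect is the axiom-by-axiom translation of the Reiner--Stamate conditions, stated purely in order-theoretic terms, into the categorical language of almost discrete fibrations. In particular, their axioms governing behavior on intervals and on atoms of covering relations should match the cartesian-type lifting conditions built into the definition of an almost discrete fibration; making this match precisely will require unpacking both sets of definitions and verifying compatibility case by case, rather than invoking any soft categorical argument.
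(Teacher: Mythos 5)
Your plan follows the paper's proof essentially step for step: the forward direction builds the quotient category $P/\sim$ and shows the projection is an almost discrete fibration (as in Proposition~\ref{P and RS quotient of P}), the reverse direction defines the relation from the fibers of $F$ and verifies the axioms (as in Proposition~\ref{prop: almost discrete fibrations define RS equivalence}), and the bijection holds only after identifying fibrations that differ by an equivalence of the target, which your fullness/faithfulness check of $\bar F$ implicitly supplies. One point needs care: the Reiner--Stamate relation lives on closed intervals $\Int(P)$, i.e.\ on morphisms of $P$, so $\sim_F$ must be defined by $[a,b]\sim_F[a',b']$ iff $F(a\leq b)=F(a'\leq b')$ rather than on objects, and there is no separate ``rank axiom''; with that reading the dictionary is that A1 is functoriality of $F$, A4 is the morphism-lifting hypothesis, and A2 (both existence and uniqueness of $\tau$) is precisely the uniqueness-of-lifts clause in the definition of an almost discrete fibration --- not Corollary~\ref{cor: almost discrete fibration}, which concerns transfer of Koszulity and plays no role here.
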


One of the motivating factors for this work was a class of category algebras called homotopy path algebras \cite{dj} (HPAs).  Given a stratified topological space $X$, one can consider the category of entrance paths in $X$ up to homotopy.  This category is like a directed fundamental group and its category algebra is an HPA.  In \cite{dj}, a sufficient condition for the Koszulity of an HPA was characterized by shellability of the intervals in the path poset.  Our result  specializes in this case to the necessary and sufficient Cohen-Macaulay property.    
\begin{thm}
A graded HPA is Koszul if and only if 
     the path poset is locally Cohen-Macaulay.    
\end{thm}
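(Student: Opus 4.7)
The plan is to reduce the statement to Theorem 1.1 and Theorem 1.2 of the paper by realizing a graded HPA as the category algebra of an indiscretely based category which admits an almost discrete fibration to the path poset.

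First, I would recall that the graded HPA is by construction $k\mathcal{C}$, where $\mathcal{C}$ is the category of entrance paths (up to homotopy) of the underlying stratified space $X$. To apply Theorem 1.1 I must check $\mathcal{C}$ is indiscretely based. In the path-length grading, the degree zero morphisms are the entrance paths that stay within a single stratum, and these form an indiscrete connected component of the degree zero subcategory, since an entrance path within a fixed stratum is determined up to homotopy by its endpoints. Hence Theorem 1.1 applies, and $k\mathcal{C}$ is Koszul if and only if $\mathcal{C}$ is locally bouquet.

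Second, I would construct the natural quotient functor $F\colon \mathcal{C}\to P$ from the entrance path category to the path poset, which sends each object to its stratum and collapses every morphism set $\Hom_{\mathcal{C}}(S,S')$ to the unique relation $S\leq S'$ in $P$. By definition of the path poset, every relation in $P$ is witnessed by an entrance path, so $F$ lifts all morphisms of $P$. The fibers of $F$ over each point of $P$ are precisely the indiscrete degree zero components identified in the previous step, which would make $F$ an almost discrete fibration in the sense of the paper.

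Third, I would invoke Corollary \ref{cor: almost discrete fibration} on almost discrete fibrations preserving Koszulity, together with Theorem \ref{thm: intro rs}, to transfer the locally bouquet property of $\mathcal{C}$ to the corresponding property on $P$. Since $P$ is a graded poset regarded as a category, the locally bouquet condition specializes to the classical locally Cohen--Macaulay condition on all open intervals of $P$, by the topological description of $\Ext$ groups between simples established in the body of the paper. Chaining these equivalences yields the theorem.

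The main obstacle is the middle step: producing a clean verification that the projection from homotopy classes of entrance paths to the path poset is genuinely an almost discrete fibration, and that it falls inside the class covered by Theorem \ref{thm: intro rs}. Once this is in hand, the remaining translations are formal consequences of the paper's general machinery; but the precise reconciliation between the topological nerve computing $\Ext$ groups in $\mathcal{C}$ and the order complex of intervals in $P$ (which detects the Cohen--Macaulay condition) is the key technical ingredient and where I expect to have to be most careful.
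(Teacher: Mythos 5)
There is a genuine gap in your middle step, and it sits exactly where you predicted trouble. You have conflated the path poset with the poset of objects (strata) of the entrance path category. The path poset $\Path_A$ of $A=kQ/I$ has as its elements the paths of $Q$ modulo $I$, ordered by $p<q$ iff $q=p\cdot r$; its elements are paths, not vertices. The functor $F\colon\mathcal{C}\to P$ you describe --- sending each object to ``its stratum'' and collapsing each $\Hom_{\mathcal{C}}(S,S')$ to a single relation --- therefore lands in the poset of objects ordered by existence of a morphism, which is a different (and much smaller) poset than $\Path_A$, and for that target $F$ is in general \emph{not} an almost discrete fibration. Concretely, for the Beilinson quiver of $\mathbb P^2$ the morphism $\bar x_1\circ x_0\colon v_1\to v_3$ lies over the single relation $v_1\le v_3$, whose unique factorization $v_1\le v_2\le v_3$ lifts to the two distinct factorizations $(\bar x_1, x_0)$ and $(\bar x_0, x_1)$, violating the uniqueness in the definition (compare Example \ref{ex: nonexample of alm. disc. fib.}). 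Moreover the locally Cohen--Macaulay condition on the strata poset is simply not the condition appearing in the statement.

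The fibration you need goes the other way. Proposition \ref{prop: hpas as RS quotient of path poset} exhibits an almost discrete fibration $\Phi\colon\Path_A\to\mathcal{C}_A$ given by $p\mapsto h(p)$ and $(p<q)\mapsto q/p$, whose well-definedness rests on the cancellativity of $I$; every morphism $r$ of $\mathcal{C}_A$ lifts, e.g.\ to $e_{t(r)}<r$. With this in hand, Corollary \ref{cor: almost discrete fibration} (equivalently, realizing $A$ as the reduced incidence algebra of $\Path_A$ as in Corollary \ref{cor: hpas as red. inc. alg. of path poset}) transfers Koszulity between $k\mathcal{C}_A$ and $k[\Path_A]$, and Corollary \ref{cor incidence} identifies the Koszulity of $k[\Path_A]$ with the locally Cohen--Macaulay property of $\Path_A$. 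Your first step (that $\mathcal{C}_A$ is indiscretely based) and your final translation from locally bouquet to locally Cohen--Macaulay are fine; what must be repaired is the identification of the correct poset and the direction of the fibration.
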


This result is readily applicable to algebraic geometry.  Namely, the endomorphism algebra of a full strong exceptional collection of line bundles on a toric variety is always an HPA \cite[Lemma 5.2]{dj}. Therefore, we obtain a topological interpretation of Koszulity for such algebras. In the special case of Bondal-Thomsen HPAs \cite[Definition 5.10]{dj}, we get a more concrete topological interpretation of Koszulity. 
\begin{thm}
   A graded Bondal-Thomsen HPA $A_{\Phi}$ is Koszul if and only if 
$$\widetilde{H}^i(\mathbb R^n_{(D,E)})=0$$ for all open intervals $(D,E)$ in the poset of the induced stratification on $\mathbb R^n$ and $i$ less than the maximal chain length in $(D,E)$ where $\mathbb R^n_{(D,E)}$ is the union of strata labeled by elements of $(D,E)$.

   
\end{thm}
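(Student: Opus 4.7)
The plan is to deduce this statement from the preceding theorem about graded HPAs by translating both sides of the equivalence into the Bondal--Thomsen setting. The left-hand side is immediate: by the previous theorem $A_{\Phi}$ is Koszul if and only if its path poset, call it $P_{\Phi}$, is locally Cohen--Macaulay, meaning every open interval $(D,E) \subseteq P_{\Phi}$ has order complex $\Delta((D,E))$ with vanishing reduced cohomology below its top dimension $\ell(D,E)-1$, where $\ell(D,E)$ is the maximal chain length in $(D,E)$. Thus what remains is a purely topological identification
\[
\widetilde{H}^{i}(\mathbb{R}^n_{(D,E)}) \;\cong\; \widetilde{H}^{i}(\Delta((D,E)))
\]
for every open interval $(D,E)$.

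First I would recall the explicit combinatorics of a Bondal--Thomsen HPA from \cite{dj}: the strata of the induced stratification on $\mathbb{R}^n$ are labeled by the elements of $P_{\Phi}$, the closure relations among strata match the order relations in $P_{\Phi}$, and each stratum is an open convex polyhedral cone (in particular contractible). Under this labeling, $\mathbb{R}^n_{(D,E)}$ is exactly the union of strata whose label lies strictly between $D$ and $E$.

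Next I would apply a Nerve-type theorem. Because each stratum is contractible and because intersections of closures of strata are themselves unions of strata with contractible pieces organized by the poset structure, one can build a functorial homotopy equivalence between $\mathbb{R}^n_{(D,E)}$ and the geometric realization of the nerve of the covering by stratum closures inside $\mathbb{R}^n_{(D,E)}$. This nerve is canonically the order complex $\Delta((D,E))$. The cleanest way to carry this out is to exhibit a map from $\mathbb{R}^n_{(D,E)}$ to $|\Delta((D,E))|$ sending a point in the stratum labeled $F$ to the vertex $F$ (or rather, via a partition of unity subordinate to the open star covering) and to show via standard piecewise-linear arguments that it is a homotopy equivalence, using convexity of strata and transversality of their closures.

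The main obstacle will be verifying that the nerve theorem applies cleanly: one must be precise about which open cover of $\mathbb{R}^n_{(D,E)}$ one takes, check that all finite intersections are contractible (or empty exactly when they should be), and confirm that the combinatorics of these intersections matches $\Delta((D,E))$ rather than some coarser or finer complex. In the toric/Bondal--Thomsen setting this should follow from the convex geometry of the defining polytope $\Phi$ -- every relevant intersection of stratum closures is itself a convex cone -- but writing this down carefully is where the work concentrates. Once the homotopy equivalence is in hand, the cohomological condition on $\mathbb{R}^n_{(D,E)}$ is literally the statement that $\Delta((D,E))$ has vanishing reduced cohomology below the top dimension, completing the equivalence with the locally Cohen--Macaulay property and hence with Koszulity.
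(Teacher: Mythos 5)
Your first step is fine and agrees with the paper: Koszulity of $A_\Phi$ reduces, via Theorem~\ref{Theorem} (equivalently Theorem~\ref{thm: Koszul HPA} together with Proposition~\ref{prop: koszulity and universal cover}), to the locally Cohen--Macaulay property of the poset $\widetilde{\Image\Phi}$, so the whole content is the identification $\widetilde{H}^i(\mathbb R^n_{(D,E)})\cong \widetilde{H}^i(K(D,E))$ for open intervals. It is in this step that your argument has a genuine gap. The natural open cover of $\mathbb R^n_{(D,E)}$ adapted to the stratification is by the open stars $U_F=\bigcup_{G\geq F}\mathbb R^n_G$ (equivalently the entrance spaces), and the nerve of this cover is \emph{not} the order complex: $U_{F_0}\cap\cdots\cap U_{F_k}$ is nonempty exactly when $\{F_0,\dots,F_k\}$ has a common upper bound in $(D,E)$, so the nerve is the complex of bounded-above subsets, which strictly contains $\Delta((D,E))$ whenever the interval has incomparable elements with a common upper bound. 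These two complexes need not even be homotopy equivalent: for the poset $\{a,b<c,\ a,b<d\}$ with $c,d$ incomparable, the order complex is a $4$-cycle ($\simeq S^1$) while the bounded-above complex is two triangles glued along the edge $ab$, hence contractible. Moreover, in exactly such configurations the good-cover hypothesis fails as well ($U_a\cap U_b=\mathbb R^n_{\geq c}\cup \mathbb R^n_{\geq d}$ can be disconnected), so a nerve theorem does not apply. Since intervals of this shape do occur in $\widetilde{\Image\Phi}$ (already for $\mathbb P^2$ the monomial posets contain incomparable elements with several incomparable upper bounds), the convexity of the strata does not rescue the argument: the combinatorics of the cover simply does not match $\Delta((D,E))$.

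The paper closes this gap by a different mechanism. It first shows (Proposition~\ref{prop: local simple stratification}) that $\widetilde{\Phi}|_{\mathbb R^n_{(D,E)}}$ is a \emph{simple} stratification, using that the entrance spaces and their differences are star-shaped, and (Lemma~\ref{lemma: ent cat over interval}) that the entrance path category of $\mathbb R^n_{(D,E)}$ is isomorphic to the poset $(D,E)$ because each $\mathbb R^n_{[F,G]}$ is star-shaped. It then invokes the exodromy equivalence $\Sh_S(X)\cong A_S(X)^{op}\text{-mod}$ (Theorem~\ref{thm: exodromy}) for both $X=\mathbb R^n_{(D,E)}$ and the order complex $K(D,E)$ with its tree stratification, matches the constant sheaves under the resulting equivalence, and deduces $H^i(\mathbb R^n_{(D,E)})\cong H^i(K(D,E))$ (Corollary~\ref{cor: topological interpretation of cohomologies}). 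If you want to avoid sheaf theory you would still need some form of the stratified homotopy hypothesis identifying $\mathbb R^n_{(D,E)}$ with the classifying space of its entrance path category; a bare nerve theorem on the stratum cover will not do it.
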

Combining our combinatorial interpretation of these endomorphism algebras  with a result of Bondal \cite[Corollary 7.2, p.\ 8]{b}, we obtain a combinatorial interpretation for strongness of the dual exceptional collection. 
\begin{thm}
Let $\mathcal O_X(D_1), ..., \mathcal O_X(D_n)$ be any collection of line bundles on a toric variety $X$.
 Let $A$ be the endomorphism algebra of $\bigoplus_{i=1}^n \mathcal O_X(D)$.  
 For $1 \leq i \leq n$, consider the poset $P_i$ of all monomials in the Cox ring of degree $D_j - D_i$. Then $A$ is Koszul iff.\ $P_i$ is locally Cohen-Macaulay for all $i$.
Furthermore, if the collection is a full strong exceptional collection such that the partial ordering by morphisms on objects is graded, then the above are equivalent to the existence of integers $d_1, ..., d_n$ such that the shifted dual exceptional collection $S_1[d_1], ..., S_n[d_n]$ is strong. 
\end{thm}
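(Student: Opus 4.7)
The plan is to split the proof into the two equivalences asserted in the statement.  For the first equivalence, that $A$ is Koszul iff each $P_i$ is locally Cohen-Macaulay, I would invoke \cite[Lemma 5.2]{dj} to realize $A$ as a homotopy path algebra, and then apply the criterion established earlier in the paper that a graded HPA is Koszul iff its path poset is locally Cohen-Macaulay.  The work lies in identifying the path poset concretely: morphisms $\mathcal{O}_X(D_i)\to \mathcal{O}_X(D_j)$ are global sections of $\mathcal{O}_X(D_j-D_i)$, which by standard toric geometry are in bijection with monomials in the Cox ring of degree $D_j-D_i$.  Running over all $j$, the set of paths with source $i$ forms exactly $P_i$, so the full path poset of the HPA decomposes (by source) as a disjoint union of the $P_i$.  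Since local Cohen-Macaulayness is a property tested on closed intervals, and every interval of the path poset lies in a single $P_i$, local Cohen-Macaulayness of the path poset is equivalent to local Cohen-Macaulayness of every $P_i$.

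For the second equivalence, under the additional hypothesis that the collection is full strong exceptional and that the partial order on objects by morphisms is graded, I would appeal to Bondal's \cite[Corollary 7.2]{b}.  The graded partial-order hypothesis is precisely what endows $A$ with the grading needed to invoke Bondal's framework (and agrees with the path-length grading coming from its HPA structure).  Bondal's corollary then directly asserts that $A$ is Koszul iff there exist integers $d_i$ for which the shifted dual exceptional collection $S_1[d_1],\ldots,S_n[d_n]$ is strong; the shifts $d_i$ are read off from the cohomological degrees of the simples under the Koszul duality.

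The main obstacle will be a conventions-matching step: verifying that the grading on $A$ furnished by the graded partial order coincides with the path-length grading implicit in the HPA structure (so that both the first and second equivalences refer to the \emph{same} Koszulity condition), and that Bondal's notion of ``shifted dual exceptional collection'' agrees with our $S_i[d_i]$.  Once these identifications are verified, the two equivalences chain together to yield the stated combinatorial description of strongness of the shifted dual.
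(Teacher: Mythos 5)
Your first equivalence follows the paper's route essentially verbatim: the paper realizes $A$ as the category algebra $k\mathcal{C}_S$ for $S=\{D_1,\dots,D_n\}\subset\Cl(X)$ (Example \ref{ex: hpas are C_S}), identifies $\Path_{A,D_i}$ with the set of monomials of degree $D_j-D_i$, and invokes Corollary \ref{cor: Koszul hpa if and only if CM Path_{A,v}}, which is exactly your observation that the path poset decomposes by source and every interval lies in a single piece (Theorem \ref{thm: Koszul HPA} together with Lemma \ref{lemma: Path_A is CM iff Path_{A,v} are CM}). There is nothing to add there.

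For the second equivalence your plan is thinner than what the paper actually does. The paper does \emph{not} quote Bondal's Corollary 7.2 as already asserting ``Koszul iff the shifted dual is strong''; it cites \cite{b} only for the fact that $S_n[-f(v_n)],\dots,S_1[-f(v_1)]$ is an exceptional collection, and then proves the strongness--Koszulity equivalence itself in Proposition \ref{prop: Koszul iff strong} (the paper explicitly calls this ``our interpretation of'' Bondal's result). The step your ``conventions-matching'' remark conceals is the real mathematical content: strongness of the shifted dual is the vanishing of the \emph{ungraded} groups $\Ext^{f(v_i)-f(v_j)+l}(S_i,S_j)$ for $l\neq 0$, whereas Koszulity is the vanishing of the \emph{graded} pieces $\Ext^m(S_i,S_j)_{-n}$ for $m\neq n$. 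These two conditions coincide only because the graded-poset hypothesis forces every path from $v_j$ to $v_i$ to have length $f(v_i)-f(v_j)$, so that Proposition \ref{Ext groups} shows each $\Ext^m(S_i,S_j)$ is concentrated in the single internal degree $f(v_j)-f(v_i)$. This purity is what allows one to pass between the two vanishing statements and is also what determines the shifts $d_i=-f(\mathcal{O}_X(D_i))$ uniformly. You need to supply this argument (or verify that Bondal's statement already packages it under hypotheses your setting satisfies) rather than treat it as a bookkeeping check.
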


The paper is organized as follows. In section \S\ref{sec: notation}, we fix our notation, conventions, and some essential terminology.
Section \S\ref{sec: top Koszulity} handles category algebras, their Ext groups, and Koszulity in the abstract setting.  Using a nerve type construction, we define a generalization of open intervals in posets to categories. We use this to give a topological description of Ext groups and Koszulity.  In section \S \ref{sec: RS equivalences}, we prove Theorem~\ref{thm: intro rs} which reinterprets the work of Reiner and Stamate in the language of category algebras. Section \S \ref{sec: App. in HPAs} discusses group actions, homotopy path algebras, and applications to Bondal-Thomsen homotopy path algebras and full strong exceptional collections of line bundles in algebraic geometry. 

\subsection{Acknowledgement}
This project was financially supported by the National Science Foundation DMS-2302262. We are very grateful for many helpful discussions and email exchanges with Victor Reiner.  We would also like to thank Peter Webb, Maru Sarazola, Sasha Voronov, Michael K. Brown, Pranjal Dangwal, Jesse Huang, and Mykola Sapronov for other useful input and discussions. 
\subsection{Notations and Conventions}\label{sec: notation}
\begin{itemize}
    \item $R$ is a commutative ring with identity.
    \item $k$ is a field.
    \item For a graded module $M$, $M(n)_k=M_{n+k}$.
    \item A \newterm{semisimple object} (e.g. module, ring, algebra, ...) is a (possibly infinite) direct sum of simple objects. 
    \item The tail of a path/morphism $p$ is denoted by $t(p)$ and  the head is denoted by $h(p)$. 
    \item We write path concatenation from left to right. For example if we have two paths 
    \begin{center}
        $\bullet_u	\xrightarrow{p}\bullet_v$, $\bullet_v\xrightarrow{q}\bullet_w$
    \end{center}
    we would write $pq$ for their composition 
    $$\bullet_u\xrightarrow{p}\bullet_v\xrightarrow{q}\bullet_w$$
    However, when viewed as morphisms, we write $q\circ p$ for their composition. 
    \item All categories are small.
    \item An \newterm{indiscrete category} is a category which has exactly one morphism between any two objects. Note that indiscrete categories are groupoids without automorphisms. 
    \item By a \newterm{skeletal category} we mean a category where any two isomorphic objects are equal. 
    \item A \newterm{saturated subset} $S\subseteq P$ of a poset $P$, is a subset satisfying the property that if $a,b\in S$ and $a\leq c\leq b$, then $c\in S$. 
\end{itemize}
\section{Topological Koszulity}\label{sec: top Koszulity}
In this section, we introduce our main object of study:\ category algebras.  This is a very general class of algebras which includes matrix algebras, free algebras, polynomial rings, path algebras of quivers, incidence algebras, and homotopy path algebras. We begin by establishing all the necessary definitions, conventions, and notation.  After that, we  provide a cellular resolution of the diagonal bimodule of a category algebra from the classifying space. This resolution leads the way to a topological interpretation of Ext groups and hence Koszulity.

\subsection{Category algebras preliminaries}
Category algebras are an extremely general phenomenon.  To any category $\mathcal C$ and any unital commutative ring $R$ we can associate a (possibly non-unital) $R$-algebra as follows. 
\begin{df}
Let $\mathcal{C}$ be a category and $R$ be commutative ring with unit. The \newterm{category algebra} $R\mathcal{C}$ of $\mathcal{C}$ over $R$ is defined to be the free $R$-module with basis given by the morphisms of $\mathcal{C}$ and multiplication defined by composition or zero: $$\alpha\beta:=\begin{cases}
   \alpha\circ \beta & \text{if $\alpha$ and $\beta$ are composable}\\
    0 & \text{otherwise} 
\end{cases}$$  
\end{df}
If $\mathcal{C}$ has finitely many objects, then $\sum_{x \in \text{Ob}(\mathcal C)} \text{Id}_x$ exists and is the unit of $R\mathcal{C}$; if $\mathcal C$ has infinitely many objects $R\mathcal C$ is not unital.
\begin{ex}
    Let $P$ be a poset.  We may regard it as a category where there exists a unique morphism $a \to b$ iff. $a\leq b$. In this case, the category algebra $RP$ is usually called the incidence algebra.
\end{ex}
  \begin{ex}
\label{hpa quiver category}
Homotopy path algebras (see \cite{dj}) generalize the previous example. Let $Q$ be a finite quiver possibly with cycles and loops. A \newterm{homotopy path algebra (HPA)} is a quotient $A=RQ/I_S$ where $Q$ is a finite quiver (possibly with cycles and loops) and 
$$I_S=\langle p-q:(p,q)\in S\rangle $$ where $S$ is any set of pairs of paths such that $h(p)=h(q)$ and $t(p)=t(q)$ and $I_S$ is left and right cancellative. This is the category algebra $R\mathcal C_A$ where $\mathcal C_A$ is the \newterm{path category} of $A$, that is the category whose objects are the vertices of $Q$ and whose morphisms are paths modulo the equivalence relation determined by $I_S$.
\end{ex}
\begin{rmk}
In \cite{dj}, cycles and loops are not allowed but we allow them here.
\end{rmk}
\begin{ex}
\label{matrix algebras are category algebras}
    Let $\mathcal{C}$ be an indiscrete category with $n$ objects $v_1,...,v_n$. Then, $k\mathcal{C}$ is isomorphic to $ M_{n\times n}(R)$ by  sending a morphism $f$ to the matrix $\delta_{t(f)h(f)}$, i.e. the matrix which has $1$ in its $t(f)h(f)$ entry and zero everywhere else. 
\end{ex}
\begin{ex}
\label{ex: R[x] is a category algebra}
 Let $R$ be a ring and $\mathcal{C}$ be a graded category with one object $v$ and  morphisms $\Hom_{\mathcal{C}}(v,v)=\{x^n:n\in \mathbb N\}$. Then, $R\mathcal{C}\cong R[x]$   
\end{ex}
\begin{prop}
\label{prop: tensor product of cat algebras}
  Let $\mathcal{C}$ and $\mathcal D$ be two categories and $R$ be a ring. Then 
  $$R\mathcal{C}\otimes_RR\mathcal{D}\cong R(\mathcal{C}\times\mathcal{D})$$  
\end{prop}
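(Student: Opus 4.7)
The plan is to construct an explicit $R$-module isomorphism on the two distinguished bases and then verify that it respects the multiplication. Both sides of the claimed isomorphism are free $R$-modules whose bases are naturally indexed: $R\mathcal{C} \otimes_R R\mathcal{D}$ has basis $\{\alpha \otimes \gamma : \alpha \in \Mor(\mathcal{C}),\; \gamma \in \Mor(\mathcal{D})\}$, while $R(\mathcal{C}\times\mathcal{D})$ has basis $\Mor(\mathcal{C}\times\mathcal{D})$, which is exactly the set of pairs $(\alpha,\gamma)$ by the definition of a product category. Thus the map
\[
\phi \colon R\mathcal{C} \otimes_R R\mathcal{D} \longrightarrow R(\mathcal{C}\times\mathcal{D}), \qquad \alpha \otimes \gamma \longmapsto (\alpha,\gamma),
\]
extended $R$-bilinearly, is immediately an isomorphism of $R$-modules (I would justify its well-definedness via the universal property of the tensor product applied to the evident $R$-bilinear pairing into $R(\mathcal{C}\times\mathcal{D})$).

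Next I would verify multiplicativity on basis elements. Using the convention set earlier in the paper, for $\alpha,\beta \in \Mor(\mathcal{C})$ we have $\alpha\beta = \alpha\circ\beta$ when $t(\alpha)=h(\beta)$ and $0$ otherwise; analogously for $\mathcal{D}$. Hence in the tensor product algebra,
\[
(\alpha \otimes \gamma)(\beta \otimes \delta) = (\alpha\beta) \otimes (\gamma\delta),
\]
which is nonzero precisely when $t(\alpha)=h(\beta)$ \emph{and} $t(\gamma)=h(\delta)$, in which case it equals $(\alpha\circ\beta) \otimes (\gamma\circ\delta)$. On the other side, composability in the product category $\mathcal{C}\times\mathcal{D}$ is exactly componentwise composability, and composition there is componentwise. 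So $(\alpha,\gamma)(\beta,\delta)$ is $(\alpha\circ\beta,\gamma\circ\delta)$ under the same hypothesis and $0$ otherwise. These agree under $\phi$.

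The rest is bookkeeping: extend linearly, observe that $\phi$ sends products of arbitrary elements to products of their images by bilinearity of multiplication, and check (in the unital case, when both $\mathcal{C}$ and $\mathcal{D}$ have finitely many objects) that $\phi$ sends $\bigl(\sum_x \Id_x\bigr) \otimes \bigl(\sum_y \Id_y\bigr)$ to $\sum_{(x,y)} \Id_{(x,y)}$, which is the unit of $R(\mathcal{C}\times\mathcal{D})$. I do not expect any genuine obstacle here: the entire content of the proposition is that composition in a product category is defined componentwise, so the ``composable-or-zero'' rule on each side lines up tautologically under $\phi$.
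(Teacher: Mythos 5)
Your proof is correct and follows exactly the route the paper takes: the paper defines the same map $\phi\colon f\otimes g\mapsto (f,g)$ and its inverse and simply asserts that one can easily check they are mutually inverse $R$-algebra isomorphisms. You have filled in the routine verification (matching bases, componentwise composability in $\mathcal{C}\times\mathcal{D}$, units in the finite-object case) that the paper leaves to the reader.
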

\begin{proof}
    One can easily check
    $$\phi: R\mathcal{C}\otimes_R R\mathcal{D}\to R(\mathcal{C}\times \mathcal{D})$$
    $$f\otimes g\mapsto (f,g)$$
    and 
    $$\psi:R(\mathcal{C}\times \mathcal{D})\to  R\mathcal{C}\otimes_R R\mathcal{D}$$
    $$(f,g)\mapsto f\otimes g$$
    are mutually inverse isomorphisms of $R$-algebras. 
\end{proof}
\begin{ex}
\label{ex: polynomial ring in several variables}
  Let $R$ be a ring and $\mathcal{C}$ be a graded category with one object $v$ and  morphisms $\Hom_{\mathcal{C}}(v,v)=\{x^n:n\in \mathbb N\}$. Then  
\begin{center}
\begin{align*}
    R(\underbrace{\mathcal{C}\times ...\times \mathcal{C}}_{\text{n-times}})&=\underbrace{R\mathcal{C}\otimes_R ...\otimes_R R\mathcal{C}}_{\text{n-times}}\\
    &=\underbrace{R[x]\otimes_R...\otimes_R R[x]}_{\text{n-times}}\\
    &=R[x_1,...x_n]
\end{align*}
\end{center}   
\end{ex}

\begin{thm}[\protect{\cite[Proposition 2.1, p. 5]{p}\cite[Theorem 7.1, p. 33]{MITCHELL19721}}]
\label{category representations vs modules}
Let $\mathcal{C}$ be a category, $(R-mod)^{\mathcal{C}}$ be the category of representations of $\mathcal{C}$ and $R\mathcal{C}-mod$ be the category of $R\mathcal{C}$-modules. Then, there are exact functors 
\begin{center}
	       \begin{tikzcd}
                    (R-mod)^{\mathcal{C}}\arrow[bend left]{r}{\Phi}
		          & R\mathcal{C}-mod \arrow[bend left]{l}{\Psi}
	       \end{tikzcd}     
\end{center} 
such that $\Psi\circ \Phi\cong Id$, $\Phi$ is a full and faithful left adjoint for $\Psi$ and is an equivalence if $\mathcal{C}$ has finitely many objects. 
\end{thm}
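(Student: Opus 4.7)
The plan is to construct $\Phi$ and $\Psi$ explicitly and check each claimed property directly. For a functor $F\colon \mathcal{C} \to R\text{-mod}$, I would set
$$\Phi(F) := \bigoplus_{x\in \Ob(\mathcal{C})} F(x),$$
with an element $\alpha \in R\mathcal{C}$ corresponding to a morphism $\alpha\colon x\to y$ acting as $F(\alpha)$ on the $F(x)$ summand and as $0$ on $F(z)$ for $z \neq x$, extended $R$-linearly. In the opposite direction, for an $R\mathcal{C}$-module $M$, I would define $\Psi(M)(x) := \mathrm{Id}_x \cdot M$, and for a morphism $\alpha\colon x\to y$ in $\mathcal{C}$, let $\Psi(M)(\alpha)\colon \mathrm{Id}_x M \to \mathrm{Id}_y M$ be multiplication by $\alpha$ (which lands in $\mathrm{Id}_y M$ because $\alpha = \mathrm{Id}_y\cdot \alpha \cdot \mathrm{Id}_x$). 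Functoriality of $\Psi(M)$ on $\mathcal{C}$ is immediate from associativity of multiplication in $R\mathcal{C}$.

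Next I would verify the isomorphism $\Psi\circ \Phi \cong \mathrm{Id}$: since $\mathrm{Id}_x$ acts as the identity on $F(x)$ and as zero on other summands, $\Psi(\Phi(F))(x) = \mathrm{Id}_x \cdot \bigoplus_y F(y) = F(x)$, and the transition maps agree by construction. For the adjunction $\Phi \dashv \Psi$, I would show that an $R\mathcal{C}$-module morphism $\phi\colon \Phi(F) \to M$ is the same data as a natural transformation $F \to \Psi(M)$. The key observation is that if $\phi_x\colon F(x) \to M$ denotes the restriction to the $x$-summand, then $\phi_x(m) = \phi_x(\mathrm{Id}_x \cdot m) = \mathrm{Id}_x \cdot \phi_x(m)$ lies in $\Psi(M)(x)$, and the intertwining condition $\alpha\cdot \phi_x(m) = \phi_y(F(\alpha)(m))$ for morphisms $\alpha\colon x\to y$ is precisely naturality into $\Psi(M)$. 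Fullness and faithfulness of $\Phi$ then follow formally from the fact that the unit $F \to \Psi\Phi(F)$ is an isomorphism.

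Exactness of both functors should be quick: $\Phi$ is the direct sum of the evaluation functors $F \mapsto F(x)$, each of which is exact, and $\Psi$ is multiplication by the idempotent $\mathrm{Id}_x$, hence exact as well (kernels, images, and quotients of $R\mathcal{C}$-modules are preserved under multiplying by idempotents). Finally, for the equivalence claim when $\Ob(\mathcal{C})$ is finite, note that $1 := \sum_{x} \mathrm{Id}_x$ is a unit for $R\mathcal{C}$ and the $\mathrm{Id}_x$ are pairwise orthogonal idempotents. Thus for any module $M$, the identities $m = 1\cdot m = \sum_x \mathrm{Id}_x \cdot m$ and orthogonality give a direct sum decomposition $M = \bigoplus_x \mathrm{Id}_x M = \Phi(\Psi(M))$, natural in $M$, which together with $\Psi\Phi \cong \mathrm{Id}$ yields an equivalence of categories.

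I do not expect any serious obstacle; the only subtle point is bookkeeping in the non-unital case, where one must resist identifying $M$ with $\bigoplus_x \mathrm{Id}_x M$. Indeed, this is exactly why $\Phi$ fails to be essentially surjective when $\mathcal{C}$ has infinitely many objects: a general $R\mathcal{C}$-module need not be the sum of its $\mathrm{Id}_x$-submodules, so $\Phi\circ \Psi$ picks out the ``unital'' part and only produces an equivalence after the finiteness hypothesis forces $\sum_x \mathrm{Id}_x$ to be a genuine identity element.
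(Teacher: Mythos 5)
Your construction of $\Phi$ and $\Psi$ is exactly the one the paper records (the paper itself only writes down these two functors on objects and defers all verifications to the cited references \cite{p} and \cite{MITCHELL19721}), and your direct verifications are all sound: the adjunction via the orthogonal idempotents $\mathrm{Id}_x$, fullness and faithfulness from the unit being an isomorphism, pointwise exactness, and the decomposition $M=\bigoplus_x \mathrm{Id}_x M$ when $\sum_x \mathrm{Id}_x$ is a genuine unit. In particular your closing remark about why essential surjectivity fails for infinitely many objects correctly identifies the only delicate point, so the argument is complete and consistent with the paper's (outsourced) proof.
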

\begin{proof}
The details can be found in \cite[Proposition 2.1, p. 5]{p} and \cite[Theorem 7.1, p. 33]{MITCHELL19721}.  We define the functors $\Phi$ and $\Psi$ on objects here for later use.     \begin{enumerate}
        \item $\Phi$:
        \\ Let $F\in \Ob((R-mod)^{\mathcal{C}})$. Define $\Phi(F)=\bigoplus_{x\in \Ob(\mathcal{C})}F(x)$ and if $f:y\to z\in R\mathcal{C}$, then $f$ sends an element $u\in F(x)$ to $F(f)(u)$ if $x=y$ and to $0$ otherwise.
        \item $\Psi$: 
        \\ Let $M\in \Ob(R\mathcal{C}-mod)$. For $x\in \Ob(\mathcal{C})$ define $(\Psi(M))(x)=Id_x \cdot M$ as the $R$-module obtained by acting on $M$ by the identity on $x$. For $f:x\to y\in \Mor(\mathcal{C})$, define $(\Psi(M))(f):(\Psi(M))(x)\to (\Psi(M))(y)$ to be the function which sends $u\in Id_x \cdot M$ to $f \cdot u\in Id_y \cdot M$. 
    \end{enumerate}
The exactness of $\Phi$ and $\Psi$ follows from the statement of this proposition in \cite{MITCHELL19721} and from the fact that $R-mod$ is an abelian category and all small colimits exist in $R-mod$ (or more generally $R-mod$ is a Grothendieck category).
\end{proof}
\begin{rmk}
\label{ext groups can be computed in the functor category}
Theorem \ref{category representations vs modules} allows us to work in $(R-mod)^{\mathcal{C}}$ instead of $R\mathcal{C}-mod$.  In  $(R-mod)^{\mathcal{C}}$ things are more readily comparable to categorical language. For example, since $\Phi$ is an exact left adjoint it preserves projectives and since it is an exact functor, it preserves resolutions. So to compute Ext groups of objects in the image of $\Phi$ in $R\mathcal{C}-mod$, we can use their projective resolutions in $(R-mod)^{\mathcal{C}}$. 
\end{rmk}
\begin{nt}
    Abusing notation, we will not write $\Phi$ when we are working with the image of objects of $(R-mod)^{\mathcal{C}}$ via $\Phi$ in $R\mathcal{C}-mod$. For example, later if $P_v$ (or $S_v$) is an object of $(R-mod)^{\mathcal{C}}$, we will use $P_v$ (or $S_v$) for $\Phi(P_v)$ (or $\Phi(S_v))$ as well. 
\end{nt}

To discuss Koszulity, we require a grading on $R\mathcal{C}$.  Such a grading can naturally be obtained from a grading  on the category itself i.e.\ a functor $l:\mathcal{C}\to \mathbb{N}$ where $\mathbb{N}$ is viewed as a category with one object. Precisely, from the grading $l$ on the category, we obtain a set of morphisms:
 \[
\mathcal{C}_i:=l^{-1}(i).
 \]
 and by setting 
 $R\mathcal{C}_i=\bigoplus_{\mathcal{C}_i}R$
 we get the following grading on the category algebra
\[
R\mathcal{C} = \bigoplus_i R\mathcal C_i.
\]
Category algebras come equipped with some nice classes of modules. For example, projective $R\mathcal{C}$-modules can be obtained as follows.
\begin{df}
     For each object $v\in \mathcal{C}$, the associated \newterm{linearized representable functor} is the functor
     \begin{align*}
       P_v:\mathcal{C} & \to R-mod \\
       w & \mapsto \bigoplus _{\Hom_{\mathcal{C}}(v,w)} R
     \end{align*} i.e. $P_v(w)$ is the free $R$-module with basis $\Hom_{\mathcal{C}}(v,w)$. 
 \end{df}
Note that these are naturally objects in $(R-mod)^{\mathcal{C}}$ but our notational abuse will view $P_v$ as an honest module in $R\mathcal{C}-mod$ as well.
 \begin{prop}[\protect{\cite[Proposition 4.4, p. 11]{p}}]
 \label{Yoneda}
 Let $v$ be an object of $\mathcal{C}$ and $M:\mathcal{C}\to R-mod$ be a representation of $\mathcal{C}$. Then, $\Hom_{R\mathcal{C}}(P_v,M)\cong M(v)$. 
 \end{prop}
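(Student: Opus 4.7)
The plan is to reduce the statement to a linearized version of the ordinary Yoneda lemma in the functor category $(R\text{-mod})^{\mathcal{C}}$. By Theorem~\ref{category representations vs modules}, the functor $\Phi$ is fully faithful, so the natural map
\[
\Hom_{(R\text{-mod})^{\mathcal{C}}}(P_v, M) \;\longrightarrow\; \Hom_{R\mathcal{C}}(\Phi(P_v), \Phi(M)) \;=\; \Hom_{R\mathcal{C}}(P_v, M)
\]
is a bijection (under our standing notational convention). Thus it suffices to exhibit a natural $R$-linear isomorphism $\Hom_{(R\text{-mod})^{\mathcal{C}}}(P_v, M) \cong M(v)$.

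For this, I would construct the two inverse maps explicitly. In one direction, send a natural transformation $\eta : P_v \Rightarrow M$ to $\eta_v(\mathrm{id}_v) \in M(v)$, where $\mathrm{id}_v$ is regarded as an element of the basis of $P_v(v) = \bigoplus_{\Hom_{\mathcal{C}}(v,v)} R$. In the other direction, given $m \in M(v)$, define a natural transformation $\eta^{m}$ whose component at each object $w$ is the unique $R$-linear map $\eta^{m}_w : P_v(w) \to M(w)$ sending the basis element $f \in \Hom_{\mathcal{C}}(v,w)$ to $M(f)(m)$.

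I would then verify two routine points. First, naturality of $\eta^{m}$: given $g : w \to w'$ in $\mathcal{C}$, both $M(g)\circ \eta^{m}_w$ and $\eta^{m}_{w'}\circ P_v(g)$ send a basis element $f$ to $M(g \circ f)(m)$, using functoriality of $M$ and the definition $P_v(g)(f) = g \circ f$. Second, that the two assignments are mutually inverse: starting from $m$ and evaluating at $\mathrm{id}_v$ recovers $M(\mathrm{id}_v)(m) = m$, while starting from $\eta$ and reconstructing at $f \in \Hom_{\mathcal{C}}(v,w)$ gives $M(f)(\eta_v(\mathrm{id}_v)) = \eta_w(P_v(f)(\mathrm{id}_v)) = \eta_w(f)$ by naturality of $\eta$ applied to $f$.

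There is no genuine obstacle here; the argument is the standard Yoneda embedding adapted to the $R$-linearized setting, and the only care needed is to correctly pass between the abelian category of $R\mathcal{C}$-modules and the functor category via $\Phi$. The main thing to keep track of is the convention for path composition (the paper composes paths left to right but morphisms right to left), which determines whether $P_v$ is a left or right module, but it does not affect the content of the bijection.
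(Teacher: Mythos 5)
Your proof is correct. The paper itself does not prove this statement --- it is quoted from the cited reference --- and your argument is exactly the standard linearized Yoneda lemma that the reference supplies: the bijection $\eta \mapsto \eta_v(\mathrm{id}_v)$ with inverse $m \mapsto \eta^m$, $\eta^m_w(f) = M(f)(m)$, together with the routine naturality and mutual-inverse checks, all of which you carry out correctly. Your reduction from $\Hom_{R\mathcal{C}}$ to $\Hom_{(R\text{-mod})^{\mathcal{C}}}$ via full faithfulness of $\Phi$ is legitimate here because $M$ is given as a representation, i.e.\ the target lies in the image of $\Phi$; note that the paper's later use of the result (in the corollary on projectivity of $P_v$) instead invokes the adjunction $\Hom_{R\mathcal{C}}(\Phi(P_v), -) \cong \Hom_{(R\text{-mod})^{\mathcal{C}}}(P_v, \Psi(-))$ together with $\Psi\circ\Phi \cong \mathrm{Id}$, which gives the same conclusion and extends it to arbitrary $R\mathcal{C}$-modules in the second slot. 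Either route is fine for the statement as written.
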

As a quick corollary of this proposition we get the following:
\begin{cor}
 Let $v$ be an object of $\mathcal{C}$. Then, $P_v$ is projective in both $(R-mod)^{\mathcal{C}}$ and $R\mathcal{C}-mod$
\end{cor}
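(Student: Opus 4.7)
The plan is to reduce projectivity in both categories to exactness of a very simple evaluation-type functor by invoking Proposition~\ref{Yoneda}. Recall that $P_v$ is projective in an abelian category $\mathcal{A}$ iff the functor $\Hom_{\mathcal{A}}(P_v,-)$ is exact. By Proposition~\ref{Yoneda}, this $\Hom$ functor is naturally isomorphic to the functor $M\mapsto M(v)$, so it suffices to check that evaluation at $v$ is exact in each of the two ambient categories.

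For the functor category $(R\text{-}mod)^{\mathcal{C}}$ this is essentially immediate: a sequence of natural transformations in $(R\text{-}mod)^{\mathcal{C}}$ is exact precisely when it is exact after evaluation at every object $w\in\Ob(\mathcal{C})$, so in particular after evaluation at $v$. Hence $M\mapsto M(v)$ is an exact functor, and $P_v$ is projective as an object of $(R\text{-}mod)^{\mathcal{C}}$.

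For the module category $R\mathcal{C}\text{-}mod$, I would use Theorem~\ref{category representations vs modules}. Explicitly, for any $R\mathcal{C}$-module $N$ we have $\Psi(N)(v)=\Id_v\cdot N$, and multiplication by the idempotent $\Id_v$ is an exact endofunctor of $R\mathcal{C}\text{-}mod$ (it is a direct summand functor, projection onto the image of the idempotent). Composing with the equivalence from $\Phi$, the evaluation functor $M\mapsto M(v)$ on the essential image of $\Phi$ is exact, which yields projectivity of $\Phi(P_v)=P_v$ in $R\mathcal{C}\text{-}mod$ via the Yoneda identification.

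Alternatively, and perhaps more cleanly, one can argue purely formally: by Theorem~\ref{category representations vs modules}, $\Phi$ is a left adjoint whose right adjoint $\Psi$ is exact, so $\Phi$ preserves projective objects. Combined with the first paragraph showing $P_v$ is projective in $(R\text{-}mod)^{\mathcal{C}}$, this immediately gives projectivity of $\Phi(P_v)=P_v$ in $R\mathcal{C}\text{-}mod$. There is no real obstacle here; the only thing to keep straight is the notational abuse identifying $P_v$ with its image under $\Phi$, which is already flagged in the excerpt.
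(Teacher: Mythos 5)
Your proposal is correct and matches the paper's argument: the paper likewise uses pointwise exactness in the functor category together with Proposition~\ref{Yoneda} to get projectivity of $P_v$ in $(R\text{-}mod)^{\mathcal{C}}$, and then observes that $\Hom_{R\mathcal{C}}(\Phi(P_v),-)=\Hom_{(R\text{-}mod)^{\mathcal{C}}}(P_v,-)\circ\Psi$ with $\Psi$ exact, which is exactly your ``alternative'' adjunction argument. Your first argument via the idempotent $\Id_v$ is an equivalent unwinding of the same composition (just be sure to quantify over all $N$ in $R\mathcal{C}\text{-}mod$, not only the essential image of $\Phi$), so there is nothing substantively different here.
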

\begin{proof} 
    A sequence of functors $F_1 \to F_2 \to F_3$ is exact if and only if it is exact at each object $F_1(v) \to F_2(v) \to F_3(v)$ since limits and colimits are computed point-wise in functor categories. By Proposition \ref{Yoneda}, $\Hom_{(R-mod)^\mathcal{C}}(P_v,M)\cong M(v)$ and hence $\Hom_{(R-mod)^\mathcal{C}}(P_v,- )$ is exact. Therefore $P_v$ is projective. Now by Theorem \ref{category representations vs modules} $\Hom_{R\mathcal{C}}(\Phi(P_v), - ) = \Hom_{(R-mod)^\mathcal{C}}(P_v,- ) \circ \Psi$ and $\Psi$ is exact.  Hence  $\Phi(P_v)$ is also projective.
\end{proof}
We are also interested in simple representations. In general, they are a bit complicated. However assuming that $R$ is a field, we have the following class of simples:
 \begin{df}
        Let $k$ be a field. For each object $v\in \mathcal{C}$, the associated \newterm{simple functor} associated to $v$ is the functor
     \begin{align*}
       S_v:\mathcal{C} & \to k-mod \\
       w & \mapsto \begin{cases} k & \text{ if } w\cong v \\
       0 &  \text{ if } w \ncong v
       \end{cases}
     \end{align*} 
     On morphisms $S_v$ sends any isomorphism between $v$ and another object to the identity and it sends all the other morphisms to $0$. 
 \end{df}
The following proposition shows that $S_v$'s are indeed simple objects in both $(k-mod)^{\mathcal{C}}$ and $R\mathcal{C}-mod$:
\begin{prop}
 Let $v$ be an object of $\mathcal{C}$. Then, $S_v$ is simple in both $(k-mod)^{\mathcal{C}}$ and $R\mathcal{C}-mod$. 
\end{prop}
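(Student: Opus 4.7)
The plan is to work first in the functor category $(k\text{-mod})^{\mathcal{C}}$, where the simplicity is visible from the pointwise description, and then transfer the result to $k\mathcal{C}\text{-mod}$ via the functor $\Phi$ from Theorem~\ref{category representations vs modules}.

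First I would show $S_v$ is simple in $(k\text{-mod})^{\mathcal{C}}$. Let $T \subseteq S_v$ be a subfunctor. Since subobjects in a functor category are computed pointwise, at each object $w$ we have $T(w) \subseteq S_v(w)$, which is $0$ if $w \ncong v$ and either $0$ or $k$ otherwise. It remains to rule out mixed behavior on the isomorphism class of $v$. If $w_1, w_2 \cong v$ via some isomorphism $\phi: w_1 \to w_2$, then $S_v(\phi)$ is the identity on $k$, and since $T$ is a subfunctor, $T(\phi)$ is its restriction. Hence $T(w_1) = k$ forces $T(w_2) = k$ (and similarly for $0$), so $T = 0$ or $T = S_v$.

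Next I would transfer to $k\mathcal{C}\text{-mod}$. Recall $\Phi(S_v) = \bigoplus_{w \in \Ob(\mathcal{C})} S_v(w) = \bigoplus_{w \cong v} k$. Let $N \subseteq \Phi(S_v)$ be a nonzero $k\mathcal{C}$-submodule. Applying $\Psi$ gives the subfunctor $\Psi(N) \subseteq \Psi(\Phi(S_v)) = S_v$, where $\Psi(N)(w) = \mathrm{Id}_w \cdot N$ (here I use the explicit descriptions of $\Phi$ and $\Psi$ recalled in the proof of Theorem~\ref{category representations vs modules}). By the previous paragraph, $\Psi(N)$ is $0$ or $S_v$. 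The case $\Psi(N) = 0$ is impossible: any element $x \in \Phi(S_v)$ has only finitely many nonzero components and thus equals $\sum_w \mathrm{Id}_w \cdot x$, so $\mathrm{Id}_w \cdot N = 0$ for every $w$ would force $N = 0$. Therefore $\Psi(N) = S_v$, meaning $\mathrm{Id}_w \cdot N = S_v(w)$ for all $w$; since $\mathrm{Id}_w \cdot N \subseteq N$ by the submodule property, every summand $S_v(w)$ of $\Phi(S_v)$ lies in $N$, giving $N = \Phi(S_v)$.

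The main obstacle is the case when $\mathcal{C}$ has infinitely many objects, where $\Phi$ is merely fully faithful rather than an equivalence and submodules of $\Phi(S_v)$ need not \emph{a priori} arise from subfunctors. The resolution is the finiteness of support of elements of $\Phi(S_v)$ together with the identity $x = \sum_w \mathrm{Id}_w \cdot x$, which bridges the gap between the module-theoretic and functor-theoretic subobject lattices in exactly this setting.
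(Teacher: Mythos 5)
Your proof is correct and follows essentially the same route as the paper: simplicity in the functor category is checked pointwise using the isomorphisms in the class of $v$, and the module-category case is reduced to it via the idempotents $\mathrm{Id}_w$ and the finite-support identity $x=\sum_w \mathrm{Id}_w\cdot x$. The paper phrases the second half as a direct element chase (rescale $\mathrm{Id}_x\cdot m$, then act by isomorphisms and sum) rather than passing through $\Psi$, but the underlying mechanism is identical, and your extra care with the infinitely-many-objects case is a welcome elaboration of what the paper leaves implicit.
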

\begin{proof}
First $S_v$ is a simple object in the category of functors since it can not have any non-trivial sub-functor. 
To check $S_v$ is also simple in $k\mathcal{C}-mod$, suppose $M \subseteq S_v$ is a submodule with $0 \neq m \in M$.  Then, $0 \neq Id_x \cdot m \in M$ for some $x \cong v$.  Now, by rescaling $Id_x \cdot m$ we get all of $Id_x \cdot S_v$.  Then,  acting by isomorphisms and summing gives all elements of $S_v$.
\end{proof}
\begin{ex}
    Let $\mathcal{C}$ be a category with $n$ objects $v_1,...,v_n$ where between any two objects there exists a unique morphism (See Example \ref{matrix algebras are category algebras}). Then, $S_{v_i}$ is isomorphic to the $M_{n\times n}(k)$ simple module which as a set has all the $n\times n$ matrices which only have non-zero entries in their $i$-th row and zero everywhere else. 
\end{ex}
 Since our notions are defined categorically, we have the following.
\begin{prop}
\label{equivalence preserves simples}
    Let         
    \begin{center}
	       \begin{tikzcd}
                    \mathcal{C}\arrow[bend left]{r}{F}
		          & \mathcal{D}\arrow[bend left]{l}{G}
	       \end{tikzcd}     
        \end{center} 
    be an equivalence of categories. This induces an equivalence 
        \begin{center}
	       \begin{tikzcd}
                  (R-mod)^{\mathcal{C}} \arrow[bend left]{r}{\circ G}
		          &  (R-mod)^{\mathcal{D}}\arrow[bend left]{l}{\circ F}
	       \end{tikzcd}     
                 \end{center} 
              which satisfies $S_v \circ G \cong S_{F(v)}$ and $P_v \circ G = P_{F(v)}$.

\end{prop}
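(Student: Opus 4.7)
The plan is to first establish that the precomposition functors $(-\circ G)$ and $(-\circ F)$ really are quasi-inverse, and then to verify the two asserted identifications separately. From the given equivalence, fix natural isomorphisms $\alpha: G\circ F \cong \mathrm{Id}_{\mathcal{C}}$ and $\beta: F\circ G \cong \mathrm{Id}_{\mathcal{D}}$. Whiskering $\alpha$ and $\beta$ with any functor $H$ yields natural isomorphisms $H \cong H\circ G \circ F$ and $H \cong H\circ F\circ G$, which show that $(-\circ G)\circ(-\circ F) \cong \mathrm{Id}$ and $(-\circ F)\circ(-\circ G)\cong \mathrm{Id}$. Hence the two precomposition functors form an equivalence of functor categories.

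For $S_v\circ G \cong S_{F(v)}$, the plan is to define the natural transformation component-wise as the identity using the observation that both sides agree on objects on the nose. Indeed, for $w \in \mathcal{D}$, $(S_v\circ G)(w) = S_v(G(w))$ equals $k$ precisely when $G(w)\cong v$. Applying $F$ to such an isomorphism and composing with $\beta_w$ gives $w \cong F(v)$; conversely, applying $G$ to any isomorphism $w \cong F(v)$ and using $\alpha_v$ yields $G(w) \cong v$. Thus $(S_v\circ G)(w)$ and $S_{F(v)}(w)$ are both $k$ or both $0$, and we take the identity map of $k$-modules. Naturality on a morphism $f\colon w_1 \to w_2$ in $\mathcal{D}$ reduces to showing that $S_v(G(f)) = S_{F(v)}(f)$. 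If either $w_i \not\cong F(v)$, both sides live in a diagram with a $0$ module and trivially commute; otherwise both $w_i \cong F(v)$ and both $G(w_i)\cong v$, and since equivalences of categories both preserve and reflect isomorphisms, $f$ is an isomorphism if and only if $G(f)$ is, so $S_v(G(f))$ and $S_{F(v)}(f)$ are simultaneously the identity of $k$ or simultaneously zero.

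For $P_v\circ G \cong P_{F(v)}$, the plan is to use full faithfulness of the equivalence to produce a natural bijection of hom-sets and then linearize. Concretely, for each $w\in\mathcal{D}$, the map
\[
\Hom_{\mathcal{C}}(v, G(w)) \longrightarrow \Hom_{\mathcal{D}}(F(v), w), \qquad g \longmapsto \beta_w \circ F(g),
\]
is a bijection since $F$ is fully faithful and $\beta_w$ is invertible. Extending $R$-linearly yields the desired isomorphism $(P_v\circ G)(w) \cong P_{F(v)}(w)$. Naturality in $w$ follows by diagram chase from the naturality of $\beta$. I expect the main obstacle to be in the simples case rather than the projectives case: because the definition of $S_v$ on morphisms depends on the non-functorial-looking condition of being ``an isomorphism between objects isomorphic to $v$,'' the verification requires carefully invoking the fact that equivalences preserve and reflect isomorphisms to transport this condition between $\mathcal{C}$ and $\mathcal{D}$. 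Once that is in hand, everything else reduces to routine diagrammatic bookkeeping.
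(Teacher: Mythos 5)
Your proposal is correct and supplies exactly the routine verification that the paper compresses into ``This is immediate from the definitions'': quasi-inverse precomposition functors via whiskering, the object-level coincidence plus preservation/reflection of isomorphisms for $S_v\circ G\cong S_{F(v)}$, and the fully-faithful hom-set bijection $g\mapsto\beta_w\circ F(g)$ for the representables. Note only that you (rightly) prove $P_v\circ G\cong P_{F(v)}$ rather than the literal equality asserted in the statement, since $\Hom_{\mathcal{C}}(v,G(w))$ and $\Hom_{\mathcal{D}}(F(v),w)$ are in general distinct sets.
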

\begin{proof}
This is immediate from the definitions.
\end{proof}

\begin{prop}
\label{equivalences preserve ext groups of simples}
     Let         
    \begin{center}
	       \begin{tikzcd}
                    \mathcal{C}\arrow[bend left]{r}{F}
		          & \mathcal{D}\arrow[bend left]{l}{G}
	       \end{tikzcd}     
        \end{center} 
    be an equivalence of graded categories. Then, 
     $$\Ext^i(S_w,S_v)_n=\Ext^i(S_{G(w)},S_{G(v)})_n$$   
\end{prop}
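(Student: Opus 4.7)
The plan is to leverage Proposition~\ref{equivalence preserves simples}, which provides a (graded) equivalence of functor categories
\[
-\circ F : (k-mod)^{\mathcal{D}} \longrightarrow (k-mod)^{\mathcal{C}}, \qquad -\circ G : (k-mod)^{\mathcal{C}} \longrightarrow (k-mod)^{\mathcal{D}},
\]
sending $S_w \mapsto S_{G(w)}$ on objects. From here the plan is routine: any equivalence of abelian categories preserves $\Ext$ groups, and Remark~\ref{ext groups can be computed in the functor category} identifies Ext in the functor category with Ext for the corresponding category algebra modules.

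First I would record the general principle that if $H:\mathcal{A}\to\mathcal{B}$ is an equivalence of abelian categories, then $H$ sends projectives to projectives (since it preserves epimorphisms and the lifting property defining projectivity), hence carries a projective resolution of $X$ to a projective resolution of $H(X)$; the induced natural isomorphism $\Hom_{\mathcal{A}}(X,Y)\cong \Hom_{\mathcal{B}}(H(X),H(Y))$ then extends to isomorphisms on cohomology of the Hom-complex, giving $\Ext^i_{\mathcal{A}}(X,Y)\cong \Ext^i_{\mathcal{B}}(H(X),H(Y))$. Applying this to $H = -\circ F$ yields
\[
\Ext^i_{(k-mod)^{\mathcal{D}}}(S_w,S_v) \cong \Ext^i_{(k-mod)^{\mathcal{C}}}(S_w\circ F, S_v\circ F) \cong \Ext^i_{(k-mod)^{\mathcal{C}}}(S_{G(w)},S_{G(v)}).
\]
By Remark~\ref{ext groups can be computed in the functor category}, these Ext groups agree with the Ext groups of the same simples in $k\mathcal{D}-mod$ and $k\mathcal{C}-mod$ respectively, giving the ungraded version of the statement.

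To recover the internal grading, I would observe that because $F$ and $G$ are functors of graded categories, they send degree-$n$ morphisms to degree-$n$ morphisms. Hence pre-composition with $F$ (resp.\ $G$) commutes with the degree shift on representable functors $P_v$, and a graded projective resolution of $S_v$ by direct sums of shifts of $P_{v'}$'s is carried to a graded projective resolution of $S_{G(v)}$ by direct sums of shifts of $P_{G(v')}$'s. Taking the degree-$n$ piece of the Hom-complex throughout then yields the refined isomorphism $\Ext^i(S_w,S_v)_n \cong \Ext^i(S_{G(w)},S_{G(v)})_n$.

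The only real obstacle is the bookkeeping for the grading: one must verify that $-\circ F$ is genuinely a functor of \emph{graded} functor categories (i.e.\ commutes with the shift $(n)$). Once that compatibility is set up, the argument is formal. No serious computation is required.
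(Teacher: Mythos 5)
Your proposal is correct and follows essentially the same route as the paper's own proof: both pass to the functor categories via Remark~\ref{ext groups can be computed in the functor category}, use the induced equivalence $-\circ F$, $-\circ G$ together with Proposition~\ref{equivalence preserves simples} to match simples and projectives, and conclude by exactness of equivalences that projective resolutions (and hence Ext groups) are preserved. Your extra care with the compatibility of precomposition with the internal grading is a reasonable elaboration of a point the paper's proof leaves implicit.
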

\begin{proof}
Since our simples are objects of the functor category, by Theorem \ref{category representations vs modules} their Ext groups in the module category can be computed using their projective resolutions in the functor category (see Remark \ref{ext groups can be computed in the functor category}). Now since $\mathcal{C}$ and $\mathcal{D}$ are equivalent, $k-mod^{\mathcal{C}}$ and $k-mod^{\mathcal{D}}$ are also equivalent. Via this equivalence our simple objects map to simple objects and our projective objects map to projective objects (see Proposition \ref{equivalence preserves simples}). Moreover since equivalences are exact functors, they preserve resolutions. Hence
    $$\Ext^i(S_w,S_v)_n=\Ext^i(S_{G(w)},S_{G(v)})_n.$$    
\end{proof}
Koszulity requires that $(k\mathcal{C})_0$ is semisimple. 
 Hence, we assume our categories have no non-trivial automorphisms. Otherwise, the representation theory of all automorphism groups seems to enter the story non-trivially. This has the added benefit of allowing us to interpret the normalized standard resolution as a cellular resolution associated to topological spaces defined later on. We codify this in the following definition and proposition.
\begin{df}
    Let $\mathcal{C}$ be an $\mathbb{N}$-graded category. We say that $\mathcal{C}$  is \newterm{indiscretely based} if each connected component of $\mathcal{C}_0$ is an indiscrete category.
\end{df}

\begin{prop}
\label{kC_0 is semisimple}
    Let $\mathcal{C}$ be an indiscretely based category. Then, 
    $$(k\mathcal{C})_0=\bigoplus_{v\in \Ob(\mathcal{C})}S_v$$
    In particular $(k\mathcal{C})_0$ is semisimple. 
\end{prop}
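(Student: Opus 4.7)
The plan is to reduce the statement to a direct sum of matrix-algebra decompositions, one for each connected component of $\mathcal{C}_0$. First I would observe that since $\mathcal{C}$ is $\mathbb N$-graded, every isomorphism has degree zero (the degrees of $f$ and $f^{-1}$ are non-negative integers summing to zero), so two objects of $\mathcal{C}$ are isomorphic if and only if they lie in the same connected component of $\mathcal{C}_0$, which by indiscreteness consists of a single isomorphism class. Consequently each simple functor $S_v$ sends every morphism of positive degree to zero, so its left $k\mathcal{C}$-action factors through the subalgebra $k\mathcal{C}_0 = (k\mathcal{C})_0$.

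Next I would note that $k\mathcal{C}_{\geq 1}$ is a two-sided graded ideal, so $(k\mathcal{C})_0 \cong k\mathcal{C}/k\mathcal{C}_{\geq 1}$ as left $k\mathcal{C}$-modules, and hence the $k\mathcal{C}$-action on the left-hand side also factors through $k\mathcal{C}_0$. The proposition thereby reduces to exhibiting an isomorphism of left $k\mathcal{C}_0$-modules
\[
k\mathcal{C}_0 \;\cong\; \bigoplus_{v\in \Ob(\mathcal{C})} S_v.
\]
I would then decompose $\mathcal{C}_0 = \bigsqcup_\alpha \mathcal{I}_\alpha$ into its indiscrete connected components, each with some number $n_\alpha$ of objects, which by Example~\ref{matrix algebras are category algebras} yields a ring decomposition $k\mathcal{C}_0 = \bigoplus_\alpha k\mathcal{I}_\alpha$ with $k\mathcal{I}_\alpha \cong M_{n_\alpha\times n_\alpha}(k)$.

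The remaining step matches the two sides component by component. For $v\in \Ob(\mathcal{I}_\alpha)$, the functor $S_v$ vanishes on all objects outside $\mathcal{I}_\alpha$, and its restriction to $\mathcal{I}_\alpha$ is (up to isomorphism) the unique simple $k\mathcal{I}_\alpha$-module. On the other hand, the classical column decomposition of $M_n(k)$ into $n$ copies of its simple left module, realized here via the idempotents $\mathrm{Id}_v$, gives $k\mathcal{I}_\alpha \cong \bigoplus_{v\in \Ob(\mathcal{I}_\alpha)} S_v$. Summing over $\alpha$ yields the claim, and semisimplicity follows immediately since the right-hand side is by construction a direct sum of simples.

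The hardest bookkeeping point, and the place I would spend the most care, is verifying that the submodules $k\mathcal{I}_\alpha \cdot \mathrm{Id}_v$ really do reproduce the $k\mathcal{C}_0$-action on $S_v$ rather than on an isomorphic twist. Once the matrix conventions of Example~\ref{matrix algebras are category algebras} are pinned down, this amounts to tracing the idempotent $\mathrm{Id}_v$ through the isomorphism $k\mathcal{I}_\alpha \cong M_{n_\alpha\times n_\alpha}(k)$ and comparing the induced $k\mathcal{I}_\alpha$-action with the one defined by the functor $S_v$; but it is the one step where the matching of simples could silently go wrong, so I would handle it explicitly.
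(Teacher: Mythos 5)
Your argument is correct, but it takes a genuinely different route from the paper's. The paper's proof is a short explicit bijection on basis elements: a degree-zero morphism $f:v\to w$ (necessarily an isomorphism, for the reason you give) is sent to $1_k\in S_v\cdot \mathrm{Id}_w$, and the inverse map is well defined precisely because an indiscretely based category has a \emph{unique} isomorphism between any two isomorphic objects. You instead reduce to classical semisimple ring theory: decompose $\mathcal{C}_0$ into indiscrete components, identify each block with a matrix algebra via Example~\ref{matrix algebras are category algebras}, and invoke the row/column decomposition into simples. What your route buys is a more systematic treatment of the module structure, which the paper leaves implicit: you explicitly identify $(k\mathcal{C})_0$ with $k\mathcal{C}/k\mathcal{C}_{\geq 1}$ as a left module and observe that both sides factor through $k\mathcal{C}_0$, whereas the paper only writes down the bijection of basis vectors. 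What it costs is length and two small points of care: (i) a connected component of $\mathcal{C}_0$ may have infinitely many objects, in which case $k\mathcal{I}_\alpha$ is the non-unital algebra of finite-support matrices rather than the $M_{n\times n}(k)$ of Example~\ref{matrix algebras are category algebras}; the decomposition into simple row modules survives, but the citation needs adjusting. (ii) The row-versus-column bookkeeping you flag at the end is in fact harmless: $S_v$ depends only on the isomorphism class of $v$ (so $S_v=S_w$ whenever $v\cong w$), hence within one block all the summands you must produce are literally the same simple module, and the matching is automatic once you know the block is semisimple with a unique simple module up to isomorphism.
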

\begin{proof}
    Define $\phi:(k\mathcal{C})_0\to \bigoplus_{v\in \Ob(\mathcal{C})}S_v$ to be the map which sends an isomorphism $f:v\to w$ to $1_k\in S_v.Id_w$. Note that since we do not have non-trivial automorphisms, if two objects are isomorphic, there exists a unique isomorphism between them. This allows us to define the inverse of $\phi$ as a map $\psi:\bigoplus_{v\in \Ob(\mathcal{C})}S_v\to (k\mathcal{C})_0$ which sends $1_k\in S_v.Id_w$ (when $v\cong w$) to the unique isomorphism from $v$ to $w$. 
\end{proof}
\begin{ex}
    Graded HPAs are indiscretely based and $k\mathcal C_0 = k ^{Q_0}$.  As a special case, graded posets can be realized as graded HPAs and so they are indiscretely based. 
\end{ex}
\begin{ex}
We continue \Cref{matrix algebras are category algebras} where $\mathcal{C}$ is a category with $n$ objects $v_1,...,v_n$ and there exists a unique morphism between any two objects. 
 In this case, $M_{n\times n}(k) \cong k\mathcal C$ and Proposition \ref{kC_0 is semisimple} provides the row decomposition of $M_{n\times n}(k)$. 
\end{ex}
\begin{cor}
\label{vanishing of ext groups based on simples}
  Let $\mathcal{C}$ be an indiscretely based category. Then, $\Ext^i((k\mathcal{C})_0,(k\mathcal{C})_0)_n=0$ if and only if $\Ext^i(S_w,S_v)_n=0$ for all $v,w\in \Ob(\mathcal{C})$. 
\end{cor}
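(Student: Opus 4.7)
The plan is to leverage the semisimple decomposition $(k\mathcal{C})_0 = \bigoplus_{v \in \Ob(\mathcal{C})} S_v$ provided by \Cref{kC_0 is semisimple} together with additivity properties of $\Ext$. Both directions flow from this decomposition, but the two variables behave asymmetrically, so they will need slightly different treatments.

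For the ``only if'' direction, I would observe that for any pair of objects $v, w$, the simples $S_v$ and $S_w$ sit as direct summands of $(k\mathcal{C})_0$ via the canonical inclusion and projection extracted from \Cref{kC_0 is semisimple}. Applying $\Ext^i(-, -)_n$ then exhibits $\Ext^i(S_w, S_v)_n$ as a retract of $\Ext^i((k\mathcal{C})_0, (k\mathcal{C})_0)_n$, so vanishing of the latter forces vanishing of the former for every $v$ and $w$.

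For the ``if'' direction, I would first use that $\Ext^i$ converts direct sums in the first variable into products, so
\[
\Ext^i((k\mathcal{C})_0, (k\mathcal{C})_0)_n \;=\; \prod_{v} \Ext^i(S_v, (k\mathcal{C})_0)_n,
\]
reducing the problem to showing $\Ext^i(S_v, \bigoplus_w S_w)_n = 0$ whenever each $\Ext^i(S_v, S_w)_n = 0$. Moving into the functor category $(k\text{-mod})^{\mathcal{C}}$ via \Cref{ext groups can be computed in the functor category}, I would choose a graded projective resolution $P_\bullet^{(v)} \to S_v$ whose terms are direct sums of (shifted) linearized representables. By Yoneda (\Cref{Yoneda}), $\Hom(P_u, \bigoplus_w S_w) = (\bigoplus_w S_w)(u) = \bigoplus_w S_w(u) = \bigoplus_w \Hom(P_u, S_w)$, so the relevant Hom complex splits term by term as a direct sum indexed by $w$, and its cohomology inherits that decomposition.

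The main obstacle is precisely this second variable, since $\Ext$ does not commute with arbitrary direct sums there in general; the subtlety is resolved by the combination of two facts: the resolution is built from linearized representables $P_u$ (so that Yoneda produces pointwise splitting of the Hom functor), and the indiscretely based hypothesis ensures each $S_w(u)$ is nonzero only when $w$ is isomorphic to $u$, keeping the decomposition well-behaved. The grading plays no essential role beyond tracking the degree $n$ piece throughout.
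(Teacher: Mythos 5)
Your proof is correct and rests on the same foundation as the paper's own argument: the semisimple decomposition $(k\mathcal{C})_0=\bigoplus_{v\in\Ob(\mathcal{C})}S_v$ from Proposition~\ref{kC_0 is semisimple} combined with additivity of $\Ext$. The paper simply asserts the bilinear decomposition $\Ext^i((k\mathcal{C})_0,(k\mathcal{C})_0)_n=\bigoplus_{v,w}\Ext^i(S_w,S_v)_n$ in one line, while you supply the justification it omits (retracts for one direction, Yoneda applied to the representable summands of the resolution for the other); since the statement concerns only vanishing, the fact that infinite direct sums of representables in the resolution turn some of your direct sums into products is immaterial, and your argument goes through.
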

\begin{proof}
  Note that by Proposition \ref{kC_0 is semisimple}, we have $(k\mathcal{C})_0$ is semisimple and
    $$(k\mathcal{C})_0=\bigoplus_{v\in \Ob(\mathcal{C})}S_v$$
    So, 
    $$\Ext^i((k\mathcal{C})_0,(k\mathcal{C})_0)_{-n}=\bigoplus_{v,w\in \Ob(\mathcal{C})}\Ext^i(S_w,S_v)_{-n}$$
    So $\Ext^i((k\mathcal{C})_0,(k\mathcal{C})_0)_n=0$ if and only if $\Ext^i(S_w,S_v)_n=0$ for all $v,w\in \Ob(\mathcal{C})$.
\end{proof}
The diagonal bimodule for $R\mathcal C$ has a projective resolution  defined as follows.
  Let $\mathcal{C}$ be a category. 
 Let $\widetilde{\mathcal{C}}(x_1,x_2)$ be the free $R$-module generated by $\Hom_{\mathcal{C}}(x_1,x_2)$ when $x_1\ne x_2\in \Ob(\mathcal{C})$ and $\widetilde{\mathcal{C}}(x,x)$ be the free $R$-module generated by $\Hom_{\mathcal{C}}(x,x)\setminus \{Id_x\}$ for $x\in \Mor(\mathcal{C})$. For each $n\geq 0$ set 
$$\widetilde{C}_n=\bigoplus_{(x_1,...,x_{n+1})}P_{x_{n+1}}\otimes_R \widetilde{\mathcal{C}}(x_{n+1},x_n)\otimes_R...\otimes_R \widetilde{\mathcal{C}}(x_1,x_2)\otimes_R P_{x_1}^{op}$$
 with differentials 
 $$d_n(f_0\otimes f_2\otimes...\otimes f_{n+1})=\Sigma_{i=0}^n(-1)^if_0\otimes...\otimes f_i\circ f_{i+1}\otimes ...\otimes f_{n+1}$$
 Then, $(\widetilde{C}_{\bullet},d_{\bullet})$ is a projective resolution of the \newterm{linearized Yoneda functor}
 $$y_{\mathcal{C}}:\mathcal{C}^{op}\times \mathcal{C}\to R-mod$$
 $$(x,y)\mapsto \bigoplus_{\Hom_{\mathcal{C}}(x,y)}R$$
 called the \newterm{normalized standard resolution} \cite[\S17]{MITCHELL19721}. Using our functor $\Phi$ from Theorem \ref{category representations vs modules}, the linearized Yoneda maps to $R\mathcal{C}$ as a $R\mathcal{C}$-bimodule. 
 Hence, the normalized standard resolution becomes a resolution of $R\mathcal{C}$ as a $R\mathcal{C}$-bimodule. 
 We do not distinguish the two and call this the  normalized standard resolution as well. Note, this resolution may have infinitely many terms (See Example \ref{graded projective resolution of k[x]}).

\subsection{Semi-simplicial sets}
Next, we would like to view the normalized standard resolution as a chain complex obtained from a semi-simplicial set. We recall the following background material from \cite{w}:
\begin{df}[\protect{\cite[Definition 8.1.9, p. 258, Exercise 8.1.6, p. 259]{w}}]
Denote by $\Delta_s$ the category whose objects are finite ordered sets $[n]=\{0<1<2<...<n\}$ and whose morphisms are nondecreasing injective maps. A \newterm{semi-simplicial set} is a functor $K:\Delta_s \to \Sets$.    Equivalently, a semi-simplicial set is a sequence of objects $K_0,K_1,...$ (of $\mathcal{C}$) together with face operators $\partial_i:K_n\to K_{n-1}$ ($i=0,...,n$) such that if $i<j$, then $\partial_i\partial_j=\partial_{j-1}\partial_i$. 
\end{df}
\begin{ex}[\protect{\cite[p. 258]{w}}]
\label{ex: abstract simplicial complexes}
    Let $\Delta$ be a (combinatorial) \newterm{ordered abstract simplicial complex}, i.e. a collection of nonempty subsets (faces) of a finite ordered set of vertices $V$ such that if $G\in \Delta$ and $F\subset G$ then $F\in \Delta$. Set $K_n$ to be the set of all faces of $\Delta$ which have $n+1$ elements. Then $K$ is a semi-simplicial set.
\end{ex}
Now, let $K$ be a semi-simplicial set. From $K$ we can obtain a topological space called the \newterm{geometric realization} defined as follows. 
\begin{df}[\protect{\cite[Geometric realization 8.1.6, p. 257]{w}}]
   For each $n\geq 0$ regard $K_n$ as a topological space with discrete topology and let $\Delta_n$ be the standard $n$-simplex.  The \newterm{geometric realization} is the quotient space $\coprod(K_n\times \Delta^n)/\sim$ where $(x,s)\in K_m\times \Delta^m$ and $(y,t)\in K_n\times \Delta^n$ are equivalent if there is a map $\alpha:[m]\to [n]$ in $\Delta_s$ such that $K(\alpha)(y)=x$ and $\alpha_*(s)=t$ and where $\alpha_*:\Delta^m\to \Delta^n$ is obtained from the linear extension of $\alpha$ on $\Delta^m$ to $\Delta^n$. That is 
$$(K(\alpha)(y),s)\sim (y,\alpha_*(t)).$$ 
\end{df}

\begin{ex}
Let $\mathcal{C}$ be a category. The \newterm{nerve}  $N\mathcal{C}$ of $\mathcal{C}$ is a (semi)-simplicial set with $(N\mathcal{C})_n=\{(f_1, ... , f_n):f_i\in \Mor(\mathcal{C}), \ \text{$f_1\circ ... \circ f_n\in \Mor(\mathcal{C})$ }\}$ with face operators 
    $$\partial_i:N(\mathcal{C})_n\to N(\mathcal{C})_{n-1}$$
    $$(f_1, ... , f_n)\mapsto (f_1, ..., (f_i\circ f_{i+1}), ...,f_n)$$
    for $i\in \{0,...,n\}$. The geometric realization of the nerve is called the \newterm{classifying space} of $\mathcal{C}$.
\end{ex}
\begin{df}
 Let $\mathcal{C}$ be a category and $p\in \Mor(\mathcal{C})$. We say that $p=f_0\circ  ... \circ f_{n+1}$ ($n\geq 0$) is a nontrivial factorization of $p$ if $f_i$ is not an isomorphism for all $0\leq i\leq n+1$. 
\end{df}
\begin{df}
\label{reduced nerve}
 Let $\mathcal{C}$ be an indiscretely based category. We define the \newterm{reduced nerve} of $\mathcal{C}$ to be the semi-simplicial set $\Bar{N}(\mathcal{C})_{\bullet}$ with 
 $$\Bar{N}(\mathcal{C})_n=\{(f_1, ... , f_n): (f_1,...,f_n)\in (N\mathcal{C})_n \ \text{and $f_i$ is not an isomorphism for all $i$ }\}/\sim$$
 where $(f_0, ... , f_{n+1})\sim (f'_0, ..., f'_{n+1})$ if for all $i$ there exists an isomorphism $h_i$ such that $f'_i=f_i\circ h_i$ and $f'_{i+1}=h_i^{-1}\circ f_{i+1}$ with face operators 
    $$\partial_i:\Bar{N}(\mathcal{C})_n\to \Bar{N}(\mathcal{C})_{n-1}$$
 We denote the geometric realization of $\Bar{N}(\mathcal{C})$ by $B\mathcal{C}$. 
\end{df}
The most important simplicial set for our purposes is defined as follows.
\begin{df}
\label{Factorization space}
    Let $\mathcal{C}$ be an indiscretely based category and $p\in \Mor(\mathcal{C})$. For each $n$ define 
    $$\mathcal{C}(p)_n=\{(f_0, ..., f_{n+1}):\text{$f_0\circ ...\circ f_{n+1}$ is a nontrivial factorization of $p$}\}/\sim
    $$
    where  $$(f_0, ... , f_{n+1})\sim (f'_0, ..., f'_{n+1})$$ if for all $i$ there exists an isomorphism $h_i$ such that $f'_i=f_i\circ h_i$ and $f'_{i+1}=h_i^{-1}\circ f_{i+1}$ with face operators 
    $$\partial_i:\mathcal{C}(p)_n\to \mathcal{C}(p)_{n-1}$$
    $$(f_0, ..., f_{n+1})\mapsto (f_0, ..., (f_i\circ f_{i+1}), ..., f_{n+1})$$
    for $i\in \{0,...,n\}$. Then, $\mathcal{C}(p)$ is a semi-simplicial set. We call the geometric realization of $\mathcal{C}(p)$ the \newterm{factorization space} of $p$ and we denote it by $B\mathcal{C}(p)$.
\end{df}
\begin{rmk}
\label{C and Cop}
  In Definition \ref{Factorization space} note that $\mathcal{C}(p)$ and $\mathcal{C}^{op}(p^{op})$ are isomorphic via the natural isomorphism 
    $$\phi_n:\mathcal{C}(p)_n\to \mathcal{C}^{op}(p^{op})_n$$
    $$(f_0, ..., f_{n+1})\mapsto (f_{n+1}^{op}, ..., f_0^{op})$$
    for each $n\geq 0$. This will later imply (by 
  Theorem \ref{Theorem}) that the Koszulity of $k\mathcal{C}$ and $k\mathcal{C}^{op}$ are equivalent.    
\end{rmk}

\begin{ex}
\label{hpa interval}
    Let $A$ be a homotopy path algebra and $\mathcal{C}$ be its quiver with relations regarded as a category. Let $K(e_{t(p)},p)$ denote the geometric realization of the path interval $K(e_{t(p)},p)$ for some path $p$. Then, $K(e_{t(p)},p)$ and $B\mathcal{C}^{op}(p^{op})$ are homeomorphic. Indeed, let $X$ be the semi-simplicial complex of $(e_{t(p)},p)$ obtained by setting $X_n$ to be the set of $n$-chains in $(e_{t(p)},p)$. For each $n$ define 
    $$\phi_n:X_n\to \mathcal{C}^{op}(p)_n$$
    $$q_1<q_2<...<q_{n+1}\mapsto ((q_1/e_{t(p)}), (q_2/q_1), ..., (q_{n+1}/q_n),(p/q_{n+1}))$$
    Then, $\phi_{\bullet}$ is an isomorphism of semi-simplicial sets. By Remark \ref{C and Cop} this implies that $K(e_{t(p)},p)$ and $B\mathcal{C}(p)$ are also homeomorphic. 
\end{ex}
\begin{rmk}
    In Example \ref{hpa interval}, the cancellative property of morphisms is necessary to define  $q_i/q_{i-1}$ and hence  $\phi$.
\end{rmk}
\begin{ex}
\label{ex: open intervals as BC(p)}
    Let $P$ be a poset and $a\leq_p b$ then 
    $$\phi:(e_a,p)\to (a,b)$$
    $$q\mapsto h(q)$$
    is an isomorphism of posets. So in Example \ref{hpa interval} in the special case when $\mathcal{C}$ is a poset $P$, the order complex of $(a,b)$ is homeomorphic to $BP(p)$. 
\end{ex}

\subsection{Topological description of Ext groups}

In this section we provide our topological computation of Ext groups.  We begin the setting of any indiscretely based category, then specialize to homotopy path algebras where the description is even more concrete topologically.

\subsubsection{General topological description}

The purpose of the following proposition is to provide a projective resolution of $R\mathcal{C}$ as an $R\mathcal{C}$-bimodule based on $B\mathcal{C}$ (see Definition \ref{reduced nerve}) using the normalized resolution of $R(\sk(\mathcal{C}))$. This generalizes \cite[Corollary 6.7]{dj}:
\begin{prop}
\label{projective resolution of diagonal}
    Let $\mathcal{C}$ be an indiscretely based category with grading $l:\mathcal C\to \mathbb N$. Define 
    $$C_k:=\bigoplus_{\eta_k\in \Cell_k(B\mathcal{C})}P_{h(\eta_k)}\boxtimes P_{t(\eta_k)}^{op}(-l(\eta_k))$$
with 
$$d_k=\bigoplus_{\eta_k\in \Cell_k(B\mathcal{C})}d_{\eta_k}:C_k\to C_{k-1}$$
where 
$$d_{\eta_k}=\Sigma_{i=0}^k(-1)^i\partial_{i,\eta_k}$$
and 
$$\partial_{i,\eta_k}(1\otimes [\eta_k]\otimes 1):=\begin{cases}
    f_1\otimes[(f_2, ..., f_k)]\otimes 1, & i=0\\
    1\otimes [(f_1, ..., (f_i\circ f_{i+1}), ..., f_k)]\otimes 1, & 0<i<k\\     1\otimes [(f_1, ..., f_{k-1})]\otimes f_k, & i=k  
\end{cases}$$
Then, $(C_{\bullet},d_{\bullet})$ is a projective resolution of $R\mathcal{C}$ as a graded $R\mathcal{C}$-bimodule. 
\end{prop}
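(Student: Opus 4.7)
The plan is to reduce the statement to the normalized standard resolution of \cite[\S17]{MITCHELL19721} by passing to the skeleton. By \Cref{equivalence preserves simples} (and a bimodule version of the same argument, since $\mathcal{C}\to\sk(\mathcal{C})$ is an equivalence of graded categories and the linearized Yoneda functor $y_{\mathcal{C}}$ is preserved under the induced equivalence on bimodule functor categories), it suffices to prove the statement for $\sk(\mathcal{C})$. The advantage is that in $\sk(\mathcal{C})$ the only isomorphisms are identities, because $\mathcal{C}$ is indiscretely based (so connected components of $\mathcal{C}_0$ are indiscrete, forcing no non-trivial automorphisms) and skeletal categories identify isomorphic objects. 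Consequently, the equivalence relation defining $\bar N(\sk(\mathcal{C}))$ collapses, and cells of $B\sk(\mathcal{C})$ are exactly honest composable chains $(f_1,\dots,f_k)$ of non-identity morphisms.

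Next, I would write out the normalized standard resolution $(\widetilde C_\bullet,d_\bullet)$ for $\sk(\mathcal{C})$ and reorganize its summands. For each $k\geq 0$,
\[
\widetilde C_k \;=\; \bigoplus_{(x_1,\dots,x_{k+1})} P_{x_{k+1}}\otimes_R \widetilde{\sk(\mathcal{C})}(x_{k+1},x_k)\otimes_R\cdots\otimes_R \widetilde{\sk(\mathcal{C})}(x_1,x_2)\otimes_R P_{x_1}^{op},
\]
and since $R$-bases of the middle tensor factors are precisely the composable chains $(f_1,\dots,f_k)$ of non-identity morphisms with $t(f_1)=x_1,\ h(f_k)=x_{k+1}$, this sum is naturally indexed by the $k$-cells $\eta_k$ of $B\sk(\mathcal{C})=B\mathcal{C}$. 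Each such summand, after absorbing the middle tensors which are rank-one free modules, becomes a copy of $P_{h(\eta_k)}\otimes_R P_{t(\eta_k)}^{op}$, i.e.\ $P_{h(\eta_k)}\boxtimes P_{t(\eta_k)}^{op}$ as an $R\mathcal{C}$-bimodule. The grading shift $(-l(\eta_k))$ comes from the fact that the chain $(f_1,\dots,f_k)$ represents a morphism of degree $l(f_1)+\cdots+l(f_k)=l(\eta_k)$, and the normalized standard resolution's differential introduces this degree contribution via the endpoint actions. This matches $C_k$ exactly.

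Finally, I would check that the differential $d_k=\bigoplus_{\eta_k}\sum_i(-1)^i\partial_{i,\eta_k}$ in the proposition is precisely the differential of the normalized standard resolution under the identification above. The three cases $i=0$, $0<i<k$, and $i=k$ correspond respectively to composing the leftmost (resp.\ middle, resp.\ rightmost) pair of morphisms: for the endpoints, composition with $f_1$ or $f_k$ is absorbed into the bimodule factors $P_{h(\eta_k)}$ and $P_{t(\eta_k)}^{op}$ (matching the $i=0$ and $i=k$ cases), while interior compositions $f_i\circ f_{i+1}$ either remain non-identity (giving a lower-dimensional cell) or become an identity. In the latter case, the term lands in the subcomplex of degenerate chains, which is exactly what the normalized (as opposed to unnormalized) standard resolution quotients out; this is what guarantees the resulting chain of non-identity morphisms is again a cell of $B\mathcal{C}$ and that the formula is well-defined.

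The main obstacle I expect is bookkeeping: verifying that the bimodule structure on $P_{h(\eta_k)}\boxtimes P_{t(\eta_k)}^{op}(-l(\eta_k))$ produced by collapsing the tensor factors of $\widetilde C_k$ really agrees with the natural bimodule structure used in the proposition, and that when an interior face map $f_i\circ f_{i+1}$ produces an identity the corresponding term vanishes in $C_{k-1}$ as required (so that $d_{k-1}\circ d_k=0$ continues to hold after normalization). Once these identifications are verified, projectivity of each $C_k$ is automatic since each summand is a tensor product of representables, and the resolution property is inherited directly from the normalized standard resolution.
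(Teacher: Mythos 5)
Your proposal follows essentially the same route as the paper's proof: pass to the skeleton $\sk(\mathcal{C})$, identify the terms of Mitchell's normalized standard resolution with the direct sum over cells of $B\mathcal{C}$, and transport the resolution back along the exact equivalence, with the grading shift accounting for degree preservation. The extra bookkeeping you flag (in particular the worry about interior compositions becoming identities) is sound but in fact harmless here, since non-isomorphisms in an indiscretely based graded category have positive degree and degrees add under composition, so no face map can produce an identity.
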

\begin{proof}
 Consider a skeletal category $\sk(\mathcal{C})$ and an equivalence  
    \begin{center}
	       \begin{tikzcd}
                    \sk(\mathcal{C})\arrow[bend left]{r}{F}
		          & \mathcal{C}\arrow[bend left]{l}{G}
	       \end{tikzcd}     
        \end{center} 
The resolution written above is just the normalized resolution of $R\sk(\mathcal{C})$ under the identification
$$P_{x_{k+1}}\otimes_R \widetilde{\mathcal{C}}(x_{k+1},x_k)\otimes_R...\otimes_R \widetilde{\mathcal{C}}(x_1,x_2)\otimes_R P_{x_1}^{op}=\bigoplus_{\eta_k\in \Cell_k(B\mathcal{C}),t(\eta_k)=x_1,h(\eta_k)=x_{k+1}}P_{h(\eta_k)}\boxtimes P_{t(\eta_k)}^{op}$$
Then, the equivalence of Proposition~\ref{equivalence preserves simples} is exact sending this projective resolution of $R\sk(\mathcal C)$ to the described projective resolution of $R\mathcal C$.  The grading shift by $-l(\eta_k)$ makes the differential degree preserving.
\end{proof}

\begin{ex}
\label{graded projective resolution of k[x]}
    Let $k$ be a field and $\mathcal{C}$ be a graded category with one object $v$ and  morphisms $\Hom_{\mathcal{C}}(v,v)=\{x^n:n\in \mathbb N\}$. Then, $k\mathcal{C}\cong k[x]$. By Proposition \ref{projective resolution of diagonal}, we have the following graded projective resolution of $k[x]$:
       $$...\xrightarrow{d_3} \begin{matrix}
       (P_v\boxtimes_{x^2}P_v^{op}(-2))^{\oplus m_{2,2}}\\
       \oplus\\
       (P_v\boxtimes_{x^3} P_v^{op}(-3))^{\oplus m_{2,3}}\\
       \oplus\\
       ...
    \end{matrix}\xrightarrow{d_2} \begin{matrix}
       (P_v\boxtimes_{x}P_v^{op}(-1))^{\oplus m_{1,1}}\\
       \oplus\\
       (P_v\boxtimes_{x^2} P_v^{op}(-2))^{\oplus m_{1,2}}\\
       \oplus\\
       ...
    \end{matrix}\xrightarrow{d_1}
        P_v\boxtimes P_v^{op}
\xrightarrow{d_0} k[x]\to 0$$ 
where $m_{r,s}$ is the number of $r$-dimensional cells of length $s$, i.e. $m_{r,s}=\binom{s-1}{r-1}$.
\end{ex}
\begin{ex}
\label{ex: resolution of Beilinson quiver}
Let $k$ be a field and $\mathcal{C}$ be the Beilinson quiver regraded as a category, i.e. the following quiver with relations $\Bar{x}_i\circ x_j=\Bar{x}_j\circ x_i$:
    \begin{center}
	       \begin{tikzcd}
                    \bullet_{v_1}\arrow[bend left]{rr}{x_0}\arrow{rr}{x_1}\arrow[bend right]{rr}{x_2}
		          && \bullet_{v_2}\arrow[bend left]{rr}{\Bar{x}_0}\arrow{rr}{\Bar{x}_1}\arrow[bend right]{rr}{\Bar{x}_2}
                  &&\bullet_{v_3}
	       \end{tikzcd}     
        \end{center} 
By Proposition \ref{projective resolution of diagonal}, we have the following graded projective resolution of $k\mathcal{C}$:
{\small$$0\to
     \begin{matrix}
        P_{v_3}\boxtimes_{\eta_1}P_{v_1}^{op}(-2)\\
        \oplus \\
        ...\\
        \oplus\\
        P_{v_3}\boxtimes_{\eta_9}P_{v_1}^{op}(-2)\end{matrix}
        \xrightarrow{d_2}  \begin{matrix}
       P_{v_3}\boxtimes_{\Bar{x}_0\circ x_0}P_{v_1}^{op}(-2)\\
       \oplus\\
       P_{v_3}\boxtimes_{\Bar{x}_1\circ x_0}P_{v_1}^{op}(-2)\\
       \oplus\\
       P_{v_3}\boxtimes_{\Bar{x}_2\circ x_0}P_{v_1}^{op}(-2)\\
       \oplus\\
       P_{v_3}\boxtimes_{\Bar{x}_2\circ x_2}P_{v_1}^{op}(-2)\\
       \oplus\\
       P_{v_3}\boxtimes_{\Bar{x}_2\circ x_3}P_{v_1}^{op}(-2)\\
       \oplus\\
       P_{v_3}\boxtimes_{\Bar{x}_3\circ x_3}P_{v_1}^{op}(-2)
    \end{matrix}\\\oplus\\\begin{matrix}
       P_{v_2}\boxtimes_{x_0}P_{v_1}^{op}(-1)\\
       \oplus\\
       ...\\
       \oplus\\
       P_{v_3}\boxtimes_{\Bar{x}_2}P_{v_2}^{op}(-1)\end{matrix}
    \xrightarrow{d_1}\begin{matrix}
       P_{v_1}\boxtimes P_{v_1}^{op}\\
       \oplus\\
       P_{v_2}\boxtimes_{}P_{v_2}^{op}\\
       \oplus\\
       P_{v_3}\boxtimes_{}P_{v_3}^{op}
    \end{matrix}\xrightarrow{d_0} k\mathcal{C}\to 0$$}
    where $\eta_1$ is the $2$-cell corresponding to $(\Bar{x}_0, x_0)$, ... and $\eta_9$ is the $2$-cell corresponding to $(\Bar{x}_2, x_2)$.
\end{ex}
To provide a topological description of Ext groups between quiver representations, we also need the following lemma: 
\begin{lemma}
\label{Cells}
Let $\mathcal{C}$ be an indiscretely based category. There is a bijection of sets 
$$\coprod_{p\ne Id}\Cell(B\mathcal{C}(p))\to \Cell_{\geq 2}(B\mathcal{C})$$
$$\alpha\mapsto \alpha$$
\end{lemma}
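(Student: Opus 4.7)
The plan is to exhibit the bijection by forgetting/remembering the composition $p$, since the two sides parametrize equivalence classes of the same underlying data: tuples of composable non-isomorphism morphisms of length at least $2$.

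First I would describe the forward map. A cell $\eta \in \Cell_k(B\mathcal{C})$ with $k \geq 2$ is by Definition~\ref{reduced nerve} an equivalence class $\eta = [(f_1,\ldots,f_k)]$, where each $f_i$ is a non-isomorphism and the $f_i$ are composable. Set $p := f_1 \circ \cdots \circ f_k$. The composition $p$ is constant on the equivalence class: if $f_i' = f_i \circ h_i$ and $f_{i+1}' = h_i^{-1} \circ f_{i+1}$ for isomorphisms $h_i$, then the $h_i$'s telescope and $f_1'\circ\cdots\circ f_k' = f_1 \circ \cdots \circ f_k$. Moreover, since the grading functor $l:\mathcal C\to\mathbb N$ must send any isomorphism to $0$ (because $l(f)+l(f^{-1})=0$ in $\mathbb N$), each non-isomorphism has $l(f_i)\geq 1$, so $l(p)\geq k\geq 2$, and in particular $p\neq\mathrm{Id}$. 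Then $(f_1,\ldots,f_k)$ is, verbatim, a representative of an element of $\mathcal{C}(p)_{k-2}$ (an $(k-2)+2$-tuple of non-isomorphisms composing to $p$), and the equivalence relation defining $\mathcal{C}(p)_{k-2}$ in Definition~\ref{Factorization space} is exactly the same as the one defining $\bar N(\mathcal C)_k$. So sending $\eta\mapsto \eta$ (now viewed inside $\Cell_{k-2}(B\mathcal{C}(p))$) lands in the appropriate summand of the coproduct.

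Next I would describe the inverse. Given any $\alpha\in\Cell_n(B\mathcal{C}(p))$ for some $p\neq\mathrm{Id}$, by Definition~\ref{Factorization space} $\alpha$ is represented by $(f_0,\ldots,f_{n+1})$ with all $f_i$ non-isomorphisms and $f_0\circ\cdots\circ f_{n+1} = p$; this tuple represents, verbatim, an $(n+2)$-cell of $B\mathcal{C}$ under the identical equivalence relation, and $n+2\geq 2$. Since $p$ is recovered from any representative as $f_0\circ\cdots\circ f_{n+1}$, different $p$'s yield disjoint sets of cells, so the coproduct on the left is well-defined and the inverse map is well-defined.

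Finally, the two maps are mutually inverse because each acts as the identity on representatives: the forward map drops $p$ from memory (which is redundant data, since $p$ is determined by the tuple) and the inverse map restores it.

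The main obstacle, if any, is purely notational: one must verify that the equivalence relation in Definition~\ref{reduced nerve} (on the reduced nerve of $\mathcal C$) and the equivalence relation in Definition~\ref{Factorization space} (on the factorization complex of $p$) are literally the same when restricted to tuples of a fixed composition $p$, and that the dimension shift $k \leftrightarrow n+2$ is handled consistently. Both are immediate from the definitions, so the lemma amounts to unwinding them.
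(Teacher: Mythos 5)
Your proof is correct and is essentially the paper's own argument (the paper's version is just terser): both amount to observing that cells on the two sides are equivalence classes of the same tuples of composable non-isomorphisms under literally the same relation, with the composite $p$ being redundant data recoverable from any representative, so the map is well-defined, injective, and surjective by unwinding Definitions~\ref{reduced nerve} and~\ref{Factorization space}. One small repair: your claim that every non-isomorphism has $l(f_i)\geq 1$ (needed to rule out $p=\mathrm{Id}$ so the image lands in the stated coproduct) does not follow from isomorphisms having length $0$; it follows instead from the indiscretely based hypothesis, since every morphism of $\mathcal{C}_0$ lies in an indiscrete connected component and is therefore an isomorphism.
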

\begin{proof}
We follow the proof of \cite[Lemma 6.20]{dj} but in our new setting. Here first note that any $n$-cell of $B\mathcal{C}(p)$ is of the form $[(f_0, ..., f_{n+1})]$. But on the other hand, $[(f_0, ..., f_{n+1})]$ can be also seen as a cell of $B\mathcal{C}$ and this is what we mean by our map. It is surjective since any cell of $B\mathcal{C}$ of dimension at least $2$ is of the form $[(f_0, ..., f_{n+1})]$ where $n\geq 0$. It is trivially injective. 
\end{proof} 
We are now ready to state and prove one of our main propositions:
\begin{prop}
\label{Ext groups}
    Let $k$ be a field and $\mathcal{C}$ be an indiscretely based category with grading $l:\mathcal C\to \mathbb N$. Then, 
$$\Ext_{k\mathcal{C}}^i(S_w,S_v)_{-n}=\bigoplus_{\substack{p|l(p)=n,\\t(p)=v,h(p)=w}}\Tilde{H}^{i-2}(B\mathcal{C}(p))$$
\end{prop}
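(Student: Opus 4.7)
The plan is to derive the formula from the bimodule resolution $C_\bullet \to k\mathcal{C}$ of Proposition~\ref{projective resolution of diagonal} together with two applications of Yoneda. Each summand $P_{h(\eta_k)}\boxtimes P_{t(\eta_k)}^{op}(-l(\eta_k))$ of $C_k$ is projective as a right $k\mathcal{C}$-module via its Yoneda-type right factor $P^{op}$, so tensoring $C_\bullet$ on the right with $S_w$ yields a projective resolution of $S_w$ as a left $k\mathcal{C}$-module. Applying $\Hom(-, S_v)$ then realizes $\Ext^{\bullet}_{k\mathcal{C}}(S_w, S_v)$ as the cohomology of an explicit $k$-linear complex, and the task reduces to identifying, for each morphism $p$, the corresponding summand of this complex with a shift of the augmented cellular cochain complex of $B\mathcal{C}(p)$.

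I would compute each term explicitly. By Proposition~\ref{equivalences preserve ext groups of simples} we may pass to a skeletal model. A Yoneda-type argument then gives
\[
(P_h \boxtimes P_t^{op}(-l(\eta_k))) \otimes_{k\mathcal{C}} S_w \;\cong\; \bigoplus_{f \in \Hom(w, t)} P_h(-l(\eta_k) - l(f)),
\]
and a second Yoneda application computes $\Hom_{k\mathcal{C}}(P_h(-m), S_v) = S_v(h)$ in internal degree $-m$. Assembling, $\Hom(C_k \otimes_{k\mathcal{C}} S_w, S_v)_{-n}$ has a $k$-basis indexed by pairs $(\eta_k, f)$ where $\eta_k$ is a $k$-cell of $B\mathcal{C}$ with $h(\eta_k) = v$ and $f:w \to t(\eta_k)$ satisfies $l(\eta_k) + l(f) = n$.

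I would then re-organize this basis by the composed morphism $p$ of length $n$ (with the endpoints of the statement, noting that Remark~\ref{C and Cop} reconciles any directional convention). For fixed $p$ the pairs split into two types: when $f$ is an isomorphism (equivalently $f = \mathrm{Id}$ in the skeletal case), the cell $\eta_k$ is itself a $k$-piece non-iso factorization of $p$, hence a $(k-2)$-cell of $B\mathcal{C}(p)$ via Lemma~\ref{Cells}; when $f$ is non-iso, $(f_1,\dots,f_k,f)$ is a $(k+1)$-piece non-iso factorization of $p$, hence a $(k-1)$-cell of $B\mathcal{C}(p)$. Under these bijections the bar differentials $\partial_0,\dots,\partial_k$ of $C_\bullet$ translate to the alternating sum of simplicial face maps of $\mathcal{C}(p)$, and the interface between the iso- and non-iso-$f$ contributions supplies the augmentation map $\widetilde{C}_{-1} \to C_0$.

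The main technical obstacle is this last step: verifying that the signed bar differentials precisely reproduce the cellular boundary maps of the augmented complex of $B\mathcal{C}(p)$, and tracking the cohomological index shift of $2$ (from $k$-cells of $B\mathcal{C}$ to $(k-2)$-cells of $B\mathcal{C}(p)$). Once established, the cohomology of the $p$-summand is $\widetilde{H}^{i-2}(B\mathcal{C}(p))$, and summing over all such $p$ yields the claimed formula.
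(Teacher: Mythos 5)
Your overall strategy is the same as the paper's (tensor the bimodule resolution $C_\bullet$ of Proposition~\ref{projective resolution of diagonal} against $S_w$, apply $\Hom(-,S_v)$, reorganize by the composite morphism $p$, and match the result with the cellular cochain complex of $B\mathcal{C}(p)$ via Lemma~\ref{Cells}), but there is a concrete error in your computation of the terms, and the rest of your argument is built on top of it. By co-Yoneda, $P_t^{op}\otimes_{k\mathcal{C}} S_w \cong S_w(t)$, which is $k$ when $t\cong w$ and $0$ otherwise; it is \emph{not} $\bigoplus_{f\in\Hom(w,t)}k$. What you have written down is $P_t^{op}\otimes_{k\mathcal{C}} P_w\cong P_w(t)=k\Hom(w,t)$, i.e.\ you have substituted the projective $P_w$ for the simple $S_w$. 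Consequently the correct basis of $\Hom(C_k\otimes_{k\mathcal{C}}S_w,S_v)_{-n}$ is indexed by the cells $\eta_k$ alone (with the appropriate endpoint and length conditions), not by pairs $(\eta_k,f)$. Your subsequent analysis --- splitting the pairs into iso and non-iso $f$, so that degree $k$ contains both $(k-2)$-cells and $(k-1)$-cells of $B\mathcal{C}(p)$, with ``the interface supplying the augmentation'' --- is an artifact of this error: if you actually carried your terms through, you would be computing the cohomology of $\Hom(C_\bullet\otimes_{k\mathcal{C}}P_w,S_v)$, i.e.\ $\Ext^\bullet(P_w,S_v)$, which is concentrated in degree $0$ and contradicts the statement.

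With the terms corrected, the complex for fixed $p$ has exactly one basis element per $r$-cell of $B\mathcal{C}$ factoring $p$; by Lemma~\ref{Cells} these are the $(r-2)$-cells of $B\mathcal{C}(p)$ for $r\geq 2$, while the single $r=1$ term (the morphism $p$ itself) plays the role of the $(-1)$-cell of the augmented complex --- no separate ``interface'' construction is needed. The remaining point, which your proposal only gestures at, is why the bar differential reduces to the cellular one: the outer face $\partial_{0,\eta_k}$ moves a non-isomorphism $f_1$ onto the $P$-factor that has been contracted against a simple, hence vanishes, and dually $\partial_{k,\eta_k}$ vanishes after applying $\Hom(-,S_v)$; the surviving interior faces $\partial_1,\dots,\partial_{k-1}$ with signs $\pm1$ are precisely the face maps of the semi-simplicial set $\mathcal{C}(p)$. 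Finally, be careful with which side you contract: the paper tensors $S_w\otimes C_\bullet$ so that surviving cells satisfy $h(\eta_r)=w$ and $t(\eta_r)=v$, matching the statement directly; your version produces the opposite endpoint conditions, and an appeal to Remark~\ref{C and Cop} is needed to fix this, which you should make explicit rather than leave as ``reconciling conventions.''
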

\begin{proof}
    By Proposition \ref{projective resolution of diagonal}, $S_w \otimes C_{\bullet}$ is a projective resolution of $S_w$.  Therefore we can compute 
    \begin{center}
\begin{align*}
    \Ext^i(S_w,S_v)_{-n}
    &=H^i(\Hom(S_w \otimes C_{\bullet},S_v)_{-n})\\
\end{align*}
\end{center}
Since 
$$S_w\otimes_{k\mathcal{C}} (P_u\boxtimes_k P_v^{op}) =\begin{cases}
        P_v^{op}, & w=u\\
        0, & w\ne u.
    \end{cases} $$
for each $r$, we have
$$(\Hom(S_w \otimes C_{\bullet},S_v)_{-n})_r=\bigoplus_{\substack{\eta_r\in \Cell_r(B\mathcal{C}),\\l(\eta_r)=n,h(\eta_r)=w}}\Hom(P_{t(\eta_r)}^{op}(-l(\eta_r)),S_v)_{-n}$$
Since  $$\Hom_{R\mathcal{C}}(P_u^{op}(-m),S_v)_{-n}=\begin{cases}
        k, & v=u,n=m\\
        0, & \text{else}
    \end{cases} $$
we get
$$(\Hom(S_w \otimes C_{\bullet},S_v)_{-n})_r=\bigoplus_{\substack{\eta_r\in \Cell_r(B\mathcal{C}),\\l(\eta_r)=n, t(\eta_r)=v,h(\eta_r)=w}}k$$
So
$$\Hom(S_w \otimes C_{\bullet},S_v)_{-n}=\bigoplus_{\substack{p|l(p)=n,\\t(p)=v,h(p)=w}}S_{\bullet}^p$$
where $S_{\bullet}^p$ is the summand consisting of all cells which factor a fixed path $p$. By Lemma \ref{Cells}, the complex $S^p_\bullet$ is cellular with respect to $B\mathcal C(p)$ i.e. the terms are indexed by the cells in $B\mathcal C(p)$.  To see that the differentials 
\[
d_k := \sum_{0 \leq i \leq k-1, \eta_k\in\Cell_k(B\mathcal{C}(p))}  (-1)^i \partial_{i,\eta_k}:S^p_k\to S^p_{k-1}
\]
of $S^p_\bullet$ agree with the CW cohomology differential, observe that $\partial_{0,\eta_k}$ vanishes after tensoring with $S_w$ and $\partial_{k,\eta_k}$ vanishes after applying $\Hom(-,S_v)$. The remaining terms $\partial_{i, \eta_k}$ have coefficients $\pm 1$, which agree with the usual CW differential
\[
d_k^{CW} := \sum_{1 \leq i \leq k-1, \eta_k\in\Cell_k(B\mathcal{C}(p))} (-1)^i \partial_{i,\eta_k}:S^p_k\to S^p_{k-1}.
\]
\end{proof}
\begin{ex}
\label{polynomial ring in one variable}
Let $k$ be a field and $\mathcal{C}$ be a graded category with one object $v$ and all powers of a single endomorphism i.e. $k\mathcal C = k[x]$. Then, by tensoring the graded projective resolution of $k[x]$ from Example \ref{graded projective resolution of k[x]} by $S_v$ from the left, we get the following resolution of $k$
       $$...\xrightarrow{d_3} \begin{matrix}
       (P_v^{op}(-2))^{\oplus m_{2,2}}\\
       \oplus\\
       (P_v^{op}(-3))^{\oplus m_{2,3}}\\
       \oplus\\
       ...
    \end{matrix}\xrightarrow{d_2} \begin{matrix}
       (P_v^{op}(-1))^{\oplus m_{1,1}}\\
       \oplus\\
       (P_v^{op}(-2))^{\oplus m_{1,2}}\\
       \oplus\\
       ...
    \end{matrix}\xrightarrow{d_1}
        P_v^{op}
\xrightarrow{d_0} k\to 0$$ 
where $m_{r,s}$ is the number of $r$-dimensional cells of length $s$, i.e. $m_{r,s}=\binom{s-1}{r-1}$. As $P_v\cong k[x]$ and $(P_v)^{op}\cong k[x]$,  this translates to 
       $$...\to\begin{matrix}
       k[x](-r)\\
       \oplus\\
       (k[x](-r-1))^{\oplus\binom{r}{r-1}}\\
       \oplus\\
       ...\\
       \oplus\\
       (k[x](-s))^{\oplus\binom{s-1}{r-1}}\\
       \oplus\\
       ...
    \end{matrix}\to...\to \begin{matrix}
       k[x](-2)\\
       \oplus\\
       (k[x](-3))^{\oplus\binom{2}{1}}\\
       \oplus\\
       ...
    \end{matrix}\to \begin{matrix}
       k[x](-1)\\
       \oplus\\
       k[x](-2)\\
       \oplus\\
       ...
    \end{matrix}\to
        k[x]
\to k\to 0$$ 
Taking $\Hom(-,S_v)$ ($n\geq 1$) from this resolution gives
$$0\to k\to k\to \begin{matrix}
       k(-1)\\
       \oplus\\
       k(-2)\\
       \oplus\\
       ...
    \end{matrix}\to \begin{matrix}
       k(-2)\\
       \oplus\\
       (k(-3))^{\oplus\binom{2}{1}}\\
       \oplus\\
       ...
    \end{matrix}\to ...\to \begin{matrix}
       k(-r)\\
       \oplus\\
       (k(-r-1))^{\oplus\binom{r}{r-1}}\\
       \oplus\\
       ...\\
       \oplus\\
       (k(-s))^{\oplus\binom{s-1}{r-1}}\\
       \oplus\\
       ...
    \end{matrix}\to ...$$
Hence $\RHom(S_v,S_v)_{-n}$ is as follows
$$0\to k^{\binom{n-1}{0}}\to ...\to k^{\binom{n-1}{i-1}}\to ...\to k^{\binom{n-1}{n-1}}\to 0.$$
For $n \geq 2$, this is nothing more than the Koszul complex on the vector $(1, ..., 1) \in k^{n-1}$ or equivalently the reduced homology of a $n-2$-simplex which is of course acyclic.  For $n=0,1$ we get the complex $k$.
\end{ex}
\begin{ex}
Let $k$ be a field and $\mathcal{C}$ be the Beilinson quiver regraded as a category, i.e. the following quiver with relations $\Bar{x}_i\circ x_j=\Bar{x}_j\circ x_i$:
    \begin{center}
	       \begin{tikzcd}
                    \bullet_{v_1}\arrow[bend left]{rr}{x_0}\arrow{rr}{x_1}\arrow[bend right]{rr}{x_2}
		          && \bullet_{v_2}\arrow[bend left]{rr}{\Bar{x}_0}\arrow{rr}{\Bar{x}_1}\arrow[bend right]{rr}{\Bar{x}_2}
                  &&\bullet_{v_3}
	       \end{tikzcd}     
        \end{center} 
By tensoring the graded projective resolution of $k\mathcal{C}$ from Example \ref{ex: resolution of Beilinson quiver} by $S_{v_3}$ from the left, we get the following graded projective resolution of $S_{v_3}$:
$$0\to P_{v_1}^{op}(-2)^{\oplus 9}\to P_{v_1}^{op}(-2)^{\oplus 6}\oplus  P_{v_2}^{op}(-1)^{\oplus 3}\to
       P_{v_3}^{op}\to S_{v_3} \to 0$$
Now taking $\Hom(-,S_{v_1})$ from this resolution gives us the following complex:
    $$0\to 0\to k(-2)^{\oplus 6}\to k(-2)^{\oplus 9}\to 0$$
    Hence $\RHom(S_{v_3},S_{v_1})_{-2}$ is the complex 
$$0\to 0\to k^{\oplus 6}\to k^{\oplus 9}\to 0$$
    Therefore $\Ext^i(S_{v_3},S_{v_1})_{-2}$ can be computed by computing the cohomologies of this complex. So, the only non-zero one is 
    $$\Ext^2(S_{v_3},S_{v_1})_{-2}=k^3$$
    Furthermore we have 
    \begin{align*}
    \bigoplus_{\substack{p|l(p)=2,\\t(p)=v_1,h(p)=v_2}}\widetilde{H}^0(B\mathcal{C}(p))& =\widetilde{H}^0(B\mathcal{C}(\Bar{x}_0\circ x_0))\oplus\widetilde{H}^0(B\mathcal{C}(\Bar{x}_1\circ x_1))\oplus\widetilde{H}^0(B\mathcal{C}(\Bar{x}_2\circ x_2))\\
    &\oplus\widetilde{H}^0(B\mathcal{C}(\Bar{x}_0\circ x_1))\oplus\widetilde{H}^0(B\mathcal{C}(\Bar{x}_0\circ x_2))\oplus\widetilde{H}^0(B\mathcal{C}(\Bar{x}_1\circ x_2))\\
    &= 0\oplus 0\oplus 0 \\
    & \oplus k\oplus k \oplus k\\
    &= k^3
\end{align*}
\end{ex}

\subsection{Topological interpretation of Koszulity}
Here we discuss the Koszulity of category algebras. We then define almost discrete fibrations, a class of functors which preserve Koszulity. 

\subsubsection{Koszul Algebras}
We present here the main facts about Koszul algebras we will need and get some quick results.  For a much more complete treatment of Koszul algebras see \cite{bgs}.
\begin{df}[\protect{\cite[Definition 1.2.1, p. 475]{bgs}}]
A Koszul ring is a positively graded ring $A=\bigoplus_{j\geq 0}A_j$ such that 
\begin{enumerate}
    \item $A_0$ is semisimple,
    \item $A_0$ considered as a graded left $A$-module admits a graded projective resolution 
    $$...\to P^2\to P^1\to P^0\twoheadrightarrow A_0$$
    such that $P^i$ is generated by its component of degree $i$, $P^i=AP^i_i$. 
\end{enumerate}
\end{df}
The followings are quick corollaries of this definition:
\begin{cor}
\label{cor: Koszulity of kC and k(C^{op})}
    Let $\mathcal{C}$ be an indiscretely based category. Then, $k\mathcal{C}$ is Koszul if and only if $k(\mathcal{C}^{op})$ is Koszul. 
\end{cor}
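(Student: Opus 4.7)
The plan is to translate Koszulity into a topological condition and then exploit the semi-simplicial isomorphism noted in Remark~\ref{C and Cop}.

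First, I would invoke the standard Ext characterization of Koszul rings: a positively graded ring $A$ with semisimple $A_0$ is Koszul if and only if $\Ext^i_A(A_0,A_0)_{-n}=0$ for all $i\ne n$; see \cite[Proposition~2.1.3]{bgs}. Combined with Proposition~\ref{kC_0 is semisimple} and Corollary~\ref{vanishing of ext groups based on simples}, this reduces the Koszulity of $k\mathcal{C}$ to the vanishing of $\Ext^i_{k\mathcal{C}}(S_w,S_v)_{-n}$ for every pair $v,w\in\Ob(\mathcal{C})$ and every $i\ne n$.

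Next, Proposition~\ref{Ext groups} converts this into the purely topological statement that $\widetilde{H}^{i-2}(B\mathcal{C}(p))=0$ for each non-identity morphism $p\in\Mor(\mathcal{C})$ and each $i\ne l(p)$. The analogous rewriting for $\mathcal{C}^{op}$ characterizes Koszulity of $k\mathcal{C}^{op}$ as the vanishing of $\widetilde{H}^{i-2}(B\mathcal{C}^{op}(q))$ for each non-identity $q\in\Mor(\mathcal{C}^{op})$ and each $i\ne l(q)$.

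Finally, the assignment $p\mapsto p^{op}$ is a length-preserving bijection between the non-identity morphisms of $\mathcal{C}$ and those of $\mathcal{C}^{op}$, and Remark~\ref{C and Cop} provides a semi-simplicial isomorphism $\mathcal{C}(p)\cong\mathcal{C}^{op}(p^{op})$ inducing a homeomorphism $B\mathcal{C}(p)\cong B\mathcal{C}^{op}(p^{op})$. Consequently $\widetilde{H}^{\bullet}(B\mathcal{C}(p))\cong\widetilde{H}^{\bullet}(B\mathcal{C}^{op}(p^{op}))$, so the two topological vanishing conditions coincide term-by-term and the corollary follows.

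The main obstacle is expository rather than mathematical: the Ext characterization of Koszulity is being invoked before the paper's own topological Koszulity theorem (the unlabeled forward-reference inside Remark~\ref{C and Cop}) has been stated. One could either cite \cite{bgs} directly, as above, or postpone this corollary until that theorem is proved and deduce it as an immediate consequence of the $\mathcal{C}\leftrightarrow\mathcal{C}^{op}$ symmetry of $B\mathcal{C}(p)$.
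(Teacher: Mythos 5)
Your proposal is correct and follows essentially the same route as the paper: both reduce Koszulity to the vanishing of $\Ext^i(S_w,S_v)_{-n}$ via the BGS criterion, rewrite these Ext groups as $\widetilde{H}^{i-2}(B\mathcal{C}(p))$ using Proposition~\ref{Ext groups}, and then invoke the isomorphism $\mathcal{C}(p)\cong\mathcal{C}^{op}(p^{op})$ from Remark~\ref{C and Cop}. The only cosmetic difference is that the paper exhibits an equality of the full Ext groups $\Ext^i((k\mathcal{C})_0,(k\mathcal{C})_0)_{-n}=\Ext^i((k\mathcal{C}^{op})_0,(k\mathcal{C}^{op})_0)_{-n}$ rather than just matching the vanishing conditions.
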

\begin{proof}
We have
\begin{center}
\begin{align*}
\Ext^i(k(\mathcal{C})_0,k(\mathcal{C})_0)_{-n}&=\bigoplus_{v,w\in \Ob(\mathcal{C})}\Ext^i(S_w,S_v)_{-n}\\ \text{By Proposition \ref{Ext groups}}
&=\bigoplus_{v,w\in \Ob(\mathcal{C})}\bigoplus_{\substack{p|l(p)=n,\\t(p)=v,h(p)=w}}\Tilde{H}^{i-2}(B\mathcal{C}(p))\\\text{By Remark \ref{C and Cop}}
&=\bigoplus_{w,v\in \Ob(\mathcal{C}^{op})}\bigoplus_{\substack{p^{op}|l(p^{op})=n,\\t(p^{op})=w,h(p^{op})=v}}\Tilde{H}^{i-2}(B\mathcal{C}^{op}(p^{op}))\\\text{By Proposition \ref{Ext groups}}
&=\bigoplus_{w,v\in \Ob(\mathcal{C}^{op})}\Ext^i(S_v,S_w)_{-n}\\ 
&=\Ext^i(k(\mathcal{C}^{op})_0,k(\mathcal{C}^{op})_0)_{-n}
\end{align*}
\end{center}
\end{proof}
\begin{cor}
\label{tensor prodcut of koszul algebras}
    Let $k$ be a field and $A$ and $B$ be Koszul $k$-algebras. Then, $A\otimes_k B$ is Koszul.
\end{cor}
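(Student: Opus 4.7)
The plan is to construct a linear projective resolution of $(A\otimes_k B)_0$ by tensoring linear projective resolutions of $A_0$ and $B_0$. By hypothesis, there exist graded projective resolutions $P^\bullet \twoheadrightarrow A_0$ over $A$ and $Q^\bullet \twoheadrightarrow B_0$ over $B$ with $P^i$ generated in internal degree $i$ and $Q^j$ generated in internal degree $j$. Form the total complex
\[
R^\bullet := \Tot(P^\bullet \otimes_k Q^\bullet), \qquad R^n = \bigoplus_{i+j=n} P^i \otimes_k Q^j,
\]
viewed as a complex of graded $A\otimes_k B$-modules.

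Next I would verify the three conditions defining Koszulity for $R^\bullet$. Projectivity of each $R^n$ is immediate: if $P^i$ is a summand of a free $A$-module $A^{(I)}$ and $Q^j$ is a summand of $B^{(J)}$, then $P^i \otimes_k Q^j$ is a summand of $A^{(I)} \otimes_k B^{(J)} \cong (A\otimes_k B)^{(I\times J)}$. That $R^\bullet$ resolves $(A\otimes_k B)_0$ uses the K\"unneth formula, which is available because $k$ is a field so $\otimes_k$ is exact: $H^n(R^\bullet) = \bigoplus_{i+j=n} H^i(P^\bullet)\otimes_k H^j(Q^\bullet)$ vanishes for $n > 0$ and equals $A_0 \otimes_k B_0 = (A\otimes_k B)_0$ for $n = 0$. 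Linearity of generators is a degree count: tensor products of generators in internal degrees $i$ and $j$ are generators in internal degree $i+j$, so $R^n$ is generated in internal degree $n$.

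Finally, I would check that $(A\otimes_k B)_0 = A_0 \otimes_k B_0$ is semisimple. In the setting of this paper both $A_0$ and $B_0$ are products of copies of $k$ by Proposition~\ref{kC_0 is semisimple}, so the tensor product is again a product of copies of $k$ and hence semisimple. The only nontrivial step is the K\"unneth identification, which relies entirely on $k$ being a field; everything else is essentially bookkeeping of homological and internal degrees.
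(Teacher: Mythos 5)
Your proof is correct and follows essentially the same route as the paper, which simply tensors the two linear projective resolutions and notes the result is a linear projective resolution of $(A\otimes_k B)_0$; you have merely filled in the details (projectivity of the tensor terms, the K\"unneth computation over the field $k$, and the degree count) that the paper leaves implicit. Your added remark that semisimplicity of $A_0\otimes_k B_0$ holds because in this paper's setting $A_0$ and $B_0$ are direct sums of copies of $k$ is a worthwhile clarification, since that point is not automatic for arbitrary semisimple degree-zero parts.
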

\begin{proof}
    Since $A$ and $B$ are Koszul, $A_0$ and $B_0$ have linear projective resolutions. The tensor product of these linear projective resolutions is a linear projective resolution of $A_0\otimes_k B_0=(A\otimes_k B)_0$.  Hence $A\otimes_k B$ is Koszul.
\end{proof}

\begin{cor}
    Let $\mathcal{C}$ and $\mathcal{D}$ be two indiscretely based categories such that their category algebras $k\mathcal{C}$ and $k\mathcal{D}$ are Koszul. Then, $k(\mathcal{C}\times \mathcal{D})$ is Koszul.
\end{cor}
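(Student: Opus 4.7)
The plan is to reduce this corollary to a direct composition of two results already proved in the paper: Proposition~\ref{prop: tensor product of cat algebras}, which identifies $k(\mathcal{C}\times\mathcal{D})$ with $k\mathcal{C}\otimes_k k\mathcal{D}$, and Corollary~\ref{tensor prodcut of koszul algebras}, which says the tensor product of two Koszul $k$-algebras is Koszul. Once these are combined compatibly with gradings, there is nothing left to do.

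First, I would note that $\mathcal{C}\times\mathcal{D}$ inherits a natural $\mathbb{N}$-grading from the sum of the gradings on $\mathcal{C}$ and $\mathcal{D}$, namely the functor $(f,g)\mapsto l_{\mathcal{C}}(f)+l_{\mathcal{D}}(g)$. With respect to this grading, the isomorphism $\phi:k\mathcal{C}\otimes_k k\mathcal{D}\to k(\mathcal{C}\times\mathcal{D})$ defined in Proposition~\ref{prop: tensor product of cat algebras} by $f\otimes g\mapsto (f,g)$ is a degree-preserving isomorphism of graded $k$-algebras, since it sends a homogeneous tensor of degrees $(i,j)$ in the tensor product grading to a morphism of degree $i+j$.

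Next, since each connected component of $(\mathcal{C}\times\mathcal{D})_0$ is a product of an indiscrete component of $\mathcal{C}_0$ with one of $\mathcal{D}_0$, and such products of indiscrete categories are indiscrete, the product $\mathcal{C}\times\mathcal{D}$ is itself indiscretely based; this ensures we remain in the setting where Koszulity is meaningful in the sense of the paper.

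Finally, by hypothesis $k\mathcal{C}$ and $k\mathcal{D}$ are Koszul, so by Corollary~\ref{tensor prodcut of koszul algebras} the tensor product $k\mathcal{C}\otimes_k k\mathcal{D}$ is Koszul, and transporting along the graded isomorphism $\phi$ gives that $k(\mathcal{C}\times\mathcal{D})$ is Koszul. There is no real obstacle here beyond bookkeeping the grading convention on the product category.
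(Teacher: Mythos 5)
Your proof is correct and follows exactly the paper's own argument: identify $k(\mathcal{C}\times\mathcal{D})$ with $k\mathcal{C}\otimes_k k\mathcal{D}$ via Proposition~\ref{prop: tensor product of cat algebras} and then apply Corollary~\ref{tensor prodcut of koszul algebras}. The extra remarks you make about the grading on the product and about $\mathcal{C}\times\mathcal{D}$ being indiscretely based are reasonable bookkeeping that the paper leaves implicit, but they do not change the route.
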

\begin{proof}
    By Proposition \ref{prop: tensor product of cat algebras}
    $k\mathcal{C}\otimes_k k\mathcal{D}\cong k(\mathcal{C}\times \mathcal{D})$. By Corollary \ref{tensor prodcut of koszul algebras}, since $k\mathcal{C}$ and $k\mathcal{D}$ are Koszul, $k\mathcal{C}\otimes_k k\mathcal{D}$ is Koszul and so $k(\mathcal{C}\times \mathcal{D})$ is Koszul. 
\end{proof}
The topological interpretation of Ext groups from Proposition \ref{Ext groups} allows us to interpret Koszulity topologically using the following proposition.   
\begin{prop}[\protect{\cite[Proposition 2.1.3, p.\ 480]{bgs}}]
\label{Koszul}
Let $A=\bigoplus_{j\geq 0}A_j$ be a positively graded ring and suppose $A_0$ is semisimple. The following conditions are equivalent:
\begin{enumerate}
    \item $A$ is Koszul. 
    \item $\Ext^i_A(A_0,A_0)_{-n}=0$ for $i\ne n$. 
\end{enumerate}
\end{prop}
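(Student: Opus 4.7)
The plan is to prove the two implications separately. The forward direction is a short degree computation from the linear resolution; the converse requires constructing a linear resolution from the Ext-vanishing hypothesis, and this is where the real work lies.

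For $(1)\Rightarrow (2)$, suppose $A$ is Koszul with linear projective resolution $\cdots\to P^1\to P^0\twoheadrightarrow A_0$ satisfying $P^i=A\cdot P^i_i$. The graded group $\Ext^i_A(A_0,A_0)_{-n}$ is the $i$-th cohomology of $\Hom_A(P^\bullet,A_0)_{-n}$. Decomposing $P^i$ as a sum of shifted indecomposable graded projectives, each summand has the form $A(-i)\otimes_{A_0}S$ for some simple $A_0$-module $S$, because all generators live in degree $i$. A graded map of internal degree $-n$ from such a summand to $A_0$ is the same data as an $A_0$-linear map $S\to (A_0)_{i-n}$, which is zero unless $n=i$. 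Therefore $\Hom_A(P^i,A_0)_{-n}=0$ whenever $i\neq n$, and the Ext-vanishing follows.

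For $(2)\Rightarrow (1)$, I would build a minimal graded projective resolution of $A_0$. Existence rests on two standard ingredients: since $A_0$ is semisimple, any finitely generated graded $A$-module $M$ has a well-defined ``top'' $M/A_{>0}M$, which is a graded $A_0$-module and hence semisimple; and the graded Nakayama lemma then lets one lift a generating set of the top to a minimal surjection $P\twoheadrightarrow M$ from a graded projective $A$-module. Iterating on kernels produces the minimal graded projective resolution $P^\bullet\to A_0$. The key feature of minimality is that the induced differentials on $\Hom_A(P^\bullet,A_0)$ vanish, so
\[
\Ext^i_A(A_0,A_0)_{-n}\;\cong\;\Hom_A(P^i,A_0)_{-n}.
\]
Writing $P^i\cong\bigoplus_j A(-j)\otimes_{A_0}T_j^{(i)}$ for graded $A_0$-modules $T_j^{(i)}$, the computation from the forward direction shows $\Hom_A(P^i,A_0)_{-n}\neq 0$ iff $T_n^{(i)}\neq 0$. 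The hypothesis thus forces $T_j^{(i)}=0$ for $j\neq i$, so $P^i$ is generated in degree $i$, i.e.\ $P^i=A\cdot P^i_i$, establishing Koszulity.

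The main obstacle is setting up minimal graded projective covers and resolutions rigorously in the possibly non-unital, graded setting considered in this paper, together with verifying that minimality implies vanishing of the differentials on $\Hom_A(P^\bullet,A_0)$; all of this follows from graded Nakayama and semisimplicity of $A_0$ once the conventions are fixed. Since the statement is classical, a short proof may simply defer to \cite[Proposition 2.1.3, p.~480]{bgs}.
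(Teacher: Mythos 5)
The paper does not prove this proposition; it is quoted verbatim from \cite[Proposition 2.1.3]{bgs}, so there is no in-text argument to compare against. Your proposal is the standard proof of that classical result and is essentially correct: the forward direction is the degree count you give (a summand $A(-i)\otimes_{A_0}S$ admits a nonzero degree $-n$ map to $A_0$ only when $n=i$), and the converse via a minimal graded projective resolution, vanishing of the induced differentials on $\Hom_A(P^\bullet,A_0)$, and reading off the generating degrees from $\Ext$ is exactly the argument in the literature. The only points requiring care are the ones you already flag: graded Nakayama and the existence of minimal resolutions need the modules to be bounded below (which holds, since $A_0$ is concentrated in degree $0$ and syzygies of bounded-below modules are bounded below), and the step ``$T^{(i)}_n\neq 0$ implies $\Hom_{A_0}(T^{(i)}_n,A_0)\neq 0$'' uses that every simple $A_0$-module occurs as a summand of $A_0$, which is guaranteed by semisimplicity of $A_0$ (in this paper's setting, $(k\mathcal C)_0=\bigoplus_v S_v$ by Proposition~\ref{kC_0 is semisimple}). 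With those conventions fixed, nothing in the argument fails.
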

The following is a quick corollary of this proposition.
\begin{cor}
\label{Koszulity based on simples}
    Let $\mathcal{C}$ be an indiscretely based category. Then, $k\mathcal{C}$ is Koszul if and only if $\Ext^i(S_w,S_v)_{-n}=0$ for $i\ne n$ for all $v,w\in \Ob(\mathcal{C})$. 
\end{cor}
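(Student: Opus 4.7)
The plan is to chain together three facts already established in the excerpt: the semisimplicity of $(k\mathcal{C})_0$ for indiscretely based $\mathcal{C}$, the BGS characterization of Koszulity via Ext vanishing off the diagonal, and the decomposition of the Ext groups of $(k\mathcal{C})_0$ into Ext groups of simples.

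First, I would observe that since $\mathcal{C}$ is indiscretely based, Proposition \ref{kC_0 is semisimple} supplies both the semisimplicity hypothesis needed for Proposition \ref{Koszul} and the direct sum decomposition $(k\mathcal{C})_0=\bigoplus_{v\in\Ob(\mathcal{C})}S_v$. This means the BGS criterion applies to $A=k\mathcal{C}$: the algebra is Koszul if and only if $\Ext^i((k\mathcal{C})_0,(k\mathcal{C})_0)_{-n}=0$ for $i\neq n$.

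Next, I would invoke Corollary \ref{vanishing of ext groups based on simples}, which translates the vanishing of $\Ext^i((k\mathcal{C})_0,(k\mathcal{C})_0)_{-n}$ into the vanishing of $\Ext^i(S_w,S_v)_{-n}$ for every pair $v,w\in\Ob(\mathcal{C})$. Strictly speaking the earlier corollary is stated for an arbitrary degree, so applying it to each $n$ individually and combining it with the BGS equivalence from Proposition \ref{Koszul} immediately yields: $k\mathcal{C}$ is Koszul if and only if $\Ext^i(S_w,S_v)_{-n}=0$ for all $v,w\in\Ob(\mathcal{C})$ whenever $i\neq n$.

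There is no real obstacle here; the statement is a direct corollary and the proof amounts to verifying that the hypotheses of the earlier results line up. The only minor point worth mentioning is that the grading conventions must be consistent between Proposition \ref{Koszul} (which uses degrees $-n$ on the Ext side) and Corollary \ref{vanishing of ext groups based on simples}, but both are already phrased compatibly in the paper, so this reduces to a one-line check.
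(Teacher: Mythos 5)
Your proposal is correct and follows the paper's own argument exactly: semisimplicity of $(k\mathcal{C})_0$ from Proposition \ref{kC_0 is semisimple}, the translation of Ext vanishing into Ext of simples via Corollary \ref{vanishing of ext groups based on simples}, and the BGS criterion of Proposition \ref{Koszul}. The only difference is the order in which the last two facts are invoked, which is immaterial.
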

\begin{proof}
    First note that by Proposition \ref{kC_0 is semisimple}, we have $(k\mathcal{C})_0$ is semisimple. By Corollary \ref{vanishing of ext groups based on simples} $\Ext^i((k\mathcal{C})_0,(k\mathcal{C})_0)_n=0$ if and only if $\Ext^i(S_w,S_v)_n=0$ for all $v,w\in \Ob(\mathcal{C})$. Now use Proposition \ref{Koszul}. 
\end{proof}
Finally, we observe that Koszulity is a property of the category.
\begin{prop}
\label{equivalences preserve Koszulity}
Let         
    \begin{center}
	       \begin{tikzcd}
                    \mathcal{C}\arrow[bend left]{r}{F}
		          & \mathcal{D}\arrow[bend left]{l}{G}
	       \end{tikzcd}     
        \end{center} 
    be an equivalence of indiscretely based categories. Then, $k\mathcal{C}$ is Koszul if and only if $k\mathcal{D}$ is Koszul. In particular, $k\mathcal{C}$ is Koszul if and only if $k(\sk(\mathcal{C}))$ is Koszul. 
\end{prop}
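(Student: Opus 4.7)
The plan is to reduce Koszulity of $k\mathcal{C}$ and $k\mathcal{D}$ to a statement purely about vanishing of $\Ext$ between simple objects, and then transport this vanishing across the equivalence using what is already proved. Specifically, by Corollary~\ref{Koszulity based on simples}, $k\mathcal{C}$ is Koszul if and only if $\Ext^i_{k\mathcal{C}}(S_{w'}, S_{v'})_{-n} = 0$ for all $i \neq n$ and all $v', w' \in \Ob(\mathcal{C})$, and the analogous statement holds for $k\mathcal{D}$. So it suffices to check that these vanishings transfer across the equivalence $F, G$.

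For this, I would apply Proposition~\ref{equivalences preserve ext groups of simples}, which already gives
\[
\Ext^i_{k\mathcal{D}}(S_w, S_v)_{-n} = \Ext^i_{k\mathcal{C}}(S_{G(w)}, S_{G(v)})_{-n}
\]
for every $v, w \in \Ob(\mathcal{D})$. If $k\mathcal{C}$ is Koszul, the right hand side vanishes for $i \neq n$, so vanishing holds for all $v, w \in \Ob(\mathcal{D})$, and hence $k\mathcal{D}$ is Koszul. For the converse, given arbitrary $v', w' \in \Ob(\mathcal{C})$, essential surjectivity of $G$ supplies $v, w \in \Ob(\mathcal{D})$ with $G(v) \cong v'$ and $G(w) \cong w'$. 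Since in an indiscretely based category the simple $S_u$ depends (up to isomorphism) only on the isomorphism class of $u$, we conclude $S_{v'} \cong S_{G(v)}$ and $S_{w'} \cong S_{G(w)}$, and the same identity then yields $\Ext^i_{k\mathcal{C}}(S_{w'}, S_{v'})_{-n} = 0$ for $i \neq n$.

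The ``in particular'' portion follows because $\sk(\mathcal{C})$ is by construction equivalent to $\mathcal{C}$ as a graded category, so the first part applies verbatim. The only subtlety—really not an obstacle—is confirming that $S_{G(v)} \cong S_{v'}$ whenever $G(v) \cong v'$, which is immediate from the definition of the simple functor. In sum, the proof is a short, bookkeeping-style assembly of Corollary~\ref{Koszulity based on simples} and Proposition~\ref{equivalences preserve ext groups of simples}, with essential surjectivity of $G$ ensuring that every simple of $\mathcal{C}$ is in the image up to isomorphism.
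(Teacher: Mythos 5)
Your proposal is correct and follows exactly the paper's route: the paper's proof likewise combines Proposition~\ref{equivalences preserve ext groups of simples} with Corollary~\ref{Koszulity based on simples}, merely stating it in two sentences. Your added remark that essential surjectivity of $G$ covers every simple of $\mathcal{C}$ up to isomorphism is a detail the paper leaves implicit, but it does not change the argument.
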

\begin{proof}
By Proposition~\ref{equivalences preserve ext groups of simples} equivalences preserve vanishing of the Ext groups between simples. Now use Corollary \ref{Koszulity based on simples}. 
\end{proof}
\subsubsection{Quadratic algebras}
As Koszul algebras are quadratic, later we also provide a topological interpretation for being quadratic (see Proposition~\ref{prop: quad}). Here we state some basic background.
\begin{df}[\protect{\cite[Definition 1.2.2, p. 481]{bgs}}]
    A quadratic ring is a positively graded ring $A=\bigoplus_{j\geq 0}A_j$ such that $A_0$ is semisimple and $A$ is generated over $A_0$ by $A_1$ with relations of degree two. 
\end{df}
\begin{prop}[\protect{\cite[Proposition 2.3.1, p. 481]{bgs}}]
\label{generation over A_1}
 Let $A=\bigoplus_{i\geq 0}A_i$ be a positively graded ring where $A_0$ is semisimple. The following conditions are equivalent:
 \begin{enumerate}
     \item $\Ext^1(A_0,A_0)_{-n}=0$ for $n\ne 1$. 
     \item $A$ is generated by $A_1$ over $A_0$. 
 \end{enumerate}
\end{prop}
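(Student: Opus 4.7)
The plan is to compute $\Ext^1_A(A_0,A_0)$ via a minimal graded projective resolution of $A_0$ and then translate the vanishing condition into a condition on generators of $A$ over $A_0$.

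First I would construct the initial terms of a minimal graded projective resolution $\cdots \to P^1 \to P^0 \xrightarrow{\varepsilon} A_0 \to 0$. Since $A_0$ is semisimple, every finitely generated graded $A$-module $M$ admits a minimal projective cover, namely $A\otimes_{A_0}(M/A_{\geq 1}M)\twoheadrightarrow M$, where $A_{\geq 1}:=\bigoplus_{j\geq 1}A_j$. Applied to $M=A_0$, this gives $P^0=A$ with augmentation $\varepsilon:A\to A_0$ whose kernel is $A_{\geq 1}$. Applied to $M=A_{\geq 1}$, this gives
\[
P^1 = A\otimes_{A_0} V, \qquad V := A_{\geq 1}/(A_{\geq 1}\cdot A_{\geq 1}),
\]
with $V$ graded by the induced grading from $A_{\geq 1}$.

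Next I would use minimality of the resolution to argue that after applying $\Hom_A(-,A_0)$, every differential is zero: minimality forces the image of each $P^{i+1}\to P^i$ to lie in $A_{\geq 1}\cdot P^i$, and any $A$-linear map from $A_{\geq 1}\cdot P^i$ to $A_0$ vanishes. Hence
\[
\Ext^1_A(A_0,A_0)_{-n} \;=\; \Hom_A(P^1, A_0)_{-n} \;=\; \Hom_{A_0}(V, A_0)_{-n} \;\cong\; (V_n)^*,
\]
where the last identification uses that a graded $A_0$-linear map $V\to A_0$ of internal degree $-n$ is determined by (and free in) its restriction $V_n\to A_0$. Thus condition (1) is equivalent to $V_n=0$ for all $n\neq 1$.

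Finally I would unpack $V_n=0$. By construction,
\[
V_n \;=\; A_n \Big/ \sum_{\substack{i+j=n\\ i,j\geq 1}} A_i A_j,
\]
which is automatic in degree $1$ and, for $n\geq 2$, forces $A_n = \sum_{i+j=n}A_iA_j$. A straightforward induction on $n$ then shows that these equalities for all $n\geq 2$ are equivalent to $A_n = A_1^{\,n}$ for every $n\geq 1$, i.e.\ to condition (2). The only subtle point in the argument is the minimality step; everything else is a bookkeeping exercise once the grading conventions are fixed.
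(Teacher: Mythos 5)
Your proof is correct. The paper itself gives no proof of this statement, citing \cite[Proposition 2.3.1]{bgs} instead, and your argument via the minimal graded projective resolution $A \otimes_{A_0} V \to A \to A_0$ with $V = A_{\geq 1}/(A_{\geq 1})^2$ is exactly the standard one underlying that reference (the only cosmetic quibble being that $\Hom_{A_0}(V_n, A_0)$ is a genuine dual only when $A_0 = k$, though the vanishing equivalence you need holds for any semisimple $A_0$).
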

\begin{prop}[\protect{\cite[Theorem 2.3.2, p. 481]{bgs}}]
\label{quadratic}
 Let $A=\bigoplus_{i\geq 0}A_i$ be a positively graded ring where $A_0$ is semisimple. Suppose $A$ is generated by $A_1$ over $A_0$. If $\Ext^2(A_0,A_0)_{-n}\ne 0$ implies $n=2$, then $A$ is quadratic.    
\end{prop}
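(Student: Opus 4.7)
The plan is to construct a minimal graded projective resolution of $A_0$ as a left $A$-module and read off the relations of $A$ from the generators of the second term. Because $A$ is positively graded with $A_0$ semisimple, such a resolution $\cdots \to P^i \to \cdots \to P^0 \twoheadrightarrow A_0$ exists; each $P^i$ is a direct sum of shifts $A(-n)$, built inductively by choosing a minimal graded $A_0$-generating set of the $(i-1)$-st syzygy $\Omega^{i-1} := \Ker(P^{i-1} \to P^{i-2})$.

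I would next verify the standard $\Ext$-degree dictionary: by minimality each differential has image in $A_{\geq 1} \cdot P^{i-1}$, so $\Hom_A(-, A_0)$ kills all differentials and $\Ext^i_A(A_0, A_0) \cong \Hom_A(P^i, A_0)$. Since $\Hom_A(A(-n), A_0)$ is $A_0$ concentrated in internal degree $-n$, the nonvanishing of $\Ext^i_A(A_0, A_0)_{-n}$ records precisely that $P^i$ has a minimal generator in degree $n$. The assumption that $A$ is generated by $A_1$ over $A_0$ then gives $P^1 = A \otimes_{A_0} A_1$ with generators in degree $1$, and the hypothesis on $\Ext^2$ forces $P^2$ to be generated entirely in degree $2$. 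Consequently, $\Omega^1$ is generated as a left $A$-module by its degree-$2$ piece
\[
\Omega^1_2 \;=\; \Ker\bigl(A_1 \otimes_{A_0} A_1 \to A_2\bigr).
\]

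To finish, I need to upgrade this left-module statement about $\Omega^1$ to a two-sided statement about the relation ideal. Let $T = T_{A_0}(A_1)$ and $I = \Ker(T \twoheadrightarrow A)$, and set $I_2 := \Omega^1_2 \subseteq T_2$. The obvious surjection $T_n \twoheadrightarrow A_{n-1} \otimes_{A_0} A_1$ (quotienting the first $n-1$ tensor factors to land in $A$) has kernel $I_{n-1}\cdot A_1$ and restricts to a surjection $I_n \twoheadrightarrow \Omega^1_n$. Combining this with $\Omega^1_n = A_{n-2} \cdot \Omega^1_2$ (from the left-module generation statement) yields $I_n = I_{n-1}\cdot A_1 + T_{n-2}\cdot I_2$, and a straightforward induction on $n$ then gives $I = \langle I_2\rangle$ as a two-sided ideal, which is the definition of quadraticity.

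The existence of a minimal graded resolution and the $\Ext$-degree correspondence are standard consequences of a graded Nakayama lemma for positively graded rings with $A_0$ semisimple, so the main obstacle will be the final paragraph: carefully tracking the $A_0$-bimodule structure and the grading across the surjection $T_n \twoheadrightarrow A_{n-1} \otimes_{A_0} A_1$, and making precise the identification of left-module syzygies over $A$ with two-sided generators of relations in $T$. I expect this bimodule bookkeeping, rather than any homological subtlety, to be the delicate step.
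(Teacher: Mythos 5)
Your proposal is correct: the dictionary between $\Ext^i_A(A_0,A_0)_{-n}\neq 0$ and the existence of a degree-$n$ minimal generator of $P^i$ in a minimal graded resolution, combined with the identification $\Omega^1_2=\Ker(A_1\otimes_{A_0}A_1\to A_2)=I_2$ and the induction $I_n=I_{n-1}\cdot A_1+T_{n-2}\cdot I_2$, is exactly the standard argument, and the bimodule bookkeeping you flag goes through because $A_0$ is semisimple (so $-\otimes_{A_0}A_1$ is exact and every graded $A_0$-module is projective). The paper itself offers no proof of this proposition, citing it directly from \cite{bgs}, and your reconstruction matches the argument in that reference.
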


\subsubsection{Locally bouquet versus locally Cohen-Macaulay}
Following Baclawski \cite{kb} we adapt the following definition for being a bouquet (cohomologically):
\begin{df}
\label{bouquet semisimplicial set}
    Let $X$ be a semi-simplicial set. We say that $X$ is \newterm{bouquet (cohomologically)} if $\Tilde{H}^j(|X|, k)=0$ for $j<\dim |X|$ where $|X|$ is the geometric realization of $X$. 
\end{df}
\begin{ex}
    Any semi-simplicial set $X$ with $|X|$ homeomorphic to a wedge of spheres of the same dimension is bouquet. In particular any set of points is a bouquet. 
\end{ex}
\begin{rmk}
    We can assign a semi-simplicial set to any (combinatorial) ordered abstract simplicial complex (see Example \ref{ex: abstract simplicial complexes}). Definition \ref{bouquet semisimplicial set} generalizes the existing definition in \cite{kb} to semi-simplicial sets. 
\end{rmk}
\begin{df}
\label{locally bouquet}
    Let $\mathcal{C}$ be an indiscretely based category. We say that $\mathcal{C}$ is bouquet if the reduced nerve $\Bar{N}(\mathcal C)$ of $\mathcal{C}$ (see Definition \ref{reduced nerve}) is bouquet. We say that $\mathcal{C}$ is locally bouquet (cohomologically) if $\mathcal{C}(p)$ (see Definition \ref{Factorization space}) is bouquet (cohomologically) for all $p\in \Mor(\mathcal{C})$. 
\end{df}
\begin{ex}
    The poset 
        \begin{center}
	       \begin{tikzcd}
                    \bullet_b
		          &\bullet_d\\
                    \bullet_a\arrow{u}{}
		          &\bullet_c\arrow{u}{}
	       \end{tikzcd}     
        \end{center} 
    is not bouquet. However it is locally bouquet. 
\end{ex}
In case of posets, locally bouquet and locally Cohen-Macaulay are the same. To be more precise, we state the definition of Cohen-Macaulayness. Some good references are \cite{ab, bww, mw}.
\begin{df}
    A simplicial complex $\Delta$ is said to be \newterm{Cohen-Macaulay} over $k$ if 
    $$\widetilde{H}_i(\link_{\Delta}(F);k)=0$$
    for all $F\in \Delta$ and $i<\dim \link_{\Delta}(F)$ where 
    $$\link_{\Delta}(F)=\{G\in\Delta: F\cup G\in\Delta,F\cap G=\emptyset\}$$
\end{df}
\begin{df}
    Let $P$ be a poset. We say that $P$ is Cohen-Macaulay if its order complex $\Delta(P)$ is Cohen-Macaulay. We say that $P$ is \newterm{locally Cohen-Macaulay} if all the open intervals $(x,y)$ of $P$ are Cohen-Macaulay.
\end{df}
\begin{prop}
\label{prop: loc bouquet vs loc CM}
    A (graded) poset $P$ is locally Cohen-Macaulay if and only if it is locally bouquet.
\end{prop}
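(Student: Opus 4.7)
The key bridge is Example~\ref{ex: open intervals as BC(p)}, which identifies the order complex $\Delta((a,b))$ of an open interval with the factorization space $BP(p)$, where $p$ is the unique morphism $a \to b$ in $P$. Under this identification, $\mathcal{C}(p) = P(p)$ is bouquet precisely when $\Delta((a,b))$ is bouquet in the usual sense (as a simplicial complex), and the cell dimensions match. Since homology and cohomology over the field $k$ are dual, $\widetilde{H}^j$ vanishing is equivalent to $\widetilde{H}_j$ vanishing, so we may interchange the two.

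For the forward direction, suppose $P$ is locally Cohen--Macaulay and let $p : x \to y$ be a morphism. Then $\Delta((x,y))$ is Cohen--Macaulay by assumption; taking the Cohen--Macaulay condition at the empty face $F = \emptyset$ (whose link is the whole complex) gives $\widetilde{H}_j(\Delta((x,y)); k) = 0$ for $j < \dim \Delta((x,y))$, which is exactly the bouquet condition for $BP(p)$ via Example~\ref{ex: open intervals as BC(p)}.

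For the converse, suppose $P$ is locally bouquet. I need to show every open interval $\Delta((x,y))$ satisfies the full link condition for Cohen--Macaulayness. For a face $F = \{z_1 < z_2 < \cdots < z_k\} \in \Delta((x,y))$, a standard decomposition of chains gives
$$\link_{\Delta((x,y))}(F) \;=\; \Delta((x,z_1)) * \Delta((z_1,z_2)) * \cdots * \Delta((z_{k-1},z_k)) * \Delta((z_k,y)),$$
a join of order complexes of open sub-intervals. Each factor is bouquet by hypothesis (with the convention that the empty complex, corresponding to a covering relation, has dimension $-1$ and is vacuously bouquet, consistent with $\widetilde{H}_{-1}(\emptyset;k) = k$).

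The main content is then the following calculation: a join of bouquets is bouquet. This follows from the reduced Künneth formula for joins, valid over a field,
$$\widetilde{H}_n(X*Y;k) \;\cong\; \bigoplus_{i+j = n-1} \widetilde{H}_i(X;k) \otimes_k \widetilde{H}_j(Y;k),$$
together with the dimension identity $\dim(X*Y) = \dim X + \dim Y + 1$. If $X$ is bouquet of dimension $d_X$ and $Y$ of dimension $d_Y$, any nonzero summand forces $i \geq d_X$ and $j \geq d_Y$, hence $n \geq d_X + d_Y + 1 = \dim(X*Y)$, so $X*Y$ is bouquet. Iterating yields that the link above is bouquet, establishing Cohen--Macaulayness of $\Delta((x,y))$.

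The main obstacle is really just bookkeeping the degenerate cases (empty factors when $z_i$ is covered by $z_{i+1}$) and matching the combinatorial dimension of $\mathcal{C}(p)$ with that of $\Delta((x,y))$; the homological content is entirely the Künneth formula for the join.
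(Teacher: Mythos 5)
Your argument is correct and is essentially the paper's own proof, only written out in full: the paper likewise reduces the claim to the decomposition of links as joins of order complexes of open subintervals, and then invokes the universal coefficient theorem (to pass between homology and cohomology over $k$) and the K\"unneth formula for joins. The only difference is that you make explicit the dimension bookkeeping and the degenerate empty-interval cases, which the paper leaves implicit.
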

\begin{proof}
     Any link of a face of the order complex of an open interval $(x,y)$ can be written as a joint of the order complexes of open subintervals of $(x,y)$. Then, this follows immediately from the universal coefficient theorem and the K\"{u}nneth formula.
\end{proof}
\begin{ex}
\label{ex: CW posets are CM}
    Any (graded) \newterm{CW poset}, i.e. a poset which is isomorphic to the face poset of a regular CW complex (see Definition 2.1 and Proposition 3.1 in \cite{bjorner1984posets}), is locally Cohen-Macaulay.
\end{ex}

\subsubsection{Koszulity versus locally bouquet}
We now relate the Koszulity of category algebras to the locally bouquet property. Recall that $A$ is Koszul if and only if $\Ext^i_A(A_0,A_0)_{-n}=0$ for $i \neq n$.  In particular, we must have $\Ext^1_A(A_0,A_0)_{-n}=0$ for $n \neq 1$.  Now, to compare with the locally bouquet property for $i=1$, we use the standard convention $\widetilde H^{-1}(\emptyset) = \mathbb Z$ and is zero otherwise. This brings us to the following result.
\begin{prop} \label{prop: ext1 vanishes}
    Let $\mathcal{C}$ be an indiscretely based category. Then the  following are equivalent:
    \begin{enumerate}
   \item $\Ext^1((k\mathcal{C})_0, (k\mathcal{C})_0)_{-n}=0$ for  $n \neq 1$.
   \item $\widetilde{H}^{-1}(B\mathcal{C}(p))=0$ for all $p\in \Mor(\mathcal{C})$ with $l(p)\ne 1$.
   \item In $\mathcal C$, a morphism is \newterm{indecomposable}, i.e. has no nontrivial factorizations if and only if it has length 1.
        
   \item Any morphism of positive length in $\mathcal C$ is a composition of morphisms of length 1.
   \item The category algebra $k\mathcal C$ is generated by $k\mathcal C_1$ over $k\mathcal C _0$. 
\end{enumerate}
        In particular, if $k \mathcal C$ is Koszul then all of the above hold.
\end{prop}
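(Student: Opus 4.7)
The plan is to verify the cycle (1) $\Leftrightarrow$ (2) $\Leftrightarrow$ (3) $\Leftrightarrow$ (4) $\Leftrightarrow$ (5) and then deduce the last sentence from Proposition~\ref{Koszul}. For (1) $\Leftrightarrow$ (2), I would combine Corollary~\ref{vanishing of ext groups based on simples} with Proposition~\ref{Ext groups} specialized to $i = 1$ to obtain the isomorphism
\[
\Ext^1((k\mathcal{C})_0, (k\mathcal{C})_0)_{-n} \cong \bigoplus_{p \in \Mor(\mathcal{C}),\, l(p) = n} \widetilde{H}^{-1}(B\mathcal{C}(p)),
\]
so vanishing of the left side for each $n \neq 1$ is equivalent to vanishing of every summand on the right, which is precisely (2).

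For (2) $\Leftrightarrow$ (3), the convention $\widetilde{H}^{-1}(\emptyset) = k$ (and zero for nonempty spaces) noted immediately before the proposition identifies $\widetilde{H}^{-1}(B\mathcal{C}(p)) = 0$ with nonemptiness of $B\mathcal{C}(p)$, equivalently the existence of a $0$-cell in $\mathcal{C}(p)$ (see Definition~\ref{Factorization space}), equivalently $p$ admitting a nontrivial factorization. Since $l$ is a functor to $\mathbb{N}$ and the length-zero morphisms of an indiscretely based category are exactly the isomorphisms, a length-one morphism cannot factor as $f_0 \circ f_1$ with both $f_i$ non-isomorphisms, so length-one morphisms are automatically indecomposable. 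Hence condition (2), imposed on all $p$ with $l(p) \neq 1$, forces every morphism of length $\geq 2$ to be decomposable, and combined with the automatic indecomposability at $l(p) = 1$ yields the biconditional in (3).

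The equivalence (3) $\Leftrightarrow$ (4) is a routine induction on length: (3) implies any morphism of length $\geq 2$ admits a nontrivial factorization $p = f_0 \circ f_1$, and since each $f_i$ has strictly smaller positive length the hypothesis applies recursively to give a decomposition into length-one pieces; conversely, any composition of at least two length-one morphisms visibly provides a nontrivial factorization. The equivalence (4) $\Leftrightarrow$ (5) is by inspection of bases: $k\mathcal{C}_0$ is spanned by the isomorphisms in $\mathcal{C}$ (Proposition~\ref{kC_0 is semisimple}), $k\mathcal{C}_1$ by the length-one morphisms, and the basis of $k\mathcal{C}$ consists of all morphisms, so $k\mathcal{C}$ is generated over $k\mathcal{C}_0$ by $k\mathcal{C}_1$ if and only if every morphism of positive length is a composition of length-one morphisms (up to isomorphisms absorbed by $k\mathcal{C}_0$).

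Finally, the last sentence is immediate from Proposition~\ref{Koszul}: Koszulity gives $\Ext^i((k\mathcal{C})_0, (k\mathcal{C})_0)_{-n} = 0$ whenever $i \neq n$, which specialized to $i = 1$ is (1). The main point to be careful about is the $l(p) = 0$ case of (2), where $B\mathcal{C}(p) = \emptyset$ would formally give $\widetilde{H}^{-1} = k$; this is not a genuine obstacle because $\Ext^1((k\mathcal{C})_0, (k\mathcal{C})_0)_0 = 0$ holds automatically by semisimplicity of $(k\mathcal{C})_0$, so the displayed formula has nontrivial content only at $n \geq 2$.
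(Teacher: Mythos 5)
Your proposal is correct and follows essentially the same route as the paper: the identification of $\Ext^1$ with $\bigoplus_p \widetilde{H}^{-1}(B\mathcal{C}(p))$ via Proposition~\ref{Ext groups}, the translation of $\widetilde{H}^{-1}$-vanishing into nonemptiness of the factorization space, induction on length for (3)$\Leftrightarrow$(4), and reading (5) as a restatement of (4); the paper merely organizes (2)--(4) as a cycle rather than pairwise biconditionals. Your closing remark on the $l(p)=0$ case is a point the paper's proof passes over silently, and addressing it explicitly is a small improvement.
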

\begin{proof}
We have 
\begin{align*}
\Ext^1((k\mathcal{C})_0,(k\mathcal{C})_0)_{-n} & =\bigoplus_{v,w\in Q_0}\Ext^1(S_w,S_v)_{-n} & \text{Since}(k\mathcal{C})_0= \bigoplus_{v\in Q_0} S_v \\ 
& = \bigoplus_{\substack{v,w\in Q_0,\\ p|l(p)=n,\\ t(p)=v,h(p)=w}}\Tilde{H}^{-1}(B\mathcal{C}(p)) & \text{By Proposition \ref{Ext groups}}
\end{align*}  
This immediately implies that (1) and (2) are equivalent. Since isomorphisms are the only morphisms of length zero in $\mathcal{C}$, in $\mathcal{C}$ any length one morphism is indecomposable. Now (2) implies that if a morphism has length greater than one then it is decomposable. So, (2) implies (3). Assuming (3) if a morphism has a length greater than $1$, then it is decomposable into morphisms with smaller length hence into length one morphisms by induction. So, (3) implies (4). Now (4) implies that for any morphism $p\in \mathcal{C}$ with $l(p)>1$, $B\mathcal{C}(p)\ne \emptyset$ and therefore $\widetilde{H}^{-1}(B\mathcal{C}(p))=0$. Hence (4) implies (2). Finally (5) is a categorical restatement of (4).
\end{proof}


\begin{rmk}
    Let $\mathcal{C}$ be an indiscretely based category satisfying the equivalent conditions of \Cref{prop: ext1 vanishes}. Then, $\dim(B\mathcal{C}(p))=l(p)-2$ for all $p\in \Mor(\mathcal{C})$. 
\end{rmk}
The following provides a topological interpretation for being quadratic: 
\begin{prop} \label{prop: quad}
  Let $\mathcal{C}$ be an indiscretely based category. Suppose $B\mathcal{C}(p)$ is non-empty for all $p\in \Mor(\mathcal{C})$ with $l(p)\ne 1$ and connected for all $p\in \Mor(\mathcal{C})$ with $l(p)\ne 2$. Then $k\mathcal{C}$ is quadratic. 
\end{prop}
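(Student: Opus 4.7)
The plan is to verify the three conditions of Proposition \ref{quadratic} for $A = k\mathcal{C}$:  namely, (i) that $A_0$ is semisimple, (ii) that $A$ is generated over $A_0$ by $A_1$, and (iii) that $\Ext^2(A_0, A_0)_{-n} \ne 0$ forces $n = 2$. Condition (i) is immediate from Proposition \ref{kC_0 is semisimple} since $\mathcal{C}$ is indiscretely based.

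For condition (ii), I would apply Proposition \ref{prop: ext1 vanishes} which (together with Proposition \ref{generation over A_1}) identifies generation in degree $1$ with the vanishing of $\widetilde{H}^{-1}(B\mathcal{C}(p))$ for every $p \in \Mor(\mathcal{C})$ with $l(p) \ne 1$. Using the standard convention $\widetilde{H}^{-1}(\emptyset) = k$ and $\widetilde{H}^{-1}(X) = 0$ for non-empty $X$, this vanishing is exactly the hypothesis that $B\mathcal{C}(p)$ be non-empty whenever $l(p) \ne 1$.

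For condition (iii), combine Corollary \ref{vanishing of ext groups based on simples} with the topological formula of Proposition \ref{Ext groups} to rewrite
\[
\Ext^2((k\mathcal{C})_0, (k\mathcal{C})_0)_{-n} \;=\; \bigoplus_{\substack{p \,:\, l(p) = n}} \widetilde{H}^0(B\mathcal{C}(p)).
\]
For $n \ne 2$, the cases $l(p) \in \{0,1\}$ force $B\mathcal{C}(p) = \emptyset$ automatically, since neither isomorphisms nor length-one morphisms admit a nontrivial factorization, so under the convention $\widetilde{H}^0(\emptyset) = 0$ the corresponding summands vanish. For $l(p) \geq 3$, the connectedness hypothesis gives $\widetilde{H}^0(B\mathcal{C}(p)) = 0$ directly. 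Together these establish (iii), and Proposition \ref{quadratic} then yields the quadratic conclusion.

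The main point to be careful about is the bookkeeping of reduced-cohomology conventions at the empty space, together with the observation that the $l(p) = 1$ case is harmless in controlling both $\Ext^1$ and $\Ext^2$ because no length-one morphism has a nontrivial factorization. Once these conventions are pinned down, each step is essentially a direct application of the topological description of Ext groups already established in Proposition \ref{Ext groups}.
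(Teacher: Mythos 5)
Your proposal is correct and follows essentially the same route as the paper: verify the hypotheses of Proposition~\ref{quadratic} by using Proposition~\ref{prop: ext1 vanishes} for generation in degree one and the topological formula of Proposition~\ref{Ext groups} to show $\Ext^2((k\mathcal{C})_0,(k\mathcal{C})_0)_{-n}$ vanishes for $n\neq 2$. Your explicit handling of the $l(p)\in\{0,1\}$ cases (where $B\mathcal{C}(p)=\emptyset$ and $\widetilde{H}^0(\emptyset)=0$) is a slightly more careful bookkeeping of a point the paper passes over, but the argument is the same.
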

\begin{proof}
Since $B\mathcal{C}(p)$ is non-empty for all $p\in \Mor(\mathcal{C})$ with $l(p)\ne 1$, $\widetilde{H}^{-1}(B\mathcal{C}(p))=0$ for all $p\in \Mor(\mathcal{C})$ with $l(p)\ne 1$. Hence by Proposition \ref{prop: ext1 vanishes}, $k\mathcal{C}$ is generated over $k\mathcal{C}_0$ by $k\mathcal{C}_1$. Furthermore, we have 
\begin{align*}
\Ext^i((k\mathcal{C})_0,(k\mathcal{C})_0)_{-n} & =\bigoplus_{v,w\in Q_0}\Ext^i(S_w,S_v)_{-n} & \text{Since}(k\mathcal{C})_0= \bigoplus_{v\in Q_0} S_v \\ 
& = \bigoplus_{\substack{v,w\in Q_0,\\ p|l(p)=n,\\ t(p)=v,h(p)=w}}\Tilde{H}^{i-2}(B\mathcal{C}(p)) & \text{By Proposition \ref{Ext groups}}
\end{align*}  
So, if $B\mathcal{C}(p)$ is connected for all $p\in \Mor(\mathcal{C})$ with $l(p)\ne 2$, then $$\Ext^2((k\mathcal{C})_0,(k\mathcal{C})_0)_{-n}=0$$ for $n\ne 2$. Now by Proposition \ref{quadratic} $k\mathcal{C}$ is quadratic. 
\end{proof}
\begin{ex}
Consider the poset $P=\{a\leq b\leq d\leq f,a\leq c\leq e\leq f\}$ with the following Hasse diagram 
        \begin{center}
	       \begin{tikzcd}
		          & \bullet_{f}\\
		          \bullet_{d}\arrow{ur}{z}&& \bullet_{e}\arrow[swap]{ul}{w}\\
                    \bullet_{b}\arrow{u}{y}&&\bullet_{c}\arrow[swap]{u}{v}\\
                    &\bullet_{a}\arrow[swap]{ur}{u}\arrow{ul}{x}
	       \end{tikzcd}     
        \end{center} 
        Let $p=xyz$. Then, $BP(p)$ is a union of two disjoint $1$-cells and so it is disconnected. So the incidence algebra of $P$ is not quadratic. This is of course obvious as the only relation is $xyz-uvw$. Also, note that $BP(p)$ is not locally bouquet.
\end{ex}
The following is our main result:
\begin{thm}
\label{Theorem}
  Let $\mathcal{C}$ be an indiscretely based category.  Then, $k\mathcal{C}$ is Koszul if and only if $\mathcal{C}$ is locally bouquet. 
\end{thm}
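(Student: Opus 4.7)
The plan is to combine the topological Ext formula (Proposition \ref{Ext groups}) with the classical Koszul vanishing criterion. By Corollary \ref{Koszulity based on simples} together with Proposition \ref{Ext groups}, Koszulity of $k\mathcal{C}$ is equivalent to
\[
\widetilde{H}^{j}(B\mathcal{C}(p)) = 0 \qquad \text{for all } j \neq l(p) - 2
\]
and all $p \in \Mor(\mathcal{C})$ (where $j = i - 2$). The goal is to match this, degree by degree, against the locally bouquet condition $\widetilde{H}^{j}(B\mathcal{C}(p)) = 0$ for $j < \dim B\mathcal{C}(p)$.

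First I would record the automatic dimension bound $\dim B\mathcal{C}(p) \leq l(p) - 2$: an $r$-cell of $B\mathcal{C}(p)$ represents a nontrivial factorization of $p$ into $r + 2$ non-isomorphism morphisms, each of length at least one, forcing $l(p) \geq r + 2$. In particular, cellular cohomology vanishes for free in degrees $j > l(p) - 2$, so the Koszul condition reduces to vanishing in degrees $j < l(p) - 2$.

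For the forward direction, Koszulity forces $\Ext^{1}((k\mathcal{C})_{0}, (k\mathcal{C})_{0})_{-n} = 0$ for $n \neq 1$, so Proposition \ref{prop: ext1 vanishes} applies and every morphism of positive length is a composition of length-one pieces. For any $p$ with $l(p) \geq 2$ such a maximal factorization realizes an $(l(p) - 2)$-cell of $B\mathcal{C}(p)$, hence $\dim B\mathcal{C}(p) = l(p) - 2$. The Koszul vanishing then reads exactly $\widetilde{H}^{j}(B\mathcal{C}(p)) = 0$ for $j < \dim B\mathcal{C}(p)$, which is the locally bouquet condition.

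Conversely, locally bouquet supplies the vanishing $\widetilde{H}^{j}(B\mathcal{C}(p)) = 0$ for $j < \dim B\mathcal{C}(p)$, and the cellular bound supplies it for $j > l(p) - 2$; once one knows $\dim B\mathcal{C}(p) = l(p) - 2$, these two ranges cover every $j \neq l(p) - 2$ and Koszulity follows. The main obstacle I expect is precisely this top-dimension identification in the converse: one must see that the bouquet hypothesis already forces $B\mathcal{C}(p)$ to be non-empty of maximal expected dimension, equivalently that every morphism of length $\geq 2$ admits a factorization into length-one pieces. This is the direct categorical analogue of the purity part of the Cohen-Macaulay condition for graded posets and is the step where careful attention to the conventions for empty geometric realizations and reduced cohomology in degree $-1$ is essential; once in place, the equivalence falls out of the two dimension-matched vanishing statements.
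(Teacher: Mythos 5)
Your overall strategy is the paper's: combine Proposition~\ref{Ext groups} with Corollary~\ref{Koszulity based on simples} to translate Koszulity into $\widetilde{H}^{j}(B\mathcal{C}(p))=0$ for all $j\neq l(p)-2$, and then match this degreewise against the bouquet condition. The paper's proof is exactly this chain of identifications, ending in a single line ``$=0$ iff $B\mathcal{C}(p)$ is bouquet for all $p$ with $l(p)=n$,'' so your forward direction --- invoking Proposition~\ref{prop: ext1 vanishes} to factor every morphism into length-one pieces, producing an $(l(p)-2)$-cell so that $\dim B\mathcal{C}(p)=l(p)-2$ and the Koszul vanishing literally becomes the bouquet vanishing --- is correct and in fact more explicit than what the paper writes.

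The converse is where you have a genuine gap, and you have put your finger on exactly the right spot without closing it. You assert that the bouquet hypothesis ``already forces $B\mathcal{C}(p)$ to be non-empty of maximal expected dimension,'' but under the literal reading of Definition~\ref{bouquet semisimplicial set} this does not follow: if $q$ is an indecomposable morphism of length $\geq 2$, then $B\mathcal{C}(q)=\emptyset$, which is vacuously bouquet under the standard conventions for $\dim\emptyset$ (and the empty space has to count as bouquet, since $B\mathcal{C}(p)=\emptyset$ for every length-one $p$), yet $\widetilde{H}^{-1}(B\mathcal{C}(q))=k$ contributes to $\Ext^{1}$ in internal degree $-l(q)$ and destroys Koszulity. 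Likewise, a length-$5$ morphism admitting only two factorizations into indecomposables of lengths $(2,3)$ and $(1,4)$ has $B\mathcal{C}(p)$ equal to two points --- bouquet, but with $\widetilde{H}^{0}\neq 0$ in the wrong degree. The resolution, and evidently what the paper intends given the remark preceding Proposition~\ref{prop: ext1 vanishes} and the fact that its proof never mentions $\dim B\mathcal{C}(p)$ at all, is to read ``$\mathcal{C}(p)$ is bouquet'' as vanishing of $\widetilde{H}^{j}$ for $j$ below the \emph{expected} dimension $l(p)-2$ rather than below the actual dimension of the realization; with that reading the converse is immediate and your dimension analysis is needed (and works) only in the forward direction. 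Without it, your converse requires an additional input --- generation of $k\mathcal{C}$ in degree one --- that the locally bouquet hypothesis alone does not supply, so the step you flagged as the main obstacle is indeed unproved as stated.
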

\begin{proof}
We have 
\begin{align*}
\Ext^i((k\mathcal{C})_0,(k\mathcal{C})_0)_{-n} & =\bigoplus_{v,w\in \Ob(\mathcal{C})}\Ext^i(S_w,S_v)_{-n} & \text{Since}(k\mathcal{C})_0= \bigoplus_{v\in \Ob(\mathcal{C})} S_v \\ 
& = \bigoplus_{\substack{v,w\in \Ob(\mathcal{C}),\\ p|l(p)=n,\\ t(p)=v,h(p)=w}}\Tilde{H}^{i-2}(B\mathcal{C}(p)) & \text{By Proposition \ref{Ext groups}}\\
& = 0 &\text{iff $B\mathcal{C}(p)$ is bouquet for all $p|l(p)=n$} 
\end{align*}
On the other hand, by Proposition \ref{Koszul}, $k\mathcal{C}$ 
 is Koszul iff $\Ext^i((k\mathcal{C})_0,(k\mathcal{C})_0)_{-n}=0$ for $i\ne n$.     
\end{proof}

The standard example of the symmetric algebra can now be seen as follows.
\begin{ex}
\label{ex: polynomial ring is Koszul}
    Let $k$ be a field and $\mathcal{C}$ be a graded category with one object $v$ whose endomorphisms are generated by $x$ i.e. $k\mathcal{C}\cong k[x]$. For $p=x^n\in \Mor(\mathcal{C})$, the space $B\mathcal{C}(p)$ is homeomorphic to the $(n-2)$-simplex which is bouquet (see also Example \ref{polynomial ring in one variable}). So we recover the fact that $k[x]$ is Koszul. Moreover, 
    $$k[x_1,...x_n]\cong \underbrace{k\mathcal{C}\otimes_k ...\otimes_k k\mathcal{C}}_{\text{n-times}}$$ 
    (See Example \ref{ex: polynomial ring in several variables}). Since $k[x]$ is Koszul, by Corollary \ref{tensor prodcut of koszul algebras} we get that $k[x_1,...,x_n]$ is Koszul.
\end{ex}
\begin{cor}[\protect{\cite{pp}\cite[Theorem 3.7, p. 406]{dw}}]
\label{cor incidence}
    The incidence algebra $k[P]$ of a graded poset $P$ is Koszul if and only if every $P$ is locally Cohen-Macaulay over $k$. 
\end{cor}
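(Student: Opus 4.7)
The plan is to derive this as an immediate consequence of the main Theorem~\ref{Theorem} together with Proposition~\ref{prop: loc bouquet vs loc CM}. Specifically, I would first observe that a graded poset $P$, viewed as a category, is indiscretely based: the degree zero morphisms are precisely the identities, so each connected component of $P_0$ is a one-object indiscrete category (this was noted in the excerpt as a special case of graded HPAs being indiscretely based).

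Next, I would apply Theorem~\ref{Theorem} to $\mathcal{C}=P$, which states that $k[P] = kP$ is Koszul if and only if $P$ is locally bouquet, i.e.\ $BP(p)$ is bouquet (cohomologically) for every morphism $p$. By Example~\ref{ex: open intervals as BC(p)}, for a graded poset $P$ and a morphism $p$ from $a$ to $b$, the space $BP(p)$ is homeomorphic to the order complex of the open interval $(a,b)$. Hence the local bouquet condition on $P$ is exactly the requirement that each order complex $\Delta((a,b))$ is cohomologically bouquet.

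Finally, I would invoke Proposition~\ref{prop: loc bouquet vs loc CM}, which identifies locally bouquet with locally Cohen-Macaulay for graded posets, to conclude that $k[P]$ is Koszul iff $P$ is locally Cohen-Macaulay over $k$. Since all three ingredients (indiscretely based, topological identification of $BP(p)$, and the equivalence of local bouquet with local Cohen-Macaulay) have been established earlier, there is no real obstacle here; this is genuinely a corollary. The only thing to check carefully is that the grading on $P$ matches the grading on the category algebra used throughout Section~\ref{sec: top Koszulity}, but this is built into the definition since the rank function on a graded poset is precisely a functor $l\colon P\to\mathbb{N}$.
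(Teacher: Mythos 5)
Your proposal is correct and follows exactly the paper's own argument: identify $k[P]$ with the category algebra $kP$, apply Theorem~\ref{Theorem} to reduce Koszulity to the locally bouquet condition, use Example~\ref{ex: open intervals as BC(p)} to identify $BP(p)$ with the order complex of the open interval, and conclude via Proposition~\ref{prop: loc bouquet vs loc CM}. The extra remarks on $P$ being indiscretely based and on the compatibility of the grading are sound and only make explicit what the paper leaves implicit.
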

\begin{proof}
Let $P$ be a graded poset. Then the category algebra $kP$ of $P$ (regarded as a category) is isomorphic to the incidence algebra $k[P]$. For any morphism $\lambda\leq_p \mu$ in $P$, $BP(p)$ is isomorphic to the order complex of the open interval $(\lambda,\mu)$ (See Example \ref{ex: open intervals as BC(p)}). So by Theorem \ref{Theorem}, $kP$ is Koszul if and only if $P$ is locally bouquet. By Proposition \ref{prop: loc bouquet vs loc CM}, $P$ is locally bouquet if and only if it is locally Cohen-Macaulay over $k$.
\end{proof}
\begin{ex}[\protect{\cite[Proposition 7.1, p. 16]{yanagawa2005dualizing}}]
    By Example \ref{ex: CW posets are CM} and Corollary \ref{cor incidence}, the incidence algebra of any graded CW poset is Koszul.
\end{ex}
The following is another quick corollary of this Theorem \ref{Theorem}: 
\begin{cor}
\label{cor: Koszulity of subcategories}
    Let $\mathcal{D}$ be a subcategory of an indiscretely based category $\mathcal{C}$ such that any factorization of a morphism in $\mathcal{D}$ remains in $\mathcal{D}$. If $k\mathcal{C}$ is Koszul, then $\mathcal{D}$ is locally bouquet and $k\mathcal{D}$ is Koszul. 
\end{cor}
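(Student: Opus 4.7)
The plan is to apply Theorem \ref{Theorem} in both directions. Since $k\mathcal{C}$ is Koszul, Theorem \ref{Theorem} tells us that $\mathcal{C}$ is locally bouquet, i.e.\ $B\mathcal{C}(p)$ is bouquet for every $p\in\Mor(\mathcal{C})$.

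The key observation is that for any $p\in\Mor(\mathcal{D})$, the semi-simplicial sets $\mathcal{D}(p)$ and $\mathcal{C}(p)$ from Definition \ref{Factorization space} agree. Indeed, any nontrivial factorization $(f_0,\ldots,f_{n+1})$ of $p$ in $\mathcal{C}$ has each $f_i\in\mathcal{D}$ by the closure hypothesis. Moreover, the isomorphisms $h_i$ used in the equivalence relation satisfy $f_i\circ h_i=f'_i\in\mathcal{D}$, so they appear as factors of morphisms in $\mathcal{D}$ and hence lie in $\mathcal{D}$ as well. Therefore $B\mathcal{D}(p)=B\mathcal{C}(p)$ is bouquet, so $\mathcal{D}$ is locally bouquet.

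To conclude that $k\mathcal{D}$ is Koszul via Theorem \ref{Theorem}, we still need to verify that $\mathcal{D}$ is indiscretely based. Any degree-zero morphism $f:a\to b$ of $\mathcal{D}$ is an isomorphism in the indiscrete component of $\mathcal{C}_0$ containing $a$ and $b$; since $Id_a=f^{-1}\circ f\in\mathcal{D}$, closure under factorization gives $f^{-1}\in\mathcal{D}$. Consequently, each connected component of $\mathcal{D}_0$ is a subgroupoid of an indiscrete component of $\mathcal{C}_0$. Between any two of its objects there is at most one morphism, namely the unique one inherited from $\mathcal{C}_0$, and there is at least one, obtained by composing a connecting zigzag in $\mathcal{D}_0$ together with the inverses just constructed. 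Hence each such component is indiscrete and Theorem \ref{Theorem} applies to $\mathcal{D}$, giving that $k\mathcal{D}$ is Koszul.

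The proof is largely bookkeeping, and no step should present a serious obstacle; the only subtlety is verifying that the closure hypothesis simultaneously pulls back the equivalence relation on factorizations and the inverses of degree-zero morphisms into $\mathcal{D}$, so that $\mathcal{D}$ is genuinely indiscretely based and its factorization spaces coincide with those of $\mathcal{C}$.
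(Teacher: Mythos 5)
Your proof is correct and takes essentially the same route as the paper: apply Theorem~\ref{Theorem} to $\mathcal{C}$, observe that the closure hypothesis forces $\mathcal{D}(p)=\mathcal{C}(p)$ for every $p\in\Mor(\mathcal{D})$, and apply Theorem~\ref{Theorem} again to $\mathcal{D}$. The only difference is that you explicitly verify two points the paper leaves implicit --- that the isomorphisms $h_i$ in the equivalence relation on factorizations also land in $\mathcal{D}$, and that $\mathcal{D}$ is itself indiscretely based --- both of which are handled correctly.
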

\begin{proof}
     If $k\mathcal{C}$ is Koszul, by Theorem \ref{Theorem}, $\mathcal{C}$ is locally bouquet. Now by assumption, $\mathcal{D}(p)=\mathcal{C}(p)$ for all $p\in \mathcal{D}$. So, $\mathcal{D}$ is locally bouquet and by Theorem \ref{Theorem} $k\mathcal{D}$ is Koszul. 
\end{proof}
\subsubsection{Almost discrete fibrations}
\begin{df}
    Let $F:\mathcal{C}\to \mathcal{D}$ be a functor between two (indiscretely based) categories. We say that $F$ is an \newterm{almost discrete fibration} if for each $q\in \Mor(\mathcal{D})$ and each $p\in F^{-1}(q)$ any nontrivial factorization $q=g_0\circ ...\circ g_{n+1}$ ($n\geq 0$) has a unique lift to a nontrivial factorization $p=f_0\circ ...\circ f_{n+1}$ such that $F(f_i)=g_i$ up to the following equivalence relation 
    $$(f_0, ... , f_{n+1})\sim (f'_0, ..., f'_{n+1})$$ if for all $i$ there exists an isomorphism $h_i$ such that $f'_i=f_i\circ h_i$ and $f'_{i+1}=h_i^{-1}\circ f_{i+1}$. 
 We provide a pictorial description below.
 \begin{center}
	       \tiny\begin{tikzcd}
                    &&\text{\large $\mathcal{C}$}&\phantom{\bullet}\arrow{rrrr}{\text{\large $F$}}
		          &&&&\phantom{\bullet}& \text{\large $\mathcal{D}$}\\
                    \bullet\ar[rrrr, "p_1"]\arrow[dr, swap,  "f^1_0"]&&&&\bullet\\
                    &\bullet&\ldots&\bullet\arrow[ur, swap, "f^1_{n+1}"]\\
                    && \vdotswithin{\ldots}&&\draw[|->] (1,0) --(3,0); &&\bullet\arrow[rrrr, "q"]\arrow[dr, swap,  "g_0"]&&&&\bullet\\
                    &&&&&&&\bullet&\ldots&\bullet\arrow[ur, swap, "g_{n+1}"]\\
                    \bullet\arrow[rrrr, "p_k"]\arrow[dr, swap, "f^k_0"]&&&&\bullet\\
                    &\bullet&\ldots&\bullet\arrow[ur, swap, "f^k_{n+1}"]\\
                    
	       \end{tikzcd}     
        \end{center}  
\end{df}

\begin{ex}
    Any equivalence between indiscretely based categories is an almost discrete fibration. 
\end{ex}
\begin{ex}
\label{ex: nonexample of alm. disc. fib.}
    Let $P=\{a\leq b,a\leq c,b\leq d,c\leq d\}$ and $P'=\{a'\leq b'\leq c'\}$ be posets. The projection 
    \begin{center}
	       \begin{tikzcd}
                    &\bullet_d &&&&&& \bullet_{c'}\\
                    \bullet_b\arrow{ur}&&\bullet_c\arrow{ul}&\phantom{\bullet}\arrow{rrr}{F}&&&\phantom{\bullet}&\bullet_{b'}\arrow{u}\\
                    &\bullet_a\arrow{ul}\arrow{ur}&&&&&&\bullet_{a'}\arrow{u}
	       \end{tikzcd}     
        \end{center}  
    is not an almost discrete fibration. Indeed $F^{-1}(a'\leq c')=\{a\leq d\}$ however the factorization $a'\leq b'\leq c'$ of $a'\leq c'$ has two lifts $a\leq b\leq d$ and $a\leq c\leq d$ of the factorizations of $a\leq d$. 
\end{ex}

Our main examples of almost discrete fibrations are discrete fibrations (See Proposition \ref{prop: discrete fibrations are almost discrete fibrations}). A good reference for discrete fibrations is \cite{LOREGIAN2020496}. 
 \begin{df}
    A functor $p:\mathcal{E}\to \mathcal{B}$ is a discrete fibration if for each object $E\in \Ob(\mathcal{E})$ and any morphism $f:B\to p(E)$, there exists a unique $g:E'\to E$ lifting $f$ i.e. $p(g) = f$. 
\end{df}
A nice example of discrete fibrations are quotients via free group actions on categories. These will be useful later in our applications.  Some good references for group actions on categories are \cite{Deligne1997ActionDG, BABSON2005439}.

Let $\mathcal{C}$ be a category and $G$ be a group. By a $G$-action on $\mathcal{C}$ we mean a group homomorphism 
    $$\rho: G\to \Iso(\mathcal{C})$$
where $\Iso(\mathcal{C})$ is the group of isomorphisms of $\mathcal{C}$. For any element $g\in G$, we write $g \cdot x$ to denote  $\rho(g)(x)$ where $x$ is either an object or morphism of $\mathcal{C}$. Any group action on $\mathcal{C}$ induces a group action on $\Ob(\mathcal{C})$ and $\Mor(\mathcal{C})$. We say that our group action is free if the induced group actions on $\Ob(\mathcal{C})$ and $\Mor(\mathcal{C})$ are free. By the quotient category $\mathcal{C}/G$ we mean a category whose objects are the orbits of the induced group action on $\Ob(\mathcal{C})$) and its morphisms are the orbits of the induced group action on $\Mor(\mathcal{C})$. For $[\alpha]:[A]\to [B]$ with $\alpha:A_1\to B_1$ and $[\beta]:[B]\to [C]$ with $\beta:B_2\to C_2$ define: 
$$[\beta]\circ [\alpha]:=[\beta\circ g \cdot \alpha]$$
where $g\in G$ such that $g \cdot B_1=B_2$.
One can easily show that $\mathcal{C}/G$ is a category.
\begin{ex}
\label{ex: group quotients are discrete fibrations}
Let $\mathcal{C}$ be a category with a free $G$-group action.  The projection functor 
    $$\pi:\mathcal{C}\to \mathcal{C}/G$$
    $$A\mapsto [A]$$
    $$f\mapsto [f]$$   
is a discrete fibration. Indeed if $[\alpha]:[A]\to [B]$ with $\alpha:A_1\to B_1$ and $B_2\in [B]=[B_1]$, then there exists a unique $g\in G$ such that $g.B_1=B_2$. Hence $g.\alpha:g.A_1\to g.B_1$ is a morphism with $[g.\alpha]=[\alpha]$.
\end{ex}

\begin{prop}
\label{prop: discrete fibrations are almost discrete fibrations}
    Discrete fibrations are almost discrete fibrations. 
\end{prop}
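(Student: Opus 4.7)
The plan is to prove existence and uniqueness of the lifted factorization by invoking the discrete fibration property iteratively from right to left along the factorization. Let $F \colon \mathcal{C} \to \mathcal{D}$ be a discrete fibration, fix $q \in \Mor(\mathcal{D})$ and $p \in F^{-1}(q)$ with $p \colon A \to B$, and fix a nontrivial factorization $q = g_0 \circ \cdots \circ g_{n+1}$.

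For existence, I would build the lift inductively. Since $F$ is a discrete fibration, the morphism $g_0$ with codomain $F(B)$ admits a unique lift $f_0 \colon Y_1 \to B$ with $F(f_0) = g_0$. Applying the discrete fibration property again to $g_1$ at the object $Y_1$ produces a unique $f_1 \colon Y_2 \to Y_1$ lifting $g_1$, and continuing in this way yields morphisms $f_0, f_1, \ldots, f_{n+1}$ with $F(f_i) = g_i$ and whose composition $f_0 \circ \cdots \circ f_{n+1}$ is defined with codomain $B$. This composition lifts $q$ at the object $B$; since $p$ is another such lift, the uniqueness clause of the discrete fibration property forces them to coincide, in particular identifying the source with $A$ and giving $p = f_0 \circ \cdots \circ f_{n+1}$. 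Nontriviality of the lift is automatic because $F$ preserves isomorphisms, so if any $f_i$ were an isomorphism then $g_i = F(f_i)$ would be too, contradicting the nontriviality of the chosen factorization of $q$.

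For uniqueness (up to the stated equivalence), suppose $(f'_0, \ldots, f'_{n+1})$ is another lift of $(g_0, \ldots, g_{n+1})$ whose composition equals $p$. Then $f'_0$ and $f_0$ are both lifts of $g_0$ with codomain $B$, so by the discrete fibration property $f'_0 = f_0$ (and they share a source). Iterating this argument, $f'_i = f_i$ for every $i$, which is in particular equivalent to $(f_0,\ldots,f_{n+1})$ by taking all the isomorphisms $h_i$ to be identities. Hence $F$ satisfies the defining property of an almost discrete fibration.

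The main step to watch is the backwards induction and the identification of the source of the composite lift with $A$; once one organizes the lifts correctly from the codomain of $p$ inward, everything reduces to iterated applications of the unique-lifting property, so there is no genuine obstacle beyond bookkeeping.
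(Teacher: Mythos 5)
Your proposal is correct and follows essentially the same route as the paper: the paper verifies the almost-discrete-fibration condition by showing the induced maps $\mathcal{E}(f)_n\to\mathcal{B}(p(f))_n$ are bijections, with surjectivity proved by exactly your head-to-tail iterated lifting plus the uniqueness clause identifying the composite with $p$, and injectivity by the same step-by-step identification $f_i=f'_i$ you give. Your added observations (nontriviality of the lift because functors preserve isomorphisms, and that on-the-nose uniqueness implies uniqueness up to the equivalence relation) are correct refinements of details the paper leaves implicit.
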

\begin{proof}
Let $p:\mathcal{E}\to \mathcal{B}$ be a discrete fibration. Let $f\in \Mor(\mathcal{E})$. We will show that for each $n$ the induced function $$p_n:\mathcal{E}(f)_n\to \mathcal{B}(p(f))_n$$
  is an isomorphism.
  \begin{enumerate}
      \item $p_n$ is injective:
      \\ Let $(f_0, ..., f_{n+1}), (f'_0, ..., f'_{n+1})\in \mathcal{E}(f)_n$ with $p_n(f_0, ..., f_{n+1})=p_n(f'_0, ..., f'_{n+1})$. So, $p(f_i)=p(f'_i)$. Since $p$ is a discrete fibration and $p(f_0)=p(f'_0)$ and $h(f_0)=h(f'_0)$, we get $f_0=f'_0$ and so $h(f'_1)=h(f'_1)$. Now by induction we get that $f_i=f'_i$ for $0\leq i\leq n+1$ and so $p_n$ is injective.  
      \item $p_n$ is surjective:
      \\ Let $(g_0, ..., g_{n+1})\in \mathcal{B}(p(f))_n$. So, $p(f)=g_0\circ ...\circ g_{n+1}$ and so $h(f)=h(g_0)$. Since $p$ is a discrete fibration, there exists $f_0\in \Mor(\mathcal{E})$ with $p(f_0)=g_0$ and $h(f_0)=h(f)$. Now note that $h(g_1)=t(g_0)=t(p(f_0))=p(t(f_0))$ and since $p$ is a discrete fibration there exists $f_1\in \Mor(\mathcal{E})$ with $h(f_1)=t(f_0)$. Repeating the same process and using induction, we get that for each $0\leq i \leq n+1$ there exists $f_i\in \Mor(\mathcal{E})$ such that $p(f_i)=g_i$, $h(f_0)=h(f)$, $t(f_i)=h(f_{i+1})$. So $f_0\circ ...\circ f_{n+1}$ is defined in $\mathcal{E}$ and we have $p(f_0\circ ...\circ f_{n+1})=g_0\circ ...\circ g_{n+1}=p(f)$ and $h(f_0\circ ...\circ f_{n+1})=h(f_0)=h(f)$. So since $p$ is a discrete fibration we get $f_0\circ ...\circ f_{n+1}=f$. So $p_n$ is surjective. 
  \end{enumerate}
\end{proof}
\begin{ex}
    Let $P=\{a\leq b,a\leq c,b\leq d,c\leq d\}$ be a poset
    \begin{center}
	       \begin{tikzcd}
                    &\bullet_d\\
                    \bullet_b\arrow{ur}&&\bullet_c\arrow{ul}\\
                    &\bullet_a\arrow{ul}\arrow{ur}
	       \end{tikzcd}     
        \end{center}  
Let $\mathcal{D}$ be the following category

    \begin{center}
	       \begin{tikzcd}
                    \bullet_{d'}\\
                    \bullet_{b'}\arrow{u}\\
                    \bullet_{a'}\arrow[bend left]{u}\arrow[bend right]{u}
	       \end{tikzcd}     
        \end{center}  
Then the projection 
        \begin{center}
	       \begin{tikzcd}
                    &\bullet_d&&&&&&\bullet_{d'}\\
                    \bullet_b\arrow{ur}&&\bullet_c\arrow{ul}&\phantom{\bullet}\arrow{rrr}{F}&&&\phantom{\bullet}&\bullet_{b'}\arrow{u}\\
                    &\bullet_a\arrow{ul}\arrow{ur}&&&&&&\bullet_{a'}\arrow[bend left]{u}\arrow[bend right]{u}
	       \end{tikzcd}     
        \end{center}  
        is an almost discrete fibration but not a discrete fibration (compare with Example \ref{ex: nonexample of alm. disc. fib.}).
\end{ex}
The following is a quick corollary of Theorem \ref{Theorem}:
\begin{cor}
\label{cor: almost discrete fibration}
        Let $F:\mathcal{C}\to \mathcal{D}$ be an almost discrete fibration between indiscretely based categories such that any morphism of $\mathcal{D}$ has a lift in $\mathcal C$. Then, the followings are equivalent: 
  \begin{enumerate}
      \item $k\mathcal{C}$ is Koszul.
      \item $k\mathcal{D}$ is Koszul.
      \item $\mathcal{C}$ is locally bouquet. 
      \item $\mathcal{D}$ is locally bouquet. 
  \end{enumerate}  
\end{cor}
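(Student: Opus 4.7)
The plan is to reduce the four-way equivalence to the single implication $(3)\iff (4)$ via \Cref{Theorem}, then prove that equivalence by showing $F$ induces a homeomorphism $B\mathcal{C}(p)\cong B\mathcal{D}(F(p))$ for every $p\in\Mor(\mathcal{C})$ and using the lifting hypothesis to match up which factorization spaces appear on each side.

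First, \Cref{Theorem} immediately yields $(1)\Leftrightarrow(3)$ and $(2)\Leftrightarrow(4)$, so only $(3)\Leftrightarrow(4)$ remains. For the central step, I would define, for each fixed $p\in\Mor(\mathcal{C})$ and each $n\ge 0$, the obvious map
\[
F_{p,n}\colon \mathcal{C}(p)_n \longrightarrow \mathcal{D}(F(p))_n,\qquad [(f_0,\dots,f_{n+1})]\mapsto [(F(f_0),\dots,F(f_{n+1}))],
\]
and check that it commutes with the face operators, giving a map of semi-simplicial sets. The almost discrete fibration property says precisely that for any $[(g_0,\dots,g_{n+1})]\in\mathcal{D}(F(p))_n$ there is a unique (up to the equivalence relation used in defining $\mathcal{C}(p)_n$) lift to a nontrivial factorization of $p$ — and this is exactly the statement that $F_{p,n}$ is a bijection. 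Hence $\mathcal C(p)\to \mathcal D(F(p))$ is an isomorphism of semi-simplicial sets and on geometric realizations gives a homeomorphism $B\mathcal{C}(p)\cong B\mathcal{D}(F(p))$.

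Now the lifting hypothesis on morphisms of $\mathcal{D}$ enters: every $q\in\Mor(\mathcal{D})$ is of the form $F(p)$ for some $p\in\Mor(\mathcal{C})$, so the family $\{B\mathcal{D}(q)\}_{q\in\Mor(\mathcal{D})}$ is, up to homeomorphism, exactly $\{B\mathcal{C}(p)\}_{p\in\Mor(\mathcal{C})}$ (with possible repetitions when several $p$'s map to the same $q$, which does no harm). Combined with $\dim B\mathcal{C}(p)=\dim B\mathcal{D}(F(p))$ (since the two semi-simplicial sets are isomorphic), the locally bouquet condition of \Cref{locally bouquet} is equivalent for $\mathcal{C}$ and $\mathcal{D}$, finishing $(3)\Leftrightarrow(4)$.

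The main obstacle is the semi-simplicial isomorphism step: one must be careful that the equivalence relation identifying $(f_0,\dots,f_{n+1})$ with $(f_0\circ h_0,\,h_0^{-1}\circ f_1\circ h_1,\dots)$ lines up correctly with the uniqueness clause in the definition of almost discrete fibration. Concretely, two lifts $(f_0,\dots,f_{n+1})$ and $(f'_0,\dots,f'_{n+1})$ of a factorization $(g_0,\dots,g_{n+1})$ must be shown to differ by such a chain of isomorphisms $h_i$; this is precisely what is built into the definition, but verifying that the zigzag $h_i$'s can be chosen compatibly across all indices (and then descend to $\mathcal{D}$-isomorphisms that are absorbed into the $\mathcal{D}$-equivalence) is where one has to be most careful. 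Once this bookkeeping is accepted, everything else — face-map compatibility, surjectivity from the lifting hypothesis, and the final appeal to \Cref{Theorem} — is routine.
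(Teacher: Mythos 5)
Your proposal is correct and follows essentially the same route as the paper: reduce to the equivalence of the locally bouquet conditions via Theorem~\ref{Theorem}, then observe that the almost discrete fibration property makes the induced map of semi-simplicial sets $\mathcal{C}(p)_n\to\mathcal{D}(F(p))_n$ a bijection (existence of lifts giving surjectivity, uniqueness up to the built-in equivalence giving injectivity), with the lifting hypothesis ensuring every $\mathcal{D}(q)$ is accounted for. Your added bookkeeping about the compatibility of the equivalence relations is a more explicit version of what the paper leaves implicit, not a different argument.
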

\begin{proof}
Since $F:\mathcal{C}\to \mathcal{D}$ is an almost discrete fibration, the induced morphism of semi-simplicial sets
  $$F:\mathcal{C}(p)\to \mathcal{D}(F(p))$$
  defined by 
  $$F_n:\mathcal{C}(p)_n\to \mathcal{D}(F(p))_n$$
  $$(f_0, ..., f_{n+1})\mapsto (F(f_0), ..., F(f_{n+1}))$$
  is an isomorphism. So $\mathcal C$ is locally bouquet if and only if $\mathcal D$ is locally bouquet which are respectively equivalent to being Koszul by Theorem \ref{Theorem}. 
\end{proof}

\section{Reiner-Stamate Equivalence Relations}\label{sec: RS equivalences}
In this section we study the equivalence relations axiomatized by Riener and Stamate in \cite{rs}. We show that Reiner-Stamate equivalence relations can be realized as almost discrete fibrations on posets and any almost discrete fibration on a poset gives a Reiner-Stamate equivalence relation. Then we show that the category algebra of the quotients can be realized as the opposite ring of the reduced incidence algebra. Using the previous section, this recovers the Koszulity criterion for reduced incidence algebras in \cite{rs}. 
\subsection{Reiner-Stamate equivalence relations as almost discrete fibrations}
We begin by Reiner-Stamate equivalence relations from \cite{rs}:
\begin{df}
\label{Def: RS equivalences}
Let $P$ be a poset. Let $\sim$ be an equivalence relation on $\Int(P)$, i.e. the set of the closed intervals of $P$. We say that $\sim$ is a \newterm{Reiner-Stamate equivalence relation} if it satisfies the following axioms: 

    \begin{axiom}
     \label{A1} If $[a,b]\sim [a',b']$ and $[b,c]\sim [b',c']$, then $[a,c]\sim [a',c']$.   
     \end{axiom}
     \begin{axiom}
     \label{A2}
      The (lower, upper) interval mappings 
     $$\intt_{[a,b]}:[a,b]\to \Int(P)/\sim\times \Int(P)/\sim$$
     $$c\mapsto (\widetilde{[a,c]},\widetilde{[c,b]})$$
     have the property that whenever $[a,b]\sim [a',b']$,
     \begin{enumerate}
         \item there exists a map $\tau:[a,b]\to [a',b']$ that commutes with $\intt_{[a,b]}$, $\intt_{[a',b']}$:
         $$\intt_{[a',b']}\circ \tau=\intt_{[a,b]}$$
         \item and such a map $\tau$ is unique. 
     \end{enumerate}   
     \end{axiom} 
     \setcounter{axiom}{3}
     \begin{axiom}
     \label{A4}
      If $a\leq b_1$ and $b_2\leq c$ in $P$ with $b_1\sim b_2$, then there exist $a'\leq b'\leq c'$ in $P$ with 
     $$[a,b_1]\sim [a',b']$$
     $$[b_2,c]\sim [b',c']$$   
     \end{axiom} 
\end{df}
\begin{rmk}
    In \cite{rs} there is another axiom which is as follows: 
    \setcounter{axiom}{2}
    \begin{axiom}
    \label{A3}
      The equivalence relation $\sim$ on $P$ defined by $x\sim y$ if $[a,a]\sim[b,b]$ has only finitely many equivalence classes $P/\sim$.    
    \end{axiom}
    This axiom is not necessary for our purposes.  It would imply that $P/\sim$, which we will view as a quotient category, has finitely many objects and hence that $kP/\sim$ is unital.
\end{rmk}

\begin{rmk}
\hyperref[A2]{A2} implies that each $\tau$ is an isomorphism. 

\end{rmk}
\begin{rmk}
    If $P$ is a poset (regarded as a category) and $\sim$ is a Reiner-Stamate relation on $\Int(P)$, then $\sim$ induces an equivalence relation on $\Mor(P)$ defined by $(a\leq b)\sim (a'\leq b')$ if and only if $[a,b]\sim [a',b']$. 
\end{rmk}
The following proposition allows us to look at Reiner-Stamate equivalences as functors. 
\begin{prop}
Let $P$ be a graded poset and $\sim$ be a Reiner-Stamate equivalence relation on $\Int(P)$. Then, we have an indiscretely based category $P/\sim$ with 
\begin{itemize}
    \item $\Ob(P/\sim)=\Ob(P)/\sim$
    \item $\Mor(P/\sim)=\Mor(P)/\sim$ 
\end{itemize}
and composition 
$$[a\leq b_1]\circ [b_2\leq c]:=[(a'\leq b')\circ (b'\leq c')]=[a'\leq c']$$
defined using \hyperref[A4]{A4}. 
    \begin{center}
	    \tiny\begin{tikzcd}
		          &\bullet_{a'}\arrow{r}& \bullet_{b'}\arrow{r}&\bullet_{c'}\\
                  &\rotatebox{45}{$\sim$}&&\rotatebox{135}{$\sim$}\\
                  \bullet_a\arrow{r}&\bullet_{b_1}&&\bullet_{b_2}\arrow{r}&\bullet_c
	    \end{tikzcd}     
    \end{center} 
\end{prop}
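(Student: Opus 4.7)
The plan is to verify the category axioms for $P/\sim$, then observe that the indiscretely based property is immediate from the grading. For well-definedness of composition, consider composable classes $[a\leq b_1]$ and $[b_2\leq c]$ in $P/\sim$ (so $b_1\sim b_2$). Axiom \aref{A4} produces a chain $a'\leq b'\leq c'$ in $P$ with $[a,b_1]\sim [a',b']$ and $[b_2,c]\sim [b',c']$, and I declare the composite to be $[a'\leq c']$. Independence of the \aref{A4} lift: for any second lift $(a''\leq b''\leq c'')$, transitivity gives $[a',b']\sim [a'',b'']$ and $[b',c']\sim [b'',c'']$, whence \aref{A1} forces $[a',c']\sim [a'',c'']$. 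The same two-step argument (transitivity followed by \aref{A1}) shows the composite is unchanged when either morphism is replaced by an equivalent representative.

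Next, set $\mathrm{id}_{[x]} := [x\leq x]$. The key observation for the unit laws is that any equivalence $[x,x]\sim [b',c']$ forces $b'=c'$, since the map $\tau$ of \aref{A2} is a poset bijection from the singleton $[x,x]$ onto $[b',c']$ (its inverse is supplied by the uniqueness clause applied to the reverse equivalence). Hence in any \aref{A4} lift used to compose with an identity, two of the three endpoints coincide and the composite collapses to the other factor. For associativity of $[a\leq b_1]\circ [b_2\leq c_1]\circ [c_2\leq d]$, I first apply \aref{A4} to the outer two factors, producing $a'\leq b'\leq c'$ with $[a,b_1]\sim[a',b']$ and $[b_2,c_1]\sim[b',c']$. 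I then apply \aref{A4} to $[a'\leq c']$ and $[c_2\leq d]$, producing $a''\leq c''\leq d''$ with $[a',c']\sim [a'',c'']$ and $[c_2,d]\sim[c'',d'']$. The unique $\tau$ of \aref{A2} transports $b'\in [a',c']$ to a point $b''\in [a'',c'']$ with $[a',b']\sim [a'',b'']$ and $[b',c']\sim [b'',c'']$, yielding a four-term chain $a''\leq b''\leq c''\leq d''$ in $P$ simultaneously lifting all three morphism classes. Both bracketings of the triple composite then evaluate to $[a''\leq d'']$, with \aref{A1} handling independence of intermediate choices. The main obstacle is precisely the existence of this simultaneous four-term lift: \aref{A4} extends a chain only one link at a time, so without the interval isomorphism supplied by \aref{A2} the intermediate point $b''$ would not exist compatibly.

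Finally, $P/\sim$ is indiscretely based: the grading on $P$ descends to $P/\sim$, and the only degree-zero morphisms of $P$ are the identities $[x\leq x]$. Their images in $P/\sim$ are the identities $\mathrm{id}_{[x]}$, so $(P/\sim)_0$ has no non-identity morphisms. Each connected component of $(P/\sim)_0$ is therefore a single object carrying only its identity morphism, and such a category is trivially indiscrete.
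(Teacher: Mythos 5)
Your proof is correct and follows essentially the same route as the paper's: well-definedness of composition from A1, existence of composites from A4, and associativity by using the unique interval isomorphism $\tau$ of A2 to transport the intermediate point into a common refining chain that simultaneously lifts all three factors. Your handling of the unit laws (via the singleton argument for $[x,x]\sim[b',c']$) and of the indiscretely based property (via descent of the grading) is somewhat more explicit than the paper's terse statements, but the underlying ideas coincide.
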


\begin{proof}
$ $\newline
    \begin{itemize}
 \item Composition is well-defined: \\
 Follows from \hyperref[A1]{A1}.          
        \item Identity: \\
        For each object $[a]\in \Ob(P/\sim)$, $[Id_a]\in \Mor(P/\sim)$ is the identity morphism of $[a]$. 
        \item Associativity:
        \\ Let $[f]:[a]\to [b]$ with $f:a_1\to b_1$ (i.e. $a_1\leq_f b_1$), $a\sim a_1$, $b\sim b_1$, $[g]:[b]\to [c]$  with $g:b_2\to c_2$ (i.e. $b_2\leq_g c_2$), $b\sim b_2$, $c\sim c_2$, and $[h]:[c]\to [d]$  with $h:c_3\to d_3$ (i.e. $c_3\leq_h d_3$), $c\sim c_3$, $d\sim d_3$. We show that 
        $$[h]\circ ([g]\circ [f])=([h]\circ [g])\circ [f]$$
        Let $a'_1\leq_{f'_1}b'_1\leq_{g'_1}c'_1$ be such that $[a_1,b_1]\sim [a'_1,b'_1]$, $[b_2,c_2]\sim [b'_1,c'_1]$, and $[g]\circ [f]=[g'_1\circ f'_1]$.
            \begin{center}
	    \tiny\begin{tikzcd}
		          &\bullet_{a'_1}\arrow{r}{f'_1}& \bullet_{b'_1}\arrow{r}{g'_1}&\bullet_{c'_1}\\
                  &\rotatebox{45}{$\sim$}&&\rotatebox{135}{$\sim$}\\
                  \bullet_{a_1}\arrow{r}{f}&\bullet_{b_1}&&\bullet_{b_2}\arrow{r}{g}&\bullet_{c_2}
	    \end{tikzcd}     
    \end{center}  
        Let $a'_2\leq_{k'_2}c'_2\leq_{h'_2}d'_2$ be such that 
        $$[a'_1,c'_1]\sim [a'_2,c'_2]$$
        $$[c_3,d_3]\sim [c'_2,d'_2]$$
        with $[h]\circ ([g]\circ [f])=[h'_2\circ k'_2]$.
            \begin{center}
	    \tiny\begin{tikzcd}
		          \bullet_{a'_2}\arrow{rr}{k'_2}&& \bullet_{c'_2}\arrow{r}{h'_2}&\bullet_{d'_2}\\
                  &\rotatebox{90}{$\sim$}&&\rotatebox{135}{$\sim$}\\
                  \bullet_{a'_1}\arrow{r}{f'_1}& \bullet_{b'_1}\arrow{r}{g'_1}&\bullet_{c'_1}&\bullet_{c_3}\arrow{r}{h}&\bullet_{d_3}
	    \end{tikzcd}     
    \end{center}  
        Then since $[a'_1,c'_1]\sim [a'_2,c'_2]$ by \hyperref[A2]{A2} there exists $\tau:[a'_1,c'_1]\xrightarrow{\sim}[a'_2,c'_2]$. So, $a'_1\leq_{f'_1}b'_1\leq_{g'_1}c'_1$ implies $\tau(a'_1)\leq \tau(b'_1)\leq \tau(c'_1)$.  Thus, $a'_2\leq_{f'_2}\tau(b'_2)\leq_{g'_2}b_2$. So, $k'_2=g'_2\circ f'_2$. 
            \begin{center}
	    \tiny\begin{tikzcd}
		          \bullet_{a'_2}\arrow{rr}{f'_2}&& \bullet_{\tau(b'_1)}\arrow{rr}{g'_2}&&\bullet_{c'_2}\\
                  &\rotatebox{90}{$\sim$}&&\rotatebox{90}{$\sim$}\\
                  \bullet_{a'_1}\arrow{rr}{f'_1}&& \bullet_{b'_1}\arrow{rr}{g'_1}&&\bullet_{c'_1}
	    \end{tikzcd}     
    \end{center}         
        Hence, 
        \begin{center}
            \begin{align*}
                [h]\circ ([g]\circ [f])&=[h'_2\circ k'_2]\\
                &=[h'_2\circ (g'_2\circ f'_2)]\\
                &=[(h'_2\circ g'_2)\circ f'_2]\\
                &=[h'_2\circ g'_2]\circ [f'_2]\\
                &=([h'_2]\circ [g'_2])\circ [f'_2]
            \end{align*}
        \end{center}     
        So, it remains to show that $[f]=[f'_2]$, $[g]=[g'_2]$ and $[h]=[h'_2]$. By assumption, $[c_3,d_3]\sim [c'_2,d'_2]$. So, $[h]=[h'_2]$. To check $[f]=[f'_2]$ and $[g]=[g'_2]$, we have $\intt_{[a'_2,c'_2]}\circ \tau=\intt_{[a'_1,c'_1]}$ by \hyperref[A2]{A2}. Therefore, $\intt_{[a'_2,c'_2]}( \tau(b'_1))=\intt_{[a'_1,c'_1]}(b'_1)$. So, 
        $$(\widetilde{[a'_2,\tau(b'_1)]}, \widetilde{[\tau(b'_1),c'_2]})=(\widetilde{[a'_1,b'_1]},\widetilde{[b'_1,c'_1]})$$
        Therefore, $[a'_2,\tau(b'_1)]\sim [a'_1,b'_1]$ and $[\tau(b'_1),c'_1]\sim [b'_1,c'_1]$. Since by assumption $[a_1,b_1]\sim [a'_1,b'_1]$ and $[b_2,c_2]\sim [b'_1,c'_1]$, it follows that $[a'_2,\tau(b'_1)]\sim [a_1,b_1]$ and $[\tau(b'_1),c'_2]\sim [b_2,c_2]$. Hence, $[f]=[f'_2]$ and $[g]=[g'_2]$.
        \end{itemize}
           \begin{center}
	    \tiny\begin{tikzcd}
		          &&\bullet_{a'_2}\arrow{rr}{f'_2}&& \bullet_{\tau(b'_1)}\arrow{rr}{g'_2}&&\bullet_{c'_2}\arrow{rr}{h'_2}&&\bullet_{d'_2}\\
                  &&&\rotatebox{90}{$\sim$}&&\rotatebox{90}{$\sim$}&&&&\rotatebox{160}{$\sim$}\\
                  &&\bullet_{a'_1}\arrow{rr}{f'_1}&& \bullet_{b'_1}\arrow{rr}{g'_1}&&\bullet_{c'_1}&&&\bullet_{c_3}\arrow{rr}{h}&&\bullet_{d_3}\\
                  &&\rotatebox{45}{$\sim$}&&&&\rotatebox{135}{$\sim$}&&&&\rotatebox{90}{$\sim$}\\
                  \bullet_{a_1}\arrow{rr}{f}&& \bullet_{b_1}&&&&\bullet_{b_2}\arrow{rr}{g}&&\bullet_{c_2}&\bullet_{c_3}\arrow{rr}{h}&&\bullet_{d_3}
	    \end{tikzcd}     
    \end{center}   
        Thus, $P/\sim$ is a category. Since the $\tau$s are isomorphisms, it follows that $P/\sim$ is an indiscretely based category. 
\end{proof} 
\begin{ex}
Consider the poset $P=\{a\leq_f b, a\leq_g c\}$ with the following Hasse diagram 
        \begin{center}
	       \begin{tikzcd}
                    \bullet_{b}&&\bullet_{c}\\
                    &\bullet_{a}\arrow[swap]{ur}{f}\arrow{ul}{g}
	       \end{tikzcd}     
        \end{center}  
Then $[b,b]\sim[c,c]$ is a Reiner-Stamate equivalence relation on $\Int(P)$ and $p/\sim$ is as follows: 
    \begin{center}
	       \begin{tikzcd}
                    \bullet_{[a]}\arrow[bend left]{r}{[f]}\arrow[bend right]{r}{[g]}
		          &\bullet_{[b]}
	       \end{tikzcd}     
        \end{center} 
\end{ex}
\begin{prop}
\label{P and RS quotient of P}
    Let $P$ be a graded poset and $\sim$ be a Reiner-Stamate equivalence relation on $\Int(P)$. Then, the projection functor 
    $$\pi:P\to P/\sim$$
    $$a\mapsto [a]$$
    $$f\mapsto [f]$$
  is an  almost discrete fibration such that any morphism of $P/\sim$ has a lift in $P$.
\end{prop}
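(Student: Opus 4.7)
The plan is to address the two parts of the claim in sequence. The first, that every morphism of $P/\sim$ lifts to $P$, is essentially built into the construction: by definition, $\Mor(P/\sim) = \Mor(P)/\sim$, so any morphism $[f]$ of $P/\sim$ has the representative $f \in \Mor(P)$ as a lift under $\pi$.

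For the almost discrete fibration property, I would fix a morphism $p: a \leq b$ in $P$ and a nontrivial factorization $[p] = [g_0] \circ \cdots \circ [g_{n+1}]$ in $P/\sim$, and produce a nontrivial factorization of $p$ in $P$ that maps to it. Since $P$ is a poset, its only isomorphisms are identities, so the equivalence relation on factorizations in the definition of almost discrete fibration collapses to strict equality; consequently the task reduces to producing a unique chain $a = x_{n+2} < x_{n+1} < \cdots < x_0 = b$ in $P$ whose successive covers represent $[g_{n+1}], \ldots, [g_0]$.

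I would proceed by induction on $n$. For the base case $n = 0$, unfolding the definition of composition in $P/\sim$ via \aref{A4} yields a composable pair $g_0', g_1'$ in $P$ giving a chain $a' \leq c' \leq b'$ with $[a' \leq c'] = [g_1]$, $[c' \leq b'] = [g_0]$, and $[a', b'] \sim [a, b]$. Applying \aref{A2} to $[a', b'] \sim [a, b]$ produces the unique map $\tau: [a', b'] \to [a, b]$ commuting with the interval mappings, and setting $c := \tau(c')$ yields a chain $a \leq c \leq b$ that lifts the given factorization, because the equality $\intt_{[a,b]}(c) = \intt_{[a',b']}(c')$ precisely encodes the matching of equivalence classes. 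The inductive step is routine: apply the base case to $[p] = [g_0] \circ [h]$ with $[h] := [g_1] \circ \cdots \circ [g_{n+1}]$ to obtain $p = f_0 \circ h$ in $P$, then apply the inductive hypothesis to the lifted morphism $h$ and its $(n+1)$-term factorization.

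The main obstacle is the uniqueness of the base-case lift: given two chains $a \leq c \leq b$ and $a \leq c'' \leq b$ with $[a \leq c] = [g_1] = [a \leq c'']$ and $[c \leq b] = [g_0] = [c'' \leq b]$, one must show $c = c''$. I plan to derive this from \aref{A2} applied to $[a, b] \sim [a, b]$: the identity map on $[a,b]$ commutes with $\intt_{[a,b]}$, so by uniqueness it is the only such map; but if $c \neq c''$ were two elements with $\intt_{[a,b]}(c) = \intt_{[a,b]}(c'')$, the set-theoretic swap of $c$ and $c''$ would give a second map commuting with $\intt_{[a,b]}$, a contradiction. This forces injectivity of the interval mapping on $[a,b]$, and hence $c = c''$, completing the argument.
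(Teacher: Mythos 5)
Your proposal is correct and follows the same overall strategy as the paper: both reduce the claim to showing that $\pi$ induces a bijection between (nontrivial) factorizations of $p$ in $P$ and factorizations of $[p]$ in $P/\sim$, with existence obtained by unfolding the definition of composition via \aref{A4} and then transporting along the map $\tau$ supplied by \aref{A2}, and with uniqueness extracted from the uniqueness clause of \aref{A2}. The genuine difference is in how that last step is executed. The paper takes two full factorizations of $p$, produces local isomorphisms $\tau_i$ on the subintervals $[a_i,a_{i+1}]$, patches them via \aref{A1} into a self-map $\tau$ of $[a,b]$, and concludes $\tau=\mathrm{Id}$; the patching is the delicate part of that argument. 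You instead apply \aref{A2} to the reflexive equivalence $[a,b]\sim[a,b]$ and note that if $c\neq c''$ had the same image under $\intt_{[a,b]}$, the transposition of $c$ and $c''$ would be a second map commuting with $\intt_{[a,b]}$ alongside the identity, contradicting uniqueness; this yields injectivity of $\intt_{[a,b]}$ directly, hence uniqueness of two-term lifts, and your induction then handles longer factorizations without any global patching. Both routes are valid; yours trades the patching for an inductive organization, and your observation that the equivalence relation on factorizations collapses to equality (since a poset has no nonidentity isomorphisms) is correct and is used only implicitly in the paper. One sentence worth adding: in the inductive step for uniqueness, once the base case pins down $t(f_0)=t(f_0')$ you should note that $f_0=f_0'$ and that the two tail morphisms coincide because morphisms in a poset are determined by their endpoints, so the inductive hypothesis applies to a single morphism $h$.
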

\begin{proof}
 The fact that any morphism of $P/\sim$ has a lift in $P$ is immediate from our construction of $P/\sim$. Thus, it remains to show that it is an almost discrete fibration. Let $p\in \Mor(P)$.  We will show that for each $n$, 
 $$(\pi)_n:P(p)_n\to (P/\sim)(\pi(p))_n$$
 $$(f_0, ..., f_{n+1})\mapsto ([f_0], ..., [f_{n+1}])$$
 is a bijection. 
\begin{enumerate}
    \item $\pi_n$ is surjective:
    \\ Let $[f_0]\circ ...\circ [f_{n+1}]=[p]$ be a factorization of $[p]$. Then, by definition, 
    $$[f_0]\circ ...\circ [f_{n+1}]=[\theta_0(f_0)\circ ...\circ \theta_{n+1}(f_{n+1})]$$
   Therefore, $[\theta_0(f_0)\circ ...\circ \theta_{n+1}(f_{n+1})]=[p]$ . Thus if we assume $\theta_0(f_0)\circ ...\circ \theta_{n+1}(f_{n+1}):a\to b$ and $p:a'\to b'$, then there exists $\tau: [a,b]\to [a',b']$ with $\tau(\theta_0(f_0)\circ ...\circ \theta_{n+1}(f_{n+1}))=p$. Now, 
    $$\tau(\theta_0(f_0)\circ ...\circ \theta_{n+1}(f_{n+1}))=\tau(\theta_0(f_0))\circ ...\circ \tau(\theta_{n+1}(f_{n+1}))$$
    and by definition, $[\tau(\theta_i(f_i))]=[f_i]$. Therefore since 
    $$p=\tau(\theta_0(f_0)\circ ...\circ \theta_{n+1}(f_{n+1}))=\tau(\theta_0(f_0))\circ ...\circ \tau(\theta_{n+1}(f_{n+1}))$$
    we get
    \begin{center}
        \begin{align*}
            \pi_n(\tau(\theta_0(f_0)), ..., \tau(\theta_{n+1}(f_{n+1})))&=([\tau(\theta_0(f_0))], ..., [\tau(\theta_{n+1}(f_{n+1}))])\\
            &=([f_0], ..., [f_{n+1}])\\
        \end{align*}
    \end{center}
    Thus $\pi_n$ is surjective. 
    \item $\pi_n$ is injective: 
    \\ Let $(f_0,..., f_{n+1}),(f'_0, ..., f'_{n+1})\in \mathcal{C}(p)_n$ such that $\pi_n(f_0,..., f_{n+1})=\pi_n(f'_0, ..., f'_{n+1})$ as elements of $(P/\sim)(\pi(p))_n$. Therefore $[f_0]=[f'_0]$, ..., $[f_{n+1}]=[f'_{n+1}]$. So, if we assume $p:a\to b$ and $f_0:a_0=a\to a_1$, $f_1:a_1\to a_2$, ..., $f_{n+1}:a_{n+1}\to a_{n+2}=b$ and $f'_0:a'_0=a\to a'_1$, $f'_1:a'_1\to a'_2$, ..., $f'_{n+1}:a'_{n+1}\to a'_{n+2}=b$, then $[a_i,a_{i+1}]\sim [a'_i,a'_{i+1}]$ with 
    $$\tau_i:[a_i,a_{i+1}] \xrightarrow{\sim} [a'_i,a'_{i+1}]$$
    such that $\tau_i(f_i)=f'_i$. By \hyperref[A1]{A1}, we get $[a,b]\sim [a,b]$ with $\tau:[a,b]\to [a,b]$ such that for $f\in [a_i,a_{i+1}]$, $\tau(f)=\tau_i(f)$. By \hyperref[A2]{A2}, $\tau=\text{Id}$. Therefore, since $\tau(f_i)=\tau_i(f_i)$ and $\tau=Id$, we get $f_i=\tau_i(f_i)$, but  $\tau(f_i)=f'_i$ by assumption. Hence, $f_i=f'_i$. Thus $\pi_n$ is injective. 
    \end{enumerate}
\end{proof}

\begin{prop}
\label{prop: almost discrete fibrations define RS equivalence}
    Let $P$ be a graded poset regarded as a category and $F:P\to \mathcal D$ be a almost discrete fibration from $P$ to an indiscretely based category $\mathcal{D}$. Assume that any morphism of $\mathcal{D}$ has a lift in $\mathcal{C}$. Then,
    \begin{center}
     $[a,b]\sim_F [a',b']$ if and only if $F(a\leq b)=F(a'\leq b')$ 
    \end{center}
    defines a Reiner-Stamate equivalence relation on $P$ and $P/{\sim_F}$ is equivalent to  $\mathcal{D}$. 
\end{prop}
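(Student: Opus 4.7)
My plan is to split the proof into two parts: first, verify that $\sim_F$ satisfies the three Reiner-Stamate axioms \aref{A1}, \aref{A2}, and \aref{A4}, and then construct the claimed equivalence $\bar F\colon P/{\sim_F}\xrightarrow{\sim}\mathcal D$. Throughout, the two workhorses are the functoriality of $F$ together with the uniqueness of lifts coming from the almost discrete fibration condition. A useful simplification is that $P$, being a poset, admits no nontrivial isomorphisms, so the equivalence relation on factorizations that appears in the definition of almost discrete fibration collapses to strict equality on lifts to $P$.

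For \aref{A1}, if $F(a\leq b)=F(a'\leq b')$ and $F(b\leq c)=F(b'\leq c')$, applying $F$ to the composition $a\leq c$ gives $F(a\leq c)=F(a'\leq c')$, so $[a,c]\sim_F[a',c']$. For \aref{A4}, I unpack $b_1\sim b_2$ as $F(b_1)=F(b_2)$, so $g:=F(b_2\leq c)\circ F(a\leq b_1)$ is a well-defined morphism in $\mathcal D$. By hypothesis $g$ lifts to some $a'\leq c'$ in $P$. Assuming $F$ preserves grading, the two factors of $g$ are non-isomorphisms whenever $a<b_1$ and $b_2<c$ (the degenerate cases are handled by setting $b'=a'$ or $b'=c'$), and applying the almost discrete fibration property to this factorization produces a unique $b'\in[a',c']$ with $F(a'\leq b')=F(a\leq b_1)$ and $F(b'\leq c')=F(b_2\leq c)$, which is exactly what \aref{A4} asks for.

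The technical heart is \aref{A2}. Given $[a,b]\sim_F[a',b']$, for each $c\in[a,b]$ I regard the factorization $F(a\leq b)=F(c\leq b)\circ F(a\leq c)$ as a factorization of $F(a'\leq b')$, lift it via the almost discrete fibration property to a unique factorization $a'\leq c'\leq b'$ in $P$, and set $\tau(c):=c'$. By construction $F(a'\leq\tau(c))=F(a\leq c)$ and $F(\tau(c)\leq b')=F(c\leq b)$, so $\intt_{[a',b']}\circ\tau=\intt_{[a,b]}$. To see that $\tau$ is order-preserving, given $c_1\leq c_2$ in $[a,b]$, I lift the length-three factorization $F(a\leq b)=F(c_2\leq b)\circ F(c_1\leq c_2)\circ F(a\leq c_1)$ to a chain $a'\leq x_1\leq x_2\leq b'$ in $P$; contracting adjacent terms and invoking uniqueness of the length-two lifts identifies $x_i=\tau(c_i)$, so $\tau(c_1)\leq\tau(c_2)$. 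Uniqueness of $\tau$ follows immediately: any competitor $\tau'$ satisfying $\intt_{[a',b']}\circ\tau'=\intt_{[a,b]}$ produces a lift of the same factorization, hence $\tau'(c)=\tau(c)$.

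Finally, the assignment $\bar F\colon P/{\sim_F}\to\mathcal D$ sending $[a]\mapsto F(a)$ and $[a\leq b]\mapsto F(a\leq b)$ is well defined by the very definition of $\sim_F$, faithful for the same reason, full by the lifting hypothesis, and essentially surjective because lifting the identities of $\mathcal D$ produces objects of $P$ with the prescribed image. The step I expect to be the main obstacle is \aref{A2}: one must argue cleanly that $\tau$ is globally defined on $[a,b]$, including the degenerate endpoints where a factor of $F(a\leq b)$ becomes an identity, and that it is order-preserving in a way that is compatible with further refinements; this is exactly where the uniqueness clause in the almost discrete fibration hypothesis is doing all the work.
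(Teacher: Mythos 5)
Your proposal is correct and follows essentially the same route as the paper's proof: \hyperref[A1]{A1} from functoriality, \hyperref[A4]{A4} from the lifting hypothesis plus composition, \hyperref[A2]{A2} by lifting the two-step factorization to define $\tau$ and deducing uniqueness from uniqueness of lifts, and the same quotient functor $[a]\mapsto F(a)$ for the equivalence with $\mathcal D$. If anything, you supply a bit more detail than the paper does (the order-preservation of $\tau$ via length-three factorizations, the degenerate cases in \hyperref[A4]{A4}); the paper's only extra move is to reduce to a skeletal $\mathcal D$ at the outset, which your observation that $P$ has no nontrivial isomorphisms effectively substitutes for.
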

\begin{proof}
    Without loss of generality we can assume $\mathcal D$ is skeletal. It is easy to check that $\sim_F$ is an equivalence relation on $\Int(P)$. We now check that $\sim_F$ is a Reiner-Stamate equivalence relation on $\Int(P)$. 
    
    \hyperref[A1]{A1} follows from our assumption that $F$ is a functor.
    
    \hyperref[A4]{A4} follows from our assumption that any morphism of $\mathcal{D}$ has a lift in $\mathcal{C}$ and that $\mathcal{D}$ is a category.
    
    It remains to check \hyperref[A2]{A2}. Let $[a,b]\sim_F [a',b']$ and $c\in [a,b]$. So, $a\leq c\leq b$. Thus we get a factorization $F(a'\leq b')=F(a\leq b)=F(a\leq c)\circ F(c\leq b)$ of $F(a'\leq b')$. Now since $F$ is an almost discrete fibration, this factorization has a unique lift $a'\leq c'\leq b'$. Define $\tau_F(c)=c'$. Again, since $F$ is an almost discrete fibration,  $\tau_F$ is a map of posets. From our definition of $\tau$ and our definition of the equivalence relation on $\Int(P)$, it follows that 
    $$\intt_{[a',b']}\circ \tau_F=\intt_{[a,b]}$$
    So it remains to check $\tau_F$ is the unique map with this property. Consider $\tau:[a,b]\to [a',b']$ satisfying $$\intt_{[a',b']}\circ \tau_F=\intt_{[a,b]}.$$
Let $c\in [a,b]$. Then, $[a,c]\sim_F[a',\tau(c)]$ and $[c,b]\sim_F [\tau(c),b']$. Therefore, $F(a\leq c)=F(a'\leq \tau(c))$ and $F(c\leq b)=F(\tau(c)\leq b')$, which implies $a'\leq \tau(c)\leq b'$ is a lift of a factorization $F(a'\leq b')=F(a\leq c)\circ F(c\leq b)$. On the other hand, $a'\leq \tau_F(c)\leq b'$ is a lift of this factorization. Therefore $\tau(c)=\tau_F(c)$ since $F$ is an almost discrete. In summary, \hyperref[A2]{A2} is satisfied. 

Finally, $P/{\sim_F}$ is equivalent to $\mathcal{D}$ via 
     $$\Psi: P/{\sim_F}\to \mathcal{D}$$
     $$[a]\mapsto F(a)$$
     $$[a\leq_f b]\mapsto F(f)$$
\end{proof}
\begin{thm}
\label{thm: RS equivalences vs. almost disc. fib.}
    Let $P$ be a graded poset regarded as a category. There is a one-to-one correspondence between Reiner-Stamate equivalence relations on $P$ and almost discrete fibrations $F:P\to \mathcal{D}$ where $\mathcal{D}$ is an indiscretely based category and any morphism of $\mathcal{D}$ has a lift in $P$ up to the equivalences of indiscretely based categories $\sigma:\mathcal{D}\to \mathcal{D}'$ with $F'=\sigma\circ F$.
\end{thm}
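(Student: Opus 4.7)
The plan is to use the two constructions already established in Propositions~\ref{P and RS quotient of P} and \ref{prop: almost discrete fibrations define RS equivalence} to define maps in both directions and then verify that they are mutually inverse up to the stated equivalence. Concretely, I will define
\[
\Phi(\sim) := \bigl(\pi_\sim : P \to P/{\sim}\bigr), \qquad \Psi(F) := \sim_F,
\]
and show $\Psi \circ \Phi = \mathrm{id}$ and $\Phi \circ \Psi = \mathrm{id}$ modulo equivalence of categories over $P$.

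The first direction, $\Psi \circ \Phi = \mathrm{id}$, is a straightforward unwinding. Starting from a Reiner-Stamate relation $\sim$ on $\Int(P)$, the definition of $\sim_{\pi_\sim}$ says
\[
[a,b] \sim_{\pi_\sim} [a',b'] \iff \pi_\sim(a\le b) = \pi_\sim(a'\le b') \iff [a\le b] = [a'\le b'] \text{ in } P/{\sim},
\]
which by the construction of $P/{\sim}$ is precisely $[a,b] \sim [a',b']$. So $\sim_{\pi_\sim} = \sim$ as equivalence relations on $\Int(P)$.

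For the reverse direction, start with an almost discrete fibration $F : P \to \mathcal{D}$ in which every morphism of $\mathcal{D}$ lifts to $P$. Proposition~\ref{prop: almost discrete fibrations define RS equivalence} already produces a functor $\Psi_F : P/{\sim_F} \to \mathcal{D}$ sending $[a] \mapsto F(a)$ and $[a \le_f b] \mapsto F(f)$, and it is immediate that $\Psi_F \circ \pi_{\sim_F} = F$. I will verify that $\Psi_F$ is an equivalence of indiscretely based categories. It is well defined on objects and morphisms because $[a,b]\sim_F[a',b']$ means $F(a\le b)=F(a'\le b')$ and $[a]\sim_F[a']$ likewise forces $F(a)=F(a')$ after passing to a skeletal model of $\mathcal{D}$ (as in the proof of Proposition~\ref{prop: almost discrete fibrations define RS equivalence}). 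Essential surjectivity follows because, after skeletalizing, every object of $\mathcal{D}$ is the image of some object (any morphism of $\mathcal{D}$ lifts, and objects appear as heads/tails of morphisms; one can also include identities). Faithfulness is just the definition of $\sim_F$: $[f] \neq [g]$ means $F(f) \neq F(g)$. Fullness is the hypothesis that every morphism of $\mathcal{D}$ lifts through $F$, hence lies in the image of $\pi_{\sim_F}$ composed with $\Psi_F$.

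This yields $F = \Psi_F \circ \pi_{\sim_F}$ with $\Psi_F$ an equivalence, which is precisely the statement $\Phi(\Psi(F)) \sim F$ under the equivalence relation declared in the theorem. The one real subtlety — and the step I expect to be the main obstacle — is handling non-skeletal $\mathcal{D}$ cleanly: the definition of $\sim_F$ uses strict equality $F(a\le b)=F(a'\le b')$, so one must either pass to a skeleton of $\mathcal{D}$ (invoking Proposition~\ref{equivalence preserves simples} and noting the equivalence relation in the theorem already absorbs such choices) or replace equality by the unique isomorphism provided by the indiscretely based structure, and then check that the unique-lifting property of the almost discrete fibration is preserved. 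Once this is dealt with, the rest of the verification is formal.
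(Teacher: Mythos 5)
Your proposal is correct and follows exactly the route the paper takes: the paper's own proof consists of the single sentence that the theorem ``immediately follows'' from Propositions~\ref{P and RS quotient of P} and~\ref{prop: almost discrete fibrations define RS equivalence}, and your argument simply spells out the mutual-inverse verification (including the skeletalization subtlety already handled inside Proposition~\ref{prop: almost discrete fibrations define RS equivalence}) that the paper leaves implicit.
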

\begin{proof}
    This immediately follows from Propositions \ref{P and RS quotient of P} and \ref{prop: almost discrete fibrations define RS equivalence}. 
\end{proof}
\subsection{(Reduced) incidence algebras as category algebras and their Koszulity}
We show here that the reduced incidence algebras defined in \cite{rs} are examples of category algebras. Then using Corollary \ref{cor: almost discrete fibration}, we recover a result of \cite{rs} connecting the Koszulity of incidence and reduced incidence algebras. 
\begin{df}[\protect{\cite[Definition 1.15, p. 6]{rs}}]
Let $P$ be a graded poset in which every interval $[x,y]$ is finite, and assume one has an equivalence relation on $\Int(P)$ which is order compatible in the sense of \hyperref[A1]{A1}. Define $k[P]_{\red}$ to be the $k$-vector space having basis $\{\xi_{\widetilde{[x,y]}}\}$ indexed by the equivalence classes $\Int(P)/\sim$, with multiplication defined $k$-bilinearly via 
$$\xi_{\widetilde{[x,y]}}. \xi_{\widetilde{[z,w]}}=\xi_{\widetilde{[x',w']}}$$
if there exist $x'\leq y' \leq w'$ in $P$ with 
$$[x',y']\sim [x,y]$$
$$[y',w']\sim [z,w]$$
and zero otherwise. We call $k[P]_{\red}$ the reduced incidence algebra. 
\end{df}
\begin{prop}
\label{reduced incidence algebra is a category algebra}
    Let $P$ be a poset (not necessarily finite) and $\sim$ be a Reiner-Stamate equivalence relation on $\Int(P)$ such that $P/\sim$. Then, the category algebra $k(P/\sim)$ is isomorphic to $(k[P]_{\red})^{op}$.
\end{prop}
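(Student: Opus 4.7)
The plan is to construct an explicit $k$-linear isomorphism $\phi: k(P/\sim) \to (k[P]_{\text{red}})^{\text{op}}$ on basis elements and verify it respects multiplication. Specifically, I would define $\phi$ by $\phi([a\leq b]) := \xi_{\widetilde{[a,b]}}$. Since $\Mor(P/\sim) = \Mor(P)/\sim$ is in bijection with $\Int(P)/\sim$, and the basis of $k[P]_{\text{red}}$ is indexed by $\Int(P)/\sim$, this map is well-defined (independent of the choice of representative) and a $k$-linear isomorphism on underlying vector spaces.

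The main step is to verify multiplicativity, i.e.\ $\phi(\alpha \cdot \beta) = \phi(\beta)\cdot \phi(\alpha)$ in $k[P]_{\text{red}}$ for basis elements $\alpha = [a\leq b_1]$, $\beta = [b_2\leq c]$. On the left-hand side, the product in $k(P/\sim)$ was defined using \hyperref[A4]{A4} to produce a lift $a'\leq b'\leq c'$ in $P$ with $[a',b']\sim [a,b_1]$ and $[b',c']\sim [b_2,c]$, yielding $[a'\leq c']$ when such a lift exists and zero otherwise. On the right-hand side, by the definition of multiplication in $k[P]_{\text{red}}$, the product $\xi_{\widetilde{[b_2,c]}}\cdot \xi_{\widetilde{[a,b_1]}}$ equals $\xi_{\widetilde{[x',w']}}$ for some $x'\leq y'\leq w'$ with $[x',y']\sim [b_2,c]$ and $[y',w']\sim [a,b_1]$, and zero otherwise. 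I would check that both nonvanishing conditions encode the same requirement (the "middle endpoints" must be equivalent so the intervals can be stacked) and that, when nonzero, the two resulting equivalence classes coincide; this reduces to the uniqueness assertion in \hyperref[A2]{A2} combined with order compatibility \hyperref[A1]{A1}.

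The main subtlety to watch is the appearance of the opposite algebra: the category algebra product $\alpha\cdot\beta = \alpha\circ\beta$ composes morphisms in the standard (right-to-left) order, while the reduced incidence algebra builds $\xi_{\widetilde{[x,y]}}\cdot \xi_{\widetilde{[z,w]}}$ by stacking intervals in the natural (left-to-right) order. This mismatch is precisely what forces the target to be $(k[P]_{\text{red}})^{\text{op}}$ rather than $k[P]_{\text{red}}$, exactly as happens classically for the comparison between $kP$ and $k[P]$ when $\sim$ is trivial. The bulk of the work is simply bookkeeping this reversal and confirming that the equivalence classes produced on each side are genuinely the same element of $\Int(P)/\sim$, which is where \hyperref[A2]{A2} does the crucial labour.
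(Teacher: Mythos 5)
Your map $\phi$ and your overall strategy (a bijection of bases plus a direct check of multiplicativity, with the passage to the opposite algebra absorbing an order reversal) are exactly what the paper does. However, the multiplicativity check as you have written it does not close, because your two sides describe \emph{different} interval classes. On the left you compose $[a\leq b_1]$ first and $[b_2\leq c]$ second, producing via A4 a chain $a'\leq b'\leq c'$ with $[a',b']\sim[a,b_1]$ on the bottom and $[b',c']\sim[b_2,c]$ on top, hence the class of $[a',c']$, an interval whose endpoints are equivalent to $a$ and $c$. On the right you form $\xi_{\widetilde{[b_2,c]}}\cdot\xi_{\widetilde{[a,b_1]}}$, which by the definition of $k[P]_{\red}$ stacks $[b_2,c]$ on the bottom and $[a,b_1]$ on top, producing an interval whose endpoints are equivalent to $b_2$ and $b_1$. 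These are not the same class in general: for the three-chain $a<b<c$ with the trivial relation and $\alpha=[a\leq b]$, $\beta=[b\leq c]$, your left side is $\xi_{[a,c]}$ while your right side is $\xi_{[b,c]}\cdot\xi_{[a,b]}=0$. No appeal to A1 or A2 will reconcile them.

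The source of the slip is which factor of the category-algebra product is applied first. With the convention $\alpha\beta=\alpha\circ\beta$ (right factor first), the composite you actually describe, namely $[a\leq b_1]$ followed by $[b_2\leq c]$, is $[b_2\leq c]\circ[a\leq b_1]=\beta\cdot\alpha$, and its image is
$\xi_{\widetilde{[a,b_1]}}\cdot\xi_{\widetilde{[b_2,c]}}=\phi(\alpha)\cdot\phi(\beta)$ in $k[P]_{\red}$, that is, $\phi(\beta)\cdot^{op}\phi(\alpha)$. This correctly paired identity is what the paper verifies and is precisely what forces the target to be $(k[P]_{\red})^{op}$. So the repair is only a matter of pairing the sides consistently; the rest of your argument (the basis bijection and well-definedness, which need only A1 --- A2 is not actually required for this step) is fine and matches the paper's proof.
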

\begin{proof}
   Define 
    $$\phi:k(P/\sim)\to (k[P]_{\red})^{op}$$
    by sending $[f]:[a]\to [b]$ with $f:a'\to b'$ ($a'\leq_f b')$, $a\sim a'$, $b\sim b'$ to $\Bar{\xi}_{\widetilde{[a',b']}}$ and extend it linearly to all $k(P/\sim)$. Define 
    $$\psi:(k[P]_{\red})^{op}\to k(P/\sim)$$
    by sending $\Bar{\xi}_{\widetilde{[a,b]}}$ to $[f]$ where $a\leq_f b$ and extend it linearly to all $(k[P]_{\red})^{op}$. One can easily check that $\phi$ and $\psi$ are well-defined mutually inverses. We check that $\phi$ preserves multiplication.

        
        Let $[f]:[a]\to [b]$ with $f:a_1\to b_1$ ($a_1\leq_f b_1)$, $a\sim a_1$, $b\sim b_1$ and $[g]:[b]\to [c]$ with $g:b_2\to c_2$ ($b_2\leq_g c_2)$, $b\sim b_2$, $c\sim c_2$ and $[g]\circ [f]=[g'\circ f']$ with $f':a_3\to b_3$, $g':b_3\to c_3$ (i.e. $a_3\leq_{f'}b_3\leq_{g'}c_3$), $[a_3,b_3]\sim [a_1,b_1]$ and $[b_3,c_3]\sim [b_2,c_2]$.
        Then,
           \begin{align*}
            \phi([g]\circ [f]) & =\phi([g'\circ f'])\\
    & =\Bar{\xi}_{\widetilde{[a_3,c_3]}} \\ 
    & = \Bar{\xi}_{\widetilde{[a_1,b_1]}}.\Bar{\xi}_{\widetilde{[b_2,c_2]}} \\
    & = \Bar{\xi}_{\widetilde{[b_2,c_2]}}.^{op}\Bar{\xi}_{\widetilde{[a_1,b_1]}}\\
    &= \phi([g]).^{op} \phi([f])
        \end{align*}
     
\end{proof}
Although the incidence algebra itself is an HPA, the reduced incidence algebras are not necessarily HPAs. The following is an easy example that shows that $k[P]_{\red}$ is not necessarily an HPA. Indeed the quotient of a cancellative category via a Reiner-Stamate equivalence relation may not be cancellative, as in the following example.
\begin{ex}
\label{example}
Consider the poset $P=\{a\leq b\leq d\leq f,a\leq c\leq e\leq f\}$ with the following Hasse diagram 
        \begin{center}
	       \begin{tikzcd}
		          & \bullet_{f}\\
		          \bullet_{d}\arrow{ur}{z}&& \bullet_{e}\arrow[swap]{ul}{w}\\
                    \bullet_{b}\arrow{u}{y}&&\bullet_{c}\arrow[swap]{u}{v}\\
                    &\bullet_{a}\arrow[swap]{ur}{u}\arrow{ul}{x}
	       \end{tikzcd}     
        \end{center}  
By looking at this as a category with equality of morphisms $z\circ y\circ x=w\circ v\circ u$ (or a quiver with relation $xyz-uvw$), we have an HPA $A=kQ/I$ such that $A$ is isomorphic to the incidence algebra of this poset. Now, if we assume $[a,b]\sim [a,c]$, this equivalence relation will satisfy the axioms and we will get the following category after taking the quotient 
        \begin{center}
	       \begin{tikzcd}
		          & \bullet_{\Tilde{f}}\\
		          \bullet_{\Tilde{d}}\arrow{ur}{\Tilde{z}}&& \bullet_{\Tilde{e}}\arrow[swap]{ul}{\Tilde{w}}\\
                    &\bullet_{\Tilde{b}}\arrow{ul}{\Tilde{y}}\arrow[swap]{ur}{\Tilde{v}}\\
                    &\bullet_{\Tilde{a}}\arrow{u}{\Tilde{x}}
	       \end{tikzcd}     
        \end{center}  
with equality of morphisms $\Tilde{z}\circ \Tilde{y}\circ \Tilde{x}=\Tilde{w}\circ \Tilde{v}\circ \Tilde{x}$. But this is not an HPA since we don't have $\Tilde{z}\circ \Tilde{y}=\Tilde{w}\circ \Tilde{v}$
\end{ex}
The following corollary is \cite[Corollary 1.19]{rs} in the graded case. 
\begin{cor}
\label{Koszul k[P]_{red}}
    Let $P$ be a graded poset (not necessarily finite) and $\sim$ be a Reiner-Stamate equivalence relation on $\Int(P)$. The followings are equivalent:
    \begin{enumerate}
        \item $k[P]$ is Koszul.
        \item $k[P]_{\red}$ is Koszul.
        \item The open intervals $(\lambda,\mu)$ of $P$ are Cohen-Macaulay. 
    \end{enumerate}
\end{cor}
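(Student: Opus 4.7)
The plan is to chain together three results from the preceding sections. The equivalence of (1) and (3) is precisely Corollary~\ref{cor incidence}, so the remaining content is the equivalence of (1) and (2), which will be obtained by identifying the reduced incidence algebra with a category algebra to which the almost discrete fibration machinery applies.

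First I would invoke Proposition~\ref{P and RS quotient of P}, which tells us that the projection
\[
\pi: P \to P/\sim
\]
is an almost discrete fibration, and that every morphism of $P/\sim$ lifts to $P$ by construction. Since $P$ is a graded poset, it is indiscretely based, and $P/\sim$ is indiscretely based as a quotient satisfying the Reiner-Stamate axioms. Thus Corollary~\ref{cor: almost discrete fibration} applies and gives
\[
kP \text{ is Koszul} \iff k(P/\sim) \text{ is Koszul}.
\]

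Next I would use Proposition~\ref{reduced incidence algebra is a category algebra}, which identifies
\[
k(P/\sim) \cong (k[P]_{\red})^{op}.
\]
Combining this with Corollary~\ref{cor: Koszulity of kC and k(C^{op})} (the opposite of a Koszul category algebra is again Koszul, since locally bouquet is self-dual under $p \mapsto p^{op}$) yields
\[
k(P/\sim) \text{ is Koszul} \iff k[P]_{\red} \text{ is Koszul}.
\]
Chaining these equivalences with Corollary~\ref{cor incidence} produces the full cycle (1) $\Leftrightarrow$ (2) $\Leftrightarrow$ (3).

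There is no real obstacle here: every ingredient has already been established, so the proof is essentially a one-line citation of three results assembled in the right order. The only thing to double-check is that the hypotheses of Corollary~\ref{cor: almost discrete fibration} are met, namely that $\pi$ is an almost discrete fibration \emph{and} that every morphism in the target admits a lift --- both of which are explicitly verified in Proposition~\ref{P and RS quotient of P}. Note that we do not need axiom~\aref{A3} (finiteness of $P/\sim$), since our formalism handles non-unital category algebras uniformly.
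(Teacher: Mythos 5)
Your proof is correct and follows essentially the same route as the paper: the paper's own proof is exactly the chain Corollary~\ref{cor incidence} $\Rightarrow$ (Proposition~\ref{P and RS quotient of P} + Corollary~\ref{cor: almost discrete fibration}) $\Rightarrow$ Corollary~\ref{cor: Koszulity of kC and k(C^{op})} $\Rightarrow$ Proposition~\ref{reduced incidence algebra is a category algebra}, assembled in the same order. Your added remarks about checking the lifting hypothesis and not needing the finiteness axiom are accurate but not needed beyond what the cited results already provide.
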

\begin{proof}
            \begin{align*}
                \text{$k[P]$ is Koszul}&\iff \text{Open intervals of $P$ are Cohen-Macaulay}&\text{By Corollary \ref{cor incidence}}\\
                &\iff \text{$k (P/\sim)$ is Koszul} & \text{By Proposition \ref{P and RS quotient of P}}\\
                &&\text{and Corollary \ref{cor: almost discrete fibration}}\\
                &\iff \text{$k((P/\sim)^{op})$ is Koszul} &\text{By Corollary \ref{cor: Koszulity of kC and k(C^{op})}}\\
                &\iff\text{$k[P]_{\red}$ is Koszul}&\text{By Proposition \ref{reduced incidence algebra is a category algebra}}
            \end{align*}
\end{proof}

\section{Applications to Homotopy Path Algebras}\label{sec: App. in HPAs}
Homotopy path algebras appear naturally in toric algebraic geometry. We dedicate this section to study homotopy path algebras and their Koszulity from different perspectives. 

\subsection{Homotopy path algebras and the path poset}
Here we show that any homotopy path algebra can be obtained via a Reiner-Stamate equivalence relation on their path poset. Indeed homotopy path algebras are examples of reduced incidence algebras. Then we use this to study the Koszulity of homotopy path algebras through their path poset. 
\subsubsection{Homotopy path algebras as the reduced incidence algebra of the path poset}
 Let $A=kQ/I=k\mathcal{C}_A$ be a homotopy path algebra. Let $\Path_Q$ be the path poset of $Q$ ordered by $p<q$ if $q=p \cdot r$ for some path $r$. Define the \newterm{path poset of $A$} as follows: 
$$\Path_A=\Path_Q/\sim$$
where $p\sim q$ if and only if $p-q\in I$. One can easily check the HPA relations imply $\Path_A$ is a poset. 
\begin{prop}
\label{prop: hpas as RS quotient of path poset}
    We have an almost discrete fibration 
    $$\Phi:\Path_A\to \mathcal{C}_A$$
    $$p\mapsto h(p)$$
    $$(p<q)\mapsto q/p$$
\end{prop}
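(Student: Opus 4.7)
The plan is to first verify that $\Phi$ is a well-defined functor, and then verify the almost discrete fibration property directly. For well-definedness, the map $p \mapsto h(p)$ on objects is well-defined because the HPA relations in $I$ preserve heads of paths. On morphisms, given $p < q$ in $\Path_A$ there exists a path $r$ with $q = p \cdot r$, and I set $q/p := [r] \in \mathcal{C}_A$. The left cancellativity built into the definition of HPA ensures $r$ is uniquely determined modulo $I$; a similar argument using two-sidedness of $I$ shows independence from the choice of representative of $p$. Functoriality reduces to $s/p = (s/q) \circ (q/p)$ for $p < q < s$, which is immediate from the construction.

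For the almost discrete fibration property, I note that $\Path_A$ is a poset so its only isomorphisms are identities. Hence the equivalence relation on factorizations is trivial, and I must produce literally unique lifts. Fix a morphism $(p < q)$ in $\Path_A$ and a nontrivial factorization $q/p = g_0 \circ g_1 \circ \cdots \circ g_{n+1}$ in $\mathcal{C}_A$. I define $p_0 := p$, $p_{n+2} := q$, and for $1 \leq j \leq n+1$ set
\[
p_j := g_{n+2-j} \circ \cdots \circ g_{n+1} \circ p,
\]
viewed as an element of $\Path_A$. By construction $p_{j+1}/p_j = g_{n+1-j}$, so $p_j \leq p_{j+1}$ in $\Path_A$, and $p_{n+2} = (q/p) \circ p = q$. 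Setting $f_i := (p_{n+1-i} < p_{n+2-i})$ for $0 \leq i \leq n+1$, I obtain $\Phi(f_i) = g_i$ and $f_0 \circ \cdots \circ f_{n+1} = (p < q)$, giving the desired lift. Nontriviality of the $f_i$ follows from nontriviality of the $g_i$: a non-identity $g_{n+1-j}$ forces the inequality $p_j < p_{j+1}$ to be strict.

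For uniqueness, suppose $(f'_0, \ldots, f'_{n+1})$ is another lift, corresponding to a chain $p = p'_0 < \cdots < p'_{n+2} = q$ with $p'_{j+1}/p'_j = g_{n+1-j}$. This constraint together with $p'_0 = p$ determines $p'_{j+1} = g_{n+1-j} \circ p'_j$ recursively, so induction on $j$ gives $p'_j = p_j$, and the two lifts coincide.

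The main bookkeeping obstacle is keeping the two composition conventions straight: path concatenation is written left-to-right while $\mathcal{C}_A$-composition is written right-to-left, so a factorization $q/p = g_0 \circ \cdots \circ g_{n+1}$ in $\mathcal{C}_A$ corresponds to the chain in $\Path_A$ in reversed order (which is why I indexed $f_i$ by $p_{n+1-i} < p_{n+2-i}$). Once these conventions are pinned down, the remaining arguments are essentially formal, with the hard content being that left cancellativity of $I$ is exactly what makes the recursion $p'_{j+1} = g_{n+1-j} \circ p'_j$ determine $p'_{j+1}$ uniquely in $\Path_A$.
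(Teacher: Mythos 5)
Your proof is correct and takes essentially the same route as the paper: the paper simply notes that every morphism $r:h(p)\to h(q)$ has the unique lift $p<p\cdot r$ and then invokes the induction from Proposition~\ref{prop: discrete fibrations are almost discrete fibrations}, which is exactly the recursion $p_{j+1}=p_j\cdot g_{n+1-j}$ you carry out explicitly (the poset structure trivializing the equivalence relation, as you observe). One small quibble: the uniqueness of $p'_{j+1}$ in your recursion comes from well-definedness of concatenation on $\Path_Q/\sim$ (i.e.\ $I$ being an ideal), whereas left cancellativity is what you actually need for $q/p$ to be well defined and for nontriviality of the lifted factorization.
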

\begin{proof}
    The fact that $\Phi$ is a functor follows from the definitions. Since any morphism $r:h(p)\to h(q)$ has a unique lift $p<p \cdot r$, as in the proof of Proposition \ref{prop: discrete fibrations are almost discrete fibrations}, we get that $\Phi$ is an almost discrete fibration.   
\end{proof}  
\begin{cor}
\label{cor: hpas as red. inc. alg. of path poset}
    Let $A=kQ/I=k\mathcal{C}_A$ be an HPA. Then $\mathcal{C}_A$ can be obtained from the Reiner-Stamate equivalence relation $\sim_{\Phi}$ ($\Phi$ as in Proposition \ref{prop: hpas as RS quotient of path poset}) on $\Path_A$ given by 
    $$[p,q]\sim_{\Phi}[p',q']$$ 
    if and only if 
    $$q/p-q'/p'\in I$$
    In particular $A$ is the reduced incidence algebra $k[\Path_A]_{red}$ obtained from this equivalence relation. 
\end{cor}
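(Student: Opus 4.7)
The strategy is to specialize the general correspondence developed in the previous section to the path poset of $A$. By Proposition \ref{prop: hpas as RS quotient of path poset}, $\Phi: \Path_A \to \mathcal{C}_A$ is an almost discrete fibration, so my plan is to feed this into Proposition \ref{prop: almost discrete fibrations define RS equivalence} to extract a Reiner-Stamate equivalence $\sim_\Phi$, identify that equivalence with the one described in the statement, and then invoke Proposition \ref{reduced incidence algebra is a category algebra} to recognize $A$ as the reduced incidence algebra.

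The first step is to check the two hypotheses of Proposition \ref{prop: almost discrete fibrations define RS equivalence}. The category $\mathcal{C}_A$ is indiscretely based because the length grading on $A$ makes the degree-zero morphisms of $\mathcal{C}_A$ precisely the identities at the vertices of $Q$, so each connected component of $(\mathcal{C}_A)_0$ is an indiscrete category on a single object. Every morphism $[r]: v \to w$ of $\mathcal{C}_A$ lifts along $\Phi$ via the element $e_v < r$ in $\Path_A$, since $\Phi(e_v < r) = r/e_v = r$.

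The second step is a direct computation. By the construction in Proposition \ref{prop: almost discrete fibrations define RS equivalence}, $[p,q] \sim_\Phi [p',q']$ if and only if $\Phi(p < q) = \Phi(p' < q')$; unpacking the definition of $\Phi$, this reads $q/p = q'/p'$ as morphisms in $\mathcal{C}_A = kQ/I$, which is exactly the condition $q/p - q'/p' \in I$ appearing in the statement. The quotients $q/p$ and $q'/p'$ are meaningful because of the left-cancellativity of $I_S$ built into the definition of an HPA.

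For the ``in particular'' clause, Proposition \ref{prop: almost discrete fibrations define RS equivalence} also yields an equivalence $\Path_A/{\sim_\Phi} \simeq \mathcal{C}_A$, so $A = k\mathcal{C}_A \cong k(\Path_A/{\sim_\Phi})$, which by Proposition \ref{reduced incidence algebra is a category algebra} is identified (up to the usual opposite) with $k[\Path_A]_{\red}$. I do not anticipate any substantive obstacle; the entire argument is a bookkeeping exercise once the framework of the previous section is in place. The one point that requires a little care is aligning the $(-)^{\mathrm{op}}$ convention between the category algebra and the reduced incidence algebra with the left-to-right path-concatenation convention fixed in \S\ref{sec: notation}, so that the asserted identification $A \cong k[\Path_A]_{\red}$ matches the statement verbatim.
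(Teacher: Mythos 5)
Your proof is correct and follows essentially the same route as the paper's: invoke Proposition~\ref{prop: hpas as RS quotient of path poset} to get the almost discrete fibration $\Phi$ and then apply the Reiner--Stamate correspondence (the paper cites Theorem~\ref{thm: RS equivalences vs. almost disc. fib.}, which packages Proposition~\ref{prop: almost discrete fibrations define RS equivalence}). Your additional verifications --- the lifting of morphisms via $e_v<r$, the unpacking of $\sim_\Phi$ as $q/p-q'/p'\in I$, and the $(-)^{\mathrm{op}}$ bookkeeping in Proposition~\ref{reduced incidence algebra is a category algebra} for the ``in particular'' clause --- simply make explicit what the paper's two-sentence proof leaves implicit.
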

\begin{proof}
    By Proposition \ref{prop: hpas as RS quotient of path poset}, $\Phi: \Path_A\to \mathcal{C}_A$ is an almost discrete fibration. By Theorem \ref{thm: RS equivalences vs. almost disc. fib.}, $[p,q]\sim_{\Phi}[p',q']$ if and only if $q/p-q'/p'\in I$ is a Reiner-Stamate equivalence relation on $\Path_A$. 
\end{proof}
\subsubsection{Koszulity of homotopy path algebras from the path poset}

\begin{thm}
\label{thm: Koszul HPA}
A graded HPA is Koszul if and only if the path poset is locally Cohen-Macaulay. 
\end{thm}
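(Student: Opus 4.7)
The strategy is to recognize this theorem as an immediate consequence of the machinery already established: combine the almost discrete fibration from the path poset to the path category (Proposition \ref{prop: hpas as RS quotient of path poset}) with the Koszulity-preservation of almost discrete fibrations (Corollary \ref{cor: almost discrete fibration}) and the incidence algebra criterion (Corollary \ref{cor incidence}).

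First, I would verify that the grading hypothesis carries over properly. Since $A=kQ/I$ is a graded HPA, the defining relations in $I$ are length-homogeneous in $Q$, so the equivalence relation $p\sim q \iff p-q\in I$ used to define $\Path_A$ respects path length. Consequently, $\Path_A$ inherits a natural $\mathbb{N}$-grading by path length, and the functor $\Phi:\Path_A\to \mathcal{C}_A$ sending $(p<q)\mapsto q/p$ from Proposition \ref{prop: hpas as RS quotient of path poset} is a functor of graded categories.

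Next, I would check that $\Phi$ fulfills the hypotheses of Corollary \ref{cor: almost discrete fibration}. Proposition \ref{prop: hpas as RS quotient of path poset} already tells us $\Phi$ is an almost discrete fibration; the lifting hypothesis is also immediate, since any morphism $r\in \mathcal{C}_A$ is represented by a path in $Q$ and, given any preimage $p\in \Path_A$ of its tail, lifts to the relation $p < p\cdot r$ in $\Path_A$. Applying Corollary \ref{cor: almost discrete fibration} then gives
\[
A = k\mathcal{C}_A \text{ is Koszul} \iff k\Path_A \text{ is Koszul}.
\]

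Finally, since $k\Path_A$ is precisely the incidence algebra of the graded poset $\Path_A$, Corollary \ref{cor incidence} identifies its Koszulity with the statement that $\Path_A$ is locally Cohen-Macaulay, completing the argument. There is essentially no obstacle here beyond the bookkeeping that the grading descends correctly through $\Phi$; the theorem packages together results already developed in the preceding sections. (Alternatively, one could route through Corollary \ref{cor: hpas as red. inc. alg. of path poset} and Corollary \ref{Koszul k[P]_{red}}, identifying $A$ with the reduced incidence algebra $k[\Path_A]_{\red}$ and appealing directly to the Koszulity criterion for such algebras; this yields an equivalent proof, trading the almost discrete fibration viewpoint for the Reiner--Stamate viewpoint.)
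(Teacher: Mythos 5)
Your proof is correct and is in substance the same argument as the paper's: the paper identifies $A$ with the reduced incidence algebra $k[\Path_A]_{\red}$ via Corollary \ref{cor: hpas as red. inc. alg. of path poset} and then invokes Corollary \ref{Koszul k[P]_{red}} --- which is precisely the ``alternative'' route you mention in your closing parenthetical. Your primary route, applying Corollary \ref{cor: almost discrete fibration} directly to the fibration $\Phi:\Path_A\to\mathcal{C}_A$ of Proposition \ref{prop: hpas as RS quotient of path poset} and then Corollary \ref{cor incidence}, is only a repackaging, since Corollary \ref{Koszul k[P]_{red}} is itself proved by that same fibration corollary; your explicit check that the grading descends and that every morphism of $\mathcal{C}_A$ lifts is a worthwhile addition the paper leaves implicit.
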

\begin{proof}
    Let $A=kQ/I$ be a graded HPA. By Corollary \ref{cor: hpas as red. inc. alg. of path poset} $A=k[\Path_A]_{red}$ via the Reiner-Stamate equivalence relation 
    \begin{center}
     $[p,q]\sim_{\Phi}[p',q']$ if and only if $q/p-q'/p'\in I$   
    \end{center}
    By Corollary \ref{Koszul k[P]_{red}} $k[\Path_A]_{red}$ is Koszul if and only if $\Path_A$ is locally Cohen-Macaulay. 
\end{proof} 
Let $A=kQ/I$ be a graded HPA. We have 
$$\Path_A=\bigsqcup_{v\in Q_0}\Path_{A,v}$$
where $Q_0$ is the set of vertices/ objects of $Q$ and $\Path_{A,v}$ is a subposet of $\Path_A$ consisting of all the path in $Q$ starting from $v$ up to our relations.
\begin{lemma}
\label{lemma: Path_A is CM iff Path_{A,v} are CM}
 Let $A=kQ/I$ be a graded HPA. Then $\Path_A$ is locally Cohen-Macaulay if and only if $\Path_{A,v}$ is locally Cohen-Macaulay for all $v\in Q_0$.    
\end{lemma}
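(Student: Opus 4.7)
The plan is to observe that $\Path_A$ decomposes as a \emph{disjoint union} of posets, not merely as a disjoint union of sets, and then to read off the statement from the definition of locally Cohen-Macaulay. Recall that the order on $\Path_Q$ (and hence on the quotient $\Path_A$) is defined by $p \leq q$ iff $q = p \cdot r$ for some path $r$. Such an $r$ must have tail equal to $h(p)$, so $t(q) = t(p)$. Therefore two comparable elements of $\Path_A$ lie in the same $\Path_{A,v}$.

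From this the first key step is immediate: as posets,
\[
\Path_A \;=\; \bigsqcup_{v \in Q_0} \Path_{A,v},
\]
with no order relations between distinct components. Consequently, for any $x < y$ in $\Path_A$, both endpoints lie in a common $\Path_{A,v}$, and the open interval $(x,y)$ computed in $\Path_A$ coincides with the open interval $(x,y)$ computed in $\Path_{A,v}$. Hence
\[
\{(x,y) : x < y \text{ in } \Path_A\} \;=\; \bigsqcup_{v \in Q_0}\{(x,y) : x < y \text{ in } \Path_{A,v}\}
\]
as a set of posets (together with their order complexes).

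The second and final step is to unwind the definition: $\Path_A$ is locally Cohen-Macaulay iff every open interval in $\Path_A$ is Cohen-Macaulay (i.e.\ has a Cohen-Macaulay order complex over $k$), which by the equality above occurs iff every open interval of every $\Path_{A,v}$ is Cohen-Macaulay, i.e.\ iff $\Path_{A,v}$ is locally Cohen-Macaulay for every $v \in Q_0$.

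There is no real obstacle here; the only thing to verify carefully is the decomposition at the poset level, which follows directly from the compatibility of the HPA relations with the tail of a path (the relations only identify paths sharing both a common head and a common tail, so passing to $\Path_A = \Path_Q/\!\sim$ does not glue components across different starting vertices).
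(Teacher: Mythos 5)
Your proof is correct and follows the same route as the paper, which simply observes that $\Path_A$ is the disjoint union of the $\Path_{A,v}$ and leaves the rest implicit. You have merely made explicit the two points the paper takes for granted: that the decomposition holds at the level of posets (comparable paths share a tail) and that the locally Cohen--Macaulay condition is checked interval by interval.
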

\begin{proof}
    It follows from the fact that $\Path_A$ is the disjoint union of all $\Path_{A,v}$.  
\end{proof}
\begin{cor}
\label{cor: Koszul hpa if and only if CM Path_{A,v}}
 Let $A=kQ/I$ be a graded HPA. Then $A$ is Koszul if and only if $\Path_{A,v}$ is locally Cohen-Macaulay for all $v\in Q_0$.       
\end{cor}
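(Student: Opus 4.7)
The proposal is to obtain this as an immediate two-step consequence of the results established just before it. Namely, \Cref{thm: Koszul HPA} already characterizes Koszulity of $A$ in terms of local Cohen-Macaulayness of the full path poset $\Path_A$, and \Cref{lemma: Path_A is CM iff Path_{A,v} are CM} already decomposes that local Cohen-Macaulay property across the connected components $\Path_{A,v}$ indexed by $v \in Q_0$. So the corollary is just the composite of the two equivalences, and no new machinery is required.

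Concretely, I would start the proof by invoking \Cref{thm: Koszul HPA} to rewrite ``$A$ is Koszul'' as ``$\Path_A$ is locally Cohen-Macaulay.'' Then I would apply \Cref{lemma: Path_A is CM iff Path_{A,v} are CM} to rewrite the latter as ``$\Path_{A,v}$ is locally Cohen-Macaulay for every $v \in Q_0$,'' which is exactly the desired conclusion. Writing this out as a short chain of iff's makes the logic transparent.

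There is no genuine obstacle here: the only thing worth being careful about is that the decomposition $\Path_A = \bigsqcup_{v \in Q_0} \Path_{A,v}$ used in \Cref{lemma: Path_A is CM iff Path_{A,v} are CM} really is a decomposition as posets (not merely as sets), so that every open interval $(p,q)$ of $\Path_A$ is an open interval in a unique $\Path_{A,v}$ (where $v = t(p) = t(q)$) and conversely. This is immediate from the definition of the order $p < q$ on $\Path_Q$, which requires $q = p \cdot r$ and hence forces $t(p) = t(q)$; the relation $\sim$ respects tails, so the same is true in $\Path_A$. Once this is noted, the proof is a one-line concatenation of the two previous results.
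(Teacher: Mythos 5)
Your proposal is correct and is essentially identical to the paper's own proof, which likewise cites \Cref{thm: Koszul HPA} and then \Cref{lemma: Path_A is CM iff Path_{A,v} are CM} to chain the two equivalences. Your extra observation that the disjoint union $\Path_A=\bigsqcup_{v\in Q_0}\Path_{A,v}$ is a decomposition of posets (so intervals live in a single component) is a harmless and slightly more careful justification of the lemma's applicability.
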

\begin{proof}
    By Theorem \ref{thm: Koszul HPA} is Koszul if and only if the path poset is locally Cohen-Macaulay. By Lemma \ref{lemma: Path_A is CM iff Path_{A,v} are CM} the path poset is locally Cohen-Macaulay if and only if $\Path_{A,v}$ is locally Cohen-Macaulay for all $v\in Q_0$.     
\end{proof}

\subsection{Homotopy path algebras and stratified spaces}
 Our goal here is to show that any homotopy path algebra can be obtained as a (Reiner-Stamate) quotient via a fundamental group action on the poset of stratas of the universal cover. We first provide a general framework. Then we apply it to stratified spaces. 
\subsubsection{Skew (group) category algebras}
In various places in toric algebraic geometry, we work with multi-graded rings, i.e.\ rings with group gradings. We generalize this notion by talking about skew category algebras. For more information about skew group categories see \cite{chen2024skew}. 
\begin{df}
\label{def: Grothendieck construction}
Let $\widehat G$ be a group, $S\subseteq \widehat G$ be any subset, and $\mathcal{C}$ be an indiscretely based category with $\widehat G$-grading $\deg:\mathcal{C}\to \widehat G$.  We define the \newterm{skew category} $\mathcal{C}_{S}$ to have objects $(C,\chi)$ where $C\in \Ob(\mathcal{C})$ and $\chi\in S$ and morphisms $f_{\chi}:(C',\chi')\to (C,\chi)$ where $f:C'\to C\in \Mor(\mathcal{C})$ with $\deg(f)=\chi-\chi'$. If $S=\widehat G$ we call $\mathcal{C}_{\widehat G}$ the \newterm{skew group category}.
\end{df}

\begin{ex}
\label{McKay quiver}
Let $G$ be a finite diagonal subgroup of $GL(n,k)$, $\widehat{G}=\Hom(G,k^*)$ and $\mathcal{C}$ be a category with one object and morphisms generated by loops $x_1,x_2,...,x_n$, i.e. $k\mathcal{C}\cong k[x_1,...,x_n]$. Let $\rho_i(g)$ be the $i$-th diagonal element of the matrix $g$.  Define a $G$-action on $k\mathcal{C}$ by $g \cdot x_i=\rho_i(g^{-1})x_i$. There is a $\widehat{G}$-grading $\deg:\mathcal{C}\to \widehat{G}$ given by $\deg(x_i)=\rho_i$. Then, $\mathcal{C}_{\widehat{G}}$ is the \newterm{McKay quiver} of $G$. For more on McKay quivers see e.g\ \cite{craw1, craw2, wemyss}. 
\end{ex}
\begin{ex}
\label{ex: Beilinson quiver}
Let $k$ be a field and $\mathcal{C}$ be a graded category with one object $v$ and morphisms generated by $x_1, ..., x_n$. Then $k\mathcal{C}\cong k[x_1,...,x_n]$ and we have a $\mathbb{Z}$ grading $\deg:\mathcal{C}\to \mathbb{Z}$ defined by $\deg(x_1^{\alpha_1}x_2^{\alpha_2}...x_n^{\alpha_n})=\Sigma_{i=1}^n\alpha_i$. Now if we let $S=\{1,...,n\}$, then $\mathcal{C}_S$ is the Beilinson quiver of $\mathbb{P}^n$. For example if $n=3$, then $\mathcal{C}_S$ is the following quiver with relations $\Bar{x}_i\circ x_j=\Bar{x}_j\circ x_i$:
    \begin{center}
	       \begin{tikzcd}
                    \bullet\arrow[bend left]{rr}{x_0}\arrow{rr}{x_1}\arrow[bend right]{rr}{x_2}
		          && \bullet\arrow[bend left]{rr}{\Bar{x}_0}\arrow{rr}{\Bar{x}_1}\arrow[bend right]{rr}{\Bar{x}_2}
                  &&\bullet
	       \end{tikzcd}     
        \end{center} 
\end{ex}
\begin{ex}
\label{ex: hpas are C_S}
Any polynomial ring can be represented as a category algebra of a category with one object (see Example \ref{ex: polynomial ring in several variables}), in particular, the total coordinate ring of a toric variety $X_{\Sigma}$, i.e. $\mathbb{C}[x_{\rho}:\rho\in \Sigma(1)]$ is a category algebra.  This category is graded by the class group $\Cl(X_{\Sigma})$ (see \cite[Chapter 5]{cls}).  Let $S$ be any finite subset of $\Cl(X_{\Sigma})$. Then, $k\mathcal{C}_S$ is the endomorphism algebra of $\bigoplus_{D\in S}\mathcal{O}(D)$. This generalizes Example \ref{ex: Beilinson quiver} and shows that the HPAs obtained from a collection of line bundles are category algebras $k\mathcal{C}_S$. 
\end{ex}
\begin{ex}
\label{ex: hpa of F_1}
    
The Hirzebruch surface $\mathbb F_1$ is a toric variety for the complete fan with rays $(1,0),(0,1),(-1,1),(0,-1)$ pictured below.
\begin{center}    
\begin{tikzpicture}[scale = .55]
\draw[fill=gray!80] (-2,-2)--(2,-2)--(2,2)--(-2,2)--(-2,-2);
\draw (0,0) -- (2,0);
\draw (0,0) -- (0,2);
\draw (0,0) -- (-2,2);
\draw (0,0) -- (0,-2);
\node (1) at (2.2,0) {1};
\node (2) at (0,2.4) {2};
\node (3) at (-2.2,2.4) {3};
\node (4) at (0,-2.4) {4};
\end{tikzpicture}
\end{center}
The Cox ring $R = k[x_1,x_2,x_3,x_4]$ is $\mathbb Z^2$-graded with degrees $(1,0),(-1,1),(1,0),(0,1)$.  We obtain an HPA from the degree zero piece of the endomorphism algebra of $R \oplus R(1,0) \oplus R(0,1) \oplus R(1,1)$.
Equivalently we consider the exceptional collection of the corresponding line bundles $v_0:=\mathcal{O}$, $v_1:=\mathcal{O}(1,0)$, $v_2:=\mathcal{O}(0,1)$ and $v_3:=\mathcal{O}(1,1)$. The category $\mathcal{C}_S$ is pictured as follows (see also \cite[Example 6.32]{dj}):
    \begin{center}
	       \begin{tikzcd}
                    \bullet\arrow[bend left]{rr}{x_1}\arrow[bend right]{rrrr}{x_4}\arrow[bend right]{rr}{x_3}
		          && \bullet\arrow{rr}{x_2}\arrow[bend left]{rrrr}{x_4}
                  &&\bullet\arrow[bend left,swap]{rr}{x_1}\arrow[bend right]{rr}{x_3}
                  &&\bullet
	       \end{tikzcd}     
        \end{center}     
    
\end{ex}
\begin{prop}
Let $\widehat G$ and $\widehat G'$ be two groups, and $S\subseteq \widehat G$ and $S'\subseteq \widehat G'$ be any subsets. Let $\mathcal{C}$ be an indiscretely based category with $\widehat G$-grading $\deg:\mathcal{C}\to \widehat G$  and $\mathcal{C}'$ be an indiscretely based category with $\widehat G'$-grading $\deg':\mathcal{C}'\to \widehat G'$. Then
$$\mathcal{C}_S\times {\mathcal{C}'}_{S'}\cong (\mathcal{C}\times \mathcal{C}')_{S\times S'}$$
\end{prop}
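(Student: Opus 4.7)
The plan is to exhibit explicit mutually inverse functors, using the obvious ``shuffle'' of coordinates. On objects define
\[
\Phi\bigl((C,\chi),(C',\chi')\bigr) := \bigl((C,C'),(\chi,\chi')\bigr),
\]
and on a morphism $(f_{\chi_2},g_{\chi_2'})$ in $\mathcal{C}_S\times\mathcal{C}'_{S'}$ define
\[
\Phi(f_{\chi_2},g_{\chi_2'}) := (f,g)_{(\chi_2,\chi_2')}.
\]
In the reverse direction, define
\[
\Psi\bigl((C,C'),(\chi,\chi')\bigr) := \bigl((C,\chi),(C',\chi')\bigr), \qquad \Psi\bigl((f,g)_{(\chi_2,\chi_2')}\bigr) := (f_{\chi_2},g_{\chi_2'}).
\]

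The first thing to verify is that these assignments actually send morphisms to morphisms, which amounts to matching the degree conditions on each side. On the left one requires $\deg(f)=\chi_2-\chi_1$ and $\deg'(g)=\chi_2'-\chi_1'$; on the right, using the grading $(\deg,\deg'):\mathcal{C}\times\mathcal{C}'\to \widehat G\times\widehat G'$, one requires $(\deg,\deg')(f,g)=(\chi_2,\chi_2')-(\chi_1,\chi_1')$. These two conditions are literally the same statement, computed componentwise in $\widehat G\times\widehat G'$.

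Next one checks functoriality of $\Phi$ (and $\Psi$): identity morphisms go to identity morphisms by inspection, and composition is preserved because composition in both skew categories is inherited coordinatewise from $\mathcal{C}$, $\mathcal{C}'$, and $\mathcal{C}\times\mathcal{C}'$, while the second-coordinate ``degree label'' is simply carried along. Finally, $\Psi\circ\Phi$ and $\Phi\circ\Psi$ are the identity on objects and morphisms by direct inspection, so $\Phi$ is an isomorphism of categories.

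The main obstacle, if any, is purely bookkeeping: making sure the grading used on $\mathcal{C}\times\mathcal{C}'$ in forming $(\mathcal{C}\times\mathcal{C}')_{S\times S'}$ is indeed the product grading $(\deg,\deg')$ into $\widehat G\times\widehat G'$, so that the two descriptions of ``a morphism with a prescribed pair of target labels'' coincide term by term. Once this identification is made, the isomorphism is essentially tautological.
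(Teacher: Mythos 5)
Your proposal is correct and matches the paper's proof, which defines the same pair of mutually inverse functors $\Phi$ and $\Psi$ swapping the object and grading labels and leaves the verification as routine. The only difference is that you spell out the degree-matching and functoriality checks that the paper dismisses with ``one can easily check.''
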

\begin{proof}
    One can easily check that 
    $$\Phi:\mathcal{C}_S\times {\mathcal{C}'}_{S'}\to  (\mathcal{C}\times \mathcal{C}')_{S\times S'}$$
    $$((C,\chi),(C',\chi'))\mapsto((C,C'),(\chi,\chi'))$$
    $$(f,f')\mapsto (f,f')$$
    is the desired isomorphism with the following inverse
    $$\Psi: (\mathcal{C}\times \mathcal{C}')_{S\times S'}\to \mathcal{C}_S\times {\mathcal{C}'}_{S'}$$
    $$((C,C'),(\chi,\chi'))\mapsto ((C,\chi),(C',\chi'))$$
    $$(f,f')\mapsto (f,f')$$
\end{proof}
\begin{ex}
\label{ex: P^1xP^1}
    Consider the Beilinson quiver of $\mathbb{P}^1$ (See Example \ref{ex: Beilinson quiver}):
    \begin{center}
	       \begin{tikzcd}
                    \bullet_0\arrow[bend left]{r}{x}\arrow[bend right]{r}{y}
		          &\bullet_1
	       \end{tikzcd}     
        \end{center} 
    obtained from $S=\{\mathcal{O},\mathcal{O}(1)\}\subset \Cl(\mathbb P^1)\cong \mathbb Z$ (See Example \ref{ex: hpas are C_S}) where $\mathcal{C}$ is a category with one object $v$ and morphisms generated by $x$ and $y$, i.e. $k\mathcal{C}\cong k[x,y]$. Then, $\mathcal{C}_S\times \mathcal{C}_S$ is as follows
    \begin{center}
	       \begin{tikzcd}
                    \bullet_{(0,1)}\arrow[bend left]{rrr}{(x,Id_1)}\arrow[bend right]{rrr}{(y,Id_1)}
		          &&&\bullet_{(1,1)}\\\\\\
                    \bullet_{(0,0)}\arrow[swap, bend left]{rrr}{(x,Id_0)}\arrow[swap, bend right]{rrr}{(y,Id_0)}\arrow[bend left]{uuu}{(Id_0,x)}\arrow[swap, bend right]{uuu}{(Id_0,y)}
		          &&&\bullet_{(1,0)}\arrow[bend left]{uuu}{(Id_1,x)}\arrow[swap, bend right]{uuu}{(Id_1,y)}
	       \end{tikzcd}     
        \end{center}   
    This can be identified with the skew category $(\mathcal{C}\times \mathcal{C})_{S\times S}$. Here  $\mathcal{C}\times \mathcal{C}$ is a category with one object $v$ and morphisms generated by $x$, $y$, $z$ and $w$, i.e. $k(\mathcal{C}\times \mathcal{C})=k[x,y,z,w]$. Furthermore 
    $$S\times S=\{(0,0),(1,0),(0,1), (1,1)\}\subseteq \Cl(\mathbb P^1)\times \Cl(\mathbb P^1)\cong \Cl(\mathbb P^1\times\mathbb{P}^1)$$
\end{ex}
\begin{prop}
\label{prop: skew category}
 Let $\widehat G$ be a group and $\mathcal{C}$ be an indiscretely based category with $\widehat G$-grading $\deg:\mathcal{C}\to \widehat G$. Then the projection functor 
     $$\pi:\mathcal{C}_{\widehat G}\to \mathcal{C}$$
    $$(C,\chi)\mapsto C$$
    $$(f_{\chi}:(C',\chi')\to (C,\chi))\mapsto f$$
is a discrete fibration. 
\end{prop}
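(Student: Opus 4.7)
The plan is to simply unpack the definition of a discrete fibration and observe that the grading forces the lift to be unique. Fix an object $(C,\chi) \in \Ob(\mathcal{C}_{\widehat G})$ and a morphism $f \colon B \to C$ in $\mathcal{C}$. I need to show that there is exactly one morphism in $\mathcal{C}_{\widehat G}$ with codomain $(C,\chi)$ whose image under $\pi$ is $f$.

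Any such lift must be of the form $f_\chi \colon (B, \chi') \to (C,\chi)$ for some $\chi' \in \widehat G$, and by Definition~\ref{def: Grothendieck construction} it is required that $\deg(f) = \chi - \chi'$. This equation has the unique solution $\chi' = \chi - \deg(f)$ in the group $\widehat G$. Conversely, taking $\chi' := \chi - \deg(f)$ produces a bona fide morphism $f_\chi \colon (B, \chi - \deg(f)) \to (C,\chi)$ in $\mathcal{C}_{\widehat G}$, and it clearly satisfies $\pi(f_\chi) = f$. Existence and uniqueness of the lift are therefore both immediate.

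There is no real obstacle here; the entire statement is a direct consequence of the fact that $\widehat G$ is a group, so that the equation $\chi - \chi' = \deg(f)$ has a unique solution for $\chi'$ once $\chi$ and $\deg(f)$ are given. The proof amounts to little more than writing down this solution and observing that the projection $\pi$ is by construction degree-forgetting. Consequently, the write-up should be only a few lines.
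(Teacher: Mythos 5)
Your proof is correct and follows essentially the same argument as the paper: both identify the unique lift $f_{\chi}\colon (C',\chi-\deg(f))\to (C,\chi)$ by solving the degree equation $\deg(f)=\chi-\chi'$, using that $\widehat G$ is a group. No differences worth noting.
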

\begin{proof}
 Let $f:C'\to C$ be a morphism in $\mathcal{C}$ and let $(C,\chi)$ be an object in the fiber $\pi^{-1}(C)$. Then, $f_{\chi}:(C',\chi-\deg(f))\to (C,\chi)$ is morphism in $\mathcal{C}_{\widehat G}$ with $\pi(f_{\chi})=f$. From the definition of morphisms in $\mathcal{C}_{\widehat G}$ it follows that $f_{\chi}:(C',\chi-\deg(f))\to (C,\chi)$ is the unique morphism with $\pi(f_{\chi})=f$. 
\end{proof}
\begin{cor}
\label{cor: Koszulity of kC and kC_H}
 Let $\widehat G$ be a group and $\mathcal{C}$ be an indiscretely based category with $\widehat G$-grading $\deg:\mathcal{C}\to \widehat G$. Then the followings are equivalent:
 \begin{enumerate}
        \item $\mathcal{C}$ is locally bouquet. 
        \item $\mathcal{C}_{\widehat G}$ is locally bouquet. 
        \item $k\mathcal{C}$ is Koszul. 
        \item $k\mathcal{C}_{\widehat G}$ is Koszul. 
 \end{enumerate}
\end{cor}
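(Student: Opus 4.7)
The plan is to deduce this immediately from Corollary~\ref{cor: almost discrete fibration} applied to the projection $\pi\colon \mathcal{C}_{\widehat G}\to\mathcal{C}$.  To be able to invoke that corollary, I would need to verify three things: that $\mathcal{C}_{\widehat G}$ is indiscretely based, that $\pi$ is an almost discrete fibration, and that every morphism of $\mathcal{C}$ admits a lift through $\pi$.

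The second and third of these are essentially free.  Proposition~\ref{prop: skew category} already states that $\pi$ is a discrete fibration, and Proposition~\ref{prop: discrete fibrations are almost discrete fibrations} upgrades this to an almost discrete fibration.  For the lifting property, given any $f\colon C'\to C$ in $\mathcal{C}$ and any choice of $\chi\in\widehat G$, the morphism $f_\chi\colon (C',\chi-\deg(f))\to (C,\chi)$ tautologically projects to $f$.

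The remaining step is to check that $\mathcal{C}_{\widehat G}$ is indiscretely based.  I would transport the $\mathbb{N}$-grading $\ell\colon\mathcal{C}\to\mathbb{N}$ witnessing that $\mathcal{C}$ is indiscretely based to $\mathcal{C}_{\widehat G}$ by setting $\ell(f_\chi):=\ell(f)$; additivity under composition is immediate, and $\pi$ becomes a graded functor.  The degree-zero morphisms of $\mathcal{C}_{\widehat G}$ are exactly those $f_\chi$ with $f\in\mathcal{C}_0$.  If $(C,\chi)$ and $(C',\chi')$ lie in the same connected component of $(\mathcal{C}_{\widehat G})_0$, projecting the connecting zigzag to $\mathcal{C}_0$ places $C$ and $C'$ in a common indiscrete component of $\mathcal{C}_0$, so there is a unique degree-zero morphism $u\colon C\to C'$ in $\mathcal{C}$, and tracking the $\widehat G$-coordinates along the zigzag forces $\deg(u)=\chi'-\chi$.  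Then $u_{\chi'}\colon (C,\chi)\to(C',\chi')$ is the unique morphism between these objects in $(\mathcal{C}_{\widehat G})_0$, so each component of $(\mathcal{C}_{\widehat G})_0$ is indiscrete.

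With these three ingredients in place, Corollary~\ref{cor: almost discrete fibration} will immediately yield the equivalence of (1)--(4).  The only mildly delicate point is the interaction of the two gradings in the last step; the worry would be that combining the $\widehat G$-grading with the $\mathbb{N}$-grading could destroy uniqueness of morphisms in the degree-zero components, but the constraint $\deg(f)=\chi-\chi'$ merely pins down the target character from the source and the underlying arrow, so uniqueness is preserved.
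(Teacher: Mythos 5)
Your proposal is correct and follows essentially the same route as the paper: the paper's proof likewise combines Proposition~\ref{prop: skew category}, Proposition~\ref{prop: discrete fibrations are almost discrete fibrations}, and Corollary~\ref{cor: almost discrete fibration}, asserting that ``one can easily check'' $\mathcal{C}_{\widehat G}$ is indiscretely based. You simply supply the details of that check (and of the lifting hypothesis), which the paper leaves implicit.
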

\begin{proof}
    Since $\mathcal{C}$ is an indiscretely based category, one can easily check that $\mathcal{C}_{\widehat{G}}$ is also an indiscretely based category. By Proposition \ref{prop: skew category} $\pi:\mathcal{C}_{\widehat G}\to \mathcal{C}$ is a discrete fibration. By Proposition \ref{prop: discrete fibrations are almost discrete fibrations}, $\pi:\mathcal{C}_{\widehat G}\to \mathcal{C}$ is an almost discrete fibration. Now by Corollary \ref{cor: almost discrete fibration} we get that all the above are equivalent. 
\end{proof}
We are particularly interested in the case where $\widehat G$ is the dual group of a finite diagonal subgroup of $GL(n,k)$. So, let $G$ be a finite diagonal subgroup of $GL(n,k)$ and let $\widehat{G}=\Hom(G,k^*)$ (Note that $G$ is abelian since it is a diagonal subgroup of $GL(n,k)$ and so since it is finite, $G\cong \widehat{G}$). In this case $\mathcal{C}_{\widehat{G}}$ generalizes the McKay quiver and its category algebra is isomorphic to the skew group algebra $k\mathcal{C}\# G$. For more information about skew group algebras, see \cite{wemyss}. We begin by recalling some definitions. 
\begin{df}
Let $A$ be a $k$-algebra and $G$ be a finite group together with a group homomorphism $G\to \Aut_{k-alg}(A)$. We define the skew group ring $A\#G$ as follows: as a vector space it is $A\otimes_k k[G]$, with multiplication defined as 
$$(f_1\otimes g_1).(f_2\otimes g_2):=(f_1.g_1(f_2))\otimes g_1g_2$$
for any $f_1,f_2\in A$ and $g_1,g_2\in G$ extended by linearity. 
\end{df}
\begin{ex}[\protect{\cite[Remark 2.7, Proposition 2.8 (3), p.\ 6]{craw2}}]
\label{ex: path algebra of the McKay quiver is the skew group algebra}
   The path algebra of the McKay quiver of the finite abelian group $G\subset GL(n,k)$ (See Example \ref{McKay quiver}) is isomorphic to the skew group algebra $k[x_1,...,x_n]\# G$. 
\end{ex}
Now let $G$ be a finite diagonal subgroup of $GL(n,k)$, $\mathcal{C}$ be an indiscretely based category with a free $G$-action on $k\mathcal{C}$ and $\widehat{G}$-grading $\deg:\mathcal{C}\to \widehat{G}$ where $g \cdot f=\deg(f)(g^{-1}) \cdot f$ in $k\mathcal{C}$ for all $g\in G$ and $f\in \Mor(\mathcal{C})$. Under these assumptions the following generalizes Example \ref{ex: path algebra of the McKay quiver is the skew group algebra}:
\begin{prop} Assume $\chr k \nmid |G|$.
\label{prop: k((C_{hat{G}})^{op}) is isomorphic to kC*G}
    $$k((\mathcal{C}_{\widehat{G}})^{op})\cong k\mathcal{C}\#G$$
\end{prop}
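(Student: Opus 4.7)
The plan is to construct the algebra isomorphism directly, using the Fourier decomposition of $k[G]$ and the Peirce decomposition of $k\mathcal{C}\#G$ induced by the Fourier idempotents. Since $\chr k \nmid |G|$ and $G$ is finite abelian, the group algebra $k[G]$ admits the primitive orthogonal idempotents
\[
e_\chi := \frac{1}{|G|}\sum_{g \in G}\chi(g^{-1})g,\qquad \chi\in\widehat{G},
\]
satisfying $e_\chi e_{\chi'} = \delta_{\chi,\chi'}e_\chi$, $\sum_{\chi}e_\chi = 1$, and the inverse-Fourier identity $g = \sum_\chi \chi(g)\,e_\chi$. For each $(C,\chi) \in \Ob(\mathcal{C}) \times \widehat{G}$ I would set $\epsilon_{C,\chi} := 1_C \otimes e_\chi \in k\mathcal{C}\#G$; since $\deg 1_C = 0$ gives $g\cdot 1_C = 1_C$, a short check shows that $\{\epsilon_{C,\chi}\}$ is a complete family of orthogonal idempotents in $k\mathcal{C}\#G$ indexed by the object set of $\mathcal{C}_{\widehat{G}}$.

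The next step is to compute the Peirce blocks. For a morphism $f: X\to Y$ in $\mathcal{C}$, expanding $\epsilon_{Y,\chi}(f\otimes 1)\epsilon_{X,\chi'}$ by the smash-product rule $(a\otimes a')(b\otimes b') = a\cdot a'(b)\otimes a'b'$ together with the hypothesis $g\cdot f = \deg(f)(g^{-1})f$, and collapsing the resulting sum by Fourier orthogonality, yields
\[
\epsilon_{Y,\chi}(f\otimes 1)\epsilon_{X,\chi'} \;=\; \delta_{\chi'-\chi,\,\deg f}\; f\otimes e_{\chi'}.
\]
Thus each non-zero Peirce block $\epsilon_{Y,\chi}(k\mathcal{C}\#G)\epsilon_{X,\chi'}$ has one basis element for each $f\in \Hom_{\mathcal{C}}(X,Y)$ with $\deg f = \chi'-\chi$. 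After flipping the sign of the character label to absorb the convention difference between $\deg f = \chi' - \chi$ on the Peirce side and $\deg f = \chi - \chi'$ in the definition of $\mathcal{C}_{\widehat{G}}$, this set is in natural bijection with the morphisms of $(\mathcal{C}_{\widehat{G}})^{op}$ between the corresponding pair of objects.

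Using the Peirce matching, I define $\phi: k((\mathcal{C}_{\widehat{G}})^{op}) \to k\mathcal{C}\#G$ on the basis by sending each $f_\chi^{op}$ to the corresponding Peirce element, which is manifestly a vector-space bijection. To verify multiplicativity, I take a pair of basis elements and unfold the opposite product as $f_\chi^{op}\bullet g_\psi^{op} = g_\psi\cdot f_\chi = (g\circ f)_\psi$ in $k\mathcal{C}_{\widehat{G}}$ (nonzero exactly when $\psi-\chi = \deg g$ and $f:C'\to C,\ g:C\to D$ are composable in $\mathcal{C}$); on the other side, $\phi(f_\chi^{op})\phi(g_\psi^{op}) = (f\otimes e_\chi)(g\otimes e_\psi)$ reduces via the smash-product formula and the identity $e_\chi\cdot h = \delta_{\chi+\deg h,\,0}\,h$ to the same Peirce element under exactly the same composability and character-shift conditions. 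Hence $\phi$ is an algebra homomorphism, and being a bijection on bases it is an isomorphism.

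The main obstacle is the careful bookkeeping of character shifts and the direction reversal induced by passing to the opposite category. The precise sign convention $g\cdot f = \deg(f)(g^{-1})f$ (involving $g^{-1}$ rather than $g$) is what makes the Peirce decomposition align with $(\mathcal{C}_{\widehat{G}})^{op}$ rather than $\mathcal{C}_{\widehat{G}}$: with this convention, the Fourier-shifted idempotents absorb the degree of a morphism with the opposite sign from what one might naively expect, and getting the matching of character labels right is the crux of the verification.
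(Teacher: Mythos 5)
Your construction is at bottom the same one the paper uses: the paper's map $\Phi(f_\chi)=\tfrac{1}{|G|}\sum_{g\in G}\chi(g)\,f\otimes g$ is exactly $f\otimes e_{\chi^{-1}}$ in your notation, so your idempotents $\epsilon_{C,\chi}=1_C\otimes e_\chi$, the Fourier inversion $g=\sum_\chi\chi(g)e_\chi$, and the Peirce block computation $\epsilon_{Y,\chi}(f\otimes 1)\epsilon_{X,\chi'}=\delta_{\chi'-\chi,\deg f}\,f\otimes e_{\chi'}$ are a correct (and conceptually cleaner) repackaging of the paper's explicit formulas. Up to that point everything checks out.

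The gap is in the multiplicativity verification, and it is not mere bookkeeping. With the conventions in force ($\alpha\beta=\alpha\circ\beta$ in $k\mathcal{C}$, so $\alpha\beta\neq 0$ only when the source of $\alpha$ equals the target of $\beta$, and $(f_1\otimes g_1)(f_2\otimes g_2)=(f_1\cdot g_1(f_2))\otimes g_1g_2$), one finds
\[
(f\otimes e_\chi)(g\otimes e_\psi)=\delta_{\psi-\chi,\,\deg g}\,(f\cdot g)\otimes e_\psi,
\]
where $f\cdot g=f\circ g$ is nonzero only when the \emph{source} of $f$ equals the \emph{target} of $g$; whereas $f_\chi^{op}\bullet g_\psi^{op}=g_\psi\circ f_\chi=(g\circ f)_\psi$ is nonzero when the \emph{target} of $f$ equals the \emph{source} of $g$. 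The two supports are opposite, so the identity $\phi(f_\chi^{op}\bullet g_\psi^{op})=\phi(f_\chi^{op})\phi(g_\psi^{op})$ fails for the map as you have defined it --- already for $G$ trivial and $\mathcal{C}$ the category $X\to Y$, where $\phi(\mathrm{Id}_X^{op}\bullet f^{op})=f$ but $\phi(\mathrm{Id}_X^{op})\phi(f^{op})=1_X\circ f=0$. The same mismatch is visible in your Peirce matching: $f\otimes e_{\chi'}$ with $f\colon X\to Y$ has its left idempotent at $Y$, while a morphism of $(\mathcal{C}_{\widehat{G}})^{op}$ with underlying $f\colon X\to Y$ has its left identity at $X$, and no relabeling of the character coordinate alone can swap the underlying objects. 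What your computation actually establishes is a \emph{covariant} isomorphism $k\mathcal{C}_{\widehat{G}}\cong k\mathcal{C}\# G$, $f_\chi\mapsto f\otimes e_{\deg f-\chi}$; to reach $k((\mathcal{C}_{\widehat{G}})^{op})$ you need, in addition, an identification of $\mathcal{C}_{\widehat{G}}$ with its opposite (available in the paper's intended applications, where $\mathcal{C}$ has one object and a commutative endomorphism monoid), or a genuinely composition-reversing map. You should fix this direction bookkeeping explicitly rather than asserting that the composability conditions agree.
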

\begin{proof}
    Define 
    $$\Phi:k((\mathcal{C}_{\widehat{G}})^{op})\to k\mathcal{C}\#G$$
    $$f_{\chi}\mapsto \frac{1}{|G|}\Sigma_{g\in G}\chi(g).f\otimes g$$
    where $f_{\chi}:(C,\chi)\to (C',\chi')$ in $(\mathcal{C}_{\widehat{G}})^{op}$ and 
    $$\Psi: k\mathcal{C}\# G\to k\mathcal{C}_{\widehat{G}}$$
    $$f\otimes g\mapsto \Sigma_{\chi\in \widehat{G}}\chi^{-1}(g)f_{\chi}$$
    One easily can check that these give the desired isomorphism.
\end{proof}
\begin{cor}
\label{cor: Koszulity of kC and kC*G}
Let $G$ be a finite diagonal subgroup of $GL(n,k)$ with $\chr k \nmid |G|$. Let $\mathcal{C}$ be an indiscretely based category with a free $G$-action on $k\mathcal{C}$ and $\widehat{G}$-grading $\deg:\mathcal{C}\to \widehat{G}$ where $g.f=\deg(f)(g^{-1}).f$ in $k\mathcal{C}$ for all $g\in G$ and $f\in \Mor(\mathcal{C})$. Then the followings are equivalent:
\begin{enumerate}
    \item $k\mathcal{C}$ is Koszul.
    \item $k\mathcal{C}\# G$ is Koszul. 
\end{enumerate}
\end{cor}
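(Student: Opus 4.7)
The plan is to chain together the three results immediately preceding the corollary. By Corollary \ref{cor: Koszulity of kC and kC_H}, the Koszulity of $k\mathcal{C}$ is equivalent to the Koszulity of $k\mathcal{C}_{\widehat{G}}$, since $\widehat{G}$ is a group and $\deg$ gives $\mathcal{C}$ the structure of a $\widehat{G}$-graded indiscretely based category. Then, by Corollary \ref{cor: Koszulity of kC and k(C^{op})}, the Koszulity of $k\mathcal{C}_{\widehat{G}}$ is equivalent to the Koszulity of its opposite $k((\mathcal{C}_{\widehat{G}})^{op})$ (noting that $\mathcal{C}_{\widehat{G}}$ is indiscretely based as remarked in the proof of Corollary \ref{cor: Koszulity of kC and kC_H}). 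Finally, the algebra isomorphism of Proposition \ref{prop: k((C_{hat{G}})^{op}) is isomorphic to kC*G} identifies $k((\mathcal{C}_{\widehat{G}})^{op})$ with $k\mathcal{C}\#G$, and since Koszulity is obviously preserved under isomorphism of graded algebras, we conclude the two are equivalent.

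Concretely, I would write:
\begin{align*}
k\mathcal{C} \text{ is Koszul} &\iff k\mathcal{C}_{\widehat{G}} \text{ is Koszul} & \text{by Corollary \ref{cor: Koszulity of kC and kC_H}}\\
&\iff k((\mathcal{C}_{\widehat{G}})^{op}) \text{ is Koszul} & \text{by Corollary \ref{cor: Koszulity of kC and k(C^{op})}}\\
&\iff k\mathcal{C}\#G \text{ is Koszul} & \text{by Proposition \ref{prop: k((C_{hat{G}})^{op}) is isomorphic to kC*G}}
\end{align*}

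There is essentially no obstacle, as the hard analytic work has been done in establishing the preceding three results. The only small thing to verify is that the grading on $k\mathcal{C}\#G$ used to talk about Koszulity matches the grading on $k((\mathcal{C}_{\widehat{G}})^{op})$ under $\Phi$; this follows because $\Phi$ sends the basis element $f_\chi$ (which has the same $\mathbb{N}$-degree as $f$ since $\deg: \mathcal{C} \to \widehat G$ is independent of the grading $l:\mathcal C \to \mathbb N$) to a linear combination of elements $f \otimes g$ all of degree $l(f)$ in the natural grading on $k\mathcal{C}\#G$ inherited from $k\mathcal{C}$. The hypothesis $\chr k \nmid |G|$ is only needed to invoke Proposition \ref{prop: k((C_{hat{G}})^{op}) is isomorphic to kC*G}, where invertibility of $|G|$ in $k$ ensures that the map $\Phi$ and its inverse $\Psi$ are well-defined; nothing further is needed here.
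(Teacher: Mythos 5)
Your proposal is correct and follows exactly the same three-step chain as the paper's proof: Corollary \ref{cor: Koszulity of kC and kC_H}, then Corollary \ref{cor: Koszulity of kC and k(C^{op})}, then the isomorphism of Proposition \ref{prop: k((C_{hat{G}})^{op}) is isomorphic to kC*G}. Your additional remark about the compatibility of the $\mathbb{N}$-gradings under $\Phi$ is a sensible detail that the paper leaves implicit.
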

\begin{proof}
By Corollary \ref{cor: Koszulity of kC and kC_H}, $kC$ is Koszul if and only if $kC_{\widehat{G}}$ is Koszul. By Corollary \ref{cor: Koszulity of kC and k(C^{op})} $kC_{\widehat{G}}$ is Koszul if and only if $k((\mathcal{C}_{\widehat{G}})^{op})$ is Koszul. By Proposition \ref{prop: k((C_{hat{G}})^{op}) is isomorphic to kC*G} $k((\mathcal{C}_{\widehat{G}})^{op})$ is isomorphic to $k\mathcal{C}\# G$. So (1) and (2) are equivalent. 
\end{proof}
\begin{ex}
Let $G$ be a finite diagonal subgroup of $GL(n,k)$ with $\chr k \nmid |G|$, $\widehat{G}=\Hom(G,k^*)$ and $\mathcal{C}$ be a category with one object and morphisms generated by loops $x_1,x_2,...,x_n$, i.e. $k\mathcal{C}\cong k[x_1,...,x_n]$. 
 Let $\rho_i(g)$ be the $i$-th diagonal element of the matrix $g$.  Define a $G$-action on $k\mathcal{C}$ by $g \cdot x_i=\rho_i(g^{-1})x_i$.  Define a $\widehat{G}$-grading $\deg:\mathcal{C}\to \widehat{G}$ by $\deg(x_i)=\rho_i$. 
Since $k[x_1,...,x_n]$ is Koszul (see Example~\ref{ex: polynomial ring is Koszul}),  $k[x_1,...,x_n]\# G$ is Koszul as well. 
\end{ex} 

Let $\widehat G$ be a partially ordered group. Assume we have a $\widehat G$-grading $\deg:\mathcal{C}\to \widehat G$. Then, if $\deg(f)\geq 0_{\widehat G}$ for all $f\in \Mor(\mathcal{C})$, we have a functor 
$$p:\mathcal{C}_S\to S$$
$$(C,\chi)\mapsto \chi$$
$$(f_{\chi}:(C',\chi')\to (C,\chi))\mapsto (\chi'<\chi)$$
where $S$ is viewed as a subposet of $G$. $p$ may not be an almost discrete fibration. In the following specific example $p$ is actually an isomorphism:
\begin{ex}
    Let $\lambda$ be a \newterm{pointed affine semigroup}, i.e. a finitely-generated sub-semigroup of the additive group $(\mathbb{Z}^n,+)$ such that an element $\lambda\ne 0$ in $\Lambda$ never has its additive inverse $-\lambda$ in $\Lambda$ (For more information see \cite{rs}). Let $\widehat G$ be its Grothendieck group. Then $\widehat G$ is a poset via $\lambda\leq \mu$ if and only if $\mu-\lambda\in \Lambda$. Since $\widehat G$ is a partially ordered group and $\Lambda=\widehat G_{\geq 0}$, we get that $\Lambda$ is locally Cohen-Macaulay if and only if $\widehat G$ is locally Cohen-Macaulay. If $\mathcal{C}$ is a category with one object and morphisms $\Lambda$, then $\mathcal{C}_{\widehat G}$ is isomorphic to $\widehat G$. So, by Proposition \ref{prop: loc bouquet vs loc CM}, $\Lambda$ is locally Cohen-Macaulay if and only if $\mathcal{C}_{\widehat G}$ is locally bouquet. By Corollary \ref{cor: Koszulity of kC and kC_H}, $k\mathcal{C}=k\Lambda$ is Koszul if and only if $\Lambda$ is locally Cohen-Macaulay. This recovers \cite[Corollary 2.2, p.\ 381]{prs} and \cite[Theorem 1.7, p. 4]{rs}. 
\end{ex}

\begin{lemma}
\label{lemma: morphism factorizations of skew categories}
Let $\widehat G$ be a partially ordered abelian group and $S\subseteq \widehat  G$ be a \newterm{saturated} subset, i.e. if $a,b\in S$ and $a<c<b$ then $c\in S$. Let $\mathcal{C}$ be an indiscretely based category with $\widehat G$-grading $\deg: \mathcal{C}\to \widehat G$ such that $\deg(f) \geq 0_{\widehat G}$. Then any morphism factorization of a morphism of $\mathcal{C}_S$ stays in $\mathcal{C}_S$.
\end{lemma}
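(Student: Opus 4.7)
The plan is to reduce to the binary case and then exploit that a non-negatively graded arrow between two objects of $\mathcal{C}_S$ forces any intermediate grading to lie between the endpoints, where saturatedness of $S$ takes over. Concretely, I would start with a morphism $f_{\chi} : (C',\chi') \to (C,\chi)$ of $\mathcal{C}_S$ (so $\chi',\chi \in S$ and $\deg(f) = \chi - \chi' \geq 0_{\widehat G}$) and an arbitrary factorization
$$f_{\chi} \;=\; (g_0)_{\eta_0} \circ (g_1)_{\eta_1} \circ \cdots \circ (g_{n+1})_{\eta_{n+1}}$$
in $\mathcal{C}_{\widehat G}$, where $(g_i)_{\eta_i} : (C_{i+1},\eta_{i+1}) \to (C_i,\eta_i)$ with $\eta_0 = \chi$ and $\eta_{n+2} = \chi'$. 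The goal is to show $\eta_i \in S$ for every intermediate index $1 \leq i \leq n+1$.

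The key step is to read off the gradings from \Cref{def: Grothendieck construction}: for each $i$ one has $\deg(g_i) = \eta_i - \eta_{i+1}$. By hypothesis $\deg(g_i) \geq 0_{\widehat G}$, so the chain
$$\chi' \;=\; \eta_{n+2} \;\leq\; \eta_{n+1} \;\leq\; \cdots \;\leq\; \eta_1 \;\leq\; \eta_0 \;=\; \chi$$
holds in the partially ordered group $\widehat G$. Since $\chi,\chi' \in S$ and $S$ is saturated (using the convention from the notations where saturatedness is taken with respect to $\leq$), each $\eta_i$ is forced to lie in $S$. Therefore every intermediate object $(C_i,\eta_i)$ belongs to $\mathcal{C}_S$, and each $(g_i)_{\eta_i}$ is a morphism of $\mathcal{C}_S$, so the factorization stays in $\mathcal{C}_S$.

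The argument has no real obstacle; the only subtlety is bookkeeping the two sign conventions (source vs.\ target of $f_\chi$) so that the inequalities in $\widehat G$ point the correct way, and confirming that saturatedness as defined in the notations suffices to pin down the intermediate gradings once the two endpoints are in $S$. No induction beyond the single chain of inequalities is needed, since the full factorization is handled simultaneously.
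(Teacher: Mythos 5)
Your argument is correct and is essentially the paper's own proof, just written out in more detail: positivity of the grading turns the factorization into a chain $\chi' \leq \eta_{n+1} \leq \cdots \leq \eta_1 \leq \chi$ in $\widehat G$ with endpoints in $S$, and saturatedness then forces every intermediate $\eta_i$ into $S$. The bookkeeping you flag (source vs.\ target conventions, and handling the case $\eta_i \in \{\chi,\chi'\}$ separately since saturatedness is stated with strict inequalities) is exactly the content the paper leaves implicit.
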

\begin{proof}
   Since $\mathcal{C}$ is positively graded via $\deg:\mathcal{C}\to G$, any morphism factorization in $\mathcal{C}_S$ will give us a chain in $S$. Now since $S$ is saturated, any morphism factorization of a morphism of $\mathcal{C}_S$ stays in $\mathcal{C}_S$.  
\end{proof}
\begin{cor}
\label{cor: Koszul skew category algebras}
    Let $\widehat G$ be a partially ordered abelian group and $S\subseteq \widehat  G$ be a saturated subset. Let $\mathcal{C}$ be an indiscretely based category with $\widehat G$-grading $\deg: \mathcal{C}\to \widehat G$ such that $\deg(f) \geq 0_{\widehat G}$. If $k\mathcal{C}$ is Koszul, then $k\mathcal{C}_S$ is Koszul. 
\end{cor}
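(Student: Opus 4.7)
The plan is to realize $\mathcal{C}_S$ as a factorization-closed subcategory of the full skew group category $\mathcal{C}_{\widehat G}$ and then chain together the Koszulity-preservation results already established.

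First, I would verify the setup: $\mathcal{C}_S$ is a full subcategory of $\mathcal{C}_{\widehat G}$ by Definition~\ref{def: Grothendieck construction} (one simply restricts the second coordinate of objects from $\widehat G$ to $S$, and keeps all morphisms between such objects). Both $\mathcal{C}_{\widehat G}$ and $\mathcal{C}_S$ inherit an $\mathbb{N}$-grading from $\mathcal{C}$ via $l(f_\chi) := l(f)$, and their degree-zero isomorphisms are exactly the lifts of degree-zero isomorphisms of $\mathcal{C}$ (with fixed second coordinate $\chi$). Since $\mathcal{C}$ is indiscretely based, so are $\mathcal{C}_{\widehat G}$ and the full subcategory $\mathcal{C}_S$.

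Next, I would chain three earlier results. Since $k\mathcal{C}$ is Koszul, Corollary~\ref{cor: Koszulity of kC and kC_H} immediately yields that $k\mathcal{C}_{\widehat G}$ is Koszul. By Lemma~\ref{lemma: morphism factorizations of skew categories}, any morphism factorization of a morphism in $\mathcal{C}_S$ stays in $\mathcal{C}_S$ (this is where the saturatedness of $S$ and the positivity $\deg(f)\geq 0_{\widehat G}$ are used). So $\mathcal{C}_S \subseteq \mathcal{C}_{\widehat G}$ satisfies the hypothesis of Corollary~\ref{cor: Koszulity of subcategories}, and therefore $k\mathcal{C}_S$ is Koszul.

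There is no substantial obstacle here, as the statement is essentially a bookkeeping corollary combining Corollary~\ref{cor: Koszulity of kC and kC_H}, Lemma~\ref{lemma: morphism factorizations of skew categories}, and Corollary~\ref{cor: Koszulity of subcategories}. The only point that deserves a moment of care is confirming that $\mathcal{C}_S$ is indiscretely based (so that the subcategory Koszulity corollary applies); once that is observed, the conclusion is immediate.
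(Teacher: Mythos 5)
Your proposal is correct and follows exactly the paper's own argument: Corollary~\ref{cor: Koszulity of kC and kC_H} to get Koszulity of $k\mathcal{C}_{\widehat G}$, Lemma~\ref{lemma: morphism factorizations of skew categories} for factorization-closure of $\mathcal{C}_S$, and Corollary~\ref{cor: Koszulity of subcategories} to conclude. Your extra remark that one should check $\mathcal{C}_S$ is indiscretely based is a reasonable point of care, but it is not a different route.
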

\begin{proof}
Since $k\mathcal{C}$ is Koszul, by Corollary \ref{cor: Koszulity of kC and kC_H}, $k\mathcal{C}_G$ is Koszul and $\mathcal{C}_G$ is locally bouquet. Now, since $S$ is saturated by Lemma \ref{lemma: morphism factorizations of skew categories}, any morphism factorization of a morphism of $\mathcal{C}_S$ stays in $\mathcal{C}_S$. By Corollary \ref{cor: Koszulity of subcategories}, $\mathcal{C}_S$ is Koszul. 
\end{proof}
\begin{ex}
As in Example \ref{ex: Beilinson quiver}, the Beilinson quiver can be written as $k\mathcal{C}_S$ where $\mathcal{C}$ is a graded category with one object $v$ and morphisms generated by $x_1,..., x_n$ and $\mathbb Z$-grading given by  $\deg(x_1^{\alpha_1}x_2^{\alpha_2}...x_n^{\alpha_n})=\Sigma_{i=1}^n\alpha_i$. Consider the saturated chain $S=\{1,...,n\} \subseteq \mathbb Z$. As $k\mathcal{C}=k[x_1,...,x_n]$ is Koszul, we see that the path algebra of the Beilinson quiver $k\mathcal{C}_S$ is Koszul. 
\end{ex}
\begin{ex}
\label{ex: F_1 is not Koszul}
The HPA of the Hirzebruch surface $\mathbb F_1$ from Example~\ref{ex: hpa of F_1} is not Koszul. Indeed it is not quadratic since we have the relation $x_1x_2x_3=x_3x_2x_1$ (see also \cite[Example 6.32]{dj}). 
From Example~\ref{ex: hpa of F_1}, $S$ is ordered as follows
$$(0,0)<(1,0)<(0,1)<(1,1)$$
However it is not saturated. For example 
$$(0,0)<(-1,1)<(0,1)$$
and $(0,0),(0,1)\in S$ but $(-1,1)\notin S$. 
\end{ex}

\subsubsection{Stratified spaces}
Stratified spaces are topological spaces stratified by a poset. First we give a brief overview.  An excellent detailed reference is \cite{l}. For more  examples and information about stratifications relevant to our context see \cite{dj, dm, hhl}. 
\begin{df}
    Let $S$ be a partially ordered set. We will regard $S$ as a topological space, where a subset $U\subset S$ is open if it is closed upward: that is, if $x\leq y$, and $x\in U$ implies $y\in U$. We call this topology \newterm{Alexandroff topology}. 
\end{df}
\begin{df}
    Let $X$ be a topological space and $S$ be a poset. An $S$-stratification of $X$ is a continuous map $f:X\to S$ where $S$ is equipped with the Alexandroff topology. Given an $S$-stratification of $X$ and an element $a\in S$, we define $X_a:=f^{-1}(a)$ and we call them stratas. Furthermore, for an open interval $(a,b)\subset S$, we define 
    $$X_{(a,b)}=f^{-1}(a,b)=\cup_{c\in (a,b)}X_c$$
\end{df}
\begin{ex}
\label{ex: stratification of P^1}
    We have the following stratification of circle $S^1$: 
\begin{center}
    \begin{tikzpicture}
    \node[circle, fill=black, inner sep=0.15cm] at (0, 0) {};
    \node[below] at (0, -1.2) {$0$};

    \draw[green] (3, 0) circle (1cm);
    \filldraw[white] (2, 0) circle (0.2cm);
    \draw (2, 0) circle (0.2cm);
    \node[below] at (3, -1.2) {$1$};
\end{tikzpicture}
\end{center}
    
\end{ex}
\begin{ex}
\label{ex: stratification of P^2}
    We have the following stratification of the $2$-dimensional torus:
    \begin{center}
    \begin{tikzpicture}[scale=1.5]

\filldraw[fill=blue!20, draw=black, dashed, thick] (5,0) -- (5,1) -- (4,1) -- cycle;

\fill[red!20] (2,0) -- (3,0) -- (2,1) -- cycle; 

\draw[black] (2,0.1) -- (2,0.9); 
\draw[black] (2.1,0) -- (2.9,0); 
\draw[black] (2.1,0.9) -- (2.9,0.1); 


\draw (2,0) circle (2pt); 
\draw (3,0) circle (2pt); 
\draw (2,1) circle (2pt); 

\filldraw (0,0) circle (2pt); 

\node at (0,-0.5) {0}; 
\node at (2.5,-0.5) {1}; 
\node at (4.5,-0.5) {2}; 

\end{tikzpicture}       
    \end{center}

\end{ex}
\begin{ex}
\label{ex: stratification of F_1}
    The following is another stratification of $2$-dimensional torus:
    \begin{center}
\begin{tikzpicture}[scale=1.2]

\node[fill=black, circle, inner sep=1.5pt] at (0,0) {};
\node at (0,-0.5) {$0$};

\node[draw, circle, inner sep=1.5pt] at (1.5,0) {}; 
\node[draw, circle, inner sep=1.5pt] at (2.5,0) {}; 
\draw[green, thick] (1.6,0) -- (2.4,0); 
\node at (2,-0.5) {$1$};

\begin{scope}[scale=1,xscale=-1,shift={(-8.6,0)}] 
    \fill[red!20] (3.8,1) -- (4.8,0) -- (4.8,1) -- cycle; 
    \draw[thick] (3.8,1) -- (4.7,0.1); 
    \draw[dashed, thick] (3.8,1) -- (4.8,1); 
    \draw[thick] (4.8,0.1) -- (4.8,1); 
    \node[draw, circle, inner sep=1.5pt] at (4.8,0) {}; 
\end{scope}

\node at (4.3,-0.5) {$2$};

\fill[blue!30] (6.1,0) -- (7.1,0) -- (7.1,1) -- cycle; 
\draw[dashed] (6.1,0) -- (7.1,0);
\draw[dashed] (7.1,0) -- (7.1,1);
\draw[dashed] (6.1,0) -- (7.1,1);
\node at (6.6,-0.5) {$3$};

\end{tikzpicture}    
        
    \end{center}
\end{ex}
\begin{ex}
\label{cube stratification}
    The negative of the floor map 
    $$f:\mathbb R^n\to \mathbb Z^n$$
    $$(a_1,...,a_n)\mapsto-(\lfloor a_1\rfloor,...,\lfloor a_n,\rfloor)$$
    defines a $\mathbb Z^n$-stratification of $\mathbb R^n$ called the \newterm{cube stratification}. 
\end{ex}
\begin{ex}
    Any regular CW complex $X$ is a $S_{CW}$-stratified space where $S_{CW}$ is its face poset.
\end{ex}
\begin{df}
   Let $f:X\to S$ be an $S$-stratified space. An \newterm{entrance path} is a continuous map 
$$\gamma:[0,1]\to X$$
such that $f\circ \gamma:[0,1]\to S$ is a map of posets. 
\end{df}
\begin{df}
Let $X=\coprod_{i\in I}S_i$ be a stratification. Choose a collection of base points $x_i\in S_i$ (indexed by $i\in I$). The \newterm{entrance path category} with respect to $S$ is the category whose objects are the base points $x_i$ and whose morphisms from $x_i$ to $x_j$ are homotopy classes of entrance paths $\gamma$ with $\gamma(0)=x_i$ and $\gamma(1)=x_j$. We denote this category by $\Ent_S(X)$.
\end{df}
\begin{ex}
    Consider the stratification of the circle $S^1$ as in Example \ref{ex: stratification of P^1} for $X=S^1$ and $S=\{0,1\}$. Then $\Ent_S(X)$ is as follows
    \begin{center}
	       \begin{tikzcd}
                    \bullet_{0}\arrow[bend left]{r}{x}\arrow[bend right]{r}{y}
		          &\bullet_{1}
	       \end{tikzcd}     
        \end{center} 
    This is indeed the Beilinson quiver of $\mathbb P^1$. For the stratification of the $2$-dimensional torus $\mathbb T^2$ as in Example \ref{ex: stratification of P^2} for $X= \mathbb{T}^2$ and $S=\{0,1,2\}$, $\Ent_S(X)$ is the Beilinson quiver of $\mathbb{P}^2$ (See Example \ref{ex: Beilinson quiver}). 
\end{ex}
\begin{ex}
    Consider the stratification of $2$-dimensional torus $\mathbb{T}^2$ as in Example \ref{ex: stratification of F_1} for $X=\mathbb{T}^2$ and $S=\{0,1,2,3\}$. Then $\Ent_S(X)$ is the same category as in  Example \ref{ex: hpa of F_1}. 
\end{ex}
Homotopy path algebras are precisely category algebras of entrance categories.
\begin{df}
    Let $f:X\to S$ be an $S$-stratified space. We define the HPA of $f:X\to S$ to be $k \Ent_S(X)$ and we denote it by $A_S(X)$. 
\end{df}
This is equivalent to the definition in Example \ref{hpa quiver category} (see \cite[Proposition 3.12]{dj}).
Next we discuss the stratification of the universal cover obtained from the stratification of the base. Let $f:X\to S$ be a path-connected $S$-stratified space admitting a universal covering map $\widetilde{X}\xrightarrow{p} X$. Fix $x_0\in X$ and for each $v\in X$ let $\alpha_v$ be a path from $x_0$ to $v$. Consider the set \[
\widetilde{S}=S\times \pi_1(X, x_0)\] ($x_0\in X$) equipped with poset structure
$(s,g)\leq (s',g')$ if and only if
\[
\text{ $\exists$ an entrance path in $X$  from $s$ to $s'$ with a unique lift from $(s,g)$ to $(s',g')$.}
\]
This makes $\widetilde{S}$ into a poset. Then the map 
$$\Tilde{f}:\widetilde{X}\to \widetilde{S}$$
$$[\gamma]\mapsto (f(\gamma(1)),\alpha^{-1}_{\gamma(1)}\star \gamma)$$
defines a $\widetilde{S}$-stratification on $\widetilde{X}$. This makes $\Ent_{\widetilde{S}}(\widetilde{X})$ to be the skew group category associated to $\Ent_S(X)$ graded by the fundamental group $\pi_1(X, x_0)$. That is, $\Ent_S(X)$ is naturally a subcategory of the fundamental groupoid $\pi_1(X)$.  Denote the inclusion functor by 
$i:\Ent_S(X)\to \pi_1(X)$. Let $F:\pi_1(X)\to \pi_1(X,x_0)$ be an equivalence. Then, $\pi_S:=p\circ i: \Ent_S(X)\to  \pi_1(X,x_0)$ is a $\pi_1(X,x_0)$-grading on $\Ent_S(X)$. 
\begin{prop}
    Let $X\to S$ be a path-connected $S$-stratified space admitting a universal cover $\widetilde{X}$. Then the skew group category $(\Ent_S(X))_{\pi_1(X,x_0)}$ is isomorphic to $\Ent_{\widetilde{S}}({\widetilde{X}})$.
\end{prop}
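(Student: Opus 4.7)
The plan is to construct an explicit functor
$$\Phi:(\Ent_S(X))_{\pi_1(X,x_0)}\to \Ent_{\widetilde{S}}(\widetilde{X})$$
and verify that it is bijective on objects and on morphism sets, using unique path lifting for the universal cover $p:\widetilde{X}\to X$.

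First I would handle objects. By construction $\widetilde{S}=S\times \pi_1(X,x_0)$, and the strata of $\widetilde{X}$ under $\widetilde{f}$ are indexed by pairs $(s,g)$, with $p$ restricting to a homeomorphism on each stratum. For an object $(C,\chi)$ of the skew category, where $C$ is the chosen base point in the stratum $f^{-1}(f(C))$, define $\Phi(C,\chi)$ to be the chosen base point in the stratum $\widetilde{f}^{-1}(f(C),\chi)$ of $\widetilde{X}$. Since both indexing sets are $\Ob(\Ent_S(X))\times \pi_1(X,x_0)$, this is a bijection on objects.

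Next I would handle morphisms. A morphism $f_\chi:(C',\chi')\to (C,\chi)$ in $(\Ent_S(X))_{\pi_1(X,x_0)}$ corresponds to a homotopy class of entrance paths $f:C'\to C$ in $X$ satisfying $\pi_S(f)=\chi(\chi')^{-1}$. By the homotopy lifting property of the cover, any representative of $f$ lifts uniquely to a path in $\widetilde{X}$ starting at $\Phi(C',\chi')$, and this lift is an entrance path for $\widetilde{f}$ because the partial order on $\widetilde{S}$ is defined precisely so that entrance paths downstairs lift to entrance paths upstairs. The degree condition $\pi_S(f)=\chi(\chi')^{-1}$ is exactly the condition that the lift terminates in the stratum indexed by $(f(C),\chi)$, hence at $\Phi(C,\chi)$. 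Define $\Phi(f_\chi)$ to be the homotopy class of this lift. Homotopies also lift, so $\Phi$ is well-defined on homotopy classes; concatenations of paths lift to concatenations of the lifts, so $\Phi$ preserves composition; and constant paths lift to constant paths, so $\Phi$ preserves identities.

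Finally I would check bijectivity on morphism sets. For surjectivity, given any entrance path class $\widetilde{\gamma}$ in $\widetilde{X}$ from $\Phi(C',\chi')$ to $\Phi(C,\chi)$, the projection $p\circ \widetilde{\gamma}$ is an entrance path from $C'$ to $C$ in $X$, and the fact that $\widetilde{\gamma}$ terminates in the stratum $(f(C),\chi)$ forces $\pi_S(p\circ\widetilde{\gamma})=\chi(\chi')^{-1}$, exhibiting $\widetilde{\gamma}$ as $\Phi((p\circ\widetilde{\gamma})_\chi)$. Injectivity is just unique path lifting once the starting point and the homotopy class downstairs are fixed.

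The main obstacle I anticipate is the bookkeeping that matches the grading $\pi_S:\Ent_S(X)\to \pi_1(X,x_0)$, which is defined via the chosen base paths $\alpha_v$ and an equivalence $\pi_1(X)\to \pi_1(X,x_0)$, with the convention for labeling strata of $\widetilde{X}$ by $S\times\pi_1(X,x_0)$, which is defined using the endpoint-lift formula $[\gamma]\mapsto (f(\gamma(1)),\alpha_{\gamma(1)}^{-1}\star\gamma)$. The verification that the unique lift of a representative of $f$ starting at $\Phi(C',\chi')$ lands precisely at $\Phi(C,\chi)$, rather than at a translate by some correction term, is really the compatibility check that makes the whole construction work, and it is where care must be taken to avoid a convention-dependent sign error.
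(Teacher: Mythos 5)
Your proposal is correct and follows essentially the same route as the paper: the paper defines mutually inverse functors $\Psi$ (projection $p\circ-$ on morphisms) and $\Psi^{-1}$ (unique path lifting, with objects sent to endpoints of lifted base paths), which is exactly your $\Phi$ and its inverse with the roles of the two directions swapped. The compatibility check you flag at the end — that the lift of a degree-$\chi(\chi')^{-1}$ path starting in stratum $(f(C'),\chi')$ terminates in stratum $(f(C),\chi)$ — is likewise left implicit in the paper's proof.
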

\begin{proof} 
Let $p: \widetilde X \to X$ denote the universal covering map.  Fix a lift $\widetilde x_0 \in \widetilde X$ of $x_0$.  Then given a path $\alpha$ in $X$ starting at $x_0$, denote by $\widetilde \alpha$ the unique lift of this path starting at $\widetilde x_0$.  Let $\widetilde{\gamma}_{\tilde v}$ be a path from $\widetilde x_0$ to $\widetilde v$.
The isomorphism is given by
\begin{align*}
\Psi: \Ent_{\widetilde{S}}({\widetilde{X}}) & \to (\Ent_S(X))_{\pi_1(X,x_0)} \\
\tilde v & \mapsto (p(\widetilde v), F(p \circ \widetilde{\gamma}_{\tilde v})) \\
\widetilde \alpha & \mapsto p \circ \widetilde \alpha
\end{align*}
The inverse is described as follows.  For each $v \in \Ob(\Ent_{S}(X))$, the equivalence $F$ induces an isomorphism  $\pi_1(X, x_0) \cong \Hom_{\pi_1(X)}(x_0, v)$.  Let $\gamma_v \in \Hom_{\pi_1(X)}(x_0, v)$ be the image of the identity under this isomorphism.  Then we have
\begin{align*}
\Psi^{-1}: (\Ent_S(X))_{\pi_1(X,x_0)} & \to  \Ent_{\widetilde{S}}({\widetilde{X}}) \\
(v, \gamma) & \mapsto \widetilde{\gamma \circ \gamma_v}(1) \\
\alpha & \mapsto \widetilde \alpha
\end{align*}
\end{proof}

Moreover we have the following: 
\begin{prop}
\label{prop: koszulity and universal cover}
    Let $f:X\to S$ be a path-connected $S$-stratified space admitting a universal cover $\widetilde{X}$. Then
    \begin{enumerate} 
        \item The functor 
        $$\Phi:\Ent_{\widetilde{S}}(\widetilde{X})\to \Ent_S(X)$$
        $$(s,g)\mapsto s$$
        $$\widetilde{\gamma}\mapsto \pi\circ \widetilde{\gamma}$$
        is a discrete fibration.  
        \item $\Ent_{\widetilde{S}}(\widetilde{X})$ is isomorphic to $\widetilde{S}$.
        \item We have a free $\pi_1(X,x_0)$-action on $\widetilde{S}$ and $\widetilde{S}/\pi_1(X,x_0)$ is isomorphic to $\Ent_S(X)$. 
        \item The HPA $A_S(X)$ is the reduced incidence algebra $k[\widetilde{S}]_{red}$ obtained from the Reiner-Stamate equivalence relation $\sim_{\Phi}$. 
        \item For each $v\in \Ob(\Ent_S(X))$ and $\widetilde{v}\in \Phi^{-1}(v)$, $\Phi$ induces an isomorphism
        
        $$\Path_{A_{\widetilde{S}}(\widetilde{X}),\widetilde{v}}\xrightarrow{\sim}\Path_{A_S(X),v}$$
        $$\widetilde{\gamma}\mapsto \Phi(\widetilde{\gamma})$$
        Furthermore $\Path_{A_{\widetilde{S}}(\widetilde{X}),\widetilde{v}}$ is isomorphic to the open set $\widetilde{S}_{\geq\widetilde{v}}$ in $\widetilde{S}$ with the Alexandroff topology.  
        \item The following are equivalent:
        \begin{enumerate}
            \item  $\widetilde{S}$ is locally Cohen-Macaulay. 
            \item Open subsets $\widetilde{S}_{\geq\widetilde{v}}$ of $\widetilde{S}$ with the Alexandroff topology are locally Cohen-Macaulay. 
            \item $A_{S}(X)$ is Koszul. 
            \item $k[\widetilde S]$ is Koszul. 
        \end{enumerate}
    \end{enumerate}
\end{prop}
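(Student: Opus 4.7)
The plan is to prove the six parts in order, with (1)--(3) providing the categorical framework, (4)--(5) translating to homotopy path algebras, and (6) then following from results already in the paper.

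For (1), the key input is the unique path-lifting property of the covering map $\widetilde X \to X$: given a representative entrance path $\gamma : v \to s$ in $X$ and a lift $(s,g)$ of $s$, the reversed path lifts uniquely starting at the specified preimage, and reversing back produces the unique entrance-path lift of $\gamma$ ending at the chosen lift of $s$. Homotopies of entrance paths in $X$ lift to homotopies in $\widetilde X$, so this assignment descends to morphisms and yields the discrete-fibration property. For (2), since $\widetilde X$ is simply connected, any two entrance paths in $\widetilde X$ with common endpoints are homotopic through entrance paths: the concatenation of one with the reverse of the other pushes down to a nullhomotopic loop in $X$ based at the endpoint, and the nullhomotopy lifts. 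Hence $\Ent_{\widetilde S}(\widetilde X)$ has at most one morphism between any pair of objects, so it is a poset, and matching its objects and morphisms to the definition of $\widetilde S$ gives the isomorphism. Claim (3) is then immediate: the deck transformation action of $\pi_1(X,x_0)$ on $\widetilde X$ descends to a free action on $\widetilde S$ translating the second factor, and combining Example \ref{ex: group quotients are discrete fibrations}, (2), and the proposition immediately preceding the statement (identifying $\Ent_{\widetilde S}(\widetilde X) \cong (\Ent_S(X))_{\pi_1(X,x_0)}$) yields $\widetilde S / \pi_1(X,x_0) \cong \Ent_S(X)$.

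Part (4) follows directly from Corollary \ref{cor: hpas as red. inc. alg. of path poset} applied to $A_S(X)$: combining (1) with the chosen basepoint lift identifies $\Path_{A_S(X)}$ with $\widetilde S$, and the Reiner-Stamate equivalence $\sim_\Phi$ is precisely the relation produced by the almost discrete fibration of Proposition \ref{prop: hpas as RS quotient of path poset} under this identification. For (5), the isomorphism $\Ent_{\widetilde S}(\widetilde X) \cong \widetilde S$ from (2) reduces $\Path_{A_{\widetilde S}(\widetilde X), \widetilde v}$ to the upper set $\widetilde S_{\geq \widetilde v}$, since in a poset the paths starting at $\widetilde v$ are exactly the elements lying above it; the induced bijection $\Path_{A_{\widetilde S}(\widetilde X), \widetilde v} \to \Path_{A_S(X), v}$ then comes from the discrete fibration property of $\Phi$ restricted to this slice.

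Finally (6) assembles from already established results: (a) $\iff$ (d) is Corollary \ref{cor incidence} applied to $\widetilde S$; (a) $\iff$ (b) holds because an open interval in $\widetilde S_{\geq \widetilde v}$ is an open interval of $\widetilde S$ above $\widetilde v$, while every open interval of $\widetilde S$ sits inside the upper set of its minimum; (b) $\iff$ (c) combines (5) with Corollary \ref{cor: Koszul hpa if and only if CM Path_{A,v}}. The main obstacle I anticipate is (2): lifting homotopies of entrance paths through the covering map must be justified within the class of stratified spaces considered. The construction of $\widetilde S$ in the text encodes the necessary lifting data, but care is needed to ensure that the covering theory of entrance paths behaves as in the unstratified topological setting, so that homotopy classes of entrance paths in $\widetilde X$ really are determined by their endpoints.
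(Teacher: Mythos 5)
Your proposal is correct and follows essentially the same route as the paper's proof: path lifting for (1), simple connectivity forcing uniqueness of entrance-path classes for (2), the deck-transformation action for (3), and assembly of (4)--(6) from the already-established machinery of (almost) discrete fibrations, Reiner--Stamate equivalences, and the Koszulity criteria. The only divergence is cosmetic: for (4) and (6) you route through Corollary~\ref{cor: hpas as red. inc. alg. of path poset}, Corollary~\ref{cor incidence}, and Corollary~\ref{cor: Koszul hpa if and only if CM Path_{A,v}}, whereas the paper cites Theorem~\ref{thm: RS equivalences vs. almost disc. fib.} and Corollary~\ref{cor: almost discrete fibration} directly, but these are interchangeable paths through the same results.
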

\begin{proof}
    (1) follows immediately from the definition of a discrete fibration. (2) follows from our definitions since on a simply connected space, any entrance path is uniquely determined by its starting and ending point. Observe that
    $\pi_1(X,x_0)$ acts on $\Ent_{\widetilde{S}}({\widetilde{X}})$ by 
    $$g.((s_1,g_1)\to (s_2,g_2)):=(s_1,gg_1)\to (s_2,gg_2)$$
    so that $\Ob(\Ent_{\widetilde{S}}({\widetilde{X}}))/\pi_1(X,x_0)=\Ob(\Ent_S(X))$ and $\Mor(\Ent_{\widetilde{S}}({\widetilde{X}}))/\pi_1(X,x_0)=\Mor(\Ent_S(X))$. Therefore $\Ent_{\widetilde{S}}({\widetilde{X}})/\pi_1(X,x_0)$ is  isomorphic to $\Ent_S(X)$. Now (3) follows from (2). By Proposition \ref{prop: discrete fibrations are almost discrete fibrations}, any discrete fibration is an almost discrete fibration. Hence (4) follows from Theorem \ref{thm: RS equivalences vs. almost disc. fib.}. The isomorphism $\Path_{A_{\widetilde{S}}(\widetilde{X}),\widetilde{v}}\xrightarrow{\sim}\Path_{A_S(X),v}$ in (5) follows immediately from the path lifting property of the universal cover. Since $\widetilde{S}$ is a poset, we get that $\Path_{A_{\widetilde{S}}(\widetilde{X}),\widetilde{v}}$ is isomorphic to the open set $\widetilde{S}_{\geq\widetilde{v}}$. For (6) the definition of locally Cohen-Macaulay immediately implies that (a) and (b) are equivalent. By (2) and Corollary \ref{cor: almost discrete fibration} we get that (a) is also equivalent to (c) and (d). 
\end{proof}
\begin{ex}
    For the stratification of the circle $S^1$ in Example \ref{ex: stratification of P^1}, the stratification of the universal cover $\mathbb R$ together with its entrance paths can be pictured as follows:
    \begin{center}
\begin{tikzpicture}[scale=1.5, dot/.style={circle, fill, inner sep=2pt}]
    \node (A) at (0,0) {};
    \node (B) at (2,0) {};
    \node (C) at (4,0) {};
    \node (D) at (6,0) {};
    \node (E) at (8,0) {};
    \draw[green,thick] (A) -- (B) -- (C) -- (D) -- (E);

    \node[dot] (A) at (0,0) {};
    \node[dot] (B) at (2,0) {};
    \node[dot] (C) at (4,0) {};
    \node[anchor=north] at (C) {\text{$\widetilde{v}$}};
    \node[dot] (D) at (6,0) {};
    \node[dot] (E) at (8,0) {};

    \coordinate (AB) at ($(A)!0.5!(B)$);
    \coordinate (BC) at ($(B)!0.5!(C)$);
    \coordinate (CD) at ($(C)!0.5!(D)$);
    \coordinate (DE) at ($(D)!0.5!(E)$);


    \draw[->, shorten >=4pt, shorten <=4pt] (A) to (AB);

    \draw[->, shorten >=4pt, shorten <=4pt] (B) to (AB);
    \draw[->, shorten >=4pt, shorten <=4pt] (B) to (BC);

    \draw[->, shorten >=4pt, shorten <=4pt] (C) to (BC);
    \draw[->, shorten >=4pt, shorten <=4pt] (C) to (CD);

    \draw[->, shorten >=4pt, shorten <=4pt] (D) to (CD);
    \draw[->, shorten >=4pt, shorten <=4pt] (D) to (DE);

    \draw[->, shorten >=4pt, shorten <=4pt] (E) to (DE);

\end{tikzpicture}

    \end{center}
$\widetilde{S}_{\geq \widetilde{v}}$ is as follows
\begin{center}
\begin{minipage}{.4\textwidth}
    \begin{center}
\begin{tikzpicture}[scale=1.5, dot/.style={circle, fill, inner sep=2pt}]
    \node (A) at (0,0) {};
    \node (B) at (2,0) {};
    \node (C) at (4,0) {};
    \draw[green,thick] (A) -- (B) -- (C);

    \node[dot] (B) at (2,0) {};
    \node[anchor=north] at (B) {\text{$\widetilde{v}$}};
    
    \node[draw, circle, inner sep=1.5pt] at (0,0) {};
    \node[draw, circle, inner sep=1.5pt] at (4,0) {};
    
    \coordinate (AB) at ($(A)!0.5!(B)$);
    \coordinate (BC) at ($(B)!0.5!(C)$);


    \draw[->, shorten >=4pt, shorten <=4pt] (B) to (AB);
    \draw[->, shorten >=4pt, shorten <=4pt] (B) to (BC);

\end{tikzpicture}
    \end{center}
\end{minipage}
\begin{minipage}{.4\textwidth}
    \begin{center}
\begin{tikzpicture}[scale=1.5, dot/.style={circle, fill, inner sep=2pt}]
    \node (A) at (0,0) {};
    \node (B) at (2,0) {};
    \node (C) at (4,0) {};

    \node[dot] (B) at (2,0) {};
    \node[anchor=north] at (B) {\text{$\widetilde{v}$}};
    
    \coordinate[dot] (AB) at ($(A)!0.5!(B)$);
    \coordinate[dot] (BC) at ($(B)!0.5!(C)$);


    \draw[->, shorten >=4pt, shorten <=4pt] (B) to (AB);
    \draw[->, shorten >=4pt, shorten <=4pt] (B) to (BC);

\end{tikzpicture}
    \end{center}
\end{minipage}
\end{center}
which is of course locally Cohen-Macaulay. 
\end{ex}
\begin{ex}
\label{ex: universal cover stratification of F_1}
For the stratification of the $2$-dimensional torus in Example \ref{ex: stratification of F_1}, the stratification of the universal cover $\mathbb R^2$ together with its entrance paths are displayed in the following figures:
    \begin{center}
\begin{tikzpicture}[scale=2]

\pgfmathsetmacro{\spacing}{1} 

\foreach \i in {0,...,2} {
    \foreach \j in {0,...,2} {
        \pgfmathsetmacro{\cxleft}{(\j*\spacing + \j*\spacing + (\j+1)*\spacing)/3}
        \pgfmathsetmacro{\cyleft}{(\i*\spacing + (\i+1)*\spacing + (\i+1)*\spacing)/3}
        \pgfmathsetmacro{\cxright}{(\j*\spacing + (\j+1)*\spacing + (\j+1)*\spacing)/3}
        \pgfmathsetmacro{\cyright}{(\i*\spacing + \i*\spacing + (\i+1)*\spacing)/3}

        \pgfmathsetmacro{\midxbottom}{\j*\spacing + 0.5*\spacing}
        \pgfmathsetmacro{\midybottom}{\i*\spacing}
        \pgfmathsetmacro{\midxtop}{\j*\spacing + 0.5*\spacing}
        \pgfmathsetmacro{\midytop}{\i*\spacing + \spacing}

        \fill[red!20] 
            (\j*\spacing, \i*\spacing) -- 
            (\j*\spacing, \i*\spacing+\spacing) -- 
            (\j*\spacing+\spacing, \i*\spacing+\spacing) -- cycle;

        \fill[blue!20] 
            (\j*\spacing, \i*\spacing) -- 
            (\j*\spacing+\spacing, \i*\spacing) -- 
            (\j*\spacing+\spacing, \i*\spacing+\spacing) -- cycle;

        \draw[black, thick] 
            (\j*\spacing, \i*\spacing) -- (\j*\spacing+\spacing, \i*\spacing+\spacing);
          \draw[green, thick] ({\j*\spacing}, {\i*\spacing}) -- ({(\j+1)*\spacing}, {\i*\spacing});
          \draw[green, thick] ({\j*\spacing}, {3*\spacing}) -- ({(\j+1)*\spacing}, {3*\spacing});

        \draw[blue, ultra thick, ->, shorten >=0.2cm] (\j*\spacing, \i*\spacing) -- (\midxbottom, \midybottom); 
        \draw[blue, ultra thick, ->, shorten >=0.2cm] (\j*\spacing, \i*\spacing+\spacing) -- (\midxtop, \midytop); 
        \draw[blue, ultra thick, ->, shorten >=0.2cm] (\j*\spacing+\spacing, \i*\spacing) -- (\midxbottom, \midybottom); 
        \draw[blue, ultra thick, ->, shorten >=0.2cm] (\j*\spacing+\spacing, \i*\spacing+\spacing) -- (\midxtop, \midytop); 

        \draw[orange, ultra thick, ->, shorten >=0.2cm] (\j*\spacing, \i*\spacing) -- (\cxleft, \cyleft); 




        \draw[purple, ultra thick, ->, shorten >=0.2cm, shorten <=0.2cm] (\cxleft, \cyleft) -- (\cxright, \cyright);

        \ifnum\j>0 
            \pgfmathsetmacro{\cxblueleft}{((\j-1)*\spacing + (\j)*\spacing + (\j)*\spacing)/3}
            \pgfmathsetmacro{\cyblueleft}{((\i)*\spacing + (\i)*\spacing + (\i+1)*\spacing)/3}
            \draw[purple, ultra thick, ->, shorten >=0.2cm] (\cxleft, \cyleft) -- (\cxblueleft, \cyright);
        \fi

        \draw[yellow, ultra thick, ->, shorten >=0.2cm] (\midxbottom, \midybottom) -- (\cxright, \cyright); 
        \draw[brown!80!black, ultra thick, ->, shorten >=0.2cm] (\midxtop, \midytop) -- (\cxleft, \cyleft); 
    }
}

\foreach \j in {1,2} {
    \draw[thick] 
        (\j*\spacing, 0) -- (\j*\spacing, 3*\spacing);
}

\draw[thick] 
    (0, 0) -- (0, 3*\spacing);

\draw[thick,dashed]
    (3*\spacing, 0) -- (3*\spacing, 3*\spacing);

\foreach \i in {0,...,3} {
    \foreach \j in {0,...,3} {
        \fill[black] (\j*\spacing, \i*\spacing) circle (2pt); 
    }
}
    \node[anchor=north west] at (2*\spacing, \spacing) {\text{$\widetilde{v}$}};
\end{tikzpicture}
\hspace{1.5 cm}
\begin{tikzpicture}[scale=2]

\pgfmathsetmacro{\spacing}{1} 

\foreach \i in {0,...,2} {
    \foreach \j in {0,...,2} {
        \pgfmathsetmacro{\cxleft}{(\j*\spacing + \j*\spacing + (\j+1)*\spacing)/3}
        \pgfmathsetmacro{\cyleft}{(\i*\spacing + (\i+1)*\spacing + (\i+1)*\spacing)/3}
        \pgfmathsetmacro{\cxright}{(\j*\spacing + (\j+1)*\spacing + (\j+1)*\spacing)/3}
        \pgfmathsetmacro{\cyright}{(\i*\spacing + \i*\spacing + (\i+1)*\spacing)/3}

        \pgfmathsetmacro{\midxbottom}{\j*\spacing + 0.5*\spacing}
        \pgfmathsetmacro{\midybottom}{\i*\spacing}
        \pgfmathsetmacro{\midxtop}{\j*\spacing + 0.5*\spacing}
        \pgfmathsetmacro{\midytop}{\i*\spacing + \spacing}




        \draw[blue, ultra thick, ->, shorten >=0.2cm] (\j*\spacing, \i*\spacing) -- (\midxbottom, \midybottom); 
        \draw[blue, ultra thick, ->, shorten >=0.2cm] (\j*\spacing, \i*\spacing+\spacing) -- (\midxtop, \midytop); 
        \draw[blue, ultra thick, ->, shorten >=0.2cm] (\j*\spacing+\spacing, \i*\spacing) -- (\midxbottom, \midybottom); 
        \draw[blue, ultra thick, ->, shorten >=0.2cm] (\j*\spacing+\spacing, \i*\spacing+\spacing) -- (\midxtop, \midytop); 

        \draw[orange, ultra thick, ->, shorten >=0.2cm] (\j*\spacing, \i*\spacing) -- (\cxleft, \cyleft); 

        \draw[purple, ultra thick, ->, shorten >=0.2cm, shorten <=0.2cm] (\cxleft, \cyleft) -- (\cxright, \cyright);

        \ifnum\j>0 
            \pgfmathsetmacro{\cxblueleft}{((\j-1)*\spacing + (\j)*\spacing + (\j)*\spacing)/3}
            \pgfmathsetmacro{\cyblueleft}{((\i)*\spacing + (\i)*\spacing + (\i+1)*\spacing)/3}
            \draw[purple, ultra thick, ->, shorten >=0.2cm] (\cxleft, \cyleft) -- (\cxblueleft, \cyright);
        \fi

        \draw[yellow, ultra thick, ->, shorten >=0.2cm] (\midxbottom, \midybottom) -- (\cxright, \cyright); 
        \draw[brown!80!black, ultra thick, ->, shorten >=0.2cm] (\midxtop, \midytop) -- (\cxleft, \cyleft); 
        \fill[black] (\midxbottom, \midybottom) circle (1.5pt); 
        \fill[black] (\midxtop, \midytop) circle (1.5pt);       

        \fill[black] (\cxleft, \cyleft) circle (1.5pt);  
        \fill[black] (\cxright, \cyright) circle (1.5pt); 
    }
}

\foreach \i in {0,...,3} {
    \foreach \j in {0,...,3} {
        \fill[black] (\j*\spacing, \i*\spacing) circle (1.5 pt); 
    }
}
    \node[anchor=north west] at (2*\spacing, \spacing) {\text{$\widetilde{v}$}};
\end{tikzpicture}
    \end{center}
The space $\widetilde{S}_{\geq \widetilde{v}}$ is pictured below.
\\
\begin{minipage}{.4\textwidth}
\begin{center}
\begin{tikzpicture}[scale=1.0, dot/.style={circle, fill, inner sep=2pt}]

\node (A) at (0,0) {};
\node (B) at (2,0) {};
\node (C) at (4,0) {};
\node (D) at (6,0) {};
\node (E) at (2,2) {};
\node[dot] (F) at (4,2) {};
\node (G) at (6,2) {};
\node (H) at (4,4) {};
\node (I) at (6,4) {};

\fill[blue!20] (A.center) -- (B.center) -- (E.center) -- cycle; 
\fill[red!20]  (B.center) -- (E.center) -- (F.center) -- cycle;
\fill[blue!20]  (B.center) -- (C.center) -- (F.center) -- cycle;
\fill[red!20]  (C.center) -- (F.center) -- (G.center) -- cycle;
\fill[blue!20] (C.center) -- (D.center) -- (G.center) -- cycle;

\fill[blue!20]  (E.center) -- (F.center) -- (H.center) -- cycle;
\fill[red!20]  (F.center) -- (H.center) -- (I.center) -- cycle;
\fill[blue!20] (F.center) -- (G.center) -- (I.center) -- cycle;

\draw[dashed] (H) -- (I);
\draw[dashed] (A) -- (E) -- (H);
\draw (B) -- (F) -- (I);
\draw (C) -- (G);
\draw[green, thick] (E) -- (F) -- (G);
\draw (B) -- (E);
\draw (C) -- (F) -- (H);
\draw[dashed] (D) -- (G) -- (I);
\draw[dashed] (A) -- (B) -- (C) -- (D);

\node (MiddleAB) at ($(A)!0.5!(B)$) {};
\node (MiddleEF) at ($(E)!0.5!(F)$) {};
\node (MiddleCB) at ($(C)!0.5!(B)$) {};
\node (MiddleFG) at ($(F)!0.5!(G)$) {};
\node (MiddleCD) at ($(C)!0.5!(D)$) {};
\node (MiddleHI) at ($(H)!0.5!(I)$) {};

\node (CentroidABE) at (4/3, 2/3) {}; 
\node (CentroidBEF) at (8/3, 4/3) {}; 
\node (CentroidCFG) at (14/3, 4/3) {}; 
\node (CentroidCDG) at (16/3, 2/3) {}; 
\node (CentroidFHI) at (14/3, 10/3) {}; 
\node (CentroidFGI) at (16/3, 8/3) {}; 
\node (CentroidBCF) at (10/3, 2/3) {}; 
\node (CentroidEFH) at (10/3, 8/3) {}; 

\draw[orange,->, ultra thick, ->, shorten >=0.2cm] (F) -- (CentroidFHI.center);
\draw[blue,->, ultra thick, ->, shorten >=0.2cm] (F) -- (MiddleEF);
\draw[blue,->, ultra thick, ->, shorten >=0.2cm] (F) -- (MiddleFG);
\draw[purple,->, ultra thick, ->, shorten >=0.2cm] (CentroidFHI.center) -- (CentroidEFH.center);
\draw[purple,->, ultra thick, ->, shorten >=0.2cm] (CentroidFHI.center) -- (CentroidFGI.center);
\draw[purple,->, ultra thick, ->, shorten >=0.2cm] (CentroidBEF.center) -- (CentroidBCF.center);
\draw[purple,->, ultra thick, ->, shorten >=0.2cm] (CentroidBEF.center) -- (CentroidABE.center);
\draw[purple,->, ultra thick, ->, shorten >=0.2cm] (CentroidCFG.center) -- (CentroidBCF.center);
\draw[purple,->, ultra thick, ->, shorten >=0.2cm] (CentroidCFG.center) -- (CentroidCDG.center);

\draw[yellow,->, ultra thick, ->, shorten >=0.2cm] (MiddleEF) -- (CentroidEFH.center);
\draw[brown!80!black,->, ultra thick, ->, shorten >=0.2cm] (MiddleEF) -- (CentroidBEF.center);
\draw[yellow,->, ultra thick, ->, shorten >=0.2cm] (MiddleFG) -- (CentroidFGI.center);
\draw[brown!80!black,->, ultra thick, ->, shorten >=0.2cm] (MiddleFG) -- (CentroidCFG.center);

\node[draw, circle, inner sep=1.5pt] at (0,0) {};
\node[draw, circle, inner sep=1.5pt] at (2,0) {};
\node[draw, circle, inner sep=1.5pt] at (4,0) {};
\node[draw, circle, inner sep=1.5pt] at (6,0) {};
\node[draw, circle, inner sep=1.5pt] at (2,2) {};
\node[dot, label=below right:\text{$\widetilde{v}$}] (F) at (4,2) {};
\node[draw, circle, inner sep=1.5pt] at (6,2) {};
\node[draw, circle, inner sep=1.5pt] at (4,4) {};
\node[draw, circle, inner sep=1.5pt] at (6,4) {};

\end{tikzpicture}
\end{center}
\end{minipage}
\begin{minipage}{.2\textwidth}
\begin{center}
\begin{tikzpicture}[scale=1.0, dot/.style={circle, fill, inner sep=2pt}]


\node[dot, label=below:\text{$\widetilde{v}$}] (F) at (4,2) {};

\node (MiddleAB) at ($(A)!0.5!(B)$) {};
\node[dot, label] (MiddleEF) at ($(E)!0.5!(F)$) {};
\node (MiddleCB) at ($(C)!0.5!(B)$) {};
\node[dot] (MiddleFG) at ($(F)!0.5!(G)$) {};
\node (MiddleCD) at ($(C)!0.5!(D)$) {};
\node (MiddleHI) at ($(H)!0.5!(I)$) {};

\node[dot] (CentroidABE) at (4/3, 2/3) {}; 
\node[dot] (CentroidBEF) at (8/3, 4/3) {}; 
\node[dot] (CentroidCFG) at (14/3, 4/3) {}; 
\node[dot] (CentroidCDG) at (16/3, 2/3) {}; 
\node[dot] (CentroidFHI) at (14/3, 10/3) {}; 
\node[dot] (CentroidFGI) at (16/3, 8/3) {}; 
\node[dot, label=below:\text{$\widetilde{w}$}] (CentroidBCF) at (10/3, 2/3) {}; 
\node[dot] (CentroidEFH) at (10/3, 8/3) {}; 

\draw[orange,->, ultra thick, ->, shorten >=0.2cm] (F) -- (CentroidFHI.center);
\draw[blue,->, ultra thick, ->, shorten >=0.2cm] (F) -- (MiddleEF);
\draw[blue,->, ultra thick, ->, shorten >=0.2cm] (F) -- (MiddleFG);
\draw[purple,->, ultra thick, ->, shorten >=0.2cm] (CentroidFHI.center) -- (CentroidEFH.center);
\draw[purple,->, ultra thick, ->, shorten >=0.2cm] (CentroidFHI.center) -- (CentroidFGI.center);
\draw[purple,->, ultra thick, ->, shorten >=0.2cm] (CentroidBEF.center) -- (CentroidBCF.center);
\draw[purple,->, ultra thick, ->, shorten >=0.2cm] (CentroidBEF.center) -- (CentroidABE.center);
\draw[purple,->, ultra thick, ->, shorten >=0.2cm] (CentroidCFG.center) -- (CentroidBCF.center);
\draw[purple,->, ultra thick, ->, shorten >=0.2cm] (CentroidCFG.center) -- (CentroidCDG.center);

\draw[yellow,->, ultra thick, ->, shorten >=0.2cm] (MiddleEF) -- (CentroidEFH.center);
\draw[brown!80!black,->, ultra thick, ->, shorten >=0.2cm] (MiddleEF) -- (CentroidBEF.center);
\draw[yellow,->, ultra thick, ->, shorten >=0.2cm] (MiddleFG) -- (CentroidFGI.center);
\draw[brown!80!black,->, ultra thick, ->, shorten >=0.2cm] (MiddleFG) --  (CentroidCFG.center);

\node[dot] (CentroidABE) at (4/3, 2/3) {}; 
\node[dot] (CentroidBEF) at (8/3, 4/3) {}; 
\node[dot] (CentroidCFG) at (14/3, 4/3) {}; 
\node[dot] (CentroidCDG) at (16/3, 2/3) {}; 
\node[dot] (CentroidFHI) at (14/3, 10/3) {}; 
\node[dot] (CentroidFGI) at (16/3, 8/3) {}; 
\node[dot] (CentroidBCF) at (10/3, 2/3) {}; 
\node[dot] (CentroidEFH) at (10/3, 8/3) {}; 
\end{tikzpicture}
\end{center}
\end{minipage}
\vspace{5mm}

\noindent This is not locally Cohen-Macaulay since $(\widetilde{v},\widetilde{w})$ consists of two disconnected lines (pictured in brown).  
\end{ex}
\begin{df}[\protect{\cite[Definition 4.8, p. 11]{dj}}]
    A stratification $f:X\to S$ where $S$ is a finite poset is called \newterm{exceptional} if $X_a$ is contractible for all $a\in S$ and 
    $$a\leq b \iff \overline{X_a}\cap X_b\ne \emptyset$$
\end{df}
\begin{df}[\protect{\cite[Definition 4.12, p. 13]{dj}}]
Let $f:X\to S$ be an exceptional stratification space admitting a universal cover $\widetilde{X}$. Let $\widetilde{f}:\widetilde{Y}\to \widetilde{S}$ be the induced stratification on the universal cover. For a point $\widetilde{x}\in \widetilde{X}$, we define the \newterm{entrance space} at $\widetilde{x}$ to be the subspace 
$$\widetilde{X}_{\Ent}(\widetilde{x}):=\{y\in \widetilde{Y}: \exists \widetilde{\gamma}\in \Ent_{\widetilde{S}}(\widetilde{Y}) \text{ with $\widetilde{\gamma}(0)=\widetilde{x}$, $\widetilde{\gamma}(1)=y$}\}$$
\end{df}
\begin{df}[\protect{\cite[Definition 4.13, p. 13]{dj}}]
    An exceptional stratification of $X$ is called simple if for all $\widetilde{x}\in \widetilde{X}$ the entrance space at $\widetilde{x}$ is contractible and for all $x$, $x'$ the difference $\widetilde{X}_{\Ent}(\widetilde{x})\setminus\widetilde{X}_{\Ent}(\widetilde{x'})$ is contractible whenever it is nonempty. 
\end{df}
\begin{thm}[\protect{\cite[Definition 4.23, p. 18]{dj}}]
\label{thm: exodromy}
    Let $f:X\to S$ be a simple stratification. Then 
    $$Sh_S(X)\cong A_S(X)^{op}-mod$$
\end{thm}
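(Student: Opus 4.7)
The proof strategy is to invoke the exodromy-type equivalence identifying constructible sheaves on a stratified space with contravariant representations of the entrance path category, then convert these representations into modules via Theorem~\ref{category representations vs modules}.

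First I would establish an equivalence
\[
Sh_S(X) \;\simeq\; (k\text{-mod})^{\Ent_S(X)^{\mathrm{op}}}
\]
by sending each constructible sheaf $\mathcal F$ to the functor $x_s \mapsto \mathcal F_{x_s}$, where each entrance path class $[\gamma]$ from $x_s$ to $x_t$ acts as the associated specialization map $\mathcal F_{x_t} \to \mathcal F_{x_s}$. The simplicity hypothesis is crucial on both sides: contractibility of each stratum $X_s$ forces $\mathcal F|_{X_s}$ to be a constant sheaf so that it is determined by its stalk, while contractibility of the entrance space $\widetilde X_{\Ent}(\widetilde x)$, together with contractibility of the nonempty differences $\widetilde X_{\Ent}(\widetilde x) \setminus \widetilde X_{\Ent}(\widetilde x')$, ensures both that specialization maps depend only on the homotopy class of $\gamma$ and that the higher gluing obstructions vanish.

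Second, since the poset $S$ is finite so is $\Ob(\Ent_S(X))$, and Theorem~\ref{category representations vs modules} yields
\[
(k\text{-mod})^{\Ent_S(X)^{\mathrm{op}}} \;\simeq\; k\Ent_S(X)^{\mathrm{op}}\text{-mod} \;=\; A_S(X)^{\mathrm{op}}\text{-mod}.
\]
Composing with the first equivalence gives the claim. The appearance of the opposite algebra reflects the covariance of sheaves: sections restrict along inclusions of opens, which reverses the direction of entrance paths.

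The main obstacle is the essential surjectivity in the first step: given a functor $F \colon \Ent_S(X)^{\mathrm{op}} \to k\text{-mod}$, one must produce a constructible sheaf realizing $F$. My plan is to work on the universal cover $\widetilde X$, where $\Ent_{\widetilde S}(\widetilde X)$ is isomorphic to the poset $\widetilde S$ by Proposition~\ref{prop: koszulity and universal cover}, and where by simplicity the entrance spaces $\widetilde X_{\Ent}(\widetilde x)$ provide a contractible open neighborhood basis compatible with the stratification. One then builds a constant sheaf of stalk $F(x_s)$ on each stratum, glues along these entrance spaces using the maps $F([\gamma])$, and observes that contractibility of the pairwise differences collapses the \v{C}ech cocycle conditions down to the functoriality relations already satisfied by $F$. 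Finally one descends along the free $\pi_1(X, x_0)$-action of Proposition~\ref{prop: koszulity and universal cover}(3) to obtain the desired sheaf on $X$ itself.
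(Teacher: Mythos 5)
The paper does not prove this statement at all: it is imported verbatim from \cite{dj} (the bracketed tag even points to a numbered item of that paper), so there is no internal proof to measure your argument against. Your architecture is nonetheless the standard one and is the right shape: an exodromy-type equivalence $\Sh_S(X)\simeq (k\text{-mod})^{\Ent_S(X)^{op}}$ followed by Theorem~\ref{category representations vs modules}. The second step is genuinely unproblematic: a simple stratification is in particular exceptional, so $S$ is finite, $\Ent_S(X)$ has finitely many objects, $\Phi$ is an equivalence, and $k(\Ent_S(X)^{op})\cong A_S(X)^{op}$ gives the opposite algebra for the reason you state.

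The gap is that your first equivalence \emph{is} the theorem, and the sketch does not establish it. Full faithfulness of the stalk-and-specialization functor is never addressed. The well-definedness of the specialization map on homotopy classes of entrance paths is asserted rather than proved; this is exactly where the contractibility of $\widetilde X_{\Ent}(\widetilde x)$ and of the nonempty differences must be deployed, and saying the hypotheses "ensure" it is not an argument. For essential surjectivity, the gluing you describe is not set up as an actual sheaf-theoretic construction: strata are not open, you have not exhibited the entrance spaces (or saturated neighborhoods built from them) as an open cover or a basis on which sections are defined, you have not verified the sheaf condition, and the claim that contractibility of pairwise differences "collapses the \v{C}ech cocycle conditions down to functoriality" is precisely the assertion that needs proof, not a remark. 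The descent along the free $\pi_1(X,x_0)$-action also requires you to check that the sheaf you build on $\widetilde X$ is equivariant, which is plausible since $F$ is pulled back from $\Ent_S(X)$ but is again left implicit. None of this means the strategy fails --- it is essentially how the cited result is proved in \cite{dj} --- but as written the proposal is an outline of that proof with its hard topological content left as gestures, so it should be presented as a citation (as the paper does) or completed.
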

\begin{ex}
\label{ex: tree stratification of a poset}
    Let $P$ be a poset with order complex $K(P)$. Define $f:K(P)\to P$ by sending $x\in \intt([a_1<...<a_k])$ to $a_k$. This is a simple stratification. Hence, 
    $$Sh_P(K(P))\cong A_P(K(P))^{op}-mod$$
    Although $K(P)$ is not necessarily simply-connected, the canonical functor 
    $$F:\Ent_P(K(P))\to P$$
    $$a\mapsto a$$
    $$[\gamma]\mapsto \gamma(0)<\gamma(1)$$
    is an isomorphism since for any $a<b$ in $P$, any chain that contains $a<b$ in $P$, has a deformation retraction to the 1-simplex $[a<b]$ in $K(P)$. Therefore, $A_P(K(P))\cong kP$ and 
    $$Sh_P(K(P))\cong (kP)^{op}-mod$$
\end{ex}
\subsubsection{Koszulity of Homotopy Path Algebras of Bondal-Thomsen Type}
We finish this section by applying our results to homotopy path algebras of Bondal-Thomsen type (for details see \cite{dj} and \cite{bondal2006derived}).

Let $M$ be a free subgroup of $\mathbb Z^{n+k}$ of rank $n$ and $\widehat G=\mathbb Z^{n+k}/M$. Consider the short exact sequence 
$$0\to M \xrightarrow{i} \mathbb Z^{n+k}\xrightarrow{\mu} \widehat G\to 0$$
Since $\mathbb R$ is a flat $\mathbb Z$-module, tensoring with $\mathbb R$ gives the following short exact sequence 
$$0\to M_{\mathbb R} \xrightarrow{i_{\mathbb R}} \mathbb R^{n+k}\xrightarrow{\mu_{\mathbb R}} \widehat G_{\mathbb R}\to 0$$
We denote the standard basis of $\mathbb R^{n+k}$ by $\{D_1,...,D_{n+k}\}$, the elements of $\mathbb Z^{n+k}$ by $D$ and the elements of $\widehat G$ by $[D]$. 

The intersection of the cube stratification 
$$f:\mathbb R^{n+k}\to \mathbb Z^{n+k}$$
$$\sum_ia_iD_i\mapsto -\sum_i\lfloor a_i\rfloor D_i$$
with
$\mu^{-1}(0)= i_{\mathbb R}(M_{\mathbb R})\cong M_{\mathbb R}\cong \mathbb R^n$ provides a $\mathbb Z^{n+k}$-stratification of $\mu^{-1}(0)$
$$\widetilde{\Phi}:\mu^{-1}(0)\to \mathbb Z^{n+k}$$
$$x\mapsto f(x)$$
So, 
$$\mathbb R^n_{D}=(-D+[0,1)^{n+k})\cap \mu_{\mathbb R}^{-1}(0)$$
Assuming $\mu_{\mathbb R}(\mathbb R^{n+k}_{\geq 0})$ is a strongly convex cone in $\widehat G_{\mathbb R}$ gives $\widehat G$ a poset structure via 
$$[E]\leq [D] \iff [D-E]\in \mu_{\mathbb R}(\mathbb R^{n+k}_{\geq 0})$$
Since $\widetilde{\Phi}(m+x)=-m+\widetilde{\Phi}(x)$ for all $m\in M$, the \newterm{Bondal-Thomsen map} 
$$\Phi:\mathbb T^n\to \widehat G$$
$$\sum_i a_iD_i+ M\mapsto \mu(-\sum_i \lfloor a_i\rfloor D_i)$$
defines a $\Image \Phi$-stratification on $\mathbb T^n=\mu_{\mathbb R}^{-1}(0)/M=M_{\mathbb R}/M$. 
Furthermore, we have an isomorphism
$$\widetilde{\Image \Phi}\xrightarrow{\sim}\Image\widetilde{\Phi}$$
$$(m,\Phi(x))\mapsto \widetilde{\Phi}(m+x)$$
As $\Image\widetilde{\Phi}\subset \mathbb Z^{n+k}$, we denote the elements of $\widetilde{\Image \Phi}$ by $D$. 
\begin{df}[\protect{\cite[Definition 5.10, p. 25]{dj}}]
    The \newterm{Bondal-Thomsen HPA} is defined as the category algebra $k\Ent_{\Image \Phi}(\mathbb T^n)$. 
\end{df}
As proved in \cite{dj}, Bondal-Thomsen HPAs can be interpreted as skew categories (see Definition \ref{def: Grothendieck construction} and Example \ref{ex: hpas are C_S}) in the following way:
\begin{prop}[\protect{\cite[Corollary 5.11, p. 25]{dj}}]
\label{prop: BR hpas are C_S}
Let $\mathcal C$ be a $\widehat G$-graded category with one object, $\Mor(\mathcal C)=\{x^{\underline{m}}:\underline{m}\in \mathbb N^{n+k}\}$ and $\deg(x^{\underline{m}})=\mu(\underline{m})$. Let $S=\Image \Phi$. Then, we have an equivalence of categories
    $$\mathcal C_S\simeq \Ent_{\Image \Phi}(\mathbb T^n)$$
\end{prop}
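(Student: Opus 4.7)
The plan is to realize both $\mathcal{C}_S$ and $\Ent_{\Image\Phi}(\mathbb{T}^n)$ as equivalent to the quotient category $\widetilde{S}/M$, where $\widetilde{S} = \widetilde{\Image\Phi} \subset \mathbb{Z}^{n+k}$ is endowed with the componentwise partial order $D \leq D'$ iff $D' - D \in \mathbb{N}^{n+k}$, and $M$ acts on $\widetilde{S}$ by translation.

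For the entrance-path side, since $\mu^{-1}(0) \cong \mathbb{R}^n$ is simply connected, it is the universal cover of $\mathbb{T}^n = \mu^{-1}(0)/M$ with deck group $\pi_1(\mathbb{T}^n) \cong M$. The equivariance relation $\widetilde{\Phi}(m + x) = -m + \widetilde{\Phi}(x)$ shows that the restriction of the cube stratification on $\mu^{-1}(0)$ descends to the Bondal-Thomsen stratification on $\mathbb{T}^n$. Proposition~\ref{prop: koszulity and universal cover}(2) and (3) then yield $\Ent_{\Image\Phi}(\mathbb{T}^n) \cong \widetilde{S}/M$. The induced poset structure on $\widetilde{S}$ is the componentwise one: requiring the negative floor map to be continuous for the Alexandroff topology forces the upward closure of a label $D$ to be exactly $D + \mathbb{N}^{n+k}$.

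Next, construct a functor $F: \mathcal{C}_S \to \widetilde{S}/M$ as follows. On objects, send $(C, \chi)$ to $[D]$ for any $D \in \widetilde{S}$ with $\mu(D) = \chi$; this is well-defined because for $\chi \in \Image\Phi$, the fiber $\mu^{-1}(\chi)$ lies entirely in $\widetilde{S}$ and forms a single $M$-orbit (again by the equivariance of $\widetilde{\Phi}$). On morphisms, send $x^{\underline{m}}_\chi: (C, \chi') \to (C, \chi)$ to the $M$-orbit of the relation $D' \leq D' + \underline{m}$, where $D' \in \mu^{-1}(\chi') \cap \widetilde{S}$; the endpoint $D' + \underline{m}$ lies in $\widetilde{S}$ since $\mu(D' + \underline{m}) = \chi \in \Image\Phi$. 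Functoriality is immediate from the identity $x^{\underline{m}_1} x^{\underline{m}_2} = x^{\underline{m}_1 + \underline{m}_2}$.

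The functor $F$ is essentially surjective by construction. For fully faithfulness, morphisms $[D_1] \to [D_2]$ in $\widetilde{S}/M$ correspond to $M$-orbits of pairs $(D'_1, D'_2)$ with $D'_2 - D'_1 \in \mathbb{N}^{n+k}$; since this difference is $M$-invariant, such orbits are parametrized bijectively by $\mu^{-1}(\mu(D_2) - \mu(D_1)) \cap \mathbb{N}^{n+k}$, matching exactly the set of monomials $x^{\underline{m}}$ parametrizing $\Hom_{\mathcal{C}_S}((C, \mu(D_1)), (C, \mu(D_2)))$. The main obstacle will be verifying the simplicity hypotheses of Proposition~\ref{prop: koszulity and universal cover}(2) for the cube-stratified $\mu^{-1}(0)$ and checking that entrance paths in this linear subspace realize the componentwise order (rather than some refinement), ensuring the identification of $\Ent_{\widetilde S}(\mu^{-1}(0))$ with $\widetilde{S}$ as posets; everything else is formal bookkeeping.
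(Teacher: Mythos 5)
The paper gives no proof of Proposition~\ref{prop: BR hpas are C_S} at all: it is imported directly from \cite[Corollary~5.11]{dj}. So your argument is necessarily a different route, and its overall architecture is sound and consistent with how the paper treats the Bondal--Thomsen situation elsewhere: pass to the universal cover $\mu_{\mathbb R}^{-1}(0)$ of $\mathbb T^n$, identify $\Ent_{\Image\Phi}(\mathbb T^n)$ with $\widetilde S/M$ via Proposition~\ref{prop: koszulity and universal cover}(2)--(3), and match $\widetilde S/M$ with $\mathcal C_S$ by counting $\Hom$-sets as $\mu^{-1}(\chi-\chi')\cap\mathbb N^{n+k}$. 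The bookkeeping you carry out is correct: the fiber $\mu^{-1}(\chi)\cap\mathbb Z^{n+k}$ is a single $M$-coset contained in $\Image\widetilde\Phi$ by the equivariance $\widetilde\Phi(m+x)=-m+\widetilde\Phi(x)$, the functor $F$ is well defined and compatible with $x^{\underline m_1}x^{\underline m_2}=x^{\underline m_1+\underline m_2}$, and full faithfulness follows from the unique orbit representative with fixed source.

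The gap is precisely the step you defer as ``the main obstacle,'' and it is not optional: essentially all the geometric content of the proposition lives there, and even the well-definedness of $F$ on morphisms (that $D'\leq D'+\underline m$ is a relation in $\widetilde S$) depends on it. Concretely you must show (i) the order on $\widetilde S=\Image\widetilde\Phi$ induced by entrance paths in $\mu_{\mathbb R}^{-1}(0)$ is exactly $D\leq D'\iff D'-D\in\mathbb N^{n+k}$, and (ii) between two comparable strata there is a unique homotopy class of entrance paths, so that $\Ent_{\widetilde S}(\mu_{\mathbb R}^{-1}(0))$ is the poset $\widetilde S$ and not a category with extra morphisms. Both are true and closable with the paper's own tools. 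For (i): one direction is that $\widetilde\Phi\circ\gamma$ is order-preserving for any entrance path $\gamma$; conversely, if $D,D'\in\Image\widetilde\Phi$ with $D'-D\in\mathbb N^{n+k}$, pick $x\in\mathbb R^n_D$ and $y\in\mathbb R^n_{D'}$; the segment from $x$ to $y$ stays in $\mu_{\mathbb R}^{-1}(0)$ by linearity, and each coordinate $-\lfloor a_i(t)\rfloor$ is non-decreasing along it (if $\lfloor y_i\rfloor<\lfloor x_i\rfloor$ then $y_i<x_i$, and otherwise the floor is constant on the segment), so the segment is an entrance path --- this is the mechanism of Lemma~\ref{lemma: straight lines are entrance path}. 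For (ii), the star-shapedness argument of Lemma~\ref{lemma: ent cat over interval} applies. Finally, note that Proposition~\ref{prop: koszulity and universal cover}(2) carries no simplicity hypothesis --- it only requires a path-connected base with a universal cover --- so there is nothing to verify on that front; simplicity only enters later, in Proposition~\ref{prop: local simple stratification}.
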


\begin{lemma}
\label{lemma: straight lines are entrance path}
    Let $D\in \widetilde{\Image \Phi}$ and $y\in \mathbb R^n_{\Ent}(x_D)$. Then any straight line from $x_D$ to $y$ in $\mathbb{R}^n$ is an entrance path.
\end{lemma}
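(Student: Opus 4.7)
The plan is to reduce the statement to a coordinate-wise calculation on the ambient cube stratification $f : \mathbb R^{n+k} \to \mathbb Z^{n+k}$.

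First I would observe that both $x_D$ and $y$ lie in $\mathbb R^n = \mu_{\mathbb R}^{-1}(0)$, which is a linear subspace, so the entire straight line $\gamma(t) := (1-t)x_D + ty$ stays in $\mathbb R^n$. Next, under the identification $\widetilde{\Image \Phi} \cong \Image\,\widetilde{\Phi} \subset \mathbb Z^{n+k}$, equipped with the restricted coordinate-wise order, the stratification $\widetilde{\Phi}$ is literally the restriction of $f$. Consequently a path in $\mathbb R^n$ is entrance for $\widetilde{\Phi}$ if and only if it is entrance for $f$ when viewed in $\mathbb R^{n+k}$. Translating to the floor language: a path $\gamma$ is entrance precisely when, for every coordinate $i$, the function $t \mapsto \lfloor \gamma_i(t) \rfloor$ is non-increasing.

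Applying this observation to the given entrance path from $x_D$ to $y$ at the endpoints yields $\lfloor y_i \rfloor \leq \lfloor x_{D,i} \rfloor$ for every $i$. Writing $x_D = \sum a_i D_i$ and $y = \sum b_i D_i$ and fixing a coordinate $i$, the affine function $\gamma_i(t) = a_i + t(b_i - a_i)$ falls into one of two cases. If $b_i \leq a_i$ then $\gamma_i$ is non-increasing and hence so is its floor. If $b_i > a_i$, then combining $\lfloor b_i \rfloor \leq \lfloor a_i \rfloor$ with $\lfloor a_i \rfloor \leq a_i < b_i$ forces $\lfloor a_i \rfloor = \lfloor b_i \rfloor$; both endpoints, and hence the whole segment $[a_i, b_i]$, lie in $[\lfloor a_i \rfloor, \lfloor a_i \rfloor + 1)$, making $\lfloor \gamma_i(t)\rfloor$ constant. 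In either case $\lfloor \gamma_i(t) \rfloor$ is non-increasing in $t$, and so $\gamma$ is an entrance path.

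There is no serious obstacle. The only point requiring care is verifying that the partial order on $\widetilde{\Image \Phi}$ (arising from the universal-cover construction via lifting entrance paths) coincides with the restriction of the coordinate-wise order on $\mathbb Z^{n+k}$, which follows by unwinding the definitions together with continuity of $f$ with respect to the Alexandroff topology.
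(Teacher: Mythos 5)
Your proof is correct and follows the same route as the paper's: reduce to the ambient cube stratification on $\mathbb R^{n+k}$ and use linearity of $\mu_{\mathbb R}$ to keep the segment inside $\mu_{\mathbb R}^{-1}(0)$. The paper simply asserts that the straight line in $\mathbb R^{n+k}$ is an entrance path, whereas you supply the coordinate-wise floor computation (and the endpoint inequality coming from the hypothesis $y\in \mathbb R^n_{\Ent}(x_D)$) that justifies this assertion; that is a filled-in detail, not a different approach.
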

\begin{proof}
    The straight line from $x_D$ to $y$ in $\mathbb{R}^{n+k}$ is an entrance path. Since $\mu_{\mathbb R}$ is linear and $x_D,y\in \mathbb R^n_{\Ent}(x_D)\subset\mu_{\mathbb R}^{-1}(0)\cong \mathbb R^n$, $\mu_{\mathbb R}^{-1}(0)$ contains this line.
\end{proof}
\begin{prop}
\label{prop: local simple stratification}
    For any open interval $(D,E)$ in $\widetilde{\Image\phi}$, 
    $$\widetilde{\Phi}|_{\mathbb R^n_{(D,E)}}:\mathbb R^n_{(D,E)}\to (D,E)$$
    is a simple stratification.
\end{prop}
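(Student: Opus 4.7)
The plan is to verify the two requirements for a simple stratification in turn: first the exceptional property, then the two contractibility conditions (of entrance spaces and their differences) on the universal cover.

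First, I would check the exceptional property. For each $D'\in(D,E)$ one has $\mathbb R^n_{D'}=(-D'+[0,1)^{n+k})\cap \mu_{\mathbb R}^{-1}(0)$, which is the intersection of a (half-open) convex cube with a linear subspace, hence convex and therefore contractible. For the closure relation $D'\leq D'' \iff \overline{\mathbb R^n_{D'}}\cap \mathbb R^n_{D''}\ne\emptyset$, the plan is to pass $\overline{\mathbb R^n_{D'}}=(-D'+[0,1]^{n+k})\cap \mu_{\mathbb R}^{-1}(0)$ and unpack the poset structure on $\widetilde{\Image\Phi}$ induced from $\mu(\mathbb R_{\geq 0}^{n+k})\subset\widehat G_{\mathbb R}$; compatibility is then a direct cubical check.

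Next I would handle the universal cover. I would argue that $\mathbb R^n_{(D,E)}$ is simply connected (so that it coincides with its own universal cover), by exhibiting a deformation retract of $\mathbb R^n_{(D,E)}$ onto a single convex stratum $\mathbb R^n_{D''}$ with $D''$ close to $E$. The retraction will be built by straight-line homotopies, using Lemma~\ref{lemma: straight lines are entrance path} together with the fact that an entrance path between points in $\mathbb R^n_{(D,E)}$ passes only through strata whose labels lie in $[f(x),f(y)]\subseteq (D,E)$, so straight line entrance paths from a point to a chosen ``top'' basepoint remain inside $\mathbb R^n_{(D,E)}$.

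Then I would verify that for any $x\in \mathbb R^n_{D'}$ with $D'\in(D,E)$, the entrance space $\mathbb R^n_{\Ent}(x)$ (computed inside $\mathbb R^n_{(D,E)}$) is contractible. By the monotonicity observation above, this entrance space equals $\bigcup_{D'\leq D''<E}\mathbb R^n_{D''}$, and by Lemma~\ref{lemma: straight lines are entrance path} every point of it is joined to $x$ by a straight-line entrance path; thus $\mathbb R^n_{\Ent}(x)$ is star-shaped with respect to $x$, and in particular contractible.

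The main obstacle will be the second simplicity condition: showing that $\mathbb R^n_{\Ent}(x)\setminus \mathbb R^n_{\Ent}(x')$ is contractible whenever nonempty. My plan is to express this difference as a union of cubical strata, namely those $\mathbb R^n_{D''}$ with $D''\geq f(x)$ but $D''\not\geq f(x')$, and then construct a deformation retract onto a convex piece by a straight-line homotopy pushing points ``away'' from the forbidden cone of $f(x')$. Concretely, after fixing a basepoint in the difference, I would use Lemma~\ref{lemma: straight lines are entrance path} to verify that the straight-line homotopy preserves the entrance condition with respect to $f(x)$, and combine this with a convexity argument to verify that the straight-line path avoids the cone over $f(x')$ throughout. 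This cone-avoidance is the technical heart of the proof, and I expect it will require a careful case analysis depending on the relative position of $f(x)$ and $f(x')$ in $\widetilde{\Image\Phi}$.
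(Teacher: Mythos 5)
There is a genuine gap in your treatment of the universal cover. You propose to show that $\mathbb R^n_{(D,E)}$ is simply connected by deformation retracting it onto a single convex stratum near $E$, so that the space coincides with its own universal cover and the simplicity conditions can be checked downstairs. This is false in general: $\mathbb R^n_{(D,E)}$ need not be connected, let alone simply connected, and there is typically no single ``top'' stratum onto which to retract. The paper's own Example~\ref{ex: universal cover stratification of F_1} exhibits an open interval whose associated space is two disjoint lines, and more fundamentally Theorem~\ref{thm: top description of BR cases} characterizes Koszulity by the (possibly nonvanishing) reduced cohomology of exactly these spaces $\mathbb R^n_{(D,E)}$ --- if they were always contractible, every Bondal--Thomsen HPA would be Koszul, contradicting Example~\ref{ex: F_1 is not Koszul}. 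So the step ``$\mathbb R^n_{(D,E)}$ is its own universal cover'' cannot be repaired.

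The fix, which is the route the paper takes, does not require any control on the global topology of $\mathbb R^n_{(D,E)}$: one first shows (as you also do, via Lemma~\ref{lemma: straight lines are entrance path}) that the \emph{downstairs} entrance spaces $\mathbb R^n_{\Ent}(x)_{<E}$ and their differences are star-shaped, hence contractible. Contractibility of $\mathbb R^n_{\Ent}(y)_{<E}$ then implies that every point of it is reached by a unique homotopy class of entrance path from $y$, so the universal covering map $p:\widetilde Y\to Y$ restricts to a homeomorphism $\widetilde Y_{\Ent}(\widetilde y)\xrightarrow{\sim}\mathbb R^n_{\Ent}(y)_{<E}$; the upstairs conditions follow immediately. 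Your verification of the exceptional property and your star-shapedness argument for the entrance spaces themselves are fine and match the paper; your more elaborate ``cone-avoidance'' plan for the differences is aiming at the same star-shapedness the paper asserts directly. But without the covering-space step above, your argument does not establish the conditions on $\widetilde Y$ required by the definition of a simple stratification.
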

\begin{proof}
     Using Lemma \ref{lemma: straight lines are entrance path} and following the definitions, it follows that 
     $$\mathbb R^n_{\Ent}(x_D)_{<E}:=\mathbb R^n_{\Ent}(x_D)\cap \mathbb R^n_{<E}$$
     and 
     $$\mathbb R^n_{\Ent}(x_D)_{<E}\setminus \mathbb R^n_{\Ent}(x_{D'})_{<E}$$
     are star-shaped and hence contractible for all $D,E\in \widetilde{\Image\phi}$. Now, since $\mathbb R^n_{\Ent}(x_D)_{<E}$ is contractible, any point $y$ in $\mathbb R^n_{\Ent}(x_D)_{<E}$ can be characterized by a unique homotopy class of entrance path from $x_D$ to $y$. Therefore, for all $y\in Y:=\mathbb R^n_{(D,E)}$, the restriction of the universal covering map $p:\widetilde{Y}\to Y$ to $\widetilde{Y}_{\Ent}(\widetilde{y})$ induces a homeomorphism from $\widetilde{Y}_{\Ent}(\widetilde{y})$ to $\mathbb R^n_{\Ent}(y)_{<E}$. Thus, for all $y,y'\in \mathbb R^n_{(D,E)}$, $\widetilde{Y}_{\Ent}(\widetilde{y})$ and $\widetilde{Y}_{\Ent}(\widetilde{y})\setminus\widetilde{Y}_{\Ent}(\widetilde{y'})$ are contractible. Hence $$\widetilde{\Phi}|_{\mathbb R^n_{(D,E)}}:\mathbb R^n_{(D,E)}\to (D,E)$$
    is a simple stratification.
\end{proof}
\begin{lemma}
\label{lemma: ent cat over interval}
    For any open interval $(D,E)$ in $\widetilde{\Image\phi}$, the natural functor 
    $$\Phi:\Ent_{(D,E)}(\mathbb R^n_{(D,E)})\to (D,E)$$
    $$x_F\mapsto F$$
    is an isomorphism of categories.
\end{lemma}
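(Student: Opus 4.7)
The plan is to identify $\Ent_{(D,E)}(\mathbb R^n_{(D,E)})$ with the full subcategory of $\Ent_{\widetilde{\Image\Phi}}(\mathbb R^n)$ spanned by the objects $\{x_F : F \in (D,E)\}$, and then invoke Proposition \ref{prop: koszulity and universal cover}(2). Since $\mathbb{R}^n$ is the universal cover of $\mathbb{T}^n$, that proposition gives an isomorphism $\Ent_{\widetilde{\Image\Phi}}(\mathbb R^n) \cong \widetilde{\Image\Phi}$ sending $x_F \mapsto F$; restricting this isomorphism to the full subcategory on $(D,E)$ produces the functor in the statement.

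The bijection on objects is immediate from the definitions. The crucial point on morphisms is the following containment observation: any entrance path $\gamma:[0,1]\to \mathbb R^n$ with $\gamma(0)=x_{F_1}$ and $\gamma(1)=x_{F_2}$, where $F_1,F_2\in(D,E)$, automatically has image inside $\mathbb R^n_{(D,E)}$. Indeed, since $\widetilde\Phi\circ\gamma:[0,1]\to\widetilde{\Image\Phi}$ is a poset map, we have $F_1\leq \widetilde\Phi(\gamma(t))\leq F_2$ for all $t$, and since $D<F_1$ and $F_2<E$, this forces $\widetilde\Phi(\gamma(t))\in (D,E)$, so $\gamma(t)\in\mathbb R^n_{(D,E)}$. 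The same argument applied slicewise to a homotopy $H:[0,1]^2\to \mathbb R^n$ of entrance paths (with fixed endpoints $x_{F_1},x_{F_2}$) shows that $H$ lies in $\mathbb R^n_{(D,E)}$ as well.

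Consequently, for all $F_1,F_2\in(D,E)$ the sets $\Hom(x_{F_1},x_{F_2})$ computed in $\Ent_{(D,E)}(\mathbb R^n_{(D,E)})$ and in $\Ent_{\widetilde{\Image\Phi}}(\mathbb R^n)$ coincide, and $\Ent_{(D,E)}(\mathbb R^n_{(D,E)})$ is precisely the full subcategory of $\Ent_{\widetilde{\Image\Phi}}(\mathbb R^n)$ on $\{x_F:F\in(D,E)\}$. Composing the inclusion of this full subcategory with the isomorphism of Proposition \ref{prop: koszulity and universal cover}(2) yields an isomorphism onto the subposet $(D,E)\subseteq \widetilde{\Image\Phi}$, and by construction this is the functor $\Phi$ from the statement.

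I do not anticipate any substantive obstacle: the geometric input (simplicity of the restricted stratification, existence of straight-line entrance paths) has already been established in Proposition \ref{prop: local simple stratification} and Lemma \ref{lemma: straight lines are entrance path}, and everything else reduces to the observation above that entrance paths (and their homotopies) between strata in $(D,E)$ are forced to remain in $\mathbb R^n_{(D,E)}$ by the poset-map property.
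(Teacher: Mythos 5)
Your argument is correct, but it is organized differently from the paper's. The paper works directly inside $\mathbb R^n_{(D,E)}$: any two entrance paths from $x_F$ to $x_G$ automatically lie in $\mathbb R^n_{[F,G]}\subset\mathbb R^n_{(D,E)}$, and since $\mu_{\mathbb R}$ is linear this set is star-shaped about $x_F$, hence contractible, so the two paths are homotopic; the resulting uniqueness of morphisms gives the isomorphism with the poset $(D,E)$. You instead show that $\Ent_{(D,E)}(\mathbb R^n_{(D,E)})$ is the \emph{full} subcategory of $\Ent_{\widetilde{\Image\Phi}}(\mathbb R^n)$ on the objects $x_F$ with $F\in(D,E)$ --- using the observation that the poset-map property of $\widetilde\Phi\circ\gamma$ traps entrance paths, and slicewise their homotopies, inside $\mathbb R^n_{(D,E)}$ --- and then quote part (2) of Proposition \ref{prop: koszulity and universal cover}. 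Both proofs rest on the same two ingredients (interval-trapping of entrance paths and linearity of $\mu_{\mathbb R}$), but your route buys a clean reduction to an already-established global statement at the cost of (i) the extra slicewise argument, which silently assumes that morphisms in the entrance category are homotopy classes \emph{through} entrance paths (true in this paper's conventions, but worth making explicit, since for an arbitrary homotopy rel endpoints the trapping argument would fail), and (ii) inheriting the rather terse justification of the global isomorphism in Proposition \ref{prop: koszulity and universal cover}(2), whereas the paper's star-shapedness argument is self-contained at the level of this lemma.
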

\begin{proof}
    Let $\gamma$ and $\gamma'$ be two entrance path from $x_F$ to $x_G$ in $\mathbb R^n_{(D,E)}$. Then, they are two paths from $x_F$ to $x_G$ in $\mathbb R^n_{[F,G]}\subset \mathbb R^n_{(D,E)}$. Since $\mathbb R^n_{[F,G]}=\mathbb{R}^{n+k}_{[F,G]}\cap \mu^{-1}_{\mathbb R}(0)$ and $\mu_{\mathbb R}$ is linear, for any $y\in \mathbb R^n_{[F,G]}$,  $\mathbb R^n_{[F,G]}$ contains the straight line from $x_F$ to $y$. Therefore, $\mathbb R^n_{[F,G]}$ is star-shaped and so contractible. Thus, $\gamma$ and $\gamma'$ are homotopic in $\mathbb R^n_{(F,G)}$. Hence $\Phi$ is an isomorphism.
\end{proof}
\begin{cor}
\label{cor: topological interpretation of cohomologies}
    For any open interval $(D,E)$ in $\widetilde{\Image\phi}$,
    $$\widetilde{H}^i(\mathbb R^n_{(D,E)})\cong \widetilde{H}^i(K(D,E))$$
    where $K(D,E)$ is the order complex of $(D,E)$.
\end{cor}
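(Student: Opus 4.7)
The cleanest route is via exodromy; my plan is to apply Theorem~\ref{thm: exodromy} to both sides and compare constant sheaves.

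By Proposition~\ref{prop: local simple stratification}, the restriction $\widetilde\Phi|_{\mathbb R^n_{(D,E)}}: \mathbb R^n_{(D,E)} \to (D,E)$ is a simple stratification, and by Lemma~\ref{lemma: ent cat over interval} its entrance path category is $(D,E)$ itself, so $A_{(D,E)}(\mathbb R^n_{(D,E)}) \cong k(D,E)$. Exodromy therefore yields an equivalence
\[
\Sh_{(D,E)}(\mathbb R^n_{(D,E)}) \simeq (k(D,E))^{op}\text{-mod}.
\]
On the other hand, Example~\ref{ex: tree stratification of a poset} exhibits the order complex $K(D,E)$ as a simple stratification over $(D,E)$ with HPA $k(D,E)$, so exodromy also gives
\[
\Sh_{(D,E)}(K(D,E)) \simeq (k(D,E))^{op}\text{-mod}.
\]
Composing, I obtain an equivalence of constructible sheaf categories that sends the constant sheaf on either space to the same constant functor $(D,E) \to k\text{-mod}$ with value $k$.

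The desired isomorphism $\widetilde H^i(\mathbb R^n_{(D,E)}) \cong \widetilde H^i(K(D,E))$ then follows because the singular cohomology of each space equals the derived global sections of its constant sheaf, and the exodromy equivalences respect derived global sections: both sides compute the same $\Ext$ groups over the common algebra $k(D,E)$ applied to the augmentation module.

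The main obstacle is verifying this last compatibility between exodromy and derived global sections, which is standard but not developed in the excerpt. An alternative, more elementary route is to build a direct homotopy equivalence $\Psi: K(D,E) \to \mathbb R^n_{(D,E)}$ by affine extension of a choice of basepoints $x_F \in \mathbb R^n_F$: on the simplex of a chain $F_0 < \cdots < F_k$ send $(t_0, \ldots, t_k) \mapsto \sum_i t_i x_{F_i}$. An induction using star-shapedness of $\mathbb R^n_{[F_0,F_k]}$ from $x_{F_0}$ (established in the proof of Lemma~\ref{lemma: ent cat over interval}) shows $\Psi$ is well-defined---coning from $x_{F_0}$ over the convex hull of $\{x_{F_1}, \ldots, x_{F_k}\}$ stays inside $\mathbb R^n_{[F_0,F_k]}$. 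A further induction on the longest chain length in $(D,E)$ then lets one retract $\mathbb R^n_{(D,E)}$ stratum-by-stratum onto the image of $\Psi$, yielding the cohomology isomorphism directly.
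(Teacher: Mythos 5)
Your primary argument is essentially the paper's own proof: both apply exodromy (Theorem~\ref{thm: exodromy}) to $\mathbb R^n_{(D,E)}$ via Proposition~\ref{prop: local simple stratification} and Lemma~\ref{lemma: ent cat over interval}, and to $K(D,E)$ via Example~\ref{ex: tree stratification of a poset}, then match the constant sheaves under the resulting equivalences and compute cohomology as $\RHom$ of the constant sheaf over the common algebra $k(D,E)$. The compatibility of the equivalences with derived global sections that you flag is likewise left implicit in the paper's proof, and your sketched direct homotopy equivalence $K(D,E)\to\mathbb R^n_{(D,E)}$ is a reasonable but unneeded backup.
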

\begin{proof}
     Let $X:=\mathbb R^n_{(D,E)}$ and $S=(D,E)$. By Proposition \ref{prop: local simple stratification}, $X\to S$ is a simple stratification. Hence by Theorem \ref{thm: exodromy}, 
    $$Sh_S(X)\cong A_S(X)^{op}-mod$$
    By Proposition \ref{lemma: ent cat over interval}, $\Ent_S(X)\cong S$. So, $A_S(X)\cong kS$.
        On the other hand Example \ref{ex: tree stratification of a poset},  demonstrates that
    $$\Sh_S(K(S))\cong (kS)^{op}-mod$$.
    Putting these together we get: 
    $$Sh_S(X)\cong (kS)^{op}-mod\cong \Sh_S(K(S)).$$

    Via these isomorphisms the constant sheaf $k_X$ on $X$ in $\Sh_S(X)$ maps to the constant sheaf $k_{K(S)}$ on $K(S)$ in $\Sh_S(K(S))$. Hence,
                \begin{align*}
                H^i(X)&= H^i(\RHom(k_X,k_X))\\
                &= H^i(\RHom(k_{K(S)},k_{K(S)}))\\
                &= H^i(K(S))
            \end{align*}
    Thus, $H^i(X)=H^i(K(S))$ for all $i$. The result follows.
\end{proof}
Using Proposition \ref{prop: BR hpas are C_S} we can identify the entrance paths on $\mathbb T^n$ with monomials, i.e. the morphisms $p:[D]\to [E]$ of $\Ent_{\Image \Phi}(\mathbb T^n)$ correspond to monomials $x^{\underline{m}}$ where $\deg(\underline{m})=[E-D]$. 
\begin{prop}
\label{prop: top description of cohomologies in BR cases}
    For any $p=x^{\underline{m}}:[D]\to [E]\in \Mor(\Ent_{\Image \Phi}(\mathbb T^n))$,
    $$\widetilde{H}^i(\mathbb R^n_{(D,D+\underline{m})})\cong \widetilde{H}^i(B\mathcal{C}(p))$$
\end{prop}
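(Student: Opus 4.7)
The plan is to reduce the statement to \Cref{cor: topological interpretation of cohomologies} by passing to the universal cover, where $B\mathcal{C}(p)$ becomes the order complex of an open interval in the poset $\widetilde{\Image \Phi}$.

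First, using that $\mathbb R^n \to \mathbb T^n$ is the universal cover, I would invoke \Cref{prop: koszulity and universal cover}(1) and (2) to see that the discrete fibration
\[
\Phi:\Ent_{\widetilde{\Image\Phi}}(\mathbb R^n)\longrightarrow \Ent_{\Image\Phi}(\mathbb T^n)
\]
identifies $\widetilde{\mathcal{C}}:=\Ent_{\widetilde{\Image\Phi}}(\mathbb R^n)$ with the poset $\widetilde{\Image \Phi}$. Fix a lift of the source $[D]$ to $D\in \widetilde{\Image\Phi}$; then the path $p=x^{\underline m}$ lifts uniquely to a morphism $\underline p:D\to D+\underline m$ in $\widetilde{\mathcal C}$, because $\Phi$ is a discrete fibration.

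Next, since discrete fibrations are almost discrete fibrations (\Cref{prop: discrete fibrations are almost discrete fibrations}), the induced map of semi-simplicial sets $\widetilde{\mathcal{C}}(\underline p)\to \mathcal{C}(p)$ is an isomorphism (as in the proof of \Cref{cor: almost discrete fibration}), so $B\widetilde{\mathcal C}(\underline p)\cong B\mathcal{C}(p)$. Since $\widetilde{\mathcal C}\cong \widetilde{\Image\Phi}$ is a poset and $\underline p$ corresponds to the order relation $D<D+\underline m$, \Cref{ex: open intervals as BC(p)} gives a homeomorphism
\[
B\widetilde{\mathcal C}(\underline p)\;\cong\; K(D,D+\underline m),
\]
where $K(D,D+\underline m)$ is the order complex of the open interval $(D,D+\underline m)\subset \widetilde{\Image\Phi}$. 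Composing, $B\mathcal C(p)\cong K(D,D+\underline m)$.

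Finally, \Cref{cor: topological interpretation of cohomologies} applied to the open interval $(D,D+\underline m)$ yields
\[
\widetilde H^i(\mathbb R^n_{(D,D+\underline m)})\;\cong\; \widetilde H^i(K(D,D+\underline m))\;\cong\; \widetilde H^i(B\mathcal{C}(p)),
\]
which is the desired conclusion. There is no real obstacle here; the only point requiring care is bookkeeping between the labelings of $\widetilde{\Image\Phi}$ (used to index the strata $\mathbb R^n_D$) and $\Image\Phi$ (used to index the morphism $p$), which is handled by the canonical isomorphism $\widetilde{\Image\Phi}\xrightarrow{\sim} \Image \widetilde\Phi$ recalled before the proposition.
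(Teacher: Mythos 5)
Your proposal is correct and follows essentially the same route as the paper's proof: both pass to the universal cover, use the (almost) discrete fibration $\Ent_{\widetilde{\Image\Phi}}(\mathbb R^n)\to \Ent_{\Image\Phi}(\mathbb T^n)$ from Proposition~\ref{prop: koszulity and universal cover} to identify $\mathcal C(p)$ with the factorization space of the lifted morphism, identify that with the order complex of the open interval $(D,D+\underline m)$ in $\widetilde{\Image\Phi}$, and conclude via Corollary~\ref{cor: topological interpretation of cohomologies}. Your explicit citation of Example~\ref{ex: open intervals as BC(p)} for the poset step is a minor added detail, not a different argument.
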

\begin{proof}
    By part (1) of Proposition \ref{prop: koszulity and universal cover}, the functor 
    $$\Psi:\Ent_{\widetilde{\Image \Phi}}(\mathbb R^n)\to \Ent_{\Image \Phi}(\mathbb T^n)$$ 
        $$x_D\mapsto [D]$$
        $$\widetilde{\gamma}\mapsto \pi\circ \widetilde{\gamma}$$
    is an almost discrete fibration. Hence 
    $$\widetilde{\mathcal{C}}(l_{\underline{m}})\cong\mathcal{C}(p)$$
    where $\mathcal{C}=\Ent_{\Image \Phi}(\mathbb T^n)$, $\widetilde{\mathcal{C}}=\Ent_{\widetilde{\Image \Phi}}(\mathbb R^n)$ and $l_{\underline{m}}:[0,1]\to M_{\mathbb R}$ is the straight line connecting $x_D$ to $x_D+\underline{m}$. By part (2) of Proposition \ref{prop: koszulity and universal cover}, we have an isomorphism 
$$F:\Ent_{\widetilde{\Image \Phi}}(\mathbb R^n)\to \widetilde{\Image\Phi}$$
$$x_D\mapsto D$$
$$q\mapsto t(q)<h(q)$$
    Therefore $B\mathcal{C}(p)$ is homeomorphic to the order complex $K(D,D+\underline{m})$ of the open interval $(D,D+\underline{m})$ in $\widetilde{\Image \Phi}$. Thus, by Corollary \ref{cor: topological interpretation of cohomologies},
    $$\widetilde{H}^i(\mathbb R^n_{(D,D+\underline{m})})\cong \widetilde{H}^i(B\mathcal{C}(p))$$
    for all $i$. 
\end{proof}
\begin{thm}
\label{thm: top description of BR cases}
    Let $k\Ent_{\Image \Phi}(\mathbb T^n)$ be a graded Bondal-Thomsen HPA. Then, $k\Ent_{\Image \Phi}(\mathbb T^n)$ is Koszul if and only if $$\widetilde{H}^i(\mathbb R^n_{(D,E)})=0$$
    for all $(D,E)\subset\widetilde{\Image\Phi}$ and $i$ less than the maximal chain length in $(D,E)$.
\end{thm}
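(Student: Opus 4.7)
The plan is to chain together three earlier results: the topological characterization of Koszulity (Theorem \ref{Theorem}), the identification of factorization spaces with stratum unions in the Bondal--Thomsen setting (Proposition \ref{prop: top description of cohomologies in BR cases}), and the matching of the relevant dimensions. Since $\Ent_{\Image \Phi}(\mathbb T^n)$ is an indiscretely based category (being an entrance category of a stratified space with trivial automorphisms), Theorem \ref{Theorem} tells us that $k\Ent_{\Image \Phi}(\mathbb T^n)$ is Koszul if and only if
\[
\widetilde H^{i}(B\mathcal{C}(p)) = 0 \quad \text{for all } p \in \Mor(\Ent_{\Image\Phi}(\mathbb T^n)) \text{ and all } i < \dim B\mathcal{C}(p).
\]

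Next, I would invoke Proposition \ref{prop: top description of cohomologies in BR cases} to translate the factorization space to a union of strata: if $p\colon [D]\to [E]$ corresponds to the monomial $x^{\underline m}$ (so $\mu(\underline m)=[E-D]$ and $E = D+\underline m$ in $\widetilde{\Image\Phi}$), then $\widetilde H^{i}(B\mathcal{C}(p)) \cong \widetilde H^{i}(\mathbb R^n_{(D, D+\underline m)})$. As $p$ ranges over all nontrivial morphisms of $\Ent_{\Image\Phi}(\mathbb T^n)$, the pair $(D, D+\underline m)$ ranges over all open intervals of $\widetilde{\Image\Phi}$ (using the identification coming from the universal cover discussion and the almost discrete fibration of Proposition \ref{prop: koszulity and universal cover}).

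Finally, I would match dimensions. The proof of Proposition \ref{prop: top description of cohomologies in BR cases} realizes $B\mathcal{C}(p)$ as the order complex $K(D, D+\underline m)$ of the open interval in $\widetilde{\Image\Phi}$, whose dimension is precisely the maximal chain length in $(D, D+\underline m)$. Substituting these identifications into the locally bouquet condition yields exactly the vanishing statement
\[
\widetilde H^{i}(\mathbb R^n_{(D,E)}) = 0 \quad \text{for all open intervals } (D,E)\subset \widetilde{\Image\Phi} \text{ and } i < \text{(max chain length in } (D,E)\text{)}.
\]

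There is no genuine obstacle here since all of the substantial work has been carried out in earlier propositions; this theorem is essentially a synthesis. The only point needing minor care is bookkeeping of the indexing conventions, namely that the dimension of the order complex, the dimension of the factorization space $B\mathcal{C}(p)$ (which equals $l(p)-2$), and the maximal chain length of the open interval in $\widetilde{\Image\Phi}$ all agree, so that the ``locally bouquet'' vanishing range and the ``$i<$ max chain length'' vanishing range coincide.
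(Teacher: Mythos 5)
Your proof is correct and follows essentially the same route as the paper: apply Theorem \ref{Theorem} to reduce Koszulity to the locally bouquet condition, use Proposition \ref{prop: top description of cohomologies in BR cases} to identify $\widetilde H^{i}(B\mathcal{C}(p))$ with $\widetilde H^{i}(\mathbb R^n_{(D,E)})$, and observe that $\dim K(D,E)$ equals the maximal chain length in $(D,E)$. The dimension bookkeeping you flag is exactly the only point of care the paper's own proof addresses.
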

\begin{proof}
    By Theorem \ref{Theorem}, $k\Ent_{\Image \Phi}(\mathbb T^n)$ is Koszul if and only if $\Ent_{\Image \Phi}(\mathbb T^n)$ is locally bouquet. By Proposition \ref{prop: top description of cohomologies in BR cases}, $\Ent_{\Image \Phi}(\mathbb T^n)$ is locally bouquet if and only if 
    $$\widetilde{H}^i(\mathbb R^n_{(D,E)})=0$$
    for all $(D,E)\subset\widetilde{\Image\Phi}$ and $i<\dim K(D,E)$. Now observe that $\dim K(D,E)$ is the maximal chain length in $(D,E)$.
    
\end{proof}
\begin{ex}
    Let $n=1, M= \mathbb Z$, and  $\widehat G=\{0\}$. Then, $\Phi =0$ and $\mathbb R_{(m,n)} =(m,n)$ is just an open interval with the usual Euclidean topology.  In this case, $\Ent_{\Image \Phi}(S^1)$ is a category with one object $\{0\}$ whose morphisms are $\{x^n:n\in \mathbb N\}$. Hence, $k\Ent_{\Image \Phi}(S^1)=k[x]$. 
    Since open intervals are contractible, $\widetilde{H}^i((m,n))=0$ for all $(m,n)$ and all $i$. Therefore $k[x]$ is Koszul by Theorem~\ref{thm: top description of BR cases} (see also Example~\ref{ex: polynomial ring is Koszul}). 
\end{ex}

\subsection{Collections of line bundles on toric varieties}
Full strong exceptional collections of line bundles play an important role in algebraic geometry.  In \cite{b}, Bondal proved that Koszulity of their endomorphism algebras is related to strongness of the dual exceptional collection.  This allows us to interpret strongness of the dual collection in terms of the Cohen-Macaulay property.  We start by recalling some basic definitions from \cite{b} and \cite{huybrechts}.
\begin{df}
Let $\mathcal{T}$ be a triangulated category. An object $E$ of $\mathcal{T}$ is called exceptional if it satisfies the following conditions:
$$\Hom(E,E[n])=\begin{cases}
    k, \text{ for $n\ne 0$}\\
    0, \text{ otherwise}
\end{cases}$$
\end{df}
\begin{df}
    An ordered collection $(E_0,...,E_n)$ of exceptional objects of a triangulated category $\mathcal{T}$ is called an exceptional collection if $\RHom(E_i,E_j)=0$ for $i>j$. 
\end{df}
\begin{df}
    An exceptional collection $(E_0,...,E_n)$ of a triangulated category $\mathcal{T}$ is called full if $\mathcal{T}$ is generated by $\{E_i\}$, i.e. any full triangulated subcategory of $\mathcal{T}$ containing all $E_i$'s is equivalent to $\mathcal{T}$ via inclusion. 
\end{df}
\begin{df}
The collection $(E_0,...,E_n)$ of objects of a triangulated category $\mathcal{T}$ which satisfy 
\begin{enumerate}
    \item $\Hom(E_i,E_j[k])=0$, for all $i,j$; $k\ne 0$, $k\in \mathbb{Z}$,
    \item  $\Hom(E_i,E_j)=0$, for all $i>j$,
\end{enumerate}
is called a strong exceptional collection. 
\end{df}
\subsubsection{Koszulity for collections of line bundles}
Let $X_{\Sigma}$ be a toric variety. Let $S\subset\Cl(X_{\Sigma})$. Then as we noted in Example \ref{ex: hpas are C_S}, 
$$k\mathcal{C}_S=\End(\bigoplus_{D\in S}\mathcal{O}(D))$$
where $\mathcal{C}$ is a category with one objects and loops such that $k\mathcal{C}=k[x_{\rho}:\rho\in \Sigma(1)]$. 
\begin{prop}
\label{prop: Koszulity and cm of monomial posets}
    Let $\mathcal O_X(D_1), ..., \mathcal O_X(D_n)$ be any collection of line bundles on a toric variety $X$.
 Let $A$ be the endomorphism algebra of $\bigoplus_{i=1}^n \mathcal O_X(D)$.  
 For $1 \leq i \leq n$, consider the poset $P_i$ of all monomials in the Cox ring of degree $D_j - D_i$. Then $A$ is Koszul iff.\ $P_i$ is locally Cohen-Macaulay for all $i$.
\end{prop}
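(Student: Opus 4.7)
The plan is to unpack $A$ via Example \ref{ex: hpas are C_S}, invoke Theorem \ref{Theorem}, and then identify the resulting factorization spaces with order complexes of intervals in the posets $P_i$. Concretely, $A = k\mathcal{C}_S$ where $\mathcal{C}$ is the one-object $\Cl(X)$-graded category with $k\mathcal{C}$ equal to the Cox ring and $S = \{D_1, \dots, D_n\}$. Because a degree-zero morphism in $\mathcal{C}_S$ is forced to be a degree-zero endomorphism of $v$ in $\mathcal{C}$, which is the identity monomial, $\mathcal{C}_S$ is indiscretely based; so Theorem \ref{Theorem} reduces Koszulity of $A$ to the assertion that $B\mathcal{C}_S(p)$ is cohomologically bouquet for every morphism $p$ in $\mathcal{C}_S$.

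A morphism $p:(v, D_i)\to (v, D_j)$ in $\mathcal{C}_S$ is exactly a monomial $x^{\underline m}$ of degree $D_j - D_i$, i.e.\ an element of $P_i$. Since the only isomorphisms in $\mathcal{C}_S$ are identities, the equivalence relation in Definition \ref{Factorization space} is trivial, so an $n$-simplex of $\mathcal{C}_S(p)$ is just a tuple $(f_0, \dots, f_{n+1})$ of non-identity morphisms composing to $p$. Taking partial products of the $f_l$ exhibits such a factorization as a strict chain $1 = m_0 < m_1 < \cdots < m_{n+1} < m_{n+2} = x^{\underline m}$ of monomial divisors of $x^{\underline m}$ whose intermediate degrees lie in $\{D_r - D_i : 1 \le r \le n\}$, and conversely every such chain recovers a factorization. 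Hence $B\mathcal{C}_S(p)$ is homeomorphic to the order complex of the open interval $(1, x^{\underline m}) \subset P_i$.

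Finally, every open interval $(x^{\underline n}, x^{\underline l})$ in $P_i$ is poset-isomorphic via $x^{\underline q} \mapsto x^{\underline q}/x^{\underline n}$ to an interval of the form $(1, x^{\underline l}/x^{\underline n})$ in $P_{j(n)}$, where $j(n)$ is the index with $\deg(x^{\underline n}) = D_{j(n)} - D_i$. Consequently, as $i$ and $x^{\underline m}$ vary, the intervals $(1, x^{\underline m}) \subset P_i$ exhaust all open intervals of all the $P_i$ up to poset isomorphism. Combining this with Proposition \ref{prop: loc bouquet vs loc CM}, we obtain the chain of equivalences: $A$ is Koszul iff every $B\mathcal{C}_S(p)$ is bouquet iff every order complex of $(1, x^{\underline m}) \subset P_i$ is bouquet iff every open interval in every $P_i$ is Cohen-Macaulay iff every $P_i$ is locally Cohen-Macaulay. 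The one delicate step is the second paragraph: one must carefully verify that ``factorization through intermediate objects in $S$'' matches exactly with ``chains of monomial divisors whose degrees land in $\{D_r - D_i\}$'', which is what allows the factorization space and the order complex to be identified.
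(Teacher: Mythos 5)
Your proof is correct, but it takes a different route from the paper's. The paper's proof is essentially one line: it sets $S=\{D_1,\dots,D_n\}$, writes $A=k\mathcal C_S$, and invokes Corollary \ref{cor: Koszul hpa if and only if CM Path_{A,v}}, which says a graded HPA is Koszul iff each $\Path_{A,v}$ is locally Cohen--Macaulay, and then identifies $\Path_{A,D_i}$ with $P_i$. That corollary in turn rests on the path-poset machinery of \S\ref{sec: App. in HPAs}: the almost discrete fibration $\Path_A\to\mathcal C_A$ (Proposition \ref{prop: hpas as RS quotient of path poset}), the realization of $A$ as the reduced incidence algebra $k[\Path_A]_{\red}$, and Corollary \ref{Koszul k[P]_{red}}. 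You instead apply Theorem \ref{Theorem} directly to $\mathcal C_S$ and identify each factorization space $B\mathcal C_S(p)$ with the order complex of the open interval $(1,x^{\underline m})\subset P_i$ by hand, then observe that these intervals exhaust all open intervals of the $P_i$ up to the division isomorphism, and finish with Proposition \ref{prop: loc bouquet vs loc CM}. Your two key steps are in fact already present in the paper in disguise: the identification of $B\mathcal C(p)$ with the order complex of a path-poset interval is Example \ref{hpa interval}/\ref{ex: open intervals as BC(p)}, and the exhaustion of intervals is what the discrete fibration $\Path_A\to\mathcal C_A$ encodes. So your argument effectively inlines and specializes that machinery to the toric case; what it buys is a self-contained, concretely combinatorial proof that never mentions Reiner--Stamate relations or reduced incidence algebras, at the cost of redoing bookkeeping (the divisor-chain versus factorization dictionary, including the constraint that intermediate degrees land in $\{D_r-D_i\}$) that the paper has already packaged into Theorem \ref{thm: Koszul HPA}. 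I see no gap: your checks that $\mathcal C_S$ is indiscretely based, that the equivalence relation on factorizations is trivial, and that every open interval $(x^{\underline n},x^{\underline l})\subset P_i$ is isomorphic to $(1,x^{\underline l}/x^{\underline n})\subset P_{j(n)}$ are all correct.
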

\begin{proof} 
    Let $S=\{D_1, ...,D_n\}\subset \Cl(X)$. Then $A=k\mathcal{C}_S$. By Corollary \ref{cor: Koszul hpa if and only if CM Path_{A,v}}, $A$ is Koszul if and only if $P_i:=\Path_{A,D_i}$ are locally Cohen-Macaulay. But 
    $$P_i = \Path_{A,D_i} = \{\text{all monomials in the Cox ring of degree $D_j - D_i$}\}$$
\end{proof}
\begin{rmk}
    Since shellability implies Cohen-Macaulay, Proposition \ref{prop: Koszulity and cm of monomial posets} generalizes  \cite[Proposition 6.29]{dj}. 
\end{rmk}
\begin{ex}
    Consider the HPA of $\mathbb P^2$ obtained by considering the line bundles $v_0:=\mathcal{O}$, $v_1:=\mathcal{O}(1)$ and $v_2:=\mathcal{O}(2)$:
    \begin{center}
	       \begin{tikzcd}
                    \bullet_{v_0}\arrow[bend left]{rr}{x_0}\arrow{rr}{x_1}\arrow[bend right]{rr}{x_2}
		          && \bullet_{v_1}\arrow[bend left]{rr}{x_0}\arrow{rr}{x_1}\arrow[bend right]{rr}{x_2}
                  &&\bullet_{v_2}
	       \end{tikzcd}     
        \end{center}   
    $P_0$ is 
    \begin{center}
	       \begin{tikzcd}
                    && \bullet_{x_0^2}\\
                    &\bullet_{x_0}\arrow{ddrrrr}\arrow{ur}\arrow{dr}\\
                    &&\bullet_{x_0x_1}\\
                    \bullet_{e_{v_0}}\arrow{uur}\arrow{r}\arrow{ddr}
		          & \bullet_{x_1}\arrow{ur}\arrow{r}\arrow{dr}&\bullet_{x_1^2}&&&\bullet_{x_0x_2}\\
                    &&\bullet_{x_1x_2}\\
                    &\bullet_{x_2}\arrow{uurrrr}\arrow{ur}\arrow{dr}\\
                    &&\bullet_{x_2^2}
	       \end{tikzcd}     
        \end{center}       
    $P_1$ is
    \begin{center}
	       \begin{tikzcd}
                    &\bullet_{x_0}\\
                    \bullet_{e_{v_1}}\arrow{ur}\arrow{r}\arrow{dr}
		          & \bullet_{x_1}\\
                    &\bullet_{x_2}
	       \end{tikzcd}     
        \end{center}       
    $P_2$ is just $\{e_{v_2}\}$. 
\\ They are all locally Cohen-Macaulay. So the HPA of $\mathbb{P}^2$ is Koszul. 
\end{ex}
\begin{ex}
The Hirzebruch surface $\mathbb F_n$ is a toric variety for the complete fan with rays $(1,0),(0,1),(-1,n),(0,-1)$ pictured below with $n=2$ (see Example~\ref{ex: hpa of F_1}
 for the $n=1$ case).
\begin{center}    
\begin{tikzpicture}[scale = .55]
\draw[fill=gray!80] (-2,-2)--(2,-2)--(2,2)--(-2,2)--(-2,-2);
\draw (0,0) -- (2,0);
\draw (0,0) -- (0,2);
\draw (0,0) -- (-1,2);
\draw (0,0) -- (0,-2);
\node (1) at (2.2,0) {1};
\node (2) at (0,2.4) {2};
\node (3) at (-1,2.4) {3};
\node (4) at (0,-2.4) {4};
\end{tikzpicture}
\end{center}
As a generalization of that example, the Cox ring $S = k[x_1,x_2,x_3,x_4]$ is $\mathbb Z^2$-graded with degrees $(1,0),(-n,1),(1,0),(0,1)$.  Again, we obtain an HPA from the degree zero piece of the endomorphism algebra of $S \oplus S(1,0) \oplus S(0,1) \oplus S(1,1)$ or equivalently we the corresponding line bundles $v_0:=\mathcal{O}$, $v_1:=\mathcal{O}(1,0)$, $v_2:=\mathcal{O}(0,1)$ and $v_3:=\mathcal{O}(1,1)$. 
The general quiver can be pictured as follows
    \begin{center}
	       \begin{tikzcd}
                    \bullet_{v_0}\arrow[bend left]{rr}{x_1}\arrow[bend right=20mm]{rrrr}{x_4}\arrow[bend right]{rr}{x_3}
		          && \bullet_{v_1}\arrow[shift left=4 mm]{rr}\arrow[shift left=2mm]{rr}\arrow[rr, phantom, "\vdots"]\arrow[shift right=4 mm]{rr}\arrow[bend left=20mm]{rrrr}{x_4}
                  &&\bullet_{v_2}\arrow[bend left,swap]{rr}{x_1}\arrow[bend right]{rr}{x_3}
                  &&\bullet_{v_3}
	       \end{tikzcd}     
        \end{center}
where the middle arrows from $v_1$ to $v_2$ are labeled by $x_1^{n-1}x_2$, $x_1^{n-2}x_2x_3$, ... and $x_2x_3^{n-1}$. The relations are given by commutativity of the variables. 
 For $n=1$ this is not quadratic and hence it is not Koszul (see also Example \ref{ex: universal cover stratification of F_1} and Example \ref{ex: F_1 is not Koszul}). Consider the case of $n=2$. Then this HPA is quadratic and $P_0$ is as follows:
    \begin{center}
\begin{tikzcd}
    &\bullet_{x_1}\arrow{ddrrrr}\arrow{rrrr}\arrow{dr}&&&&\bullet_{x_1^2x_2}\arrow{r}\arrow{dr}&\bullet_{x_1^3x_2}\\
    & &\bullet_{x_1x_4}&&&&\bullet_{x_1^2x_2x_3}  \\
   \bullet_{e_{v_0}} \arrow{uur}\arrow{r}\arrow{ddr}& \bullet_{x_4} \arrow{ur}\arrow{dr}&&&& \bullet_{x_1x_2x_3}\arrow{ur} \arrow{dr}\\
   &&\bullet_{x_3x_4}&&&&\bullet_{x_1x_2x_3^2}\\
    & \bullet_{x_3} \arrow{uurrrr}\arrow{ur}\arrow{rrrr}&&&&\bullet_{x_2x_3^2}\arrow{r}\arrow{ur}&\bullet_{x_2x_3^3}
\end{tikzcd} 
        \end{center}
In this poset, for example the interval $(e_{v_1},x_1^2x_2x_3)$ pictured below
    \begin{center}
\begin{tikzcd}
    \bullet_{x_1}\arrow{drrrr}\arrow{rrrr}&&&&\bullet_{x_1^2x_2}\\
    &&&& \bullet_{x_1x_2x_3}\\
    \bullet_{x_3} \arrow{urrrr}
\end{tikzcd} 
        \end{center}
is Cohen-Macaulay. Indeed all the intervals of $P_i$'s are homeomorphic to lines, or are collection of points or are empty. Hence this HPA is Koszul. For $n\geq 2$, among the $P_i$s, all the intervals of the form $(e_{v_0},x_1^rx_2x_3^s)$ with $r+s \neq n+1$ are either empty or a collection of points and hence Cohen-Macaulay. For the intervals of the form $(e_{v_0},x_1^rx_2x_3^s)$ with $r+s=n+1$, we have the following different cases: 
\begin{itemize}
    \item $r=n+1,s=0$: In this case 
    $$(e_{v_0},x_1^{n+1}x_2)=\{x_1\leq x_1^nx_2\}$$
    which is of course Cohen-Macaulay. 
    \item $r=n,s=1$:
    In this case $(e_{v_0},x_1^nx_2x_3)$ is as follows: 
    \begin{center}
\begin{tikzcd}
    \bullet_{x_1}\arrow{drrrr}\arrow{rrrr}&&&&\bullet_{x_1^nx_2}\\
    &&&& \bullet_{x_1^{n-1}x_2x_3}\\
    \bullet_{x_3} \arrow{urrrr}
\end{tikzcd} 
        \end{center}
        which is Cohen-Macaulay. 
    \item $r,s\geq 2$:
    In this case $(e_{v_0},x_1^rx_2x_3^s)$ is as follows: 
        \begin{center}
\begin{tikzcd}
    \bullet_{x_1}\arrow{drrrr} \arrow{ddrrrr}\\
    &&&& \bullet_{x_1^rx_2x_3^{s-1}}\\
    &&&& \bullet_{x_1^{r-1}x_2x_3^s}\\
    \bullet_{x_3} \arrow{urrrr}\arrow{uurrrr}
\end{tikzcd} 
        \end{center}
        which is homeomorphic to the circle $S^1$ and is Cohen-Macaulay. 
    \item $r=1,s=n$:
    In this case $(e_{v_0},x_1x_2x_3^n)$ is as follows: 
        \begin{center}
\begin{tikzcd}
    \bullet_{x_1}\arrow{drrrr}\\
    &&&& \bullet_{x_1x_2x_3^{n-1}}\\
    \bullet_{x_3} \arrow{urrrr}\arrow{rrrr}&&&&\bullet_{x_2x_3^n}
\end{tikzcd} 
        \end{center}
    which is Cohen-Macaulay. 
    \item $r=0,s=n+1$: In this case 
    $$(e_{v_0},x_2x_3^{n+1})=\{x_3\leq x_2x_3^n\}$$
    which is of course Cohen-Macaulay. 
\end{itemize}
Hence for all $n\geq 2$ this HPA is Koszul. 

\end{ex}

\subsubsection{Strong exceptional collections and Koszulity}
Let $X_{\Sigma}$ be a toric variety. Let $\{\mathcal O_X(D_1), ..., \mathcal O_X(D_n)\}$ be a full strong exceptional collection of line bundles on $X_{\Sigma}$ and $A=\End(\bigoplus_{i=1}^n\mathcal{O}(D_i))$. Then $A=k\mathcal{C}_S$ (see Example \ref{ex: hpas are C_S}). We set $S_i$ to be the simple object corresponding to $\mathcal{O}(D_i)$ as an object of $\mathcal{C}_S$. 
In the bounded derived category $D^b(X_{\Sigma})$, 
$$\Hom(S_i,S_j[k])=\Ext^k(S_i,S_j)$$
and Koszulity is just the vanishing of certain graded pieces 
$$\Ext^k(S_i,S_j)=\bigoplus_{n\in \mathbb{Z}}\Ext^k(S_i,S_j)_n$$
We assume the partial ordering by morphisms is graded via 
$$f:\{\mathcal O_X(D_1), ..., \mathcal O_X(D_n)\}\to \mathbb{N}.$$ 
For clarity, we provide a proof of our interpretation of \cite[Corollary 7.2, p. 8]{b}.
\begin{prop}
\label{prop: Koszul iff strong}
Let $\mathcal O_X(D_1), ..., \mathcal O_X(D_n)$ be a full strong exceptional collection of line bundles on a toric variety $X_{\Sigma}$ such that the partial ordering by morphisms on objects is graded via a grading $f:\{\mathcal O_X(D_1), ..., \mathcal O_X(D_n)\}\to \mathbb{N}$ such that for any arrow $f(h(a))-f(t(a)) =1$. Then $A$ is Koszul if and only if $S_n[-f(\mathcal O_X(D_n))],...,S_1[-f(\mathcal O_X(D_1))]$ is a strong exceptional collection. 
\end{prop}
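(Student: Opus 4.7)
The plan is to translate both Koszulity of $A$ and the strong exceptional property of the shifted dual collection into identical vanishing conditions on $\Ext^\bullet(S_a, S_b)$, using the grading hypothesis to concentrate each such Ext group in a single internal degree.

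First I would exploit the hypothesis that $f(h(a)) - f(t(a)) = 1$ for every arrow to conclude that any morphism $p:D_b\to D_a$ in $\mathcal{C}_S$ has length exactly $f(D_a) - f(D_b)$. Combined with Proposition \ref{Ext groups}, this forces $\Ext^i(S_a, S_b)_{-n} = 0$ unless $n = f(D_a) - f(D_b)$, so the graded Ext is concentrated in the single internal degree $-(f(D_a) - f(D_b))$ and vanishes altogether whenever there is no path $D_b \to D_a$ in $\mathcal{C}_S$ (in particular whenever $f(D_a) < f(D_b)$). Feeding this into Corollary \ref{Koszulity based on simples}, Koszulity of $A$ becomes equivalent to
\[
\Ext^m(S_a, S_b) = 0 \quad \text{for every } m \ne f(D_a) - f(D_b) \text{ and every } a,b.
\]

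On the other side, a direct application of $\Hom(X[s], Y[t]) = \Ext^{t-s}(X,Y)$ gives
\[
\Hom\bigl(S_a[-f(D_a)],\, S_b[-f(D_b)][k]\bigr) = \Ext^{\,k + f(D_a) - f(D_b)}(S_a, S_b),
\]
so the strong condition (vanishing for $k \ne 0$, all $a,b$) is literally the displayed condition above. The remaining ordering condition $\Hom(E_p, E_q) = 0$ for $p > q$, applied to the collection $(S_n[-f(D_n)], \ldots, S_1[-f(D_1)])$, reduces to $\Ext^{f(D_a) - f(D_b)}(S_a, S_b) = 0$ for $a < b$. But the hypothesis that $\mathcal O_X(D_1), \ldots, \mathcal O_X(D_n)$ is an exceptional collection gives $\Hom(\mathcal O_X(D_b), \mathcal O_X(D_a)) = 0$ for $a < b$, so there are no morphisms $D_b \to D_a$ in $\mathcal{C}_S$ and hence no paths for Proposition \ref{Ext groups} to detect; the ordering condition is therefore automatic.

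The main obstacle is purely bookkeeping: keeping signs and directions straight through the shift formula, tracking the order reversal when passing from $\mathcal O_X(D_i)$ to $S_i$, and aligning the internal grading on $A$ with the cohomological shift in $D^b(X)$. Once the grading hypothesis is used to concentrate $\Ext^\bullet(S_a, S_b)$ in a single internal degree, the equivalence between Koszulity and the strong exceptional property reduces to a direct comparison of the two boxed formulas above.
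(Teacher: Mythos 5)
Your proposal is correct and follows essentially the same route as the paper: use the grading hypothesis together with Proposition~\ref{Ext groups} to concentrate each $\Ext^\bullet(S_a,S_b)$ in the single internal degree $-(f(D_a)-f(D_b))$, then match the vanishing $\Hom(S_a[-f(D_a)],S_b[-f(D_b)][k])=0$ for $k\ne 0$ against the criterion of Corollary~\ref{Koszulity based on simples}. The only cosmetic difference is that the paper cites Bondal for the fact that the shifted duals form an exceptional collection, whereas you verify the ordering condition directly from the absence of morphisms $D_b\to D_a$ for $a<b$; both are fine.
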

\begin{proof}
To declutter notation, set $v_i:=\mathcal O_X(D_i)$. By \cite{b}, 
\begin{equation} \label{eq: dual collection}
\{S_n[-f(v_n)],...,S_1[-f(v_1)]\}
\end{equation}
is an exceptional collection. It is strong if and only if 
    $$\Hom(S_i[-f(v_i)],S_j[-f(v_j)][l])= \Ext^{f(v_i)-f(v_j)+l}(S_i, S_j) = 0$$
    for all $l\ne 0$.

Now since $\length(p)=f(h(p))-f(t(p))$ for $p\in \Mor(\mathcal{C}_S)$, by Proposition \ref{Ext groups}, 
$$\Ext^{f(v_i)-f(v_j)+l}(S_i, S_j)_{-n}=0$$
for all $n\ne f(v_i)-f(v_j)$. 
Therefore, 
$$\Ext^{f(v_i)-f(v_j)+l}(S_i, S_j)=\Ext^{f(v_i)-f(v_j)+l}(S_i, S_j)_{f(v_j)-f(v_i)}$$ 
Hence, \eqref{eq: dual collection} is strong if and only if $\Ext^{f(v_i)-f(v_j)+l}(S_i, S_j)_{f(v_j)-f(v_i)}=0$ for $l\ne 0$ which by Corollary \ref{Koszulity based on simples} is equivalent to Koszulity. 
\end{proof}
\begin{thm}
\label{thm: Koszul-Strong-CM}
Let $\mathcal O_X(D_1), ..., \mathcal O_X(D_n)$ be any collection of line bundles on a toric variety $X_{\Sigma}$.
 Let $A$ be the endomorphism algebra of $\bigoplus_{i=1}^n \mathcal O_X(D)$.  
 For $1 \leq i \leq n$, consider the poset $P_i$ of all monomials in the Cox ring of degree $D_j - D_i$. Then $A$ is Koszul iff.\ $P_i$ is locally Cohen-Macaulay for all $i$.
Furthermore, if the collection is a full strong exceptional collection such that the partial ordering by morphisms on objects is graded, then the above are equivalent to the existence of integers $d_1, ..., d_n$ such that the shifted dual exceptional collection $S_1[d_1], ..., S_n[d_n]$ is strong. 
\end{thm}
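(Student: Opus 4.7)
The theorem has two claims: the first equivalence (Koszulity iff locally Cohen-Macaulay for each $P_i$) and the ``Furthermore'' equivalence with existence of strong shifted dual collections. My plan is to derive each from propositions already in hand, namely Proposition \ref{prop: Koszulity and cm of monomial posets} and Proposition \ref{prop: Koszul iff strong}.

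For the first equivalence, the plan is simply to invoke Proposition \ref{prop: Koszulity and cm of monomial posets} verbatim. Indeed the poset $P_i$ in the theorem statement is literally $\Path_{A,D_i}$ as identified there, so that proposition gives: $A$ Koszul iff $P_i$ is locally Cohen-Macaulay for all $i$.

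For the ``Furthermore'' direction, the graded hypothesis on the partial ordering yields a rank function $f \colon \{\mathcal{O}_X(D_1),\dots,\mathcal{O}_X(D_n)\} \to \mathbb{N}$. The plan is to reduce to Proposition \ref{prop: Koszul iff strong} by verifying its hypothesis that $f(h(a))-f(t(a))=1$ for each arrow $a$. For the direction Koszul $\Rightarrow$ existence: a Koszul algebra is quadratic, hence by Proposition \ref{prop: ext1 vanishes} every morphism in $\mathcal{C}_S$ is a composition of length-one morphisms, so that arrows correspond bijectively to covering relations in the partial order and the rank condition $f(h(a))-f(t(a))=1$ holds. Then Proposition \ref{prop: Koszul iff strong} immediately produces the required integers $d_i := -f(\mathcal{O}_X(D_i))$.

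For the converse direction, existence $\Rightarrow$ Koszul, suppose $S_1[d_1],\dots,S_n[d_n]$ is strong for some integers $d_i$. Strongness gives $\Ext^l(S_i,S_j)=0$ for $l \neq d_j - d_i$, while Proposition \ref{Ext groups} together with the graded hypothesis confines the internal grading of $\Ext^l(S_i,S_j)$ to the single degree $n = f(\mathcal{O}_X(D_i)) - f(\mathcal{O}_X(D_j))$. The key point is then to show the $d_i$'s must be compatible with the rank function, i.e. $d_j - d_i = f(\mathcal{O}_X(D_i)) - f(\mathcal{O}_X(D_j))$ (up to a global shift $d_i \mapsto d_i + c$). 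For this, I would use fullness of the original exceptional collection to argue that there are enough pairs $(i,j)$ with $\Ext^{l_{ij}}(S_i,S_j)\neq 0$ to connect all indices $i$ via a cocycle identity, thereby forcing $d_i + f(\mathcal{O}_X(D_i))$ to be constant on the connected components and irrelevant elsewhere. Once these shifts are pinned down, Proposition \ref{prop: Koszul iff strong} applied in reverse, together with Corollary \ref{Koszulity based on simples} (using that bi-grading is now concentrated on the diagonal $l=n$), yields Koszulity of $A$.

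The main obstacle is precisely this last consistency check: showing that the mere existence of \emph{some} integers $d_i$ implies we may take $d_i = -f(\mathcal{O}_X(D_i))$ up to a global shift. This hinges on the graph of non-vanishing $\Ext$-groups between simples being connected, which I expect to follow from the fullness and the graded poset structure, but would need careful justification. Once this is in place, Proposition \ref{prop: Koszul iff strong} closes the loop and the three-way equivalence of Koszulity, local Cohen-Macaulay-ness of each $P_i$, and existence of a shifted strong dual collection is established.
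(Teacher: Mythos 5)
At the top level you take the same route as the paper, whose entire proof consists of the two citations: Proposition \ref{prop: Koszulity and cm of monomial posets} for the first claim and Proposition \ref{prop: Koszul iff strong} for the second. The extra care you take with the second claim is warranted, because Proposition \ref{prop: Koszul iff strong} assumes $f(h(a))-f(t(a))=1$ for every arrow and concerns the specific shifts $d_i=-f(\mathcal O_X(D_i))$, while the theorem only assumes the poset is graded and asserts existence of \emph{some} shifts; the paper silently elides both points. Your forward direction is correct: Koszulity forces, via Proposition \ref{prop: ext1 vanishes}, that every indecomposable morphism has length one, so the hypothesis of Proposition \ref{prop: Koszul iff strong} is met and it produces the shifts $d_i=-f(\mathcal O_X(D_i))$.

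The one genuine gap is in your converse, and your proposed fix (fullness plus connectivity of the nonvanishing-$\Ext$ graph) is not the right mechanism; what pins down the $d_i$ is the graded hypothesis itself. If $v_a\lessdot v_b$ is a covering relation, any morphism $v_a\to v_b$ has length $f(v_b)-f(v_a)=1$, hence is indecomposable, hence contributes $\widetilde H^{-1}(\emptyset)=k$ to $\Ext^1(S_b,S_a)_{-1}$ by Proposition \ref{Ext groups}; strongness of $\{S_i[d_i]\}$ then forces $d_a-d_b=1$. Because the poset is graded, every comparable pair $v_a<v_c$ is joined by a saturated chain of covers of length $f(v_c)-f(v_a)$, and telescoping gives $d_a-d_c=f(v_c)-f(v_a)$. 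Incomparable (or reversed) pairs support no paths and hence no $\Ext$, so no further compatibility is needed. Consequently strongness concentrates $\Ext^m(S_c,S_a)$ in the single cohomological degree $m=d_a-d_c=f(v_c)-f(v_a)$, which by Proposition \ref{Ext groups} is also the only internal degree in which this group can be nonzero; Corollary \ref{Koszulity based on simples} then gives Koszulity directly, with no appeal to fullness. With this substitution your argument closes and is in fact more careful than the proof printed in the paper.
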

\begin{proof}
    The first part is Proposition \ref{prop: Koszulity and cm of monomial posets}. The second part is Proposition \ref{prop: Koszul iff strong}. 
\end{proof}
\begin{ex}
    Let $X_{\Sigma}=\mathbb{P}^1\times \mathbb P^1$. Consider the collection $\{v_1:=\mathcal{O}(0,0),v_2:=\mathcal{O}(1,0),v_3:=\mathcal O(0,1), v_4:=\mathcal{O}(1,1)\}$ and let $A$ be its HPA (See Example \ref{ex: P^1xP^1}).
        \begin{center}
	       \begin{tikzcd}
                    \bullet_{(0,1)}\arrow[bend left]{rrr}{x}\arrow[bend right]{rrr}{y}
		          &&&\bullet_{(1,1)}\\\\\\
                    \bullet_{(0,0)}\arrow[swap, bend left]{rrr}{x}\arrow[swap, bend right]{rrr}{y}\arrow[bend left]{uuu}{z}\arrow[swap, bend right]{uuu}{w}
		          &&&\bullet_{(1,0)}\arrow[bend left]{uuu}{z}\arrow[swap, bend right]{uuu}{w}
	       \end{tikzcd}     
        \end{center} 
        In this example the collection $\{\mathcal{O}(0,0),\mathcal{O}(1,0),\mathcal O(0,1), \mathcal{O}(1,1)\}$ is a full strong exceptional collection. Moreover the partial ordering by morphisms on objects can be graded by
    $$f:\{\mathcal{O}(0,0),\mathcal{O}(1,0),\mathcal O(0,1), \mathcal{O}(1,1)\}\to \mathbb N$$
    $$(i,j)\mapsto i+j$$
    Furthermore this is obviously locally Cohen-Macaulay. Hence $A$ is Koszul. Thus by Theorem \ref{thm: Koszul-Strong-CM} $\{S_4[-2],S_3[-1],S_2[-1],S_1\}$ is a strong exceptional collection. 
\end{ex} 

\begin{ex}
Let $X_\Sigma = \mathbb P(a_0: \cdots : a_n)$ be weighted projective space and consider the collection $\mathcal O, ..., \mathcal O(\sum a_i - 1)$.  The arrows do not satisfy the condition of Theorem \ref{thm: Koszul-Strong-CM} unless $a_i =1$ for all $i$.  However, it is always locally Cohen-Macaulay (which follows from the locally Cohen-Macaulay property of $\mathbb N^{n+1}$).  Therefore, the endomorphism algebra of $\mathcal O, ..., \mathcal O(\sum a_i - 1)$ is Koszul but the dual collection cannot be made strong unless all the weights are 1.
\end{ex}

\printbibliography
\Addresses
\end{document}